\theoremstyle{plain}
\newtheorem{theorem}[subsection]{Theorem}
\newtheorem*{theorem*}{Theorem}
\newtheorem{corollary}[subsection]{Corollary}
\newtheorem*{corollary*}{Corollary}
\newtheorem{proposition}[subsection]{Proposition}
\newtheorem{proposition*}{Proposition}
\newtheorem{lemma}[subsection]{Lemma}
\newtheorem{fact}[subsection]{Fact}
\theoremstyle{definition}
\newtheorem{definition}[subsection]{Definition}
\newtheorem{example}[subsection]{Example}
\theoremstyle{remark}
\newtheorem{remark}[subsection]{Remark}
\newtheorem{notation}[subsection]{Notation}
\providecommand{\Z}{\mathbb{Z}}
\providecommand{\C}{\mathbb{C}}
\providecommand{\F}{\mathbb{F}}
\providecommand{\N}{\mathbb{N}}
\providecommand{\bbG}{\mathbb{G}}
\providecommand{\bbD}{\mathbb{D}}
\providecommand{\p}{\mathfrak{p}}
 \providecommand{\cF}{\mathscr{F}}
\providecommand{\cA}{\mathscr{A}}
\providecommand{\cB}{\mathscr{B}}
\providecommand{\cC}{\mathscr{C}}
\providecommand{\cD}{\mathscr{D}}
\providecommand{\cP}{\mathscr{P}}
\providecommand{\cS}{\mathscr{S}}
\providecommand{\bsi}{{\bm{\sigma}}}
\providecommand{\spec}{\mathop{\rm Spec}}
\providecommand{\et}{\text{\'et}}
\providecommand{\Ob}{\mathop{\rm Ob}}
\providecommand{\id}{\mathop{\rm id}\nolimits}
\providecommand{\Aut}{\mathop{\rm Aut}\nolimits}
\providecommand{\op}{{\mathop{\rm op}\nolimits}}
\providecommand{\Fix}{\mathop{\rm Fix}\nolimits}
\renewcommand{\ker}{\mathop{\rm ker}\nolimits}
\providecommand{\Eq}{\mathop{\rm Eq}\nolimits}
\providecommand{\colim}{\mathop{\rm colim}}
\providecommand{\lbr}{\llbracket}
\providecommand{\rbr}{\rrbracket}
\providecommand{\Gal}{\mathop{\rm Gal}}
\providecommand{\Split}{\text{\rm Split}}
\providecommand{\ld}{\text{\rm ld}}
\providecommand{\Set}{\mathbf{Set}}
\providecommand{\Rng}{\mathbf{Rng}}
\providecommand{\Prof}{\mathbf{Prof}}
\providecommand{\Topl}{\mathbf{Top}}
\providecommand{\Gp}{\mathbf{Gr}}
\providecommand{\Alg}{\text{-}\mathbf{Alg}}
\providecommand{\Corr}{\text{\rm Corr}}
\providecommand{\coCorr}{\text{\rm coCorr}}
\providecommand{\diff}{\bsi\text{-}}
\providecommand{\ov}{{\kern-1pt\mathord{/}\kern-.7pt}}
\providecommand{\Ov}{{\kern-1pt\sslash\kern-.7pt}}
\providecommand{\diff}{\text{\it Diff}}
\providecommand{\forg}[1]{\lfloor#1\rfloor}
\providecommand{\ssig}[1]{\lceil#1|}
\providecommand{\psig}[1]{|#1\rceil}
\providecommand{\lexp}[2]{{\vphantom{#2}}^{#1}{\kern-.1ex#2}}
\providecommand{\lsub}[2]{{\vphantom{#2}}_{#1}{\kern-.1ex#2}}
\providecommand{\lrexp}[3]{{\vphantom{#2}}^{#1}{\kern-.1ex#2^#3}}
\providecommand{\ldexp}[3]{{\vphantom{#2}}^{#1}{\kern-.1ex#2_#3}}
\newcommand*{\doublerightarrow}[2]{\mathrel{
  \settowidth{\@tempdima}{$\scriptstyle#1$}
  \settowidth{\@tempdimb}{$\scriptstyle#2$}
  \ifdim\@tempdimb>\@tempdima \@tempdima=\@tempdimb\fi
  \mathop{\vcenter{
    \offinterlineskip\ialign{\hbox to\dimexpr\@tempdima+1em{##}\cr
    \rightarrowfill\cr\noalign{\kern.5ex}
    \rightarrowfill\cr}}}\limits^{\!#1}_{\!#2}}}
\newcommand{\mw}[1]{{\color{red}#1}}
\begin{document}

\subjclass[2010]{18E50, 12H05, 37B10, 18D40}

\keywords{Difference algebra, categorical Galois theory, symbolic dynamics, subshift of finite type, Galois groupoid}
\title{Difference Galois theory and dynamics}
\date{\today}

\author{Ivan Toma{\v s}i{\'c}} 
\address{Ivan Toma{\v s}i{\'c}\\
         School of Mathematical Sciences\\
  	Queen Mary University of London\\
         London, E1 4NS\\
        United Kingdom}
\email{i.tomasic@qmul.ac.uk}

\author{Michael Wibmer} 
\address{Michael Wibmer\\
Institute of Analysis and Number Theory\\ 
Graz University of Technology\\
Kopernikusgasse 24\\
8010 Graz, Austria}
\email{wibmer@math.tugraz.at}

\begin{abstract}
We develop a Galois theory for difference ring extensions, inspired by Magid's separable Galois theory for ring extensions and by Janelidze's categorical Galois theory. Our difference Galois theorem states that the category of difference ring extensions split by a chosen Galois difference ring extension is classified by actions of the associated \emph{difference profinite Galois groupoid}. In particular, \emph{difference locally \'etale} extensions of a difference ring are classified by its \emph{difference profinite fundamental groupoid}.

The emergence of difference profinite spaces, viewed as discrete dynamical systems in the realm of topological dynamics, leads us to investigate the interaction of difference algebra and symbolic dynamics. 
As an application of this interaction, we prove the near-rationality of a certain difference zeta function counting solutions of systems of difference algebraic equations over algebraic closures of finite fields with Frobenius. 
\end{abstract}

\maketitle




\section{Introduction}

\subsection{Difference algebra}

Difference algebra  was founded by Ritt in the 1930s as a study of rings and modules equipped with distinguished endomorphisms called \emph{difference operators} (\cite{cohn},\cite{levin}). In particular, a \emph{difference ring} is a pair
$$
(R,\sigma_R)
$$
consisting of a commutative unital ring and a ring endomorphism $\sigma_R:R\to R$. 

A \emph{homomorphism of difference rings} 
$$
f:(R,\sigma_R)\to (S,\sigma_S)
$$
is a homomorphism $f$ of {the} underlying rings commuting with {the} difference operators, i.e.,
$$
\sigma_S\circ f=f\circ\sigma_R.
$$
We thus obtain a category of difference rings
$$
\diff\Rng.
$$
If $R$ is a difference ring, we will use the notation
$$
\forg{R}
$$
to denote the underlying ring.

By analogy, we define categories of difference objects in an arbitrary category {(\ref{cat-diff-obj})}. The most interesting  
instances {that} we shall use are \emph{difference sets} $\diff\Set$, \emph{difference groups} $\diff\Gp$, \emph{difference topological spaces} $\diff\Topl$, \emph{difference profinite spaces} $\diff\Prof$. 

Difference rings arise in the study of recursions, equations in the calculus of finite differences, functional equations and many other contexts. To emphasise the connection to dynamics, note that the algebras of suitable classes of (observable) functions on a discrete dynamical system $(X,\sigma_X)\in \diff\Topl$ are naturally endowed with a difference ring structure induced by precomposition with the shift $\sigma_X$.

\subsection{Difference field extensions}

A \emph{difference field} is a difference ring whose underlying ring is a field.

{While there exists a successful Galois theory of linear difference equations as described in \cite{singer}, a Galois theory for difference field and difference ring extensions in the spirit of Grothendieck's Galois theory has not been identified.}

Let $L/k$ be a difference field extension. Ritt-style difference algebra suggests that the group of $\diff\Rng$-automorphisms
$$
\Aut(L/k)=\{g\in\Aut(\forg{L}/\forg{k}): \sigma_L\circ g=g\circ \sigma_L\}
$$
may be relevant to study this extension. However, even under the assumption that $\forg{L}/\forg{k}$ is a Galois extension, this group of strict automorphisms is `too small' for the purpose of classifying difference field subextensions of $L/k$. 

The authors have identified (\cite[2.20]{ive-tgs}, \cite[1.23]{ive-mich-babb}) that, even when $\sigma_L$ is not invertible, 
$$
\Gal(\forg{L}/\forg{k})
$$
carries a structure of a difference group through a group endomorphism $()^\sigma:\Gal(\forg{L}/\forg{k})\to \Gal(\forg{L}/\forg{k})$ satisfying
$$
g\circ\sigma_L=\sigma_L \circ g^\sigma,
$$
and found that its \emph{difference actions} are better suited for the purpose. On the other hand, the approach to simply consider transformations of underlying objects does not generalise well. 

To gain a better understanding of the situation, let us discuss \emph{difference group actions}.
In the category of difference sets, an action of a difference group $G$ on a difference set $X$ defined through a morphism of difference sets
$$
\mu:G\times X\to X
$$
satisfying the relevant properties \emph{does not} correspond to a morphism from $G$ to $\Aut(X)$. Indeed, $G$ is a difference group, and $\Aut(X)$ is an ordinary group. 

On the other hand, using the \emph{cartesian closed structure} of the category $\diff\Set$ (\ref{diffset-cart-cl}), this action can equivalently be given by a \emph{difference group} homomorphism
$$
G\to \Aut[X],
$$
where the latter is the \emph{internal automorphism group} of $X$, as explained in \ref{diff-gp-ac-intaut}. The fact that
$$
\Aut(X)=\Fix(\Aut[X])=\{g\in\Aut[X] :\ \sigma(g)=g\},
$$
shows precisely how much smaller  the group of strict automorphisms is compared to the `correct' difference group $\Aut[X]$.

Our Galois Theorem~\ref{diff-Galois-thm-fields} for a difference field extension $L/k$  clarifies the connection between the difference profinite group $(\Gal(\forg{L}/\forg{k}),()^\sigma)$ and a suitable internal automorphism group.

\subsection{In search of a difference Galois theory}

Generally speaking, a \emph{Galois correspondence} is an equivalence of categories of the form
$$
\cC\simeq BG
$$
between a category $\cC$ of 
objects we wish to study and the category $BG$ of actions of a suitable algebraic `group like' 
object $G$, which allows us to say that $G$ \emph{classifies} the objects of $\cC$. 

Historically, the most important instances were the following. 

\begin{enumerate}
\item In \emph{classical Galois theory} of a Galois field extension $L/k$,  the subextensions are classified through closed subgroups of the profinite Galois group $$\Gal(L/k).$$ 
A more powerful statement is that there is an anti-equivalence of categories betweeen $k$-algebras \emph{split} by $L$ and profinite actions of $\Gal(L/k)$ (cf.~\ref{field-ext-gal-desc}).

\item In algebraic topology, given a path-connected space $X$ and a point $x\in X$, the \emph{fundamental group}
$$
\pi_1(X,x)
$$
classifies covering spaces of $X$.

\item In algebraic geometry, to a great extent uniting both previous theories, given a connected scheme $X$ and a geometric point $\bar{x}$ in $X$, \emph{Grothendieck's Galois theory} states that the profinite \emph{\'etale fundamental group} 
$$
\pi_1^\et(X,\bar{x})
$$
classifies the category of finite \'etale covers of $X$ through its finite continuous actions \cite{sga1}.
\end{enumerate}

Our objective is to classify suitably defined extensions of a difference ring 
(thought of as `\'etale covers' of associated difference schemes). 

If we were to attempt to generalise Grothendieck's Galois theory for this task (bearing in mind that we should use internal/enriched Hom-objects and automorphism groups), we would  encounter the following obstacles.

\begin{enumerate}
\item 
Grothendieck's formalism from \cite{sga1} relies on working in a Boolean universe, where subobjects have complements, and \emph{connected} objects play a crucial role.  The category of difference sets is not Boolean, and, as we shall see later, the spectrum of a difference ring can be \emph{totally disconnected} with no fixed points nor fixed components, so we may not be able to select a base geometric difference point and consider the relevant fibre functor.
\item There exist algebraic difference field extensions with no finite difference subextensions, so the difference theory cannot mimic the classical passage through finite extensions/covers. 
\end{enumerate}

Obstacle (1) points us in the direction of considering a theory of a \emph{difference fundamental groupoid} as opposed to a difference group, while (2) indicates that it should be a difference \emph{topological} groupoid. Our paper pursues this line of research.  

On the other hand, the first author developed an approach to difference algebra through topos theory and categorical logic in \cite{ive-topos}, and the key idea from those areas is that in contexts with not enough points (`fixed points' when it comes to difference sets), everything is governed by \emph{internal localic groupoids}, as in \cite{joyal-tierney}, \cite{bunge-moerdijk}, \cite{bunge-92}, \cite{bunge-04}. In a forthcoming paper, the first author will formulate the theory of the \emph{difference localic fundamental group} reliant on the general theory from \cite{bunge-04}.

 \subsection{Difference Galois correspondence}
 
 Our development of \emph{difference Galois groupoids} is inspired by Magid's separable Galois theory of ring extensions \cite{magid}, which is formulated over an arbitrary ring and overcomes the difficulties of dealing with rings with a totally disconnected space of connected components. 
 
 The theory works particularly well using the framework of Janelidze's categorical Galois theory from \cite{janelidze} and \cite{borceux-janelidze}. We start by lifting a general context for Janelidze theory to the associated categories of difference objects in \ref{si-Janelidze}, and we apply this principle to transition from Magid's theory for rings to difference rings as follows.
 
 We consider the opposite category of the category of difference rings
 $$
 \cA=(\diff\Rng)^\op,
 $$
 which the reader can think of as the category of affine difference schemes, and the category
 $$
 \cP=\diff\Prof
 $$
 of difference profinite sets. The \emph{difference Pierce spectrum} is the functor
 $$
 S:\cA\to \cP,
 $$
 assigning to a difference ring $R$ the difference profinite set of connected components of $\spec(\forg{R})$ with the shift induced by $\sigma_R$ (equivalently, the spectrum of the Boolean algebra of idempotents of $R$ with an induced $\sigma_R$-action, as in  \ref{difference-context}). 
 
 Its right adjoint 
 $$
 C:\cP\to \cA
 $$
 assigns to a difference profinite space $X$ the difference ring of continuous functions $$(\mathop{\rm Cont}(\forg{X},\Z), f\mapsto f\circ\sigma_X).$$
 This adjunction extends to adjunctions
 $$
 S_k\dashv C_k
 $$
 between slice categories $\cA_{\ov k}$ and $\cP_{\ov S(k)}$ for any $k\in\cA$. 
 
 Given a morphism $f:L\to k$ in $\cA$, we define the subcategory 
 $$
 \Split_k(f)
 $$
 of $\cA_{\ov k}$ consisting of those $k$-algebras $A$ which are \emph{split} by $f$ in the sense that the pullback $f^*(A)=L\otimes_kA$ is in the image of the functor $C_L$ (to be thought of as a `coproduct of copies of $L$ indexed by a difference profinite space').  Split objects play the role of separable/\'etale objects from classical Galois theories.
 
The next step is to identify morphisms $f:L\to k$ in $\cA$ of \emph{relative Galois descent} 
that play the role analogous to normal/Galois objects in the classical theories (\ref{janelidze}). These are auto-split morphisms such that $\forg{L}/\forg{k}$ is \emph{componentially locally strongly separable}, a condition familiar from \cite{magid}.
  
Given a morphism $f\colon L\to k$ of relative Galois descent, the \emph{difference Galois groupoid} is 
$$
\begin{tikzcd}[cramped, column sep=normal, ampersand replacement=\&]
{\Gal[f]=\left(S(L\otimes_kL) \right.}\ar[yshift=2pt]{r}{} \ar[yshift=-2pt]{r}[swap]{} \&{\left.S(L)\right),}
\end{tikzcd}
$$
a difference profinite groupoid with object of objects $S(L)$ and object of morphisms $S(L\otimes_kL)$.

Our main Theorem~\ref{diff-Galois-thm} is the following. 
\begin{theorem*}[Difference Galois correspondence]
There is an equivalence of categories
$$
\Split_k(f)\simeq B\Gal[f]=[\Gal[f],\cP]
$$
between the opposite category of $k$-algebras $A$ split by $L$ and the category of difference profinite $\Gal[f]$-actions,  realised by the functor
$$
A\mapsto S(L\otimes_kA).
$$
\end{theorem*}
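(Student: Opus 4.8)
The plan is to deduce the difference Galois correspondence from its classical (non-difference) analogue by systematically applying the lifting principle for Janelidze contexts established in \ref{si-Janelidze}. Concretely, Magid's separable Galois theory—rephrased in the categorical-Galois language of \cite{janelidze,borceux-janelidze}—produces, for a morphism $f\colon L\to k$ of relative Galois descent in the \emph{ordinary} context $\big((\Rng)^\op,\Prof,S\dashv C\big)$, an equivalence $\Split_k(f)\simeq B\Gal[f]$ realised by $A\mapsto S(L\otimes_kA)$; see \ref{field-ext-gal-desc} for the special case of fields. The first step is therefore to verify that the triple $\big(\cA,\cP,S\dashv C\big)$ with $\cA=(\diff\Rng)^\op$ and $\cP=\diff\Prof$ satisfies the axioms of an admissible Galois context: $S$ preserves pullbacks along the relevant morphisms (this follows from the fact that forming the Boolean algebra of idempotents commutes with the tensor products involved, together with the compatibility of the induced $\sigma$-actions), $C$ is fully faithful, and the relative notions of split morphism and of morphism of relative Galois descent are stable under pullback along morphisms in $\cP$. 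Most of these facts are inherited from the classical context because the difference structure is merely carried along as extra data.

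The second step is to check that the constructions defining the theorem are compatible with the forgetful functor $\forg{-}$ to the non-difference setting. That is, one shows that $\forg{S(L\otimes_kL)\rightrightarrows S(L)}$ is precisely the classical Galois groupoid of $\forg{L}/\forg{k}$ and that an object $A\in\cA_{\ov k}$ lies in $\Split_k(f)$ if and only if $\forg{A}$ lies in the classical $\Split_{\forg{k}}(\forg{f})$ \emph{and} the splitting witnesses are compatible with the difference operators—i.e.\ the difference profinite set indexing the copies of $L$ is exactly $S(L\otimes_kA)$ with its induced shift. Here one uses that $C_L$ is fully faithful, so the difference profinite set exhibiting $L\otimes_kA$ as a `coproduct of copies of $L$' is uniquely determined, hence automatically carries the shift induced functorially from $\sigma$. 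The componential-local-strong-separability hypothesis on $\forg{L}/\forg{k}$ is what guarantees, via Magid's analysis, that $S$ applied to the relevant algebras behaves well (in particular that $L\otimes_kL$ is again split by $L$).

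The third step assembles these into the equivalence. Define $\Phi\colon \Split_k(f)\to B\Gal[f]$ by $A\mapsto S(L\otimes_kA)$, equipped with the $\Gal[f]$-action induced by the groupoid structure maps (the action is the arrow $S(L\otimes_kL)\times_{S(L)}S(L\otimes_kA)\to S(L\otimes_kA)$ coming from the comultiplication $L\otimes_kA\to L\otimes_kL\otimes_kA$). On underlying objects $\forg{\Phi}$ is the classical Magid equivalence, which is therefore an equivalence of ordinary categories; since every arrow and every object in sight is the difference-enriched version of its classical shadow, and the difference structure is determined uniquely by functoriality (no choices), $\Phi$ is essentially surjective and fully faithful on difference objects as well. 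For the inverse, given a difference profinite $\Gal[f]$-set $Y$ one forms $C_L(Y)$ with its descended difference $k$-algebra structure coming from the $\Gal[f]$-action—this is Janelidze's general descent construction applied internally in the category of difference objects—and checks the two round-trips are naturally isomorphic, again by reducing to the classical statement.

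I expect the main obstacle to be the verification in the first step that the difference Pierce spectrum functor $S$ preserves the pullbacks required for an admissible Galois context, and more subtly that the class of morphisms of relative Galois descent is closed under the base changes used in the proof—here the non-invertibility of $\sigma$ and the possible absence of fixed points or fixed components (the very phenomena flagged in the introduction) mean one cannot blithely transport arguments that implicitly use complementation or connected components. The resolution should be exactly the payoff of working with \emph{groupoids} rather than groups and with the \emph{relative} (Janelidze) formulation rather than Grothendieck's: the connectedness-dependent steps of the classical proof are replaced by statements about the adjunction $S\dashv C$ and the fully faithfulness of $C_L$, which survive the passage to difference objects because, as noted in \ref{si-Janelidze}, adjunctions and the defining diagrams of a Galois context lift verbatim to categories of difference objects. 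A secondary technical point to handle carefully is that $\diff\Prof$ must be shown to have the limits (in particular the relevant internal products) needed to form $B\Gal[f]=[\Gal[f],\cP]$ and to run the descent construction; this is where the cartesian-closed structure of $\diff\Set$ (\ref{diffset-cart-cl}) and its restriction to difference profinite spaces is used.
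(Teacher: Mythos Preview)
Your overall orientation—lift Magid's classical theory to difference objects via the machinery of \ref{si-Janelidze}—is exactly right, and this is what the paper does. However, your execution diverges from the paper's in a way that introduces an unnecessary detour and one genuine confusion.

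The confusion is in your Step~1: the conditions you list (``$S$ preserves pullbacks along the relevant morphisms'', ``$C$ is fully faithful'', ``relative notions \dots\ stable under pullback'') are not Janelidze's conditions for a morphism of relative Galois descent. What needs checking for $f$ in the difference context is precisely the triple from \ref{janelidze}: (i)~$f^*$ is monadic, (ii)~the counit $\epsilon^X:S_XC_X\to\id$ is an isomorphism, (iii)~every $f_!C_X(e)$ is split by $f$. The paper handles all three in Lemma~\ref{gen-diff-split-normal} by reducing to the corresponding conditions on $\forg{f}$: monadicity of $f^*$ follows from Beck's criterion using that $\forg{\,}$ creates coequalisers and reflects isomorphisms (\ref{forg-creates-limits}); the counit condition holds because $\forg{\epsilon^X}=\epsilon^{\forg{X}}$; and the splitting condition reduces to the classical one via part~(1) of that lemma. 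Once $f$ is of relative Galois descent, Janelidze's theorem applies \emph{directly} in $(\diff\Rng)^\op$ and gives the equivalence—there is no separate lifting argument.

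This makes your Step~3 and the worries in your final two paragraphs unnecessary. You try to argue that $\Phi$ is an equivalence because $\forg{\Phi}$ is, with the difference structure ``determined uniquely by functoriality''; this could be made to work but requires care for essential surjectivity (why does the classical inverse produce a \emph{difference} algebra?). The paper avoids this entirely: it never lifts the equivalence from the underlying categories, it applies Janelidze's theorem intrinsically in the difference context. Likewise, your anticipated obstacles about $S$ preserving pullbacks, non-invertibility of $\sigma$, and cartesian closedness of $\diff\Prof$ do not arise—the only structural input is that $\forg{\,}$ creates limits and colimits, which handles everything.
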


\subsection{Difference fundamental groupoid}

A separable closure of a difference ring $k$ is a difference ring extension $\bar{k}$ such that $\forg{\bar{k}}$ is a separable closure of $\forg{k}$. We show (\ref{diff-sep-cl-exists}) that separable closures exist, but they are not unique up to a $k$-isomorphism.  

The \emph{fundamental difference groupoid} is the difference profinite groupoid
$$
\pi_1(k,\bar{k})=\Gal[\bar{k}\to k].
$$
It is aptly named because it classifies \emph{difference locally \'etale $k$-algebras}, as shown in 
Corollary~\ref{diff-pi1-correspondence}.

\subsection{Finite difference generation and presentation}

In a category of algebraic structures, \emph{freely generated} objects are typically obtained by applying the left adjoint of a suitable forgetful functor, while \emph{presentations} of objects are defined in terms of coequaliser diagrams involving free objects. By duality,  in a geometric context we should consider the right adjoint of the forgetful functor and equaliser diagrams. 

In \ref{ss:freely-si-gen}, for a suitable category $\cC$, we find left and right adjoints to the forgetful functor $\forg{\,}:\diff\cC\to \cC$,
$$
\ssig{\,}\dashv\forg{\,}\dashv \psig{\,}
$$
and we view them as \emph{free constructions} of difference objects. 

Moreover, in \ref{ss:dir-si-pres}, we identify a right adjoint $\psig{\,}$ to the forgetful functor 
$$
\forg{\,}^\Corr:\diff\cC\to \Corr(\cC), \ \ \  X\mapsto (X\xleftarrow{\id} X \xrightarrow{\sigma_X} X).
$$ 
into the category of correspondences in $\cC$, 
and, dually, a left adjoint $\ssig{\,}$ to the forgetful functor from $\diff\cC$ to cocorrespondences in $\cC$. 
These are viewed as constructions of \emph{directly presented difference objects}. 

The rigorous treatment of finite $\diff$generation and finite $\diff$presentation makes it straightforward to show that these constructions commute with the functors establishing the equivalence of categories in our difference Galois correspondence, establishing Corollaries~\ref{eq-with-subshifts} and \ref{fp-corr-sft} as follows.
\begin{corollary*}
Through the difference Galois correspondence, finitely $\diff$generated objects of $\Split_k(f)$ correspond to finitely $\diff$generated $\Gal[f]$-actions, and similarly for finitely $\diff$presented objects.
\end{corollary*}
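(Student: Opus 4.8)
The plan is to leverage two structural facts from the earlier sections. First, the properties ``finitely $\diff$generated'' and ``finitely $\diff$presented'' are purely \emph{categorical}: by \ref{ss:freely-si-gen} a difference object is finitely $\diff$generated precisely when it is a regular quotient of $\ssig{F}$ for some finitely generated $F$ in the base category $\cC$ --- and dually, in the geometric categories $\cA$ and $\cP$, a regular subobject of $\psig{F}$ --- while by \ref{ss:dir-si-pres} the finitely $\diff$presented objects are those additionally expressible through the $\Corr$/$\coCorr$-adjoints $\psig{\,}$ (resp.\ $\ssig{\,}$) of finitely generated (co)correspondences together with (co)equaliser diagrams between them. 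Second, the functor $\Phi\colon A\mapsto S(L\otimes_kA)$ realising the difference Galois correspondence of Theorem~\ref{diff-Galois-thm} is an \emph{equivalence of categories}, hence preserves and reflects all limits and colimits, in particular (co)equalisers. Granting these, the corollary reduces to the single claim that $\Phi$ and its quasi-inverse carry the free constructions on one side to those on the other.

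To prove this matching of free constructions, I would reduce it, via the defining adjunctions $\ssig{\,}\dashv\forg{\,}\dashv\psig{\,}$ and $\ssig{\,}\dashv\forg{\,}^\Corr$, to a compatibility of the relevant forgetful functors with $\Phi$. The key observation is that the ingredients of the Galois equivalence --- the base change $L\otimes_k(-)$, the difference Pierce spectrum $S$, and, for the quasi-inverse, the function-ring functor $C$ together with descent along $f$ --- are each defined by a universal property or a levelwise formula into which the difference operator $\sigma$ enters only formally; consequently each commutes, up to canonical isomorphism, with ``forgetting $\sigma$''. Hence the square relating $\Phi$ to its non-difference analogue (the classical separable Galois correspondence in the spirit of Magid that our theorem refines) through the forgetful functors commutes. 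Since $\ssig{\,}$ and $\psig{\,}$ are determined by $\forg{\,}$ and $\forg{\,}^\Corr$ through adjointness, and the analogous free constructions on the classical side are matched by the classical correspondence, uniqueness of adjoints forces $\Phi\circ\ssig{\,}\cong\ssig{\,}\circ\Phi^\flat$ and $\Phi\circ\psig{\,}\cong\psig{\,}\circ\Phi^\flat$ (and likewise for the $\Corr$/$\coCorr$ variants), where $\Phi^\flat$ denotes the corresponding equivalence on underlying structures.

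It remains to see that ``finitely many generators'' passes through $\Phi$ in both directions. In the direction $\Split_k(f)\to B\Gal[f]$, the base change $L\otimes_k(-)$ preserves finite $\diff$generation of $k$-algebras, and $S$ sends a finitely $\diff$generated difference ring to a difference profinite space whose Boolean algebra of idempotents is finitely $\diff$generated, since a finitely generated ring has only finitely many connected components after base change along any point. In the opposite direction, $C$ of a difference profinite space with finitely $\diff$generated idempotent algebra is componentially finitely $\diff$generated over $L$, and Galois descent along the morphism $f$ of relative Galois descent --- whose underlying extension $\forg{L}/\forg{k}$ is componentially locally strongly separable --- preserves finite $\diff$generation. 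Combined with the (co)equaliser characterisation and the preservation of (co)equalisers by the equivalence, this shows that finitely $\diff$generated objects of $\Split_k(f)$ correspond exactly to finitely $\diff$generated $\Gal[f]$-actions, and finitely $\diff$presented objects to finitely $\diff$presented ones.

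The step I expect to require the most care is the bookkeeping around free objects \emph{inside} the subcategory $\Split_k(f)$: a difference polynomial $k$-algebra need not be split by $L$, so the ``free finitely $\diff$generated split object'' is the reflection of the ambient free object into $\Split_k(f)$, and one must check both that this reflection exists on finitely $\diff$generated objects and that it corresponds under $\Phi$ to the expected free finitely $\diff$generated $\Gal[f]$-action (a full-shift-type action on finitely many $\diff$generators). Once this compatibility of reflections with the equivalence is in place, the remainder of the argument is formal, resting only on $\Phi$ being an equivalence preserving (co)equalisers.
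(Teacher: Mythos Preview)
Your core strategy in the second paragraph---showing that $\Phi$ commutes with the forgetful functors $\forg{\,}$ (and $\forg{\,}^\Corr$), and then invoking uniqueness of adjoints to deduce $\Phi\circ\psig{\,}\simeq\psig{\,}\circ\Phi_0$---is exactly what the paper does in its Comparison Theorem~\ref{gen-gal-free-diff} and Corollary~\ref{gal-dif-corr}. The paper implements this via a Yoneda chain
\[
[G,\cP](F,\Phi(\psig{A_0}))\simeq\cdots\simeq [G,\cP](F,\psig{\Phi_0 A_0}),
\]
walking through the adjunctions $\Psi\dashv\Phi$, $\forg{\,}\dashv\psig{\,}$, and the classical equivalence $\Psi_0\dashv\Phi_0$; this is the uniqueness-of-adjoints argument spelled out.

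Where you diverge is in your third paragraph, and that detour is both unnecessary and shaky. You try to argue directly that $S$ and $C$ preserve a notion of ``finitely many generators'' by reasoning about idempotents and connected components. The paper avoids this entirely: it \emph{defines} the notion of ``finitely freely $\diff$generated $G$-action'' as $\psig{X_0}_G$ where $X_0$ is a constant relatively finite $\forg{G}$-action, and then appeals to Magid's classical result (\ref{magid-th}) that these are exactly the images under $\Phi_0$ of strongly separable (i.e.\ finite \'etale) $\forg{k}$-algebras. So the matching of ``finite'' on the two sides is delegated to the classical, non-difference Galois correspondence, not argued from scratch.

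Your concern in the fourth paragraph is legitimate, and the paper's resolution is the one you should adopt: rather than reflecting an arbitrary free difference polynomial algebra into $\Split_k(f)$, one simply restricts to generators $A_0$ that already lie in $\Split_{\forg{k}}(\forg{f})$. The paper's proof of \ref{eq-with-subshifts} takes for granted that a finitely $\diff$generated split $A$ is a quotient of some $\ssig{A_0}^k$ with $A_0$ itself split by $\forg{f}$; in the concrete field case (Definition~\ref{diff-etale}) this is built into the definition of ``finitely $\diff$generated \'etale''. With that convention in place, no reflection is needed and the argument becomes the formal one you outlined in paragraph two.
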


\subsection{Symbolic dynamics}

The \emph{one-sided full shift} on a finite set $E_0$ is the profinite space
$$
X=E_0^\N
$$
together with the shift 
$$
\sigma:X\to X, \ \ \ \ (x_0,x_1,x_2,\ldots)\mapsto (x_1,x_2,\ldots).
$$
It is a profinite space with a continuous self-map, and we view it as an object of the category $\diff\Prof$.

A \emph{subshift} is a closed subset of some full shift which is {stable} under the shift map, considered as a subobject of a full shift in $\diff\Prof$. 

A \emph{subshift of finite type} is a subshift isomorphic to the space $X_\Gamma$ of infinite paths of a finite directed graph 
$$
\Gamma= (X_0\leftarrow E_0\rightarrow X_0)
$$
with vertices $X_0$ and edges $E_0$.

We observe that key concepts and objects of difference algebra, symbolic dynamics and algebraic dynamics are all obtained through the same categorical constructions of  $\diff$generation and $\diff$presentation, applied in a suitable context.

\begin{enumerate}
\item When $\cC=\Set$, the full shift on $E_0$ is in fact the difference set $\psig{E_0}$, and a subshift is a subobject of some $\psig{E_0}$. A directed graph $\Gamma$ as above can be viewed as a correspondence $X_0\leftarrow E_0\rightarrow X_0$ in $\Set$, and the subshift of finite type associated to $\Gamma$ is directly presented as
$$
X_\Gamma=\psig{X_0\leftarrow E_0\rightarrow X_0}.
$$
\item Given a difference ring $k$, a $k$-algebra $A$ is \emph{finitely $\diff$generated} if it is a quotient of $\ssig{A_0}^k$ for some finitely generated $\forg{k}$-algebra $A_0$, or, equivalently, if there exists a finite tuple $a$ in $A$ such that $A=k[a,\sigma(a),\sigma^2(a),\ldots]$. 

A $k$-algebra $A$ is \emph{finitely $\diff$presented} if it is a quotient of a finitely $\diff$generated $k$-algebra by a finitely $\diff$generated difference ideal $I$, i.e, $\sigma(I)\subseteq I$ and there exist a finite tuble $b$ in $I$ such that $b,\sigma(b),\ldots$ generates $I$ as an ideal.
We show in \ref{fin-pres-dir-pres} that $A$ is \emph{finitely $\diff$presented} if and only if it is of {the} form $$\ssig{A_0;C_0}^k$$ for some $k$-cocorrespondence with $A_0$ and $C_0$ finitely presented $\forg{k}$-algebras.
\item When $k$ is a difference ring, and $X_0\leftarrow C_0\to X_0$ is a $k$-correspondence in the category of affine $\forg{k}$-schemes, the \emph{path space}  
of this correspondence is the object of $\diff\Rng^\op$ (thought of as a `difference scheme')
$$
\psig{X_0;C_0}_k.
$$
This object is used in algebraic dynamics to construct arboreal representations as in \cite{ingram}.
\end{enumerate}

The above Corollary now yields the following \emph{translation mechanism between difference algebra and symbolic dynamics} (cf.~\ref{theo:translation-mech}). 

Given a finitely $\diff$generated difference field extension $L/k$ {with $\forg{L}/\forg{K}$ Galois}, the difference Galois group
$$
G=\Gal[L/k]
$$
is a \emph{group subshift of finite type}, and we have  anti-equivalences of categories between
\begin{enumerate}
\item finitely $\diff$generated $k$-algebras split by $L$ and subshifts with an action of $G$;
\item finitely $\diff$presented $k$-algebras split by $L$ and subshifts of finite type with an action of $G$.
\end{enumerate}
For the absolute case, if we choose a separable closure $\bar{k}$ of $k$, the difference fundamental group
$$
\pi_1(k,\bar{k})
$$
is a limit of group subshifts of finite type and we have analogous statements for finitely $\diff$generated/$\diff$presented \'etale $k$-algebras and subshifts/subshifts of finite type with $\pi_1(k,\bar{k})$-action.

{In particular, for $\forg{k}$ algebraically closed, finitely $\diff$generated/$\diff$presented \'etale $k$-algebras are equivalent to subshifts/subshifts of finite type.}

\subsection{Interactions between symbolic dynamics and difference algebra}

The connection between symbolic dynamics and difference algebra is not well-known. So far researchers on one side have mostly been unfamiliar with the other side. Therefore, a main goal of this article is also to formalize and popularize this connection. It seems clear that an understanding of the methods and perspectives of both sides will be mutually beneficial. For example, as explained in \cite[0.7]{Gromov:TopologicalInvariants}, the definition of the mean dimension, by now a standard invariant in dynamics, was inspired by the notion of dimension in difference algebra. 
It is also curious to see how certain constructions correspond to each other. For example, in difference algebra it is a standard procedure to rewrite a finite system of algebraic difference equations as a system of order $1$ by adding variables. In the symbolic dynamics world, this corresponds to the standard procedure of rewriting a subshift of finite type as a $1$-step subshift.

One of the possible explanations why the connection between difference algebra and symbolic dynamics has widely remained unnoticed is that difference algebra has long had a strong focus on fields and integral domains. Only in the last decades the importance of studying difference algebras with many zero-divisors has been recognized, for example in  \cite{singer}, \cite{OvchinnikovPogudinScanlon:DifferenceElimination}, \cite{PogudinScanlonWibmer:Solving}, \cite{Wibmer:Dimension}.

A particular case of our Galois correspondence theorem shows that, for a fixed difference field $k$, the category of subshifts is anti-equivalent to a rather special class of  finitely $\diff$generated $k$-algebras having many zero divisors (namely, the category of finitely $\diff$generated \'{e}tale $k$-algebras split by $\id\colon k\to k$). This shows that the study of difference algebras with many zero divisors is a very rich albeit difficult subject as it includes all of symbolic dynamics.

Using the above translation mechanism between the two areas, we establish a number of results in difference algebra 
by adapting methods from symbolic dynamics.

(1) Given a difference ring $A$, we consider the spectrum of the underlying ring with the action induced by $\sigma_A$ as a difference topological space
$$
X=\spec(A)=(\spec(\forg{A}),\spec(\sigma_A))\in\diff\Topl.
$$
We consider the \emph{$\diff$topology} on $X$, whose closed subsets are the $\sigma_X$-stable closed subsets of $X$.
Usings facts on irreducible components of subshifts of finite type, we prove (\ref{finitely-si-comp}) that,
if $A$ is a finitely $\diff$presented \'etale difference algebra, then $\spec(A)$ has finitely many connected components in the $\diff$topology.

(2) By using facts on the  irreducible decomposition and on periodic points of subshifts of finite type, we address (\ref{strong-core-strong}) our own conjecture \cite[1.19]{ive-mich-babb} by proving that, 
 if $A$ is a finitely $\diff$presented \'etale difference algebra over a difference field $k$, then its \emph{strong core}
$$
\pi_0^\sigma(A)
$$
is a strongly $\diff$\'etale $k$-algebra.

(3) We define the notion of \emph{entropy}  
$$
h(A)
$$
of a finitely {$\diff$generated} \'etale difference algebra $A$ over a difference field $k$ in terms of  \emph{topological entropy} of its associated subshift  (\ref{alg-entropy}). 

We show (\ref{entropy-ld}) that,  for a finitely $\diff$generated separable extension $L/k$ of difference fields, 
$$
h(L)=\log(\ld(L/k)),
$$
where $\ld(L/k)$ is the \emph{limit degree}.
Hence, the notion of entropy of {difference} algebras generalises the concept of limit degree, well-known in difference algebra.

(4) Let the base difference field be $k=(\F_q,\id)$ and write $\bar{k}_n=(\bar{\F}_q,\mathop{\rm Frob}_q^n)$ for the algebraic closure of the finite field $\F_q$ equipped with the 
power of the Frobenius map $x\mapsto x^{q^n}$. 
Let $A$ be a $k$-algebra,  and consider its number of $\bar{k}_n$-rational points,
$$
N_n=|k\Alg(A,\bar{k}_n)|.
$$
 We define the \emph{difference zeta function} of $A$ as the formal power series
$$
Z_A(t)=\exp\left(\sum_{n=1}^\infty \frac{N_n}{n} t^n\right).
$$ 
Ruelle \cite{ruelle-zeta} explains how the definition of the \emph{dynamical zeta function} of a discrete dynamical system $X$, which counts periodic points as fixed points of $\sigma_X^n$, is motivated by Weil conjectures, where $\F_{q^n}$-rational points of a scheme are thought of as fixed points of $\mathop{\rm Frob}_q^n$.

In model theory, the study of the numbers $N_n$, as $n$ varies, is associated with Macintyre's conjecture that the theory of existentially closed difference fields is the limit theory of difference fields $\bar{k}_n$ posed in \cite{angus} and proved by Hrushovski in \cite{udi-frob}.

In number theory and algebraic geometry, the problem can be reformulated via counting points on algebraic correspondences twisted by powers of Frobenius, which is a subject of Deligne's conjecture proved in \cite{Pink} and \cite{Fujiwara} under certain properness assumptions on the correspondence. Recently, \cite{Varshavsky} and \cite{shuddhodan} worked toward algebraic-geometric proofs of Hrushovski's result in its full generality.

\begin{theorem*}[{\ref{zeta-near-rat}}] If $A$ is a {finitely $\diff$presented \'etale} $k$-algebra, then $Z_A(t)$ is \emph{near-rational}, i.e., its logarithmic derivative is rational.
\end{theorem*}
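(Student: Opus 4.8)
The plan is to run the counting problem through the translation mechanism between difference algebra and symbolic dynamics: I would reinterpret $N_n$ as a Frobenius-twisted periodic-point count on a subshift of finite type, observe that such counts are periodic in $n$, and read off rationality of the logarithmic derivative of $Z_A$. Write $\varphi=\mathop{\rm Frob}_q$, so that $\bar{k}_n=(\bar{\F}_q,\varphi^n)$, and put $\bar{k}_0=(\bar{\F}_q,\id)$. The first step is to base change to $\bar{k}_0$: since finite $\diff$presentation and \'etaleness are preserved, $A'=A\otimes_k\bar{k}_0$ (underlying ring $\forg A\otimes_{\F_q}\bar{\F}_q$, with $\sigma_{A'}=\sigma_A\otimes\id$) is a finitely $\diff$presented \'etale algebra over the difference field $\bar{k}_0$, whose underlying field is algebraically closed.

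By the translation mechanism, $\spec(A')$ is then identified with a subshift of finite type, namely the space $X_\Gamma$ of infinite paths of a \emph{finite} directed graph $\Gamma=(X_0\xleftarrow{\alpha}E_0\xrightarrow{\beta}X_0)$; as a set with self-map, $X_\Gamma=\spec(\forg A)(\bar{\F}_q)=\Hom_{\F_q\text{-alg}}(\forg A,\bar{\F}_q)$ equipped with the shift $F\colon\psi\mapsto\psi\circ\sigma_A$. Because $A$ is defined over $\F_q$, Galois descent lets me take $\Gamma$ to be $\F_q$-rational: $X_0=\spec(A_0)(\bar{\F}_q)$ and $E_0=\spec(C_0)(\bar{\F}_q)$ for finite \'etale $\F_q$-algebras $A_0,C_0$ with $A=\ssig{A_0;C_0}^k$ (such a presentation exists since the associated subshift is of finite type and $A$ is $\F_q$-rational). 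Then $\varphi$ acts on the finite sets $X_0,E_0$ compatibly with $\alpha$ and $\beta$, i.e.\ as an automorphism $\gamma$ of $\Gamma$ of some finite order $r$; the induced self-map $\Phi$ of $X_\Gamma\subseteq E_0^{\N}$ acts coordinatewise by $\gamma$, satisfies $\Phi^r=\id$ and commutes with $F$, and equals $\psi\mapsto\varphi\circ\psi$ under the above identification.

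Next I would unwind $N_n$. A difference $k$-algebra homomorphism $A\to\bar{k}_n$ is exactly a point $\psi\in\spec(\forg A)(\bar{\F}_q)$ with $\psi\circ\sigma_A=\varphi^n\circ\psi$, i.e.\ with $F(\psi)=\Phi^n(\psi)$, so
$$
N_n=\bigl|\Fix\bigl(\Phi^{-n}\circ F\colon X_\Gamma\to X_\Gamma\bigr)\bigr|.
$$
On the edge shift, $F(x)=\Phi^n(x)$ forces $x=(e_0,\gamma^ne_0,\gamma^{2n}e_0,\dots)$, and — using that $\gamma$ commutes with $\alpha$ and $\beta$ — the composability conditions $\beta(e_i)=\alpha(e_{i+1})$ all follow from the single one with $i=0$, namely $\beta(e_0)=\gamma^n\alpha(e_0)$. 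Hence
$$
N_n=\bigl|\{\,e\in E_0:\ \beta(e)=\gamma^n\alpha(e)\,\}\bigr|,
$$
and, since $\gamma^r=\id$, this gives $N_{n+r}=N_n$ for all $n\ge1$: the sequence $(N_n)_{n\ge1}$ is periodic of period dividing $r$ (and in particular each $N_n$ is finite).

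It then remains to note that for any periodic sequence $(N_n)_{n\ge1}$ of period $r$,
$$
\frac{Z_A'(t)}{Z_A(t)}=\sum_{n\ge1}N_nt^{\,n-1}=\frac{N_1+N_2t+\dots+N_rt^{\,r-1}}{1-t^{\,r}}\in\Q(t),
$$
so $Z_A$ is near-rational. (It is generally not rational: for $A=(\F_{q^3},\varphi)$ one gets $N_n=3$ for $n\equiv1\pmod3$ and $N_n=0$ otherwise, hence $Z_A(t)=(1-t)^{-1}(1-\omega t)^{-\omega^{2}}(1-\omega^{2}t)^{-\omega}$ with $\omega$ a primitive cube root of unity, which is not rational, although $Z_A'/Z_A=3/(1-t^{3})$; this is precisely the phenomenon the term ``near-rational'' is designed to capture.) The conceptual content is carried entirely by the translation mechanism, which I am assuming; the only genuinely new ingredient is the twisted periodic-point computation above. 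I expect the step requiring the most care to be the descent bookkeeping of the second paragraph — confirming that \'etaleness really does deposit us in a subshift of \emph{finite} type carrying a Frobenius twist of \emph{finite} order (as opposed to a positive-dimensional algebraic dynamical system, where the analogous counts are governed by Deligne's conjecture and are far deeper).
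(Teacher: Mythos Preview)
Your proposal is correct and follows essentially the same route as the paper: translate $A$ to a subshift of finite type $X$ with a Frobenius-induced shift map $\phi$ of finite order, identify $N_n$ with the size of $\Eq(\sigma_X,\phi^n)$, and reduce to a finite combinatorial count on the presenting graph. The only cosmetic difference is that the paper packages the count as $N_n=\mathop{\rm Tr}(TF^n)$ (with $T$ the transition matrix and $F$ the permutation matrix of the Frobenius bijection on the alphabet), then invokes finite order of $F$ to sum the geometric series, whereas you argue periodicity of $N_n$ directly from $\gamma^r=\id$; these are the same observation in different clothing, and your ``descent bookkeeping'' step is exactly what the paper abbreviates as ``by functoriality of Frobenius, we may assume $\phi$ is a $1$-block map given by a bijection''.
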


Our difference Galois correspondence sends $A$ to a subshift of finite type $X$, and the absolute Frobenius $a\mapsto a^q$ on $A$ corresponds to a subshift map $\phi: X\to X$. 
The key observation is that
$$
N_n=|\begin{tikzcd}[cramped, column sep=normal, ampersand replacement=\&]
{\Eq\left(X\right.}\ar[yshift=2pt]{r}{\sigma} \ar[yshift=-2pt]{r}[swap]{\phi^n} \&{\left.X\right)}
\end{tikzcd}|=|\{x\in X: \sigma(x)=\phi^n(x)\}|.
$$
The classical dynamical zeta {function} of $X$ counts the numbers $|\Fix(\sigma_X^n)|$, so our difference zeta {function} is indeed a novel object. Nevertheless, we are able to adapt the methods used to study the dynamical zeta {function} and complete our proof.


\subsection{Future work on interactions with dynamics}

In this paper, we {use} basic results from symbolic dynamics to prove important results in difference algebra. By discussing topological entropy and dynamical zeta functions, we barely {scratch} the surface of the full \emph{thermodynamic formalism} in topological dynamics as set out in \cite{ruelle-book}.  We believe that difference algebra counterparts of concepts related to ergodic theory, measure-theoretic entropy, (relative) pressure, (relative) variational principle, equilibrium states will lead to new results in difference algebra.

On the other hand, {we hope that} the categorical formulation of difference algebra, our Galois-theoretic perspective, the use of internal automorphism groups, and various decomposition theorems from difference algebra will lead to novel results in symbolic dynamics. 

As a proof of concept, we point the reader to \cite[7.2]{michael-gsft}, which shows that Babbitt's decomposition for a finitely $\diff$generated Galois extension $L/k$ of difference fields corresponds (via our difference Galois correspondence) to a suitable subnormal series for the group subshift of finite type $\Gal[L/k]$.

\subsection{Layout of the paper}

Section~\ref{s:diffalg} sets out the categorical foundations of difference algebra and introduces key objects and techniques to be used throughout the paper. In order to expedite presentation, we moved all the proofs to  Appendix~\ref{ap:diffalg}.

Section~\ref{s:galth} contains our main results on difference Galois theory as summarised above. 

Section~\ref{s:symb} recalls the fundamental results of symbolic dynamics  regarding shift spaces and subshifts of finite type, while Section~\ref{s:interactions} contains a collection of our key results on the interactions between difference algebra and symbolic dynamics. 

The reader only interested in connections to symbolic dynamics is invited to start reading from Section~\ref{s:symb} to fix the notation, and then move on to Section~\ref{s:interactions}, aided by the \emph{translation mechanism} Theorem~\ref{theo:translation-mech} that fledges out the relevance of our difference Galois theory from Section~\ref{s:galth} to symbolic dynamics.

The theory of internal categories, groupoids and their actions used throughout the paper is given in Appendix~\ref{s:groupoids},  and Appendix~\ref{sec: Appendix C} provides background on enriched function spaces in difference algebra needed for a full understanding of the Galois theory of difference fields in \ref{diff-Galois-thm-fields}.

\section{Difference Algebra}\label{s:diffalg}

In this section, we introduce categories of difference objects, difference algebraic structures, the notion of free and finite $\diff$generation, $\diff$presentation and direct $\diff$presentation. We consider difference groupoids and their actions, which will play a major role in the development of difference Galois theory in Section~\ref{s:galth}.

We present definitions and results only, and we give all the proofs in Appendix~\ref{ap:diffalg}.

\subsection{Categories of difference objects}\label{cat-diff-obj}

The \emph{category of difference objects} associated to a category $\cC$ is the category
$$
\diff\cC
$$
whose objects are pairs $(X,\sigma_X)$, where $X\in\Ob(\cC)$, and $\sigma_X\in\cC(X,X)$. A morphism $f:(X,\sigma_X)\to (Y,\sigma_Y)$ is a commutative diagram
 \begin{center}
 \begin{tikzpicture} 
\matrix(m)[matrix of math nodes, row sep=2em, column sep=2em, text height=1.5ex, text depth=0.25ex]
 {
 |(1)|{X}		& |(2)|{Y} 	\\
 |(l1)|{X}		& |(l2)|{Y} 	\\
 }; 
\path[->,font=\scriptsize,>=to, thin]
(1) edge node[above]{$f$} (2) edge node[left]{$\sigma_X$}   (l1)
(2) edge node[right]{$\sigma_Y$} (l2) 
(l1) edge  node[above]{$f$} (l2);
\end{tikzpicture}
\end{center}
in $\cC$, i.e., $f\in\cC(X,Y)$ such that $f\circ\sigma_X=\sigma_Y\circ f$. 
We have the forgetful functor 
$$
\forg{\, }:\diff\cC\to\cC,\ \ \ 
\forg{(X,\sigma)}=X.
$$
\begin{notation}
In the sequel, whenever we write $$X\in \diff\cC,$$ {$X$ is a difference object, whose under lying object 
 $$\forg{X}\in\cC$$
 is an object of $\cC$ equipped with a
 $\cC$-endomorphism $\sigma_X$, i.e.,
 $$X=(\forg{X},\sigma_X).$$}
\end{notation}

\begin{notation}
For an object $X$ of a category $\cC$, we denote by $\cC_{\ov X}$ the \emph{slice category} over $X$. The objects of $\cC_{\ov X}$ are the $\cC$-morphism with codomain $X$ and the morphisms of $\cC_{\ov X}$ are the $\cC$-morphisms over $X$. Dually, we  have the \emph{coslice category} $X/\cC$ whose objects are morphisms with domain $X$.
\end{notation}

\begin{remark}\label{forg-creates-limits}
The forgetful functor $\forg{\,}$ \emph{creates limits and colimits}, i.e., it preserves and reflects limits/colimits and reflects the existence of limits/colimits. More concretely, if $\cC$ has limits or colimits of a certain type, so does $\diff\cC$.  
In particular:
\begin{enumerate}
\item if $e$ is a terminal object of $\cC$, then $(e,\id)$ is a terminal object of $\diff\cC$;
\item if $\cC$ has  products and $X,Y\in \diff\cC$  then 
$$
(\forg{X}\times \forg{Y},\sigma_X\times\sigma_Y)
$$ is their product in $\diff\cC$;
\item If $\cC$ has pullbacks/fibre products, $S\in\diff\cC$,  and $X\to S, Y\to S$ are objects of $(\diff\cC)_{\ov S})$, then $$(\forg{X}\times_{\forg{S}}\forg{Y},\sigma_X\times\sigma_Y)$$ is their pullback/product in 
 $(\diff\cC)_{\ov S}$.
\end{enumerate}
\end{remark}

\begin{example}
The most commonly used categories of difference objects in this paper will be:
\begin{enumerate}
\item the category of \emph{difference sets} $\diff\Set;$ 
\item the category of \emph{difference groups} $\diff\Gp;$ 
\item the category of \emph{difference rings} $\diff\Rng$, {where for the purpose of this paper all rings are assumed to be unital and commutative.}
\end{enumerate}

\end{example}

\subsection{Difference algebraic structures}

The theories of groups, rings and algebras over a given ring are \emph{algebraic} in the sense that their axioms can be expressed in terms of commutative diagrams involving finite limits (most commonly products). Hence, in any category with finite limits, we can speak of \emph{group objects}, \emph{ring objects}, and \emph{algebra objects} over a given ring object.

\begin{remark}
Given that $\diff\Set$ has finite limits, we have that
\begin{enumerate}
\item $\diff\Gp$ is the category of group objects in $\diff\Set$;
\item $\diff\Rng$ is the category of ring objects in $\diff\Set$;
\item given $k\in\diff\Rng$, the category of $k$-algebras
$$
k\Alg
$$
is the category of algebra objects over $k$ in $\diff\Set$. {Of course, this agrees with the usual notion of a difference algebra as in \cite{levin}: a difference ring with a $\forg{k}$-algebra structure such that the $\forg{k}$-algebra structure morphism is a morphism of difference rings.}
\end{enumerate} 
\end{remark}

\begin{remark}
If $k\in\diff\Rng$ is such that $\sigma_k$ is not the identity, then the category $k\Alg$ is not a category of difference objects of some natural category. Nevertheless, we find it useful to view it as an \emph{undercategory/coslice} of $\diff\Rng$ under $k$,
$$
k\Alg\simeq  k/\diff\Rng.
$$
\end{remark}

\subsection{Difference categories and groupoids}\label{diff-cats}

In subsequent definitions, we use the notions of \emph{internal categories/groupoids} and \emph{internal diagrams/actions} familiar in category theory, as detailed in Appendix~\ref{internal-cats}.
\begin{definition}
A \emph{difference category} is an internal category $\C$ in  $\diff\Set$. A \emph{difference groupoid} is an internal groupoid $\bbG$ in the category $\diff\Set$. 
\end{definition}

\begin{remark}
 Explicitly, following \ref{internal-cats}, if $\C$ is a difference category, the object of objects $C_0$ and the object of morphisms $C_1$ are difference sets, and all structure morphisms $d_0,d_1, e, m$ are difference maps. Writing
 $$
 \cC=\forg{\C}
 $$
 for the underlying small category with objects $\forg{C_0}$ and morphisms $\forg{C_1}$, the morphisms
 ${\sigma_{C_0}}$ and ${\sigma_{C_1}}$ define a functor
 $
 \sigma_\cC:\cC\to \cC,
 $
 so a difference category $\C$ can be identified with the datum
 $$
 (\cC,\sigma_\cC),
 $$
 as noted in \cite{ive-topos}.
 \end{remark}

\begin{definition}
A \emph{difference diagram} on a difference category $\C$ is an object
$$
F\in [\C,\diff\Set],
$$
where we write $[\C,\diff\Set]$  for the category of internal diagrams on an internal category $\C$ in $\diff\Set$, as in \ref{internal-diags}.
If $\C$ is a difference groupoid, we may call $F$ a \emph{difference $\C$-action}. 
\end{definition}

\begin{remark}
Let $F$ be a difference diagram on a difference category $\C$. Then the underlying structure
$$
\cF=\forg{F}
$$ 
is a diagram on $\cC=\forg{\C}$, and it is readily verified (\cite{ive-topos}) using  \ref{internal-diags} that $\sigma_{F_0}$ defines a natural transformation
$$
\sigma_\cF: \cF\to \sigma_\cC^*\cF=\cF\circ \sigma_\cC,
$$
and that $F$ can be identified with a $\sigma_\cC$-equivariant diagram
$$
(\cF,\sigma_\cF).
$$
\end{remark}

\subsection{Difference group actions}\label{diff-gp-actions}

In view of \ref{diff-cats}, a difference group is a difference groupoid whose object of objects is a point (a terminal object in $\diff\Set$), and a difference group action is a special case of a difference groupoid action. We provide details in the following.
 
\begin{definition} \label{defi: diff group action}
Let $G\in\diff\Gp$ be a difference group (considered as a group object in $\diff\Set$) with multiplication $\diff\Set$-morphism $m_G:G\times G\to G$ and identity section $e_G:1\to G$, and let $X\in\diff\Set$ be a difference set. An \emph{action} of $G$ on $X$ is a $\diff\Set$-morphism
$$
\mu: G\times X\to X
$$
that makes the diagrams
 \begin{center}
 \begin{tikzpicture} 
\matrix(m)[matrix of math nodes, row sep=2em, column sep=3em, text height=1.9ex, text depth=0.25ex]
 {
 |(1)|{G\times G\times X}		& |(2)|{G\times X} 	\\
 |(l1)|{G\times X}		& |(l2)|{X} 	\\
 }; 
\path[->,font=\scriptsize,>=to, thin]
(1) edge node[above]{$\id\times \mu$} (2) edge node[left]{$m_G\times\id$}   (l1)
(2) edge node[right]{$\mu$} (l2) 
(l1) edge  node[below]{$\mu$} (l2);
\end{tikzpicture}
 \begin{tikzpicture} 
\matrix(m)[matrix of math nodes, row sep=2em, column sep=3em, text height=1.9ex, text depth=0.25ex]
 {
 |(1)|{1\times X}		&  |(2)|{X}	\\
 |(l1)|{G\times X}		& |(l2)|{X} 	\\
 }; 
\path[->,font=\scriptsize,>=to, thin]
(2) edge (1) edge node[right]{$\id$} (l2) 
(1)  edge node[left]{$e_G\times\id$}   (l1)
(l1) edge  node[below]{$\mu$} (l2)
;
\end{tikzpicture}
\end{center}
commutative. This is equivalent to saying that $\forg{G}$ acts on $\forg{X}$ and, crucially, writing $g.x=\mu(g,x)$,
$$
\sigma_X(g.x)=\sigma_G(g).\sigma_X(x).
$$
\end{definition}

\subsection{Free generation of difference objects}\label{ss:freely-si-gen}

\begin{proposition}\label{psig-adj}
If $\cC$ has countable products, the forgetful functor $\forg{\,}: \diff\cC\to \cC$ admits a right adjoint
$$
\forg{\,}\dashv \psig{\,}.
$$
\end{proposition}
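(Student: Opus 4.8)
The plan is to construct the right adjoint $\psig{\,}$ explicitly and verify the adjunction isomorphism $\diff\cC(X, \psig{Y}) \cong \cC(\forg{X}, Y)$ directly. First I would guess the formula: for an object $Y \in \cC$, set
$$
\forg{\psig{Y}} = \prod_{n \geq 0} Y,
$$
the countable product indexed by $\N$ (which exists by hypothesis), and let $\sigma_{\psig{Y}} \colon \prod_{n\geq 0} Y \to \prod_{n \geq 0} Y$ be the shift map determined by the projections $\mathrm{pr}_n \circ \sigma_{\psig{Y}} = \mathrm{pr}_{n+1}$. This is the obvious analogue of the full-shift construction $E_0^{\N}$ described later in the paper, and indeed the case $\cC = \Set$ recovers exactly $\psig{E_0} = E_0^{\N}$. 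On morphisms, $\psig{\,}$ sends $g \colon Y \to Y'$ to $\prod_{n} g$, which visibly commutes with the shifts, so $\psig{\,}$ is a functor $\cC \to \diff\cC$.

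Next I would exhibit the counit and the bijection. The counit $\varepsilon_Y \colon \forg{\psig{Y}} \to Y$ is the projection $\mathrm{pr}_0$. Given $X = (\forg{X}, \sigma_X) \in \diff\cC$ and a $\cC$-morphism $h \colon \forg{X} \to Y$, I would define the adjunct $\tilde h \colon X \to \psig{Y}$ by the universal property of the product: $\mathrm{pr}_n \circ \tilde h = h \circ \sigma_X^n$ for each $n \geq 0$. One then checks that $\tilde h$ is a morphism of difference objects, i.e. $\sigma_{\psig{Y}} \circ \tilde h = \tilde h \circ \sigma_X$; this is immediate by composing with each projection $\mathrm{pr}_n$, since both sides become $h \circ \sigma_X^{n+1}$. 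Conversely, any difference morphism $f \colon X \to \psig{Y}$ determines $h := \varepsilon_Y \circ \forg{f} = \mathrm{pr}_0 \circ f$, and the difference-compatibility of $f$ forces $\mathrm{pr}_n \circ f = \mathrm{pr}_0 \circ \sigma_{\psig{Y}}^n \circ f = \mathrm{pr}_0 \circ f \circ \sigma_X^n = h \circ \sigma_X^n$, so $f = \tilde h$. These two assignments $h \mapsto \tilde h$ and $f \mapsto \mathrm{pr}_0 \circ f$ are mutually inverse, giving the natural bijection
$$
\diff\cC(X, \psig{Y}) \;\cong\; \cC(\forg{X}, Y).
$$
Naturality in both $X$ and $Y$ follows from the defining equations $\mathrm{pr}_n \circ \tilde h = h \circ \sigma_X^n$ by a routine diagram chase.

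I do not expect a serious obstacle here: the only input is the existence of countable products in $\cC$, which is exactly the hypothesis, and the verification is a formal manipulation with projections. The one point that requires a little care — and is the closest thing to a "main obstacle" — is making sure the shift endomorphism $\sigma_{\psig{Y}}$ is well-defined by the prescription $\mathrm{pr}_n \circ \sigma_{\psig{Y}} = \mathrm{pr}_{n+1}$: this is legitimate precisely because $\prod_{n \geq 0} Y$ is a limit over the countable discrete diagram and the family $(\mathrm{pr}_{n+1})_{n \geq 0}$ is a cone over it, so the universal property supplies a unique such map. Everything else (functoriality of $\psig{\,}$, naturality of the bijection, the inverse correspondence) is bookkeeping. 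Since the full proof is deferred to Appendix~\ref{ap:diffalg} in any case, I would present it at roughly this level of detail.
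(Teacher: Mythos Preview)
Your proposal is correct and takes exactly the same approach as the paper: define $\psig{X_0}=\prod_{i\in\N}X_i$ with the left shift, then verify the adjunction via the bijection $h\mapsto\tilde h$ determined by $\mathrm{pr}_n\circ\tilde h=h\circ\sigma_X^n$. The paper's proof in the appendix merely states the construction and calls the verification ``straightforward'', so your write-up is in fact more detailed than what appears there.
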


\begin{proposition}\label{psig-prop}
Let $\cC$ be a category and $S\in\diff\cC$. 
If $\cC$ has countable pullbacks {(i.e., the slice categories have countable products)}, the forgetful functor on the slice category  
$$\forg{\,}_S:\diff\cC_{\ov S}\to\cC_{\ov\forg{S}}$$ admits a right adjoint
$$
\forg{\,}_S \dashv \psig{\,}_S.
$$
\end{proposition}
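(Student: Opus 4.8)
The plan is to deduce Proposition~\ref{psig-prop} from Proposition~\ref{psig-adj} applied to a suitable category, rather than reproving the adjunction from scratch. The key observation is that $\diff\cC_{\ov S}$ should be identifiable with $\diff(\cC_{\ov \forg{S}})$ for an appropriate endofunctor, or — more carefully — with the category of difference objects in $\cC_{\ov\forg S}$ \emph{twisted} by $\sigma_S$. Concretely, an object of $\diff\cC_{\ov S}$ is a morphism $p\colon X\to S$ in $\diff\cC$, which unwinds to: an object $\forg X\in\cC$, an endomorphism $\sigma_X$, a morphism $\forg p\colon \forg X\to \forg S$ in $\cC$, and the compatibility $\forg p\circ\sigma_X=\sigma_S\circ\forg p$. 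The last condition says exactly that $\sigma_X$, viewed through $\forg p$, is a morphism in $\cC_{\ov\forg S}$ from $(X,\forg p)$ to $(X,\sigma_S\circ\forg p)$, i.e.\ $\sigma_X$ is a morphism $\id_{\forg S}\colon(X,\forg p)\to \sigma_S^*(X,\forg p)$ in $\cC_{\ov\forg S}$ where $\sigma_S^*\colon\cC_{\ov\forg S}\to\cC_{\ov\forg S}$ is pullback/restriction of scalars along $\sigma_S\colon\forg S\to\forg S$. So $\diff\cC_{\ov S}$ is the category of ``$\sigma_S^*$-twisted difference objects'' in $\cC_{\ov\forg S}$.

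First I would make this identification precise and record that under it the forgetful functor $\forg{\,}_S$ becomes the evident forgetful functor to $\cC_{\ov\forg S}$. Next I would construct the right adjoint directly, mimicking the construction behind Proposition~\ref{psig-adj}: for $Y\in\cC_{\ov\forg S}$ set
$$
\psig{Y}_S=\prod_{n\geq 0}(\sigma_S^n)^*Y,
$$
the countable product taken in $\cC_{\ov\forg S}$, which exists precisely because $\cC$ has countable pullbacks (the hypothesis is phrased exactly so that $\cC_{\ov\forg S}$ has countable products). The endomorphism $\sigma$ on $\psig{Y}_S$ is the shift map: it sends the factor indexed by $n$ to the factor indexed by $n-1$ for $n\geq 1$ via the obvious comparison, and one checks it is a morphism of the required twisted type, giving $\psig{Y}_S\in\diff\cC_{\ov S}$. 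The counit $\forg{\psig{Y}_S}_S\to Y$ is the projection to the $n=0$ factor.

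Then I would verify the adjunction bijection $\diff\cC_{\ov S}(Z,\psig{Y}_S)\cong\cC_{\ov\forg S}(\forg Z_S,Y)$: a morphism into the product is a compatible family of morphisms $\forg Z_S\to(\sigma_S^n)^*Y$, and the difference-compatibility of the map forces the $n$-th component to be determined by the $0$-th component precomposed with $\sigma_Z^n$ (using naturality and that $(\sigma_S^n)^*$ applied to the $0$-component composes correctly with the iterated $\sigma_Z$), so the whole morphism is determined by its $n=0$ component, which can be arbitrary. This is the standard ``free difference object = countable power with shift'' argument, now carried out one slice up. The main thing to be careful about — and the step I expect to require the most bookkeeping — is the twisting by $\sigma_S$: one must track which pullback functor $(\sigma_S^n)^*$ acts on which factor, confirm the shift map genuinely lands in the twisted-difference category (not merely in $\diff\cC_{\ov\forg S}$), and check the triangle identities with all the base-change comparisons inserted. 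None of this is deep, but it is the only non-formal content; everything else is a re-indexing of Proposition~\ref{psig-adj}. Alternatively, and perhaps more cleanly, one can avoid the explicit formula entirely: observe that $\forg{\,}_S$ preserves all colimits (it creates them, by Remark~\ref{forg-creates-limits} applied to slices), that both categories are suitably (co)complete, and invoke an adjoint functor theorem; I would mention this as the conceptual shortcut but present the explicit construction above for concreteness and because it is what later sections will need.
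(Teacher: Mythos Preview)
Your proposal is correct and your explicit construction---$\psig{Y}_S=\prod_{n\geq 0}(\sigma_S^n)^*Y$ with the shift built from the comparison maps $(\sigma_S^{n+1})^*Y\to(\sigma_S^n)^*Y$---is exactly the paper's direct construction (the paper writes $X_i$ for $(\sigma_S^i)^*X_0$ and defines $\sigma$ as projection followed by $\prod\sigma_i$). Your framing in terms of ``$\sigma_S^*$-twisted difference objects in $\cC_{\ov\forg S}$'' is a helpful gloss the paper does not spell out, though it is implicit in the $\bar\sigma:\id\to(\mathord{-})_{\sigma_S}$ notation used elsewhere.

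One difference worth noting: the paper opens with an even shorter argument you did not mention. Rather than trying to apply Proposition~\ref{psig-adj} to a twisted version of $\cC_{\ov\forg S}$, the paper simply slices the adjunction $\forg{\,}\dashv\psig{\,}$ of Proposition~\ref{psig-adj} using Lemma~\ref{adj-rel} (the general fact that an adjunction $L\dashv R$ between $\cA$ and $\cB$ induces $L_X\dashv R_X$ between $\cA_{\ov X}$ and $\cB_{\ov L(X)}$ when $\cA$ has pullbacks). Taking $\cA=\diff\cC$, $\cB=\cC$, $L=\forg{\,}$, $X=S$ gives the result in one line. The paper then supplies the explicit construction anyway because later sections need the concrete formula---which is precisely what you chose to present.
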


By reversing all arrows in the above considerations, we can consider the forgetful functor on the coslice category
$$
\forg{\,}^S:S/\diff\cC\to \forg{S}/\cC
$$
and we obtain the following.
\begin{proposition}\label{ssig-prop}
If $\cC$ has countable pushouts, then $\forg{\,}^S$ admits a left adjoint 
$$
\ssig{\,}^S\dashv \forg{\,}^S.
$$
\end{proposition}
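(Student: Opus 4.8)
The plan is to deduce this by formal duality from Proposition~\ref{psig-prop}: there is no new construction to make, only bookkeeping to match up.

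First I would record two canonical identifications. (a) $\diff(\cC^\op)=(\diff\cC)^\op$: an object of $\diff(\cC^\op)$ is a pair $(X,\sigma)$ with $\sigma$ an endomorphism of $X$ in $\cC^\op$, equivalently in $\cC$, and a $\diff(\cC^\op)$-morphism $(X,\sigma_X)\to(Y,\sigma_Y)$ unwinds to a $\cC$-morphism $Y\to X$ intertwining $\sigma_Y$ and $\sigma_X$, i.e.\ exactly a $\diff\cC$-morphism $(Y,\sigma_Y)\to(X,\sigma_X)$. (b) For any object $S$ of any category $\cD$, the coslice $S/\cD$ is canonically isomorphic to $(\cD^\op_{\ov S})^\op$, compatibly with the forgetful functors down to (co)slices of an underlying category. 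Combining (a) and (b) with $\cD=\diff\cC$, and regarding $S\in\diff\cC$ as an object of $\diff(\cC^\op)$, the forgetful functor
$$
\forg{\,}^S\colon S/\diff\cC\longrightarrow \forg{S}/\cC
$$
is identified with the opposite of the forgetful functor $\forg{\,}_S\colon \diff(\cC^\op)_{\ov S}\to \cC^\op_{\ov\forg{S}}$ of Proposition~\ref{psig-prop}, now read in the category $\cC^\op$. The hypothesis transfers too: $\cC$ has countable pushouts if and only if $\cC^\op$ has countable pullbacks, which in the reformulation of Proposition~\ref{psig-prop} says exactly that the coslice categories of $\cC$ have countable coproducts.

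Granting this, Proposition~\ref{psig-prop}, applied to $\cC^\op$, produces a right adjoint $\psig{\,}_S$ to that forgetful functor; passing to opposite functors reverses the direction of the adjunction, so $(\psig{\,}_S)^\op$ is left adjoint to $(\forg{\,}_S)^\op=\forg{\,}^S$, and one sets $\ssig{\,}^S:=(\psig{\,}_S)^\op$. I expect the only genuine obstacle to be checking that the chain of canonical isomorphisms --- $\diff(\cC^\op)\cong(\diff\cC)^\op$, $S/\cD\cong(\cD^\op_{\ov S})^\op$, and their interaction with the forgetful functors and with the slice/coslice structure --- commutes coherently, so that a right adjoint on one side genuinely transports to a left adjoint on the other rather than getting turned around; this is routine diagram-chasing but easy to botch. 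Should a self-contained argument be preferred, one would instead run the proof of Proposition~\ref{psig-prop} inside $\cC^\op$: given $p\colon\forg{S}\to X$ in $\forg{S}/\cC$, form for each $n\geq 0$ the pushout $X_n$ of $p$ along $\sigma_S^n\colon\forg{S}\to\forg{S}$, let $\ssig{X}^S$ be the countable pushout of the family $(X_n)_{n\geq 0}$ over $\forg{S}$ (available since the coslices of $\cC$ have countable coproducts), equip $\ssig{X}^S$ with the endomorphism induced by the reindexing $n\mapsto n+1$ --- which lies over $\sigma_S$ and hence makes $\ssig{X}^S$ an object of $S/\diff\cC$ --- and verify the universal property that $\diff\cC$-morphisms $\ssig{X}^S\to Y$ under $S$ correspond naturally to $\cC$-morphisms $X\to\forg{Y}$ under $\forg{S}$.
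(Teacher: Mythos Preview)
Your proposal is correct and follows exactly the approach the paper takes: the paper introduces \ref{ssig-prop} with the remark ``By reversing all arrows in the above considerations'' and gives no further proof, so your careful spelling-out of the duality $\diff(\cC^\op)\cong(\diff\cC)^\op$, the slice/coslice identification, and the transport of adjunctions under opposites is precisely the intended argument. Your closing explicit construction is also the dual of the construction given in the proof of \ref{psig-prop} (and matches the algebra case worked out in the proof of \ref{sigmaization-algebras}).
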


An important special case is for difference rings and algebras.

\begin{corollary}\label{sigmaization-algebras}

If $k$ is a difference ring, the natural forgetful functor
$$
\forg{\,}^k=\forg{\,}:k\Alg\to\forg{k}\Alg
$$
admits a left adjoint
$$
\ssig{\,}^k \dashv \forg{\,}^k.
$$
\end{corollary}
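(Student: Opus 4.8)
The plan is to recognise this as the instance of Proposition~\ref{ssig-prop} obtained by taking $\cC=\Rng$ and $S=k$. First I would invoke the identification $k\Alg\simeq k/\diff\Rng$ recorded above, together with the analogous $\forg{k}\Alg\simeq\forg{k}/\Rng$, and check that under these identifications the forgetful functor $\forg{\,}^k\colon k\Alg\to\forg{k}\Alg$ --- which keeps the underlying $\forg{k}$-algebra and discards $\sigma$ --- is exactly the coslice forgetful functor $\forg{\,}^S\colon S/\diff\cC\to\forg{S}/\cC$ of Proposition~\ref{ssig-prop}: it sends an object $k\to A$ of $k/\diff\Rng$ to $\forg{k}\to\forg{A}$, i.e.\ to the $\forg{k}$-algebra $\forg{A}$. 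That ``coslice of difference objects'' agrees with ``difference objects in the coslice'' here is formal, following from the fact that $\forg{\,}$ creates colimits (Remark~\ref{forg-creates-limits}) and that a difference ring under $k$ is precisely a difference $k$-algebra. It then remains only to verify the hypothesis of Proposition~\ref{ssig-prop}, namely that $\Rng$ has countable pushouts.

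Next I would check that hypothesis. Dualising the parenthetical convention of Proposition~\ref{psig-prop}, ``$\cC$ has countable pushouts'' means that every coslice $A/\cC$ has countable coproducts. For $\cC=\Rng$ this is standard: $\Rng$ is cocomplete (it is a variety of finitary algebras), hence so is every coslice $A/\Rng$; concretely the coproduct of a countable family $(B_i)_{i\in\N}$ of $A$-algebras is the infinite tensor product $\bigotimes_{i\in\N,\,A}B_i=\varinjlim_{F}\bigotimes_{i\in F,\,A}B_i$, the filtered colimit over finite $F\subseteq\N$ of the finite tensor products. With this in hand, Proposition~\ref{ssig-prop} immediately yields the desired left adjoint $\ssig{\,}^k\dashv\forg{\,}^k$.

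Finally, I would record what the adjoint computes, since this is what is used later: unwinding the construction dual to the one behind $\psig{\,}$, the value $\ssig{A_0}^k$ is the free difference $k$-algebra on the $\forg{k}$-algebra $A_0$, assembled as a countable pushout in $\diff\Rng$ out of $k$ and successive copies of $A_0$ glued along $\sigma_k$ (one for each power of $\sigma_k$, which is why countable rather than merely finite pushouts are needed); if $A_0=\forg{k}[x_1,\dots,x_m]/(f_1,\dots,f_r)$ it is the quotient $k\{x_1,\dots,x_m\}/[f_1,\dots,f_r]$ of the difference polynomial ring by the difference ideal generated by the $f_j$, which matches the paper's notion of finite $\diff$generation. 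I do not expect a genuine obstacle: the entire argument is bookkeeping plus the (routine) existence of infinite tensor products. The one place that needs a little care is the identification of the two forgetful functors in the first paragraph --- once that is pinned down, Proposition~\ref{ssig-prop} supplies the adjoint directly, with no need to build it by hand or to invoke an adjoint functor theorem.
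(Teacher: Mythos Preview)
Your proposal is correct. You derive the corollary exactly as its placement suggests: by specialising Proposition~\ref{ssig-prop} to $\cC=\Rng$ and $S=k$, after identifying $k\Alg\simeq k/\diff\Rng$ and checking that $\Rng$ has countable pushouts. The paper's own proof in the appendix takes a slightly different presentational route: rather than invoking the general proposition, it writes down the left adjoint explicitly as $\ssig{A_0}^k=\bigotimes_{i\in\N}A_i$ with $A_i$ the base change of $A_0$ along $\sigma_k^i$, equipped with the shift $a_0\otimes a_1\otimes\cdots\mapsto 1\otimes\sigma_0(a_0)\otimes\sigma_1(a_1)\otimes\cdots$, and asserts the adjunction directly. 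This is of course just the unwinding of the dual of the construction in the proof of Proposition~\ref{psig-prop}, and you sketch essentially the same formula in your final paragraph. Your approach is cleaner as a proof of a corollary; the paper's buys a concrete formula immediately usable in later computations (e.g.\ in the proofs of \ref{fin-si-gen} and \ref{fin-pres-dir-pres}).
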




\subsection{Free generation of difference diagrams and groupoid actions}

\begin{proposition}\label{psig-diags}
Let $\cS$ be a category with countable pullbacks, and let $\C$ be an internal category in $\diff\cS$. The forgetful functor
$$
\forg{\,}_\C: [\C,\diff\cS]\to [\forg{\C},\cS]
$$
admits a right adjoint
$$
\forg{\,}_\C\dashv \psig{\,}_\C.
$$
\end{proposition}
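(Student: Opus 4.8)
The plan is to construct the right adjoint $\psig{\,}_\C$ by the same "take a limit over the $\sigma$-orbit" recipe that produced the adjoints in Propositions~\ref{psig-adj}, \ref{psig-prop} and \ref{ssig-prop}, now carried out internally to $\cS$ and compatibly with the internal category structure of $\C$. Recall from \ref{diff-cats} that an internal category $\C$ in $\diff\cS$ is the same datum as an internal category $\forg{\C}$ in $\cS$ together with an endofunctor $\sigma_\C\colon\forg{\C}\to\forg{\C}$ (i.e.\ $\sigma_\C$ given by $\sigma_{C_0}$ and $\sigma_{C_1}$ commuting with all structure maps), and that an object $F\in[\C,\diff\cS]$ is the same as an internal diagram $\cF=\forg{F}$ on $\forg{\C}$ equipped with a natural transformation $\sigma_\cF\colon\cF\to\sigma_\C^*\cF$. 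So the claim is precisely that the forgetful functor $(\cF,\sigma_\cF)\mapsto\cF$ from $\sigma_\C$-equivariant internal diagrams to internal diagrams on $\forg{\C}$ has a right adjoint.

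First I would observe that pullback along $\sigma_\C$ gives a functor $\sigma_\C^*\colon[\forg{\C},\cS]\to[\forg{\C},\cS]$, and — because $\cS$ has countable pullbacks, hence the internal presheaf categories $[\forg\C,\cS]$ have countable limits computed pointwise — one can form, for any internal diagram $\cG$ on $\forg{\C}$, the countable power
$$
\psig{\cG}_\C \;=\; \prod_{n\ge 0}(\sigma_\C^n)^*\cG,
$$
the limit being taken in $[\forg\C,\cS]$. The natural map shifting each index up by one, $\prod_{n\ge0}(\sigma_\C^n)^*\cG\to\prod_{n\ge1}(\sigma_\C^n)^*\cG\cong\sigma_\C^*\big(\prod_{n\ge0}(\sigma_\C^n)^*\cG\big)$, supplies the structure transformation $\sigma$, so $\psig{\cG}_\C$ becomes an object of $[\C,\diff\cS]$; the first projection is a map $\forg{\psig{\cG}_\C}\to\cG$, the counit. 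To verify the adjunction one checks that for $F=(\cF,\sigma_\cF)$ a $\sigma_\C$-equivariant diagram, a morphism $\cF\to\cG$ in $[\forg\C,\cS]$ extends uniquely to a morphism $F\to\psig{\cG}_\C$ in $[\C,\diff\cS]$: the extension is forced to be $(\varphi,\sigma_\cF\text{-iterated}\circ\varphi,\ldots)$, i.e.\ its $n$-th component is $(\sigma_\C^n)^*(\varphi)$ precomposed with the $n$-fold iterate of $\sigma_\cF$, and one verifies this is natural in the internal sense and compatible with the $\sigma$-structures. This is a routine diagram chase, formally parallel to the proofs of \ref{psig-adj}--\ref{ssig-prop}, and indeed one could alternatively deduce it from \ref{psig-prop} applied in the internal-presheaf topos $[\forg\C,\cS]$ once one identifies $[\C,\diff\cS]$ with $\diff\big([\forg\C,\cS]\big)$ — though that identification itself needs the same kind of bookkeeping.

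The main obstacle I anticipate is purely organizational rather than conceptual: making sure that the internal-category structure maps $d_0,d_1,e,m$ of $\C$ interact correctly with the countable limit and with $\sigma_\C$, so that $\psig{\cG}_\C$ really is an internal diagram on $\C$ (and not merely a compatible family of objects), and that the countable products needed exist at the level of internal diagrams — which is why the hypothesis "$\cS$ has countable pullbacks" is exactly what is used, as in \ref{psig-prop}. One must be slightly careful that $\sigma_\C^*$ is computed using the endofunctor structure, i.e.\ restriction along $\sigma_\C$ as an internal functor, and that it commutes with the forgetful functor to $\cS$; given the explicit description of internal diagrams in \ref{internal-diags}, this is mechanical. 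I would therefore present the construction of $\psig{\,}_\C$ explicitly as the orbit-power above, exhibit unit and counit, and leave the naturality verifications to the reader as being identical in form to the earlier propositions.
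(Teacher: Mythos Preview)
Your construction is correct and produces the same object as the paper's, but the organisation differs. You build $\psig{\cG}_\C=\prod_{n\ge0}(\sigma_\C^n)^*\cG$ directly as a countable product in $[\forg{\C},\cS]$ and equip it with the shift. The paper instead observes that $\psig{\,}:\cS\to\diff\cS$, being a right adjoint, preserves limits and therefore sends the internal category $\forg{\C}$ to an internal category $\psig{\forg{\C}}$ in $\diff\cS$; it then defines $\psig{\,}_\C$ as the composite
\[
[\forg{\C},\cS]\xrightarrow{\ \psig{\,}\ }[\psig{\forg{\C}},\diff\cS]\xrightarrow{\ \eta_\C^*\ }[\C,\diff\cS],
\]
where $\eta_\C:\C\to\psig{\forg{\C}}$ is the internal functor given by the units $\eta_{C_0},\eta_{C_1}$. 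The advantage of the paper's factorisation is that the $\C$-action on the result is obtained for free (restriction along an internal functor always yields an internal diagram), so the ``organisational obstacle'' you flag simply does not arise; your approach trades this for a more explicit formula at the cost of having to check the action by hand. One small correction: your parenthetical suggestion that $[\C,\diff\cS]$ might be identified with $\diff\big([\forg{\C},\cS]\big)$ is not correct in general, since objects of the latter carry an endomorphism $\cF\to\cF$ rather than an equivariance map $\cF\to\sigma_\C^*\cF$; the two agree only when $\sigma_\C=\id$.
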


\begin{example}\label{psig-G-sets}
Let $G\in\diff\Gp$ be a difference group. The forgetful functor 
$$
\forg{\,}_G: [G,\diff\Set]\to [\forg{G},\Set]
$$
from difference $G$-actions to $\forg{G}$-actions admits a right adjoint
$$
\psig{\,}_G:[\forg{G},\Set]\to [G,\diff\Set], \ \ \psig{X_0}=\prod_{i\in\N} X_i,
$$
where $X_i$ is a copy of $X_0$ with action of $\forg{G}$ twisted via $\sigma_G^i$, and the difference operator $\sigma_{\psig{X_0}}$ is the left shift. 

\end{example}

\subsection{Direct presentation of difference objects}\label{ss:dir-si-pres}
Let $\cC$ be a category and let $S\in\diff\cC$.
\begin{definition}\label{def-corresp}
We define the category 
$$\Corr_S=\Corr_S(\cC)$$
of \emph{self-$S$-correspondences in $\cC$} as follows.
\begin{enumerate}
\item\label{dpdiag} An object $(X_0;C_0)$ is a commutative diagram 
\begin{center}
 \begin{tikzpicture} 
\matrix(m)[matrix of math nodes, row sep=2em, column sep=2em, text height=1.5ex, text depth=0.25ex]
 {
 |(2)|{X_0}		& |(3)|{C_0} 	& |(4)|{X_0}\\
 |(l2)|{S}		& |(l3)|{S} 		& |(l4)|{S}\\
 }; 
\path[->,font=\scriptsize,>=to, thin]
(3) edge node[above]{${p_0}$} (2) (3) edge node[above]{${\sigma_0}$}   (4)
(l3) edge node[above]{${\mathop{\rm id}}$} (l2) (l3) edge node[above]{${\sigma_S}$} (l4)
(2) edge  (l2) (3) edge (l3) (4) edge  (l4);
\end{tikzpicture}.
\end{center}
consisting of $\cC$-morphisms.

\item A morphism $f:(X_0,C_0)\to(Y_0,D_0)$ consists of $\cC$-morphisms $f_0:X_0\to Y_0$, $\hat{f}_0:C_0\to D_0$, making the diagram
$$
 \begin{tikzpicture}
[cross line/.style={preaction={draw=white, -,
line width=4pt}}]
\matrix(m)[matrix of math nodes, row sep=.9em, column sep=.5em, text height=1.5ex, text depth=0.25ex]
{			& |(x0)| {X_0}	&			& |(x1)| {C_0} 	&			& |(x0s)| {X_0}	\\   [.2em]
|(y0)|{Y_0} 	&			& |(y1)|{D_0} 	&			&  |(y0s)| {Y_0}	&			\\  [.4em]
			& |(s0)|{S} 		&			& |(s1)|{S} 	&			& |(s0s)|{S} 				\\};
\path[->,font=\scriptsize,>=to, thin]
(x1) edge node[above]{$p_0$} (x0) edge node[above]{$\sigma_0$} (x0s) 
	edge node[left,pos=0.3]{$\hat{f}_0$} (y1) edge (s1)
(x0) edge node[left,pos=0.3]{$f_0$} (y0) edge (s0)
(x0s) edge node[left,pos=0.3]{$f_0$} (y0s) edge (s0s)
(y0) edge (s0)
(y0s) edge (s0s)
(y1) edge [cross line] node[above,pos=0.2]{$p_0$} (y0) edge [cross line] node[above,pos=0.8]{$\sigma_0$} (y0s)  edge (s1)
(s1) edge node[above]{$\mathop{\rm id}$} (s0) edge node[above]{$\sigma$} (s0s) 

	
;
\end{tikzpicture}
$$ 
commutative.
\end{enumerate}
\end{definition}

\begin{proposition}\label{dir-gen-adj}
If $\cC$ has finite limits and countable  products, the forgetful functor 
$$
\forg{\,}_S^\Corr:\diff\cC_{\ov S}\to\Corr_S
$$
which sends an object $X\to S$ to the $S$-correspondence $X\xleftarrow{\id}X\xrightarrow{\sigma_X}X$
admits a right adjoint
$$
\forg{\,}_S^\Corr \dashv \psig{\,}_S.
$$
\end{proposition}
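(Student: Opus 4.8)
The plan is to construct the right adjoint $\psig{\,}_S$ explicitly and verify the adjunction $\forg{\,}_S^\Corr \dashv \psig{\,}_S$ by exhibiting a natural bijection on hom-sets, rather than by checking the triangle identities directly. First I would unwind the data: given a self-$S$-correspondence $(X_0; C_0)$, i.e.\ a diagram $X_0 \xleftarrow{p_0} C_0 \xrightarrow{\sigma_0} X_0$ over $S \xleftarrow{\id} S \xrightarrow{\sigma_S} S$, and a difference object $Z \to S$ in $\diff\cC_{\ov S}$, a morphism $\forg{Z}_S^\Corr \to (X_0; C_0)$ consists of $\cC$-morphisms $f_0 : \forg{Z} \to X_0$ over $S$ and $\hat f_0 : \forg{Z} \to C_0$ over $S$ (where the copy of $\forg{Z}$ in the middle of the correspondence $\forg{Z} \xleftarrow{\id} \forg{Z} \xrightarrow{\sigma_Z} \forg{Z}$ maps via $\hat f_0$) subject to $p_0 \circ \hat f_0 = f_0$ and $\sigma_0 \circ \hat f_0 = f_0 \circ \sigma_Z$. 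So the datum of $\hat f_0$ determines $f_0 = p_0 \circ \hat f_0$, and the single remaining constraint is the square $\sigma_0 \circ \hat f_0 = (p_0 \circ \hat f_0) \circ \sigma_Z$.

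The construction of $\psig{(X_0; C_0)}_S$ should proceed as an inverse limit built from iterated fibre products, mirroring the path-space construction: set $X_n$ to be the ``$n$-fold twisted pullback'' of $C_0$ along the correspondence, i.e.\ form the limit over $\N$ of the tower
$$
\cdots \longrightarrow X_0 \times_S C_0 \longrightarrow X_0,
$$
where at each stage one glues a fresh copy of $C_0$ using $p_0$ on one side and $\sigma_0$ on the other, all living over the appropriate powers of $\sigma_S$ on $S$. Concretely I would define the underlying object $Y = \forg{\psig{(X_0;C_0)}_S}$ as $\lim_{n} Y^{(n)}$ where $Y^{(0)} = X_0$ and $Y^{(n+1)} = Y^{(n)} \times_{X_0,\,p_0} C_0$ with transition map the composite $Y^{(n+1)} \to C_0 \xrightarrow{\sigma_0} X_0 \cong$ (the last factor of $Y^{(n)}$ via the twist); countable products and finite limits in $\cC$ guarantee this limit exists. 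The difference endomorphism $\sigma_Y$ is the shift deleting the first coordinate, and the structure map $Y \to S$ comes from $Y^{(0)} = X_0 \to S$. This is the direct analogue of Example~\ref{psig-G-sets} and of the construction underlying Propositions~\ref{psig-prop} and~\ref{psig-diags}.

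With the construction in hand, the verification is the heart of the argument: given $Z \to S$ in $\diff\cC_{\ov S}$, I want a natural bijection between $\diff\cC_{\ov S}(Z, \psig{(X_0;C_0)}_S)$ and $\Corr_S(\forg{Z}_S^\Corr, (X_0;C_0))$. By the universal property of the limit, a morphism $Z \to Y$ over $S$ commuting with the difference operators amounts to a compatible family of maps $\forg{Z} \to Y^{(n)}$, which by induction unwinds to a single map $\hat f_0 : \forg{Z} \to C_0$ over $S$ (the first ``new'' coordinate $\forg{Z} \to C_0$) together with the requirement that the higher coordinates are forced by $\sigma_Z$ and the correspondence maps --- precisely the constraint $\sigma_0 \circ \hat f_0 = p_0 \circ \hat f_0 \circ \sigma_Z$ identified above. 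Naturality in both variables is routine. I expect the main obstacle to be bookkeeping: correctly tracking the twisting by powers of $\sigma_S$ over the base $S$ at every stage of the tower (the correspondence is over a \emph{difference} base, not a fixed one), and making precise that ``the last factor of $Y^{(n)}$'' receives the map $\sigma_0$ in a way that is coherent across all $n$; getting the indexing and the identification of fibre products exactly right, so that the shift really does define a morphism in $\diff\cC_{\ov S}$ and the adjunction bijection is literally an equality of constraint sets, is where care is needed. Everything else reduces to the universal properties of limits and the definition of $\Corr_S$.
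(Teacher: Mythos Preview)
Your proposal is correct, but the paper takes a somewhat different and more modular route. Rather than building the path space from scratch as a sequential limit of a tower $Y^{(n)}$, the paper defines $\psig{X_0;C_0}_S$ as an \emph{equaliser} of two maps between objects already constructed via Proposition~\ref{psig-prop}: namely
\[
\psig{X_0;C_0}_S \;=\; \Eq\bigl(\psig{C_0}_S \rightrightarrows \psig{X_1}_S\bigr),
\]
where the two arrows are $\text{proj}\circ\psig{p_0}$ and $\psig{s_0}$ (using the linearised description of correspondences from Remark~\ref{corr-linearised}). The verification of the adjunction then reduces immediately to: hom into an equaliser is an equaliser of homs, apply the already-established adjunction $\forg{\,}_S\dashv\psig{\,}_S$, and identify the resulting equaliser of sets with the set of $\Corr_S$-morphisms. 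Your tower and the paper's equaliser describe the same object---elements are sequences $(c_i)$ with $p_{i+1}(c_{i+1})=s_i(c_i)$---but the equaliser packaging buys the paper a cleaner proof with essentially no bookkeeping about twists, since all of that is absorbed into the prior result~\ref{psig-prop}. Your approach is more self-contained but, as you correctly anticipate, pays for it in the indexing gymnastics over the difference base.
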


By reversing all arrows in the above, and by defining a suitable category of cocorrespondences, we obtain the following.

\begin{proposition}\label{dir-pres-adj}
If $\cC$ has finite colimits and countable coproducts, and $S\in \diff\cC$, the forgetful functor
$$
\forg{\,}^S_\coCorr:S/\diff\cC\to \coCorr^S
$$
which sends an object $S\to A$ to the $S$-cocorrespondence $A\xrightarrow{\id}A\xleftarrow{\sigma_A}A$ admits a left adjoint
$$
\ssig{\,}^S\dashv  \forg{\,}^S_\coCorr.
$$ 
\end{proposition}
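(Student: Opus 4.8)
The plan is to deduce the proposition formally from Proposition~\ref{dir-gen-adj}, applied to the opposite category $\cC^\op$; no genuinely new construction is needed, since the relevant left adjoint is just the arrow-reversal of the right adjoint $\psig{\,}_S$ built there.

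First I would record the bookkeeping identifications. A difference object of a category $\cC$ is the \emph{same datum} as a difference object of $\cC^\op$ --- a pair consisting of an object together with one endomorphism of it --- so the categories $\diff(\cC^\op)$ and $(\diff\cC)^\op$ coincide, and therefore $S/\diff\cC = (\diff(\cC^\op))_{\ov S}$ for $S\in\diff\cC$. Next, the category $\coCorr^S = \coCorr^S(\cC)$ appearing in the statement is defined exactly dually to Definition~\ref{def-corresp}: an object is a commutative diagram whose bottom row is the cocorrespondence $A_0\xrightarrow{\iota_0}C_0\xleftarrow{\tau_0}A_0$, whose top row is $S\xrightarrow{\id}S\xleftarrow{\sigma_S}S$, and whose three vertical $\cC$-morphisms point \emph{from} the top row to the bottom row, the morphisms being the evident maps of such diagrams. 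By construction $\coCorr^S(\cC) = (\Corr_S(\cC^\op))^\op$. Finally, under these identifications the functor $\forg{\,}^S_\coCorr\colon S/\diff\cC\to\coCorr^S$ becomes precisely the functor $\forg{\,}_S^\Corr$ of \ref{dir-gen-adj} for $\cC^\op$: both send an object to the (co)correspondence built from an identity map and the difference operator.

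Now the hypotheses transfer, since $\cC$ having finite colimits and countable coproducts is exactly $\cC^\op$ having finite limits and countable products. Hence \ref{dir-gen-adj} supplies a right adjoint $\psig{\,}_S$ to $\forg{\,}_S^\Corr\colon (\diff(\cC^\op))_{\ov S}\to\Corr_S(\cC^\op)$. Because $\Hom_{\cC^\op}$ is $\Hom_\cC$ with its two arguments interchanged, translating every hom-set in the adjunction bijection $\forg{\,}_S^\Corr\dashv\psig{\,}_S$ back to $\cC$ turns it into an adjunction $\psig{\,}_S\dashv\forg{\,}^S_\coCorr$ of functors $\coCorr^S\rightleftarrows S/\diff\cC$; writing $\ssig{\,}^S$ for $\psig{\,}_S$ in this guise yields the assertion $\ssig{\,}^S\dashv\forg{\,}^S_\coCorr$.

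The only delicate point is the variance bookkeeping: one must simultaneously check that passing to $\cC^\op$ (i) converts the slice $(\diff\cC)_{\ov S}$ into the coslice $S/\diff\cC$, matching the change of the $S$-structure from $X\to S$ to $S\to A$; (ii) converts $\Corr_S$ into $(\coCorr^S)^\op$; and (iii) converts a right adjoint into a left adjoint. Once all three are aligned the conclusion is immediate. Should an explicit description be wanted, $\ssig{A_0;C_0}^S$ is the countable colimit produced by iterating the pushout-type gluing dual to the iterated fibre-product (``path space'') construction of $\psig{X_0;C_0}_S$ in the proof of \ref{dir-gen-adj}, equipped with the evident shift as difference operator.
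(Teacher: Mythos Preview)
Your proposal is correct and takes essentially the same approach as the paper. The paper does not give a separate proof of this proposition at all: it simply prefaces the statement with ``By reversing all arrows in the above, and by defining a suitable category of cocorrespondences, we obtain the following,'' leaving the dualisation of Proposition~\ref{dir-gen-adj} implicit. Your write-up carries out precisely that dualisation, and in fact does so more carefully than the paper, spelling out the identifications $\diff(\cC^\op)\cong(\diff\cC)^\op$, $\coCorr^S(\cC)=(\Corr_S(\cC^\op))^\op$, and the hypothesis transfer.
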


\subsection{Direct presentation of difference diagrams and groupoid actions}

\begin{definition}\label{corr-diagrams}
Let $\cS$ be a category with pullbacks, and let $\C$ be an internal category in $\diff\cS$.
The objects of the category 
$$
\Corr_\C([\forg{\C},\cS])
$$ 
are spans of the form 
$$
\cF_0\xleftarrow{p_0} \cC_0\xrightarrow{\sigma_0} \sigma_\C^*\cF_0
$$
in the category $[\forg{\C},\cS]$. Morphisms of such spans are defined by analogy to \ref{def-corresp}. 
\end{definition}

\begin{proposition}\label{dir-pres-diag-adj}
If $\cS$ has finite limits and countable  products, and $\C$ is an internal category in $\cS$, the forgetful functor 
$$
\forg{\,}_\C^\Corr:[\C, \diff\cS]\to\Corr_\C([\forg{\C},\cS])
$$
which sends a internal diagram $F$ on $\C$ to the $\C$-correspondence $\forg{F}\xleftarrow{\id}\forg{F}\xrightarrow{\sigma_F}\sigma_\C^*\forg{F}$
admits a right adjoint
$$
\forg{\,}_\C^\Corr \dashv \psig{\,}_\C.
$$
\end{proposition}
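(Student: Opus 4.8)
The plan is to reduce to Proposition~\ref{psig-diags} in exactly the way that Proposition~\ref{dir-gen-adj} reduces to Proposition~\ref{psig-prop}: I will present the desired right adjoint, evaluated at a $\C$-correspondence, as an equalizer of two difference-diagram morphisms between cofree difference diagrams, and then unwind the universal property.

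First I would assemble the ingredients. Since $\cS$ has finite limits, so does the category $[\forg{\C},\cS]$ of internal diagrams on $\forg{\C}$ (limits there are created by the forgetful functor to $\cS$), and so does $[\C,\diff\cS]$ by Remark~\ref{forg-creates-limits}; in particular both categories have equalizers. Since $\cS$ has countable products, Proposition~\ref{psig-diags} provides the right adjoint $\forg{\,}_\C\dashv\psig{\,}_\C$ to the forgetful functor $\forg{\,}_\C\colon[\C,\diff\cS]\to[\forg{\C},\cS]$; write $\epsilon$ for its counit. I will use the identification of an object of $[\C,\diff\cS]$ with a pair $(\cF,\sigma_\cF\colon\cF\to\sigma_\C^*\cF)$ in $[\forg{\C},\cS]$, a morphism $F\to G$ being a morphism $g\colon\forg{F}\to\forg{G}$ in $[\forg{\C},\cS]$ with $\sigma_{\forg{G}}\circ g=\sigma_\C^*(g)\circ\sigma_{\forg{F}}$; and the description, from Definition~\ref{corr-diagrams}, of an object of $\Corr_\C([\forg{\C},\cS])$ as a span $\cF_0\xleftarrow{p_0}\cC_0\xrightarrow{\sigma_0}\sigma_\C^*\cF_0$ in $[\forg{\C},\cS]$, with $\forg{\,}_\C^\Corr$ sending $(\cF,\sigma_\cF)$ to the span $\cF\xleftarrow{\id}\cF\xrightarrow{\sigma_\cF}\sigma_\C^*\cF$.

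Given a $\C$-correspondence $(\cF_0;\cC_0)$, I would introduce the two morphisms
$$
\alpha,\beta\colon\psig{\cC_0}_\C\longrightarrow\psig{\sigma_\C^*\cF_0}_\C
$$
of $[\C,\diff\cS]$ that are adjoint, under $\forg{\,}_\C\dashv\psig{\,}_\C$, to the two morphisms $\forg{\psig{\cC_0}_\C}\to\sigma_\C^*\cF_0$ in $[\forg{\C},\cS]$ given by
$$
a:=\sigma_0\circ\epsilon_{\cC_0}\qquad\text{and}\qquad b:=\sigma_\C^*\bigl(p_0\circ\epsilon_{\cC_0}\bigr)\circ\sigma_{\psig{\cC_0}_\C},
$$
and then I would set
$$
\psig{\cF_0;\cC_0}_\C:=\Eq\bigl(\psig{\cC_0}_\C\rightrightarrows\psig{\sigma_\C^*\cF_0}_\C\bigr),
$$
the equalizer of $\alpha$ and $\beta$, formed in $[\C,\diff\cS]$. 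To verify the universal property, fix $F\in[\C,\diff\cS]$: a morphism $F\to\psig{\cF_0;\cC_0}_\C$ is a morphism $\hat\phi\colon F\to\psig{\cC_0}_\C$ with $\alpha\hat\phi=\beta\hat\phi$, which by Proposition~\ref{psig-diags} is the same datum as the morphism $\hat g_0:=\epsilon_{\cC_0}\circ\forg{\hat\phi}\colon\forg{F}\to\cC_0$ in $[\forg{\C},\cS]$. One computes that $\alpha\hat\phi$ and $\beta\hat\phi$ are adjoint respectively to $\sigma_0\circ\hat g_0$ and to $\sigma_\C^*(p_0\circ\hat g_0)\circ\sigma_{\forg{F}}$, the latter identity using precisely that $\hat\phi$, being a morphism of difference diagrams, satisfies $\sigma_{\psig{\cC_0}_\C}\circ\forg{\hat\phi}=\sigma_\C^*(\forg{\hat\phi})\circ\sigma_{\forg{F}}$. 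Hence $\alpha\hat\phi=\beta\hat\phi$ if and only if $\sigma_0\circ\hat g_0=\sigma_\C^*(p_0\circ\hat g_0)\circ\sigma_{\forg{F}}$, which by Definition~\ref{corr-diagrams} is exactly the condition for $(p_0\circ\hat g_0,\hat g_0)$ to define a morphism $\forg{\,}_\C^\Corr(F)\to(\cF_0;\cC_0)$ in $\Corr_\C([\forg{\C},\cS])$. These bijections are natural in $F$ (being built from the adjunction of Proposition~\ref{psig-diags} and the equalizer) and in $(\cF_0;\cC_0)$ (the formation of $\alpha$, $\beta$ and of their equalizer being functorial in the input correspondence), so $\forg{\,}_\C^\Corr\dashv\psig{\,}_\C$.

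The main obstacle will be the third step: choosing $\alpha$ and $\beta$ so that equalizing them captures exactly the ``path-matching'' identity $\sigma_0\hat g_0=\sigma_\C^*(p_0\hat g_0)\circ\sigma_{\forg{F}}$, which is a matter of carefully tracking the twist $\sigma_\C^*$, the counit $\epsilon$, and the way the difference-diagram compatibility of $\hat\phi$ silently accounts for the $\sigma_F$ that appears on the $\Corr_\C$ side. The remaining points --- existence of the relevant equalizers in $[\C,\diff\cS]$ and the two naturality verifications --- are routine. As a more concrete alternative, mirroring the proof of Proposition~\ref{dir-gen-adj}, one can instead build the underlying diagram of $\psig{\cF_0;\cC_0}_\C$ directly as the ``space of $\C$-paths'' $\varprojlim_n Q_n$, with $Q_0=\cF_0$ and $Q_{n+1}=\cC_0\times_{\sigma_0,\,\sigma_\C^*\cF_0}\sigma_\C^*Q_n$ (fibre product over $\sigma_\C^*$ of the structure map $Q_n\to\cF_0$), equipped with the shift $\varprojlim_n Q_n\to\sigma_\C^*\varprojlim_n Q_n$ as difference operator; this uses the same countable products and finite limits, together with the fact that $\sigma_\C^*$ --- which over the object of objects of $\C$ acts as a pullback functor --- preserves these countable limits.
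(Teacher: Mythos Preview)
Your proof is correct and follows essentially the same approach as the paper: the paper also defines $\psig{\cF_0;\cC_0}_\C$ as the equalizer in $[\C,\diff\cS]$ of two morphisms $\psig{\cC_0}_\C\rightrightarrows\psig{\sigma_\C^*\cF_0}_\C$, namely $\psig{s_0}$ and $\text{shift}\circ\psig{p_0}$, and then declares the verification ``purely formal, following steps analogous to the proof of \ref{dir-gen-adj}''. Your $\alpha$ and $\beta$ are precisely those two morphisms (one checks via the triangle identity and naturality of the counit that the adjoint transpose of your $a$ is $\psig{\sigma_0}$ and of your $b$ is $\text{shift}\circ\psig{p_0}$), so your construction coincides with the paper's and your explicit unwinding of the universal property is exactly the argument the paper leaves to the reader.
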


\subsection{Freely generated and directly presented difference objects}\label{free-dirpres-diffob}

Working in a `geometric' context as in \ref{psig-adj}, \ref{psig-prop}, \ref{psig-diags}, \ref{psig-G-sets}, we shall say that an object is
\begin{enumerate}
\item \emph{freely $\diff$generated}, if it is in the essential image of the appropriate functor $\psig{\,}_*$;
\item \emph{freely $\diff$presented}, if it is an equaliser of a pair of morphisms between two freely $\diff$generated objects;
\item \emph{directly $\diff$presented}, if it is the essential image of the appropriate functor $\psig{\,}_*$ on correspondences (as in \ref{dir-gen-adj} and \ref{dir-pres-diag-adj}).  
\end{enumerate}

Dually, in an `algebraic' context as in \ref{ssig-prop}, \ref{sigmaization-algebras}
we shall say that an object is
\begin{enumerate}
\item \emph{freely $\diff$generated}, if it is in the essential image of the appropriate functor $\ssig{\,}^*$;
\item \emph{freely $\diff$presented}, if it is a coequaliser of a pair of morphisms between two freely $\diff$generated objects;
\item \emph{directly $\diff$presented}, if it is the essential image of the appropriate functor $\ssig{\,}^*$ on correspondences (as in \ref{dir-pres-adj}).  
\end{enumerate}

When the relevant categories admit forgetful functors to $\Set$ or have a meaningful notion of `finite cardinals', we will be able to speak of \emph{finitely $\diff$generated, finitely $\diff$presented} and \emph{finitely directly $\diff$presented} objects.

\subsection{Finitely presented difference algebras}

In this subsection, we show that the categorical notions of finite $\diff$generation and $\diff$presentation from \ref{free-dirpres-diffob} agree with the traditional notions for difference algebras.

Recall (\cite[Section 2.1]{levin}) that a \emph{difference ideal} of a difference ring $R=(\forg{R},\sigma_R)$ is an ideal $I$ of $\forg{R}$ such that $\sigma_R(I)\subseteq I$. In this case the quotient $\forg{R}/I$ is naturally a difference ring. The difference ideal $I$ is \emph{finitely $\diff$generated} if there exists a finite subset $B$ of $I$ such that $I$ is generated by $B,\sigma_R(B),\ldots$ as an ideal.

A $k$-algebra $R$ is called \emph{finitely $\diff$generated} if there exists a finite subset $B$ of $R$ such that $\forg{R}$ is generated by $B,\sigma_R(B),\ldots$ as a $\forg{k}$-algebra.

A difference algebra over a difference ring $k$ is called \emph{finitely $\diff$presented} if it is a quotient of a difference polynomial ring in finitely many variables over $k$ by a finitely $\diff$generated difference ideal (\cite[Section 7.1]{Wibmer:FinitenessPropertiesOfAffineDifferenceAlgebraicGroups}).

\begin{definition}
Let $k$ be a difference ring. Following \cite[Section 2.2]{levin} the \emph{difference polynomial ring in $n$ variables} over $k$
$$
P_{n,k}=k\{x_1,\ldots,x_n\}={\forg{k}[x_1,\ldots,x_n,\sigma(x_1),\ldots,\sigma(x_n),\ldots]}
$$
{is the polynomial ring in the variables $x_1,\ldots,x_n,\sigma(x_1),\ldots,\sigma(x_n),\ldots$ over $\forg{k}$ with action of $\sigma$ extended from $k$ as suggested by the names of the variables. In other words, $P_{n,k}=\ssig{ \forg{k}[x_1,\ldots,x_n]}^k$ is}
obtained by applying the `free $k$-algebra' functor, i.e., the left adjoint of the forgetful functor $k\Alg\to\forg{k}\Alg\to\Set$ to an $n$-element set.
\end{definition}

\begin{lemma}\label{fin-si-gen}
Let $k$ be a difference ring, and let $A\in k\Alg$ be a $k$-algebra. The following statements are equivalent:
\begin{enumerate}
\item {$A$ is finitely $\diff$generated;}
\item $A$ is a quotient of a difference polynomial ring in finitely many variables over $k$;
\item $A$ is a quotient of $\ssig{A_0}^k$, for some finitely generated $\forg{k}$-algebra $A_0$.
\end{enumerate}
\end{lemma}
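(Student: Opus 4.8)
The plan is to prove the equivalence of the three statements in a cycle $(1) \Rightarrow (2) \Rightarrow (3) \Rightarrow (1)$, unwinding the definitions along the way. First I would recall that, by the definition given just before the lemma, statement (1) means there is a finite subset $B = \{b_1,\dots,b_n\}$ of $A$ such that $\forg{A}$ is generated as a $\forg{k}$-algebra by $B \cup \sigma_A(B) \cup \sigma_A^2(B) \cup \cdots$. To get $(1) \Rightarrow (2)$, I would use the difference polynomial ring $P_{n,k} = k\{x_1,\dots,x_n\}$ and its universal property: since $P_{n,k} = \ssig{\forg{k}[x_1,\dots,x_n]}^k$ is obtained by applying the left adjoint of the forgetful functor $k\Alg \to \Set$ (factoring through $\forg{k}\Alg$), a $k$-algebra map $P_{n,k} \to A$ corresponds freely to a set map $\{x_1,\dots,x_n\} \to \forg{A}$, namely $x_i \mapsto b_i$. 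The image of the resulting $k$-algebra homomorphism $\varphi\colon P_{n,k} \to A$ is exactly the $\forg{k}$-subalgebra generated by all $\sigma_A^j(b_i)$ (since $\varphi(\sigma^j(x_i)) = \sigma_A^j(b_i)$ and $\varphi$ is a ring map), which by hypothesis is all of $\forg{A}$; hence $\varphi$ is surjective and $A$ is a quotient of $P_{n,k}$.

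Next, $(2) \Rightarrow (3)$ is essentially immediate once we observe that $P_{n,k} = \ssig{A_0}^k$ with $A_0 = \forg{k}[x_1,\dots,x_n]$, which is a finitely generated $\forg{k}$-algebra; so a quotient of $P_{n,k}$ is in particular a quotient of $\ssig{A_0}^k$ for a finitely generated $A_0$. For $(3) \Rightarrow (1)$, suppose $A$ is a quotient $q\colon \ssig{A_0}^k \epi A$ with $A_0$ a finitely generated $\forg{k}$-algebra, say generated by $a_1,\dots,a_n \in A_0$. I would unpack the construction of $\ssig{A_0}^k$ from Corollary~\ref{sigmaization-algebras} (the left adjoint to $\forg{\,}^k\colon k\Alg \to \forg{k}\Alg$): concretely, as in the description of $P_{n,k}$, the difference $k$-algebra $\ssig{A_0}^k$ is generated as a $\forg{k}$-algebra by the images of $A_0$ under the unit map together with all their $\sigma$-translates; in particular it is generated by $\{a_i, \sigma(a_i), \sigma^2(a_i),\dots : 1 \le i \le n\}$ (more carefully, by the $\forg{k}$-subalgebra generated by the image of $A_0$ and all its iterated $\sigma$-images, and $A_0$ itself is generated by the $a_i$). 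Applying the surjection $q$, which commutes with $\sigma$, shows $\forg{A}$ is generated as a $\forg{k}$-algebra by the finite set $\{q(a_i)\}$ together with all its $\sigma_A$-translates, which is precisely the definition of $A$ being finitely $\diff$generated.

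The main obstacle, and the place requiring the most care, is making precise the concrete description of $\ssig{A_0}^k$ as "$A_0$ with freely adjoined $\sigma$-iterates" — that is, verifying that the left adjoint from Corollary~\ref{sigmaization-algebras}, when applied to a $\forg{k}$-algebra $A_0$, yields a difference $k$-algebra whose underlying $\forg{k}$-algebra is generated by the image of $A_0$ and its $\sigma$-orbit. For the special case $A_0 = \forg{k}[x_1,\dots,x_n]$ this is spelled out in the definition of $P_{n,k}$, but for a general finitely generated $A_0$ one should either give the explicit construction $\ssig{A_0}^k = \bigl(\bigotimes_{i\ge 0}^{\text{(colimit)}} \sigma_k^{i*}A_0\bigr) / (\text{relations})$, or — more cleanly — reduce to the polynomial case by choosing a surjection $\forg{k}[x_1,\dots,x_n] \epi A_0$, applying the left adjoint $\ssig{\,}^k$ (which preserves epimorphisms, being a left adjoint), and composing $P_{n,k} = \ssig{\forg{k}[x_1,\dots,x_n]}^k \epi \ssig{A_0}^k \epi A$ to land back in case (2)/(1). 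The latter reduction is the slickest route and avoids any explicit manipulation of the adjunction's construction. Everything else is a routine chase through universal properties.
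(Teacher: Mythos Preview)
Your proposal is correct and follows essentially the same approach as the paper: the paper proves $(1)\Leftrightarrow(2)$ via the adjunction/universal property of $P_{n,k}$ exactly as you do, and handles $(2)\Leftrightarrow(3)$ by the very reduction you identify as the ``slickest route'' (choose a surjection $\forg{k}[x_1,\ldots,x_n]\twoheadrightarrow A_0$, apply the left adjoint $\ssig{\,}^k$, and compose). The only cosmetic difference is that the paper organises the implications pairwise rather than as a cycle.
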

%
%

\begin{lemma}\label{fin-pres-dir-pres}
Let $k$ be a difference ring, and let $A\in k\Alg$ be a $k$-algebra. The following statements are equivalent:
\begin{enumerate}
\item {$A$ is finitely $\diff$presented;} 
\item $A$ fits into a coequaliser diagram
\begin{center}
 \begin{tikzpicture} 
\matrix(m)[matrix of math nodes, row sep=0em, column sep=1.7em, text height=1.5ex, text depth=0.25ex]
 {
 |(1)|{P_{m,k}}		& |(2)|{P_{n,k}}  & |(3)|{A}	\\
 }; 
\path[->,font=\scriptsize,>=to, thin,yshift=12pt]
(2) edge node[above]{} (3)
([yshift=2pt]1.east) edge node[above]{} ([yshift=2pt]2.west) 
([yshift=-2pt]1.east)edge node[below]{}   ([yshift=-2pt]2.west) 
;
\end{tikzpicture}
\end{center}
of some difference polynomial rings in finitely many variables over $k$;
\item $A$ fits into a coequaliser diagram
\begin{center}
 \begin{tikzpicture} 
\matrix(m)[matrix of math nodes, row sep=0em, column sep=1.7em, text height=1.5ex, text depth=0.25ex]
 {
 |(1)|{\ssig{A_0}^k}		& |(2)|{\ssig{C_0}^k}  & |(3)|{A}	\\
 }; 
\path[->,font=\scriptsize,>=to, thin,yshift=12pt]
(2) edge node[above]{} (3)
([yshift=2pt]1.east) edge node[above]{} ([yshift=2pt]2.west) 
([yshift=-2pt]1.east)edge node[below]{}   ([yshift=-2pt]2.west) 
;
\end{tikzpicture}
\end{center}
where $A_0$ and $C_0$ are finitely presented $\forg{k}$-algebras;
\item there exists a cocorrespondence $(A_0\to C_0 \leftarrow A_0)$ of finitely presented $\forg{k}$-algebras so that
$$
A\simeq \ssig{A_0;C_0}^k.
$$
\end{enumerate}
\end{lemma}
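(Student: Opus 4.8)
The plan is to prove the equivalence of (1)--(4) by a cycle of implications, using Lemma~\ref{fin-si-gen} as a black box to handle the ``finitely $\diff$generated'' half of each condition and reducing the content to a careful bookkeeping of the ``finitely $\diff$generated difference ideal'' half. First I would observe that (2) $\Leftrightarrow$ (3) is essentially immediate from Lemma~\ref{fin-si-gen}: the functor $\ssig{\,}^k$ is a left adjoint (Corollary~\ref{sigmaization-algebras}), hence preserves coequalisers and epimorphisms, so $\ssig{A_0}^k\simeq P_{n,k}$ whenever $A_0$ is free on $n$ generators, and a general finitely presented $A_0=\forg{k}[x_1,\dots,x_n]/(g_1,\dots,g_m)$ is itself a coequaliser $\forg{k}[y_1,\dots,y_m]\rightrightarrows \forg{k}[x_1,\dots,x_n]\to A_0$ in $\forg{k}\Alg$; applying $\ssig{\,}^k$ turns this into a presentation of $\ssig{A_0}^k$ by difference polynomial rings, and splicing the two coequaliser diagrams together (using that coequalisers compose suitably) gives the passage both ways.

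The main work is (1) $\Leftrightarrow$ (2). For (1) $\Rightarrow$ (2): if $A=P_{n,k}/I$ with $I$ a difference ideal $\diff$generated by $b=(b_1,\dots,b_m)$, I would define a difference $k$-algebra map $\varphi\colon P_{m,k}\to P_{n,k}$ sending the $j$-th generator $y_j$ to $b_j$ (this is legitimate since $P_{m,k}$ is free), and let the two parallel arrows $P_{m,k}\rightrightarrows P_{n,k}$ be $\varphi$ and the zero map (more precisely the map factoring through $k$ sending all $y_j\mapsto 0$). The coequaliser of these in $k\Alg$ is the quotient of $P_{n,k}$ by the smallest difference ideal containing all $\varphi(y_j)=b_j$, which is exactly $I$ since $I$ is $\diff$generated by the $b_j$; hence the coequaliser is $A$. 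The key point requiring care is that the coequaliser of $u,v\colon R\rightrightarrows S$ in $k\Alg$ is $S$ modulo the difference ideal generated by $\{u(r)-v(r):r\in R\}$ --- one needs that this difference ideal is generated as a difference ideal by the images of a generating set of $R$ as a difference $k$-algebra, which follows because $u-v$ is additive and the relevant closure properties. For (2) $\Rightarrow$ (1): conversely, a coequaliser of $P_{m,k}\rightrightarrows P_{n,k}$ is $P_{n,k}$ modulo the difference ideal generated by the finitely many differences of images of the generators $y_1,\dots,y_m$, which is manifestly finitely $\diff$generated, and $P_{n,k}$ is a difference polynomial ring in finitely many variables, so $A$ is finitely $\diff$presented.

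Finally (3) $\Leftrightarrow$ (4): given a cocorrespondence $(A_0\xrightarrow{\alpha} C_0\xleftarrow{\beta} A_0)$ of finitely presented $\forg{k}$-algebras, by Proposition~\ref{dir-pres-adj} the object $\ssig{A_0;C_0}^k$ is the left adjoint $\ssig{\,}^S$ (here $S=k$) applied to this cocorrespondence, and unwinding the construction of that left adjoint via the coslice/cocorrespondence forgetful functor shows it is precisely the coequaliser of the two maps $\ssig{C_0}^k\rightrightarrows \ssig{A_0}^k$ induced by $\alpha$ and $\sigma_{A_0}\circ\beta$ --- wait, more carefully, the coequaliser presenting a directly $\diff$presented algebra; in either case a directly $\diff$presented algebra is by definition (\ref{free-dirpres-diffob}) in the essential image of $\ssig{\,}^k$ on cocorrespondences, and comparing with the coequaliser diagram in (3) gives the equivalence. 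Conversely, from a coequaliser diagram as in (3) with parallel arrows $f,g\colon\ssig{A_0}^k\rightrightarrows\ssig{C_0}^k$ --- note I have swapped the roles, let me instead match indices to (4): from $\ssig{A_0}^k\rightrightarrows\ssig{C_0}^k$, by adjunction $\ssig{\,}^k\dashv\forg{\,}$ the two maps correspond to $\forg{k}$-algebra maps $A_0\rightrightarrows\forg{\ssig{C_0}^k}$, but this needs the images to land in $C_0\subseteq\ssig{C_0}^k$, which one arranges by enlarging $C_0$ (finitely presented is preserved). I expect the main obstacle to be exactly this last bookkeeping step: massaging an arbitrary coequaliser presentation into one where the parallel arrows are induced by an honest cocorrespondence of $\forg{k}$-algebras, i.e.\ where the maps $A_0\to C_0$ exist at the level of the ``level-zero'' algebras rather than only after applying $\ssig{\,}^k$; this requires absorbing finitely many extra $\sigma$-shifts of generators and relations into $A_0$ and $C_0$ while keeping everything finitely presented, which is routine but fiddly. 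Everything else is formal adjunction and coequaliser juggling, so I would organise the proof around the cycle (1)$\Rightarrow$(2)$\Rightarrow$(3)$\Rightarrow$(4)$\Rightarrow$(1) with the cocorrespondence-massaging isolated as the one place that needs genuine care.
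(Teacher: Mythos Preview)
Your plan is essentially correct and parallels the paper's argument for (1)$\Leftrightarrow$(2) and (2)$\Leftrightarrow$(3); these parts match almost verbatim. The difference is in how you reach (4). The paper goes directly (1)$\Rightarrow$(4) via the standard difference-algebra move: given $A=P_{n,k}/I$ with $I$ finitely $\diff$generated, add variables to rewrite the system as one of \emph{order $1$}, so that $A=P_{n,k}/\langle f_1(\bar x_0,\bar x_1),\dots,f_m(\bar x_0,\bar x_1)\rangle_\sigma$, and then the cocorrespondence is simply $\forg{k}[\bar x]\xrightarrow{\bar x\mapsto\bar x_0}\forg{k}[\bar x_0,\bar x_1]/(f_1,\dots,f_m)\xleftarrow{\bar x\mapsto\bar x_1}\forg{k}[\bar x]$. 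This is exactly the ``absorbing finitely many extra $\sigma$-shifts'' you allude to, but made concrete.

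One point you gloss over in your (3)$\Rightarrow$(4) sketch: a cocorrespondence $A_0\to C_0\leftarrow A_0$ is not just any pair of $\forg{k}$-algebra maps --- one leg lies over $\id_k$ and the other over $\sigma_k$ (cf.\ the diagram in Definition~\ref{def-corresp} dualised). So even after enlarging $C_0$ so that the transposed maps $A_0\to\forg{\ssig{C_0}^k}$ land in it, you still have two $\forg{k}$-linear maps over $\id_k$, not a cocorrespondence. The order-reduction trick is precisely what lets you recognise one map as ``inclusion at level $0$'' and the other as ``inclusion at level $1$'', hence twisted by $\sigma_k$. Your instinct that this step is the one requiring genuine care is right; the paper's route through (1) makes it explicit and short.
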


\begin{lemma}[{{\cite[Section 4.3]{levin}}}]\label{ld-lemma}
Let $L/k$ be an {extension of difference fields such that $\forg{L}/\forg{k}$ is algebraic and $L$ is finitely $\diff$generated as a $k$-algebra.}  Let $a=a_1,\ldots,a_n$ be a tuple of $\diff$generators of the $k$-algebra $L$, i.e.,
$L=\forg{k}[a,\sigma(a),\sigma^2(a),\ldots]$.
Let 
$$
L_i=\forg{k}[a,\sigma(a),\ldots,\sigma^i(a)].
$$
The sequence of numbers $$d_i=[L_i:L_{i-1}]$$
is non-increasing and hence stabilizes. {The eventual value does not depend on the choice of $\diff$generators $a$.}
\end{lemma}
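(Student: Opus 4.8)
The plan is to show the sequence $d_i=[L_i:L_{i-1}]$ is non-increasing by a pure field-theory argument about linear disjointness, and then argue stabilization and independence of generators separately. First I would set up the key observation: $L_i = \forg{k}[a,\sigma(a),\dots,\sigma^i(a)]$ is obtained from $L_{i-1}$ by adjoining the coordinates of $\sigma^i(a)$. Applying $\sigma_L$ to the tower $L_{i-1}\subseteq L_i$ gives $\sigma_L(L_{i-1})\subseteq\sigma_L(L_i)$, and since $\sigma_L(L_{i-1})=\forg{k'}[\sigma(a),\dots,\sigma^{i-1}(a)]$ where $k'=\sigma_L(k)$ (note $\sigma_L(k)\subseteq k$ may be proper, but this causes no trouble as we only need containments), we have that $\sigma_L(L_i)$ is generated over $\sigma_L(L_{i-1})$ by $\sigma^i(a)$-coordinates. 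The crucial comparison is between the two extensions $L_{i-1}(\sigma^i(a))/L_{i-1}$ and its ``shift'' $\sigma_L(L_{i-1})(\sigma^i(a))/\sigma_L(L_{i-1})$: the minimal polynomials of the entries of $\sigma^i(a)$ over the bigger base $L_{i-1}$ divide (the images under the relevant identification of) those over the smaller base $\sigma_L(L_{i-1})$, hence $[L_i:L_{i-1}]\le [\sigma_L(L_{i-1})(\sigma^i(a)):\sigma_L(L_{i-1})]$.

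Second I would make precise that $[\sigma_L(L_{i-1})(\sigma^i(a)):\sigma_L(L_{i-1})] = [L_i:L_{i-1}]$ becomes instead a comparison with $d_{i-1}$. The cleaner route: observe $\sigma_L$ restricts to a field isomorphism of $L_{i-1}=\forg{k}[a,\dots,\sigma^{i-1}(a)]$ onto $\sigma_L(L_{i-1})=\sigma_L(\forg{k})[\sigma(a),\dots,\sigma^{i}(a)]$, carrying the subfield $L_{i-2}$-analogue appropriately, so that $[\sigma_L(L_{i-1}):\sigma_L(L_{i-2})] = [L_{i-1}:L_{i-2}] = d_{i-1}$ where here $\sigma_L(L_{i-2})=\sigma_L(\forg k)[\sigma(a),\dots,\sigma^{i-1}(a)]$. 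Now $L_i=L_{i-1}\cdot\sigma_L(L_{i-1})$ as compositum inside $L$ over the common subfield $\sigma_L(L_{i-2})\cdot\forg k \subseteq L_{i-1}$, and the standard inequality for compositum degrees gives $[L_i:L_{i-1}] = [L_{i-1}\cdot\sigma_L(L_{i-1}):L_{i-1}] \le [\sigma_L(L_{i-1}):\sigma_L(L_{i-2})\cdot\forg k] \le [\sigma_L(L_{i-1}):\sigma_L(L_{i-2})] = d_{i-1}$. This yields $d_i\le d_{i-1}$. Since all $d_i$ are positive integers (finiteness uses that $\forg L/\forg k$ is algebraic and $L$ is finitely $\diff$generated, so each $L_i$ is a finite extension of $\forg k$), a non-increasing sequence of positive integers stabilizes.

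Third, for independence of the eventual value on the choice of $\diff$generators, I would take two generating tuples $a$ and $b$, form the associated towers $L_i$ and $L_i'$, and note that for some fixed $N$ each $b_j$ lies in $L_N$ and vice versa (finite $\diff$generation of each from the other). Then for large $i$, $L_i\subseteq L_{i+N}'$ and $L_i'\subseteq L_{i+N}$, and a telescoping/interleaving comparison of the stable degrees — using multiplicativity of degree in towers and the already-established monotonicity — forces the two limiting values to coincide. Concretely, once both sequences have stabilized to values $d$ and $d'$, comparing $[L_{i+N}:L_i]$ computed two ways (it is $d^N$ through the $a$-tower, and is bounded in terms of $d'$ through an interleaved tower containing $L_i'$) pins down $d=d'$.

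The main obstacle will be the bookkeeping in the compositum-degree step when $\sigma_L$ is not injective or not surjective onto $k$: one must be careful that ``$\sigma_L(L_{i-1})$'' is genuinely a field isomorphic to $L_{i-1}$ (which holds since $\sigma_L$ is a ring endomorphism of the field $L$, hence injective) and that the common base subfield over which one takes the compositum is correctly identified so that the elementary inequality $[E\cdot F:E]\le[F:E\cap F]\le [F:B]$ for $B\subseteq E\cap F$ applies. The rest is routine linear algebra over fields; no difference-algebraic subtlety beyond tracking the shift of generators is needed, which is why the excerpt attributes it to Levin's text.
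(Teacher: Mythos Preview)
The paper does not supply its own proof of this lemma: it is stated with the attribution \cite[Section 4.3]{levin} and no argument appears in the text or in Appendix~\ref{ap:diffalg}. So there is nothing in the paper to compare your proposal against beyond the citation.

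That said, your argument is the standard one and is essentially correct. The monotonicity step is clean once you observe $L_i = L_{i-1}\cdot\sigma_L(L_{i-1})$ inside $L$ and that $\sigma_L(L_{i-2})\subseteq L_{i-1}\cap\sigma_L(L_{i-1})$, giving $d_i=[L_i:L_{i-1}]\le[\sigma_L(L_{i-1}):\sigma_L(L_{i-2})]=[L_{i-1}:L_{i-2}]=d_{i-1}$ via the compositum inequality and injectivity of $\sigma_L$ on the field $L$. Stabilization is immediate since the $d_i$ are positive integers. Your independence-of-generators sketch is a bit loose; the quickest way to finish it is to note that $\log[L_i:\forg{k}]/i\to\log d$ once the sequence stabilizes, and the mutual containments $L_i'\subseteq L_{i+N}$, $L_i\subseteq L_{i+M}'$ force both growth rates to agree, hence $d=d'$. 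Your ``obstacle'' paragraph correctly identifies the only delicate point (that $\sigma_L$ is an injective field map, so $\sigma_L(L_{i-1})\cong L_{i-1}$), and your handling of it is fine.
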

\begin{definition}[{{\cite[Section 4.3]{levin}}}]\label{ld-defn}
With notation of \ref{ld-lemma}, we define the \emph{limit degree} of the extension $L/k$ as the integer
$$
\ld(L/k)=\lim_i d_i.
$$
\end{definition} 

%

\section{Galois theory}\label{s:galth}

In this section, we review Janelidze's pure Galois theory, as set out in \cite{janelidze} and \cite{borceux-janelidze}, as well as  Magid's separable Galois theory of commutative rings, following {\cite{magid}}.

In \ref{si-Janelidze}, we develop an abstract version of Janelidze's formalism for categories of difference objects, and apply it in \ref{ss:diff-gal} to develop our main objective, the Galois theory of difference ring extensions, as well as the theory of the difference fundamental groupoid in \ref{ss:diff-fund}. In the special case of difference fields, we have a more complete understanding of the theory, as explained in \ref{ss:diff-fields}

\subsection{Janelidze's categorical Galois theory}\label{janelidze}

Consider an adjoint pair of functors
\begin{center}
 \begin{tikzpicture} 
 [cross line/.style={preaction={draw=white, -,
line width=3pt}}]
\matrix(m)[matrix of math nodes, minimum size=1.7em,
inner sep=0pt, 
row sep=3.3em, column sep=1em, text height=1.5ex, text depth=0.25ex]
 { 
  |(dc)|{\cA}	\\
 |(c)|{\cP} 	      \\ };
\path[->,font=\scriptsize,>=to, thin]
%
(dc) edge [bend right=30] node (ss) [left]{$S$} (c)
(c) edge [bend right=30] node (ps) [right]{$C$} (dc)
(ss) edge[draw=none] node{$\dashv$} (ps)
;
\end{tikzpicture}
\end{center}
with unit $\eta:\id\to CS$ and counit $\epsilon:SC\to \id$.
If $\cA$ admits pullbacks, using \ref{adj-rel}, for any $X\in\cA$ we obtain an adjunction
\begin{center}
 \begin{tikzpicture} 
 [cross line/.style={preaction={draw=white, -,
line width=3pt}}]
\matrix(m)[matrix of math nodes, minimum size=1.7em,
inner sep=0pt, 
row sep=3.3em, column sep=1em, text height=1.5ex, text depth=0.25ex]
 { 
  |(dc)|{\cA_{\ov X}}	\\
 |(c)|{\cP_{\ov S(X)}} 	      \\ };
\path[->,font=\scriptsize,>=to, thin]
%
(dc) edge [bend right=30] node (ss) [left]{$S_X$} (c)
(c) edge [bend right=30] node (ps) [right]{$C_X$} (dc)
(ss) edge[draw=none] node{$\dashv$} (ps)
;
\end{tikzpicture}
\end{center}
where  
$$
S_X(A\xrightarrow{a}X)=S(A)\xrightarrow{S(a)}S(X).
$$

A morphism $X\xrightarrow{f}Y$ {in $\cA$} gives rise to the pullback/base change functor
$$
f^*:\cA_{\ov Y}\to\cA_{\ov X},
$$
which admits a left adjoint
$$
f_!:\cA_{\ov X}\to\cA_{\ov Y},  \ \ \  a\mapsto f\circ a.
$$
{Following \cite[Def. 5.1.7]{borceux-janelidze}} an object $A\xrightarrow{a}Y\in\cA_{\ov Y}$ is \emph{split} by $X\xrightarrow{f}Y\in\cA$ when the unit $\eta^X:\id\to C_XS_X$ of adjunction $S_X\dashv C_X$ gives an isomorphism
$$
\eta^X_{f^*a}:f^*a\to C_XS_X(f^*a),
$$
or, equivalently {(\cite[Cor. 5.1.13]{borceux-janelidze})}, if there exists an object $E\xrightarrow{e}S(X)$ such that 
$$
f^*a\simeq C_X(e).
$$
We write
$$
\Split_Y(f)
$$
for the full subcategory of $\cA_{\ov Y}$ of objects split by $f$.

The morphism $f$ is of \emph{relative Galois descent} if
\begin{enumerate}
\item $f^*$ is monadic;
\item the counit $\epsilon^X:S_XC_X\to \id$ of adjunction $S_X\dashv C_X$ is an isomorphism;
\item for every $E\xrightarrow{e}S(X)$ in $\cP_{\ov S(X)}$, the object $f_!\, C_X(e)\in\cA_{\ov Y}$ is split by $f$.
\end{enumerate} 
 If $X\xrightarrow{f}Y$ is of relative Galois descent, we define 
$$
\Gal[f]
$$
as the internal groupoid in $\cP$ given by the data

\begin{center}
 \begin{tikzpicture} 
\matrix(m)[matrix of math nodes, row sep=0em, column sep=3em, text height=1.5ex, text depth=0.25ex]
 {
|(0)|{S(X\times_YX)\times_{S(X)}S(X\times_YX)}  &[3em]  |(1)|{S(X\times_YX)}		&[1em] |(2)|{S(X)} \\
 }; 
\path[->,font=\scriptsize,>=to, thin]
(0) edge node[above]{$(S(\pi_1),S(\pi_4))$} (1)
([yshift=1em]1.east) edge node[above=-2pt]{$S(\pi_1)$} ([yshift=1em]2.west) 
(2)  edge node[above=-2pt]{$S(\Delta)$} (1) 
([yshift=-1em]1.east) edge node[above=-2pt]{$S(\pi_2)$} ([yshift=-1em]2.west) 
 (1) edge [loop below] node {$S(\tau)$} (1)
;
\end{tikzpicture}
\end{center}
where $\tau$ is the morphism interchanging the copies of $X$, and $\Delta$ is the diagonal.

Janelidze's \emph{Galois theorem} {(\cite[Thm. 5.1.24]{borceux-janelidze})} gives  an equivalence of categories
$$
\Split_Y(f)\simeq [\Gal[f],\cP].
$$
The proof consists in verifying that the monad $\mathbb{T}$ associated to the adjunction 
\begin{center}
 \begin{tikzpicture} 
 [cross line/.style={preaction={draw=white, -,
line width=3pt}}]
\matrix(m)[matrix of math nodes, minimum size=1.7em,
inner sep=0pt, 
row sep=3.3em, column sep=1em, text height=1.5ex, text depth=0.25ex]
 { 
  |(dc)|{\Split_Y(f)}	\\
 |(c)|{\cP_{\ov S(X)}} 	      \\ };
\path[->,font=\scriptsize,>=to, thin]
%
(dc) edge [bend left=30] node (ss) [right]{$U=S_X\,f^*$} (c)
(c) edge [bend left=30] node (ps) [left]{$F=f_!\,C_X$} (dc)
(ss) edge[draw=none] node{$\dashv$} (ps)
;
\end{tikzpicture}
\end{center}
of the monadic functor $U$ {(\cite[Cor. 5.1.21]{borceux-janelidze})}, with functorial part $T=UF$, is isomorphic to the monad on $\cP_{\ov S(X)}$ whose category of algebras is the category $[\Gal[f],\cP]$ of actions of the internal groupoid $\Gal[f]$ in $\cP$.  

Hence, modulo the identification of the category of $\mathbb{T}$-algebras $(\cP_{\ov S(X)})^{\mathbb{T}}$ with  $[\Gal[f],\cP]$ 
the functors realising the sought-after equivalence are the comparison functor of the monad $\mathbb{T}$,
$$
\Split_Y(f)\xrightarrow{\sim} (\cP_{\ov S(X)})^{\mathbb{T}},\ \ \   A\mapsto (U(A),U(\varepsilon_A)),
$$
and its left adjoint
$$
(X,\nu)\mapsto
\begin{tikzcd}[cramped, column sep=normal, ampersand replacement=\&]
{\text{\rm Coeq}\left(FUF(X) \right.}\ar[yshift=2pt]{r}{F(\nu)} \ar[yshift=-2pt]{r}[swap]{\varepsilon_{FX}} \&{\left.F(X)\right)}
\end{tikzcd},
$$
where $(X,\nu)$ is a $\mathbb{T}$-algebra and $\varepsilon:FU\to \id$ is the counit of the adjunction $F\dashv U$, and the coequaliser exists by the proof of Beck's monadicity criterion as in \cite[3.14]{barr-wells}.

By implicitly remembering the $\Gal[f]$-action, allowing a slight imprecision in terminology, we may say that the above equivalence is realised by the functor $U:\Split_Y(f)\to \cP_{\ov S(X)}$.

\subsection{Janelidze's theory for categories of difference objects}\label{si-Janelidze}

\begin{notation}\label{gen-diff-context}
Let $\cA_0$, $\cP_0$, $S_0$ and $C_0$ be categories and functors forming a context for categorical Galois theory as in \ref{janelidze}. 
The context for the associated difference Galois theory consists of 
\begin{enumerate} 

\item the  categories of difference objects $$\cA=\diff\cA_0, \ \ \ \cP=\diff\cP_0,$$
\item the functor  
$$S:\cA\to \cP, \ \ S(X,\sigma_X)=(S_0(X),S_0(\sigma_X)),$$ 

\item the functor
$$
C:\cP\to \cA, \ \ C(E,\sigma_E)=(C_0(E),C_0(\sigma_E)).
$$
\end{enumerate}
\end{notation}

\begin{lemma}\label{diff-adjunction}
Let $F_0: \cD\to \cC$ and $G_0:\cC\to \cD$ be an adjoint pair of functors,
$$
F_0\dashv G_0,
$$
and consider  the induced functors on difference categories
$$
F:\diff\cD\to \diff\cC, \ \ F(X)=(F_0(\forg{X}),F_0(\forg{\sigma_X})),
$$
and
$$
G:\diff\cC\to \diff\cD, \ \ G(Y)=(G_0(\forg{Y}),G_0(\forg{\sigma_Y})).
$$ 
Then
$$
F\dashv G.
$$
Moreover, if $G_0$ is monadic, then $G$ is also monadic.
\end{lemma}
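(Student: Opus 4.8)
The plan is to establish the adjunction $F \dashv G$ directly from the adjunction $F_0 \dashv G_0$ by transporting the natural bijection of hom-sets, and then to deduce monadicity of $G$ from that of $G_0$ using the fact that the forgetful functors $\forg{\,}$ create limits (Remark~\ref{forg-creates-limits}) together with Beck's monadicity theorem.

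First I would construct the hom-set bijection. Given $X \in \diff\cD$ and $Y \in \diff\cC$, a morphism $F(X) \to Y$ in $\diff\cC$ is a $\cC$-morphism $g \colon F_0(\forg X) \to \forg Y$ with $g \circ F_0(\sigma_X) = \sigma_Y \circ g$. Under the adjunction bijection $\cC(F_0(\forg X), \forg Y) \cong \cD(\forg X, G_0(\forg Y))$, transpose $g$ to $g^\flat \colon \forg X \to G_0(\forg Y)$. The key computation is that the equivariance condition on $g$ translates, via naturality of the adjunction unit and counit, precisely into the equivariance condition $g^\flat \circ \sigma_X = \sigma_{G(Y)} \circ g^\flat = G_0(\sigma_Y) \circ g^\flat$ defining a $\diff\cD$-morphism $X \to G(Y)$. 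This uses the naturality square of the transposition applied to $\sigma_X$ and $\sigma_Y$, and is the one genuinely computational point — but it is routine, amounting to chasing one square. Naturality of the resulting bijection in $X$ and $Y$ is then inherited from naturality of the original adjunction bijection, since all the structure maps in $\diff\cD$ and $\diff\cC$ are built from those in $\cD$ and $\cC$. Equivalently, one could phrase this by exhibiting the unit $\eta\colon \id \to GF$ and counit $\epsilon\colon FG \to \id$ as $(\eta_0)$ and $(\epsilon_0)$ applied componentwise, observing they are morphisms of difference objects by naturality of $\eta_0, \epsilon_0$, and checking the triangle identities, which hold because they do on underlying objects and $\forg{\,}$ is faithful.

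For monadicity, suppose $G_0$ is monadic. I would verify the hypotheses of Beck's theorem for $G$. The functor $G$ has the left adjoint $F$ just constructed. It reflects isomorphisms: if $G(h)$ is an isomorphism in $\diff\cD$ then $\forg{G(h)} = G_0(\forg h)$ is an isomorphism in $\cD$, so $\forg h$ is an isomorphism in $\cC$ (as $G_0$ reflects isos, being monadic), hence $h$ is an isomorphism in $\diff\cC$ since $\forg{\,}$ reflects isos. Finally, $\diff\cC$ has coequalizers of $G$-split pairs and $G$ preserves them: given a parallel pair in $\diff\cC$ whose image under $G$ admits a split coequalizer in $\diff\cD$, apply $\forg{\,}$ to get a $G_0$-split pair in $\cC$; by monadicity of $G_0$ it has a coequalizer in $\cC$ preserved by $G_0$. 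Since $\forg{\,}\colon\diff\cC\to\cC$ creates colimits (Remark~\ref{forg-creates-limits}), this coequalizer in $\cC$ lifts uniquely to a coequalizer in $\diff\cC$ — the difference structure on the coequalizer object being induced — and $G$ preserves it because $G_0$ does and $\forg{\,}$ both creates and reflects the relevant colimit. By Beck's theorem $G$ is monadic.

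The main obstacle, such as it is, is the bookkeeping in the equivariance translation for the hom-set bijection: one must be careful that the naturality square used is the one for the transposition natural isomorphism evaluated at the endomorphisms $\sigma_X, \sigma_Y$, and that it combines correctly with the triangle identities. Everything else is formal, relying on the already-established fact that $\forg{\,}$ creates limits and colimits and on standard properties of monadic functors.
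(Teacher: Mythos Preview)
Your proof is correct. For the adjunction, you give the hom-set transposition argument and also note the equivalent unit/counit verification; the paper takes the latter route, so there is no real difference.

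For monadicity the approaches diverge. You verify Beck's criterion for $G$ (left adjoint, reflects isomorphisms, existence and preservation of coequalisers of $G$-split pairs), pushing each condition down to $G_0$ via the creation properties of $\forg{\,}$ from Remark~\ref{forg-creates-limits}. The paper instead argues directly on the level of Eilenberg--Moore categories: writing $\mathbb{T}_0$ for the monad of $F_0\dashv G_0$ and $\mathbb{T}$ for that of $F\dashv G$, it observes that $(\diff\cD)^{\mathbb{T}}\simeq \diff(\cD^{\mathbb{T}_0})$, and then the assumed equivalence $\cC\simeq \cD^{\mathbb{T}_0}$ yields $\diff\cC\simeq (\diff\cD)^{\mathbb{T}}$. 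Your route is more hands-on and entirely self-contained given Remark~\ref{forg-creates-limits}; the paper's route is shorter but leaves the identification $(\diff\cD)^{\mathbb{T}}\simeq \diff(\cD^{\mathbb{T}_0})$ implicit. Incidentally, the paper itself uses exactly your Beck-criterion strategy later, in the proof of Lemma~\ref{gen-diff-split-normal}(2), so your choice is well aligned with the surrounding material.
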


\begin{proof}
If $\varepsilon_0$ and $\eta_0$ are the counit and unit of the adjunction $F_0\dashv G_0$, then
the counit $\varepsilon: FG\to \id_{\diff\cC}$ of the required adjunction is given by $\varepsilon_X=\varepsilon_{0,\forg{X}}$ for $X\in\diff\cC$, and
the unit $\eta:\id_{\diff\cD}\to GF$ is given by $\eta_Y=\eta_{0,\forg{Y}}$ for $Y\in\diff\cC$. These are difference morphisms since by naturality of $\varepsilon_0$ and $\eta_0$, we have diagrams
\begin{center}
 \begin{tikzpicture} 
\matrix(m)[matrix of math nodes, row sep=2em, column sep=2em, text height=1.9ex, text depth=0.25ex]
 {
 |(1)|{F_0G_0(\forg{X})}		& |(2)|{\forg{X}} 	\\
 |(l1)|{F_0G_0(\forg{X})}		& |(l2)|{\forg{X}} 	\\
 }; 
\path[->,font=\scriptsize,>=to, thin]
(1) edge  node[above]{$\varepsilon_{0,\forg{X}}$} (2) edge  node[left]{$F_0G_0(\forg{\sigma_X})$}   (l1)
(2) edge node[right]{$\forg{\sigma_X}$} (l2) 
(l1) edge node[above]{$\varepsilon_{0,\forg{X}}$}  (l2);
\end{tikzpicture}
 \begin{tikzpicture} 
\matrix(m)[matrix of math nodes, row sep=2em, column sep=2em, text height=1.9ex, text depth=0.25ex]
 {
 |(1)|	{\forg{Y}} 	& |(2)|   {G_0F_0(\forg{Y})}	\\
 |(l1)|	{\forg{Y}}	& |(l2)|   	{G_0F_0(\forg{Y})}\\
 }; 
\path[->,font=\scriptsize,>=to, thin]
(1) edge  node[above]{$\eta_{0,\forg{Y}}$} (2) edge    node[left]{$\forg{\sigma_Y}$}  (l1)
(2) edge node[right]{$G_0F_0(\forg{\sigma_Y})$} (l2) 
(l1) edge node[above]{$\eta_{0,\forg{Y}}$}  (l2);
\end{tikzpicture}
\end{center}
hence the counit-unit equations for $\varepsilon,\eta, F, G$ follow directly from those for $\varepsilon_0,\eta_0,F_0,G_0$.

Let $\mathbb{T}_0=(T_0,\mu_0,\eta_0)$ be the monad induced by the adjunction $F_0\dashv G_0$, where $T_0=G_0F_0:\cD\to\cD$, $\mu_0=G_0\varepsilon_0 F_0: T_0T_0\to T_0$ and $\eta_0: \id\to T_0$ is the unit.
The functor $G_0$ being monadic amounts to saying that the comparison functor
$$
\cC\to \cD^{\mathbb{T}_0}, \ \ X\mapsto (G_0(X), G_0(\varepsilon_X))
$$ 
to the category $ \cD^{\mathbb{T}_0}$ of $\mathbb{T}_0$-algebras gives an equivalence of categories. 

Let $\mathbb{T}=(T,\mu,\eta)$ be the monad induced by the adjunction $F\dashv G$, where 
$T=G F:\diff\cD\to\diff\cD$, $\mu=G \varepsilon  F: T T\to T$ and $\eta: \id\to T$ is the above unit. We obtain that
$$
(\diff\cD)^\mathbb{T}\simeq \diff(\cD^{\mathbb{T}_0})\simeq \diff\cC,
$$
so $G$ is monadic.
\end{proof}

\begin{lemma}
The functors from \ref{gen-diff-context} form an adjunction
$$
S\dashv C.
$$
If $\cA_0$ admits pullbacks, for each $X\in\cA$, we have an adjunction
$$
S_X\dashv C_X
$$
between slice categories $\cA_{\ov X}$ and $\cP_{\ov S(X)}$.
\end{lemma}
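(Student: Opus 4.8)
The plan is to derive both adjunctions from the corresponding facts in the base context $(\cA_0,\cP_0,S_0,C_0)$, using the tools already assembled in Section~\ref{s:diffalg}; no new ideas are needed beyond unwinding definitions.

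\emph{First assertion.} I would apply Lemma~\ref{diff-adjunction} with $(F_0,G_0)=(S_0,C_0)$. By assumption (part of the data of \ref{gen-diff-context}, via \ref{janelidze}) we have $S_0\dashv C_0$, and the functors induced on difference objects in the sense of Lemma~\ref{diff-adjunction} are, by construction, exactly the functors $S$ and $C$ of \ref{gen-diff-context}: indeed $F(X,\sigma_X)=(S_0(X),S_0(\sigma_X))=S(X,\sigma_X)$ and likewise for $C$. Hence Lemma~\ref{diff-adjunction} yields $S\dashv C$ immediately. It is worth recording for later use that, by the proof of that lemma, the unit and counit of $S\dashv C$ are $\eta_X=\eta_{0,\forg{X}}$ and $\epsilon_E=\epsilon_{0,\forg{E}}$, i.e.\ the classical unit and counit carried over verbatim.

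\emph{Second assertion.} The key observation is that $\cA=\diff\cA_0$ inherits pullbacks from $\cA_0$: since $\cA_0$ admits pullbacks, Remark~\ref{forg-creates-limits} says $\forg{\,}\colon\diff\cA_0\to\cA_0$ creates them, so $\cA$ has pullbacks and, concretely, a pullback in $\cA$ is the underlying pullback in $\cA_0$ equipped with the induced $\sigma$-action. Granting this, I would simply apply the formal slice-category construction \ref{adj-rel} to the adjunction $S\dashv C$ produced above: for each $X\in\cA$ it yields the functors $S_X(A\xrightarrow{a}X)=S(A)\xrightarrow{S(a)}S(X)$ and $C_X(E\xrightarrow{e}S(X))=X\times_{CS(X)}C(E)$ (pullback formed along $\eta_X\colon X\to CS(X)$), together with the adjunction $S_X\dashv C_X$ between $\cA_{\ov X}$ and $\cP_{\ov S(X)}$, exactly as recalled in \ref{janelidze}. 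Note only pullbacks in $\cA$ (not in $\cP$) are needed for this.

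If desired, one may append the optional compatibility remark that the evident forgetful functors $\cA_{\ov X}\to(\cA_0)_{\ov\forg{X}}$ and $\cP_{\ov S(X)}\to(\cP_0)_{\ov S_0(\forg{X})}$ intertwine $S_X$ with $S_{0,\forg{X}}$ and $C_X$ with $C_{0,\forg{X}}$, since the pullback defining $C_X$ is created by $\forg{\,}$; thus the difference slice adjunction literally lifts the classical one. There is no genuine obstacle in this lemma; the only point requiring a moment's care is confirming that the generic slice construction \ref{adj-rel} applies verbatim in the difference setting, and this reduces entirely to the "$\forg{\,}$ creates pullbacks" statement of Remark~\ref{forg-creates-limits}.
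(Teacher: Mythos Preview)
Your proposal is correct and follows essentially the same approach as the paper's proof: apply Lemma~\ref{diff-adjunction} to lift $S_0\dashv C_0$ to $S\dashv C$, observe that $\cA$ inherits pullbacks from $\cA_0$ (you cite \ref{forg-creates-limits} explicitly, the paper does this implicitly), and then invoke \ref{adj-rel} for the slice adjunction. The paper's proof is a two-line compression of exactly this argument; your additional compatibility remark is correct but not needed for the lemma itself.
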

\begin{proof}
The adjunction is given directly \ref{diff-adjunction}. If $\cA_0$ admits pullbacks, so does $\cA$, hence \ref{adj-rel} gives the relative adjunction. 
\end{proof}

\begin{lemma}\label{gen-diff-split-normal}
Let $f:X\to Y$ be a morphism in $\cA$.
\begin{enumerate}
\item An object $A\xrightarrow{a} Y\in\cA_{\ov Y}$ is split by $f$ if and only if $\forg{a}\in\cA_{0\ov\forg{Y}}$ is split by $\forg{f}$. 
\item If 
$\forg{f}$ is a morphism of relative Galois descent in $\cA_0$, then $f$ is a morphism of relative Galois descent. 
\end{enumerate}
\end{lemma}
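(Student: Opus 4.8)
The plan is to deduce each assertion from the corresponding statement about $\forg{f}$, using throughout that the forgetful functors $\forg{\,}\colon\diff\cA_0\to\cA_0$ and $\forg{\,}\colon\diff\cP_0\to\cP_0$ \emph{create limits and colimits} (Remark~\ref{forg-creates-limits}); in particular they are conservative, they preserve and reflect pullbacks, and by \ref{gen-diff-context} and Lemma~\ref{diff-adjunction} the functors $S,C$ are computed levelwise ($\forg{S(\cdot)}=S_0(\forg{\cdot})$, $\forg{C(\cdot)}=C_0(\forg{\cdot})$) while the underlying morphisms of the unit $\eta$ and counit $\epsilon$ of $S\dashv C$ are the unit and counit of $S_0\dashv C_0$. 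The first step is to record that all of the auxiliary constructions appearing in the definitions of \emph{split} and \emph{relative Galois descent} are compatible with $\forg{\,}$. Since $\forg{\,}$ preserves pullbacks, $\forg{f^*a}=\forg{f}^*\forg{a}$; since $f_!$ is postcomposition with $f$, $\forg{f_!b}=\forg{f}_!\forg{b}$. For the relative adjunction $S_X\dashv C_X$ of \ref{janelidze}, the descriptions $S_X(B\to X)=\bigl(S(B)\to S(X)\bigr)$, $C_X(e)=C(E)\times_{CS(X)}X$ (the pullback of $C(e)$ along $\eta_X\colon X\to CS(X)$), $\eta^X_b=\langle\eta_{B},b\rangle$ and $\epsilon^X_e=\epsilon_{E}\circ S(\pi_1)$, combined with the levelwise descriptions and preservation of pullbacks, give $\forg{S_X(b)}=S_{0,\forg{X}}(\forg{b})$, $\forg{C_X(e)}=C_{0,\forg{X}}(\forg{e})$, and show that the underlying morphisms of $\eta^X_b$ and $\epsilon^X_e$ are precisely the unit and counit of $S_{0,\forg{X}}\dashv C_{0,\forg{X}}$ at $\forg{b}$ and $\forg{e}$.

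Given this, part~(1) is immediate: $a$ is split by $f$ exactly when $\eta^X_{f^*a}$ is an isomorphism in $\cA_{\ov X}$; since $\forg{\,}$ is conservative and $\forg{\eta^X_{f^*a}}$ is the unit of $S_{0,\forg{X}}\dashv C_{0,\forg{X}}$ at $\forg{f}^*\forg{a}$, this holds exactly when $\forg{a}$ is split by $\forg{f}$. Of the three conditions for $f$ to be of relative Galois descent, condition~(ii) transfers the same way: $\epsilon^X$ is an isomorphism iff each component $\epsilon^X_e$ ($e\in\cP_{\ov S(X)}$) is, iff each underlying morphism $\forg{\epsilon^X_e}$ — a component of the counit of $S_{0,\forg{X}}\dashv C_{0,\forg{X}}$ — is, and this follows from the assumption that the latter counit is a global isomorphism. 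Condition~(iii) transfers via part~(1): for $e\in\cP_{\ov S(X)}$ we have $\forg{f_!C_X(e)}=\forg{f}_!\bigl(C_{0,\forg{X}}(\forg{e})\bigr)$, which is split by $\forg{f}$ by condition~(iii) for $\forg{f}$ applied to $\forg{e}$, hence $f_!C_X(e)$ is split by $f$.

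The only substantial point is condition~(i): that $f^*\colon\cA_{\ov Y}\to\cA_{\ov X}$ is monadic, given that $\forg{f}^*\colon(\cA_0)_{\ov\forg{Y}}\to(\cA_0)_{\ov\forg{X}}$ is. Here I would invoke the crude monadicity theorem: $f^*$ has the left adjoint $f_!$, so it is enough to show that $f^*$ creates coequalizers of those parallel pairs in $\cA_{\ov Y}$ that $f^*$ sends to pairs admitting a split coequalizer. The square comparing $f^*$ with $\forg{f}^*$ along the forgetful functors $\cA_{\ov Y}\to(\cA_0)_{\ov\forg{Y}}$ and $\cA_{\ov X}\to(\cA_0)_{\ov\forg{X}}$ commutes, and these two forgetful functors create colimits, being composites of slice-projections (which create colimits) with $\forg{\,}\colon\diff\cA_0\to\cA_0$. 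So, given such a pair $g,h$ in $\cA_{\ov Y}$: its image $\forg{g},\forg{h}$ in $(\cA_0)_{\ov\forg{Y}}$ is sent by $\forg{f}^*$ to a pair still admitting a split coequalizer (split coequalizers are absolute, hence preserved by every functor); monadicity of $\forg{f}^*$ then yields a coequalizer of $\forg{g},\forg{h}$ in $(\cA_0)_{\ov\forg{Y}}$ preserved by $\forg{f}^*$; creation of colimits lifts this to a coequalizer of $g,h$ in $\cA_{\ov Y}$ computed underlying-wise; and since the forgetful functor over $X$ reflects coequalizers (it creates colimits), $f^*$ carries it to a coequalizer. Thus $f^*$ creates this coequalizer and is therefore monadic. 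I expect this last argument to be the main obstacle — not conceptually, but in the bookkeeping needed to check that ``creates colimits'' is inherited by the slice-projections and transported across the commuting square of categories; the conceptual input is entirely Remark~\ref{forg-creates-limits}.
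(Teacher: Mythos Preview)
Your proof is correct and follows essentially the same route as the paper: both arguments reduce each condition in the definitions of \emph{split} and \emph{relative Galois descent} to the corresponding condition for $\forg{f}$ via the levelwise description of $S,C,\eta,\epsilon$ and the fact that $\forg{\,}$ creates limits and colimits (hence is conservative). For monadicity of $f^*$, the paper invokes Beck's criterion in the form ``reflects isomorphisms and has/preserves coequalisers of reflexive $f^*$-contractible pairs'', while you use the equivalent ``creates coequalisers of $f^*$-split pairs'' version; the underlying mechanism---lifting the needed coequalisers through the commuting square comparing $f^*$ with $\forg{f}^*$---is the same. One terminological quibble: what you describe is Beck's \emph{precise} monadicity theorem, not the \emph{crude} one (the crude version assumes existence and preservation of coequalisers of all reflexive pairs, without the $f^*$-split hypothesis).
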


\begin{proof}
\noindent{(1)} Pullbacks in $\cA$ are calculated in $\cA_0$ (\ref{forg-creates-limits}), so 
$\forg{f^*A}=\forg{X\times_Y A}\simeq \forg{X}\times_{\forg{Y}}\forg{A}\simeq \forg{f}^*\forg{A}$.

By definition, $A$ is split by $f$ if and only if the unit $\eta^X:f^*A\to C_XS_X(f^*A)$ is an isomorphism, which is equivalent to saying that $\forg{\eta^X}$ is an isomorphism. Given that  $\forg{}\circ S_X\simeq S_{0,\forg{X}}\circ\forg{}$ and $\forg{}\circ C_X\simeq C_{0,\forg{X}}\circ\forg{}$, this is equivalent to
$$
\eta^{\forg{X}}:\forg{f}^*\forg{A}\to C_{0,\forg{X}}S_{0,\forg{X}}(\forg{f}^*\forg{A})
$$
being an isomorphism, i.e., to  $\forg{A}$ being split by $\forg{f}$.

\noindent{(2)} Suppose that $\forg{f}$ is a morphism of relative Galois descent.
We prove that $f^*$ is monadic using Beck's criterion \cite[3.14]{barr-wells}.  Since  $f^*$ has a left adjoint $f_!$, it suffices to verify that $f^*$ reflects isomorphisms and that $\cA_{\ov Y}$ has coequalisers of reflexive $f^*$-contractible coequaliser pairs and $f^*$ preserves them.  This follows from the assumption that $\forg{f}^*$ is monadic so enjoys the corresponding properties, and the fact that $\forg{\,}$ preserves and reflects isomorphisms and creates coequalisers (\ref{forg-creates-limits}). 

%

The counit $\varepsilon_X:S_XC_X\to\id$ is an isomorphism, since the underlying counit
$\forg{\varepsilon_X}=\varepsilon_{\forg{X}}:S_{0,\forg{X}}C_{0,\forg{X}}\to \id$ is an isomorphism by assumption on $\forg{f}$. 

Moreover, for every $E\in\cP\simeq \cP_{\ov S(X)}$, the object $f_! C_X(E) \in  \cA_{\ov Y}$ is split by $f$, since by (1), this is equivalent to $\forg{f_! C_X(E)}=\forg{f}_! C_{0,\forg{X}}(\forg{E})$ being split by $\forg{f}$, and this holds by assumption on $\forg{f}$.
\end{proof}

The following theorem is an instance of Janelidze's \emph{Galois theorem} (\cite[Thm.~5.1.24]{borceux-janelidze}). However, in conjunction with the Comparison theorem below, one can think of it as a difference version of Janelidze's theorem.

\begin{theorem}[Galois theorem for categories of difference objects]\label{gen-diff-Galois-thm}
With Notation~\ref{gen-diff-context}, and with $\cA_0$ admitting pullbacks, let $f:X\to Y$ be a morphism in $\cA$ such that $\forg{f}$ is of relative Galois descent in $\cA_0$.
The category 
$$
\Split_Y(f)
$$
is is the full subcategory of $\cA_{\ov Y}$ consisting of objects $a$ such that $\forg{a}$ is split by $\forg{f}$.
 The difference categorical Galois groupoid is the groupoid object in $\cP$
$$
\begin{tikzcd}[cramped, column sep=normal, ampersand replacement=\&]
{\Gal[f]=\left(S(X\otimes_YX) \right.}\ar[yshift=2pt]{r}{} \ar[yshift=-2pt]{r}[swap]{} \&{\left.S(X)\right).}
\end{tikzcd}
$$
We have an equivalence of categories
$$
\Split_k(f)\simeq [\Gal[f],\cP],
$$
realised by the functor
$$
A\mapsto S(L\otimes_k A). 
$$
\end{theorem}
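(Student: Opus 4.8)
The plan is to deduce this theorem directly from the classical Janelidze Galois theorem (\cite[Thm.~5.1.24]{borceux-janelidze}) applied in the difference categories, using Lemma~\ref{gen-diff-split-normal} to transfer the hypotheses. First I would observe that $\cA = \diff\cA_0$ admits pullbacks because $\cA_0$ does and the forgetful functor $\forg{\,}$ creates limits (\ref{forg-creates-limits}); hence the relative adjunctions $S_X \dashv C_X$ between $\cA_{\ov X}$ and $\cP_{\ov S(X)}$ exist, as established in the lemma preceding the theorem. Next, since $\forg{f}$ is assumed to be of relative Galois descent in $\cA_0$, part~(2) of Lemma~\ref{gen-diff-split-normal} tells us that $f$ itself is of relative Galois descent in $\cA$. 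Janelidze's formalism (recalled in \ref{janelidze}) then applies verbatim inside $\cA$ and $\cP$: one obtains the internal groupoid $\Gal[f]$ in $\cP$ with object of objects $S(X)$ and object of morphisms $S(X\times_Y X)$, together with the equivalence $\Split_Y(f) \simeq [\Gal[f],\cP]$.

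Then I would identify the category $\Split_Y(f)$ concretely: by part~(1) of Lemma~\ref{gen-diff-split-normal}, an object $a\colon A\to Y$ of $\cA_{\ov Y}$ is split by $f$ precisely when $\forg{a}$ is split by $\forg{f}$ in $\cA_{0\ov\forg{Y}}$, so $\Split_Y(f)$ is exactly the full subcategory of $\cA_{\ov Y}$ described in the statement. Finally, to pin down the functor realising the equivalence, I would trace through the monadic proof sketched in \ref{janelidze}: the equivalence is realised (up to remembering the groupoid action) by $U = S_X\, f^*\colon \Split_Y(f)\to \cP_{\ov S(X)}$, and since $f^*(A) = X\times_Y A$ with pullbacks computed in $\cA_0$, and $S$ commutes with the forgetful functors by construction (\ref{gen-diff-context}), this is $A\mapsto S(X\times_Y A)$, which in the notation of the statement (with the pushout/tensor written $\otimes_Y$ on the affine side) is $A\mapsto S(L\otimes_k A)$ after the appropriate relabeling $X\rightsquigarrow L$, $Y\rightsquigarrow k$.

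The only genuinely substantive point — and the place I would be most careful — is verifying that the construction of $\Gal[f]$ and the monad-theoretic identification of $[\Gal[f],\cP]$ with the Eilenberg–Moore category $(\cP_{\ov S(X)})^{\mathbb T}$ go through with difference objects in place of ordinary objects; but this is precisely what the general machinery of \ref{janelidze} delivers once the three descent conditions hold, and those have been secured by Lemma~\ref{gen-diff-split-normal}. Everything else is bookkeeping: matching the abstract notation $(S,C,\cA,\cP,X,Y)$ of \ref{gen-diff-context} with the affine-difference-scheme notation $(S,C,(\diff\Rng)^\op,\diff\Prof,L,k)$ used in the statement, and noting that pullbacks in $\cA=(\diff\Rng)^\op$ are pushouts of difference rings, i.e.\ tensor products $L\otimes_k A$ with the induced difference structure (\ref{forg-creates-limits}).
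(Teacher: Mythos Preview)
Your proposal is correct and follows essentially the same route as the paper's own proof: invoke Lemma~\ref{gen-diff-split-normal} to see that $f$ is of relative Galois descent and that $\Split_Y(f)$ has the stated description, then apply Janelidze's categorical Galois theorem~\ref{janelidze} in the difference context~\ref{gen-diff-context}. The paper's proof is a terse two-line version of exactly this; your added remarks about pullbacks in $\cA$, the identification of the realising functor $U=S_X f^*$, and the $(X,Y)\rightsquigarrow(L,k)$ relabelling are all sound elaborations of what the paper leaves implicit.
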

 \begin{proof}
 By \ref{gen-diff-split-normal}, such an $f$ is of relative Galois descent, and the category of objects split by $f$ is as described, so the statement follows from Janelidze's categorical Galois theorem \ref{janelidze} applied in the difference context \ref{gen-diff-context}.
 \end{proof}

\begin{theorem}[Comparison theorem]\label{gen-gal-free-diff}
With notation of \ref{gen-diff-Galois-thm}, let $G=\Gal[f]$ be the difference Galois groupoid, and denote by
 $$
 \Phi:\Split_Y(f)\to [G,\cP], \ \ \  \Psi:[G,\cP]\to \Split_Y(f) 
$$
the adjoint pair realising the stated equivalence of categories. By the assumption on $\forg{f}$ and Janelidze's theory for $(\cA_0,\cP_0,S_0,C_0)$, we have that 
$$
\forg{G}=\Gal[\forg{f}],
$$
and the associated functors 
 $$
 \Phi_0:\Split_{\forg{Y}}(\forg{f})\to [\forg{G},\cP_0], \ \ \  \Psi_0:[\forg{G},\cP_0]\to \Split_{\forg{Y}}(\forg{f}) 
$$
also define an equivalence of categories.  The solid part of the diagram
$$
\begin{tikzcd}[column sep=normal, ampersand replacement=\&] 
{\Split_Y(f)}\arrow[r,yshift=2pt,"\Phi"] \arrow[d,xshift=-2pt,"\forg{}_{Y}",swap] \&{[G,\cP]} \arrow[l,yshift=-2pt,"\Psi"]\arrow[d,xshift=-2pt,"\forg{}_G",swap]\\
{\Split_{\forg{Y}}(\forg{f})}\arrow[r,yshift=2pt,"\Phi_0"] \arrow[u,xshift=2pt,right,dashed,"\psig{}_{Y}",swap]  \&{[\forg{G},\cP_0]} \arrow[l,yshift=-2pt,"\Psi_0"] \arrow[u,xshift=2pt,right,dashed,"\psig{}_G",swap]
\end{tikzcd}
$$
is commutative. 

Moreover, if $\cA_0$ has countable pullbacks, the dashed part of the diagram also commutes in the sense that
$$
\Phi(\psig{A_0}_{Y})\simeq \psig{\Phi_0 A_0}_G,
$$
as well as 
$$
\Psi(\psig{F_0}_G)\simeq\psig{\Psi_0 F_0}_{Y},
$$
for  a $\forg{G}$-action $F_0\in\cP_0$.
\end{theorem}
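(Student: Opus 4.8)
The plan is to deduce the commutativity statements from a single principle: the functors $\Phi,\Psi,\Phi_0,\Psi_0$ are \emph{canonically built} out of the adjunctions $S_X\dashv C_X$, $f^*$, $f_!$ and the monadic comparison/reconstruction functors, and each of these pieces is, by \ref{diff-adjunction} and \ref{forg-creates-limits}, obtained by ``applying the underlying construction levelwise and equipping the result with the induced $\sigma$''. Consequently the vertical forgetful functors $\forg{\,}_Y$ and $\forg{\,}_G$ intertwine the whole diagram, which is exactly the commutativity of the solid square; and since $\psig{\,}_Y,\psig{\,}_G$ are the right adjoints of $\forg{\,}_Y,\forg{\,}_G$ (existing by \ref{psig-prop} and \ref{psig-diags} once $\cA_0$ has countable pullbacks, noting $\cP=\diff\cP_0$ inherits them too), the dashed commutativity is a formal consequence once we check that $\Phi$ and $\Psi$ are \emph{right adjoints} in the appropriate sense, or more precisely that they commute with the relevant limits used to build $\psig{\,}$.

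First I would record the solid commutativity precisely. Recall from \ref{janelidze} that $\Phi = U = S_X f^*$ followed by the identification $(\cP_{\ov S(X)})^{\mathbb T}\simeq[\Gal[f],\cP]$, and $\Psi$ is the left adjoint described by the coequaliser $\mathrm{Coeq}(FUF(X)\rightrightarrows F(X))$ with $F=f_!C_X$. By \ref{gen-diff-split-normal} and its proof we already know $\forg{\,}\circ S_X\simeq S_{0,\forg{X}}\circ\forg{\,}$, $\forg{\,}\circ C_X\simeq C_{0,\forg{X}}\circ\forg{\,}$, $\forg{\,}\circ f^*\simeq \forg{f}^*\circ\forg{\,}$, and $\forg{\,}\circ f_!\simeq \forg{f}_!\circ\forg{\,}$. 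Since $\forg{\,}$ creates coequalisers (\ref{forg-creates-limits}), the coequaliser defining $\Psi$ is computed on underlying objects, so $\forg{\,}_Y\circ\Psi\simeq\Psi_0\circ\forg{\,}_G$; dually (and more directly) $\forg{\,}_G\circ\Phi\simeq\Phi_0\circ\forg{\,}_Y$. The only subtlety is matching the two descriptions of the category of $\mathbb T$-algebras: one must observe that the isomorphism $(\diff\cD)^{\mathbb T}\simeq\diff(\cD^{\mathbb T_0})$ from the proof of \ref{diff-adjunction}, applied with $\cD=\cP_{\ov S(X)}$, is exactly the identification of $[\Gal[f],\cP]$ with $\diff$ of $[\forg{G},\cP_0]$ — i.e.\ that forming a difference $\Gal[f]$-action is the same as a $\sigma_{\forg G}$-equivariant $\forg G$-action, which is \ref{diff-cats} and the remark following the definition of difference diagrams.

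For the dashed part, I would argue as follows. The functor $\Phi=U\circ(\text{comparison})$ is a \emph{right adjoint} (it is $S_Xf^*$, and more to the point it equals the monadic comparison functor of $\mathbb T$ composed with an equivalence, hence preserves all limits that exist). Right adjoints commute with right adjoints in the sense of mate/Beck–Chevalley calculus: given the adjunctions $\forg{\,}_Y\dashv\psig{\,}_Y$ and $\forg{\,}_G\dashv\psig{\,}_G$ and the natural isomorphism $\forg{\,}_G\circ\Phi\simeq\Phi_0\circ\forg{\,}_Y$ already established, the mate construction produces a canonical natural transformation $\Phi\circ\psig{\,}_Y\to\psig{\,}_G\circ\Phi_0$; this is an isomorphism provided $\Phi$ preserves the (countable) limits used to construct $\psig{\,}_Y$ — which it does, being a right adjoint / monadic comparison functor — and provided $\Phi_0$ is likewise well-behaved, which holds by the same reasoning in $(\cA_0,\cP_0,S_0,C_0)$. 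This yields $\Phi(\psig{A_0}_Y)\simeq\psig{\Phi_0A_0}_G$. For the statement about $\Psi$, I would not argue directly (a left adjoint need not commute with right adjoints); instead I would use that $\Psi$ is an \emph{equivalence} with quasi-inverse $\Phi$: apply $\Phi$ to the desired isomorphism, use the $\Phi$-case just proved together with $\Phi\Psi\simeq\id$ and $\Phi_0\Psi_0\simeq\id$, to get $\Phi\bigl(\Psi(\psig{F_0}_G)\bigr)\simeq\psig{F_0}_G\simeq\psig{\Phi_0\Psi_0F_0}_G\simeq\Phi\bigl(\psig{\Psi_0F_0}_Y\bigr)$, and conclude by fully-faithfulness of $\Phi$.

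The main obstacle I expect is purely bookkeeping: making the identification $[\Gal[f],\cP]\simeq\diff[\forg G,\cP_0]$ genuinely compatible with the monad description of both categories, so that ``$\Phi$ is the difference of $\Phi_0$'' is literally true rather than true up to an unchecked equivalence. Everything else — that $\psig{\,}$ is a right adjoint, that $\forg{\,}$ creates the relevant (co)limits, that $S_X,C_X,f^*,f_!$ are computed underlyingly — is already in the excerpt (\ref{forg-creates-limits}, \ref{psig-prop}, \ref{psig-diags}, \ref{gen-diff-split-normal}, \ref{diff-adjunction}), so the proof should reduce to assembling these with a mate/adjunction argument for the dashed square.
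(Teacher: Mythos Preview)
Your argument for the solid square is correct and more explicit than the paper's, which simply uses $\forg{\Psi(F)}\simeq\Psi_0\forg{F}$ without comment inside its Yoneda chain. For the dashed square the paper writes out that chain directly, stringing together the adjunctions $\Psi\dashv\Phi$, $\forg{\,}_Y\dashv\psig{\,}_Y$, the solid identity $\forg{\,}_Y\Psi\simeq\Psi_0\forg{\,}_G$, $\Psi_0\dashv\Phi_0$, and $\forg{\,}_G\dashv\psig{\,}_G$ to obtain $[G,\cP](F,\Phi\psig{A_0}_Y)\simeq[G,\cP](F,\psig{\Phi_0A_0}_G)$ naturally in $F$. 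Your mate formulation is equivalent in spirit, but the stated justification that the mate is an isomorphism ``because $\Phi$ preserves the countable limits used to construct $\psig{\,}_Y$'' does not work as written: $\psig{A_0}_Y$ is not a limit \emph{in} $\Split_Y(f)$ of objects to which $\Phi$ applies---the countable product defining it lives in $\cA_{0,\ov\forg{Y}}$, with the shift added afterwards---so there is no limit in the domain of $\Phi$ to preserve. The clean fix, for which you already have all the pieces, is uniqueness of right adjoints: $\Phi\circ\psig{\,}_Y$ is right adjoint to $\forg{\,}_Y\circ\Psi$, while $\psig{\,}_G\circ\Phi_0$ is right adjoint to $\Psi_0\circ\forg{\,}_G$, and these two left adjoints agree by the solid commutativity for $\Psi$ that you established. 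This is exactly what the paper's Yoneda chain computes. Your deduction of the $\Psi$-case from the $\Phi$-case via the equivalence is fine; the paper instead repeats the Yoneda argument using the reverse adjunction $\Phi\dashv\Psi$.
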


\begin{proof}
Using the difference and classical Galois correspondences and the adjunction $\Psi\dashv \Phi$, we have canonical isomorphisms
\begin{multline*}
[G,\cP](F,\Phi(\psig{A_0}_{Y}))
\simeq \cA_{Y}(\Psi(F),\psig{A_0}_{k})\simeq {\cA_0}_{\ov\forg{Y}}(\forg{\Psi(F)},A_0) \\
\simeq {\cA_0}_{\ov\forg{Y}}(\Psi_0\forg{F},A_0)\simeq [\forg{G},\cP_0](\forg{F},\Phi_0A_0)\simeq
[G,\cP](F,\psig{\Phi_0A_0}_G)
\end{multline*}
for all $F\in [G,\cP]$, hence $\Phi$ has the required property. To verify the property of $\Psi$, we perform an analogous calculation using the reverse adjunction $\Phi\dashv\Psi$.  
\end{proof}

\begin{corollary}\label{gal-dif-corr}
With assumptions of \ref{gen-gal-free-diff} and notation from \ref{dir-pres-diag-adj}, the diagram
$$
\begin{tikzcd}[column sep=normal, ampersand replacement=\&] 
{\Split_Y(f)}\arrow[r,yshift=2pt,"\Phi"] \arrow[d,xshift=-2pt,"\forg{}_{Y}^\Corr",swap] \&{[G,\cP]} \arrow[l,yshift=-2pt,"\Psi"]\arrow[d,xshift=-2pt,"\forg{}_G^\Corr",swap]\\
{\Corr_{Y}(\Split_{\forg{Y}}(\forg{f}))}\arrow[r,yshift=2pt,"\Phi_0"] \arrow[u,xshift=2pt,right,"\psig{}_{Y}",swap]  \&{\Corr_G([\forg{G},\cP_0])} \arrow[l,yshift=-2pt,"\Psi_0"] \arrow[u,xshift=2pt,right,"\psig{}_G",swap]
\end{tikzcd}
$$
commutes.
\end{corollary}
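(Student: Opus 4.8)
The plan is to split the assertion into its two natural halves and dispatch them in turn. The \emph{outer square} says that the correspondence-valued forgetful functors commute with the equivalence functors, i.e.\ $\forg{\,}_G^\Corr\circ\Phi\simeq\Phi_0\circ\forg{\,}_Y^\Corr$ and $\forg{\,}_Y^\Corr\circ\Psi\simeq\Psi_0\circ\forg{\,}_G^\Corr$, where $\Phi_0,\Psi_0$ on the bottom row are the functors induced componentwise on correspondence categories by the classical Galois equivalence. The \emph{inner square} says the analogous commutation holds for the right adjoints, $\Phi\circ\psig{\,}_Y\simeq\psig{\,}_G\circ\Phi_0$ and $\Psi\circ\psig{\,}_G\simeq\psig{\,}_Y\circ\Psi_0$. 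I would establish the outer square first and then deduce the inner square from it by a Yoneda argument mirroring the proof of Theorem~\ref{gen-gal-free-diff} line by line.

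For the outer square, recall that $\forg{\,}_Y^\Corr$ sends a difference object $A=(\forg{A},\sigma_A)$ split by $f$ to the self-correspondence $\forg{A}\xleftarrow{\id}\forg{A}\xrightarrow{\sigma_A}\forg{A}$, while $\forg{\,}_G^\Corr$ sends a difference diagram $F=(\cF,\sigma_\cF)$ on $G$ to $\forg{F}\xleftarrow{\id}\forg{F}\xrightarrow{\sigma_F}\sigma_\C^*\forg{F}$. By the solid part of Theorem~\ref{gen-gal-free-diff} the square with $\forg{\,}_Y,\forg{\,}_G$ in place of $\forg{\,}_Y^\Corr,\forg{\,}_G^\Corr$ already commutes, giving a natural isomorphism $\forg{\Phi(A)}\simeq\Phi_0(\forg{A})$. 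Since $\Phi$ is realised by $A\mapsto S(X\otimes_YA)$ and $\forg{\,}$ commutes with $S$ (by construction, \ref{gen-diff-context}) and with $\otimes_Y$ (pullbacks are computed in $\cA_0$, \ref{forg-creates-limits}), the difference operator on $\Phi(A)$ is induced by $\sigma_X\otimes\sigma_A$, so this isomorphism automatically intertwines $\sigma_{\Phi(A)}$ with $\Phi_0(\sigma_A)$; extracting the associated spans yields the commutation of the outer square for $\Phi$, and the case of $\Psi$ follows symmetrically (or from the fact that $\Phi,\Psi$ and $\Phi_0,\Psi_0$ are mutually quasi-inverse).

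The inner square is then a formal adjunction chase. Fixing $A_0\in\Corr_Y(\Split_{\forg{Y}}(\forg{f}))$ and $F\in[G,\cP]$, I would chain canonical isomorphisms
\[
[G,\cP](F,\Phi\psig{A_0}_Y)\simeq\Split_Y(f)(\Psi F,\psig{A_0}_Y)\simeq\Corr_Y(\Split_{\forg{Y}}(\forg{f}))(\forg{\,}_Y^\Corr\Psi F,\,A_0),
\]
using the equivalence $\Psi\dashv\Phi$ together with the adjunction $\forg{\,}_Y^\Corr\dashv\psig{\,}_Y$ of \ref{dir-gen-adj}; then rewrite $\forg{\,}_Y^\Corr\Psi F\simeq\Psi_0\,\forg{\,}_G^\Corr F$ using the outer square and continue
\[
\simeq\Corr_G([\forg{G},\cP_0])(\forg{\,}_G^\Corr F,\,\Phi_0A_0)\simeq[G,\cP](F,\psig{\Phi_0A_0}_G)
\]
via the classical Galois correspondence extended componentwise ($\Psi_0\dashv\Phi_0$) and the adjunction $\forg{\,}_G^\Corr\dashv\psig{\,}_G$ of \ref{dir-pres-diag-adj}. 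All these isomorphisms are natural in $F$, so Yoneda gives $\Phi\psig{A_0}_Y\simeq\psig{\Phi_0A_0}_G$, and the statement for $\Psi$ is obtained from the same chain with the roles of $\Phi/\Psi$ and $\Phi_0/\Psi_0$ interchanged.

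The only step that is not purely mechanical is the outer square: $\Phi$ and $\Psi$ are a priori pinned down only up to natural isomorphism (via the monadicity comparison functors of \ref{janelidze}), so one must check that they respect passage to correspondences and not merely the underlying objects — concretely, that the isomorphism $\forg{\Phi(A)}\simeq\Phi_0(\forg{A})$ is compatible with the $\sigma$-operators. As indicated above this reduces to the functoriality of $S$ and of pullbacks in the difference context already fixed in \ref{gen-diff-context} and \ref{gen-diff-Galois-thm}; once it is in place, the remainder of the argument is identical in form to the proof of Theorem~\ref{gen-gal-free-diff}.
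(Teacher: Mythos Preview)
Your argument is correct, but it takes a different route from the paper's. You rerun the Yoneda chase of Theorem~\ref{gen-gal-free-diff} with the correspondence-level adjunctions $\forg{\,}^\Corr\dashv\psig{\,}$ in place of $\forg{\,}\dashv\psig{\,}$, after first checking the outer square directly. The paper instead avoids repeating that chase: it recalls from the proof of \ref{dir-gen-adj} that $\psig{A_0;C_0}_Y$ is \emph{constructed} as an equaliser $\Eq(\psig{C_0}_Y\rightrightarrows\psig{A_1}_Y)$ of freely $\diff$generated objects, observes that $\Phi$ (being an equivalence) preserves equalisers, and applies the already-proven free-generation case $\Phi(\psig{-}_Y)\simeq\psig{\Phi_0(-)}_G$ from \ref{gen-gal-free-diff} to each term, recognising the result as $\psig{\Phi_0A_0;\Phi_0C_0}_G$.

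Your approach has the advantage of being uniform with \ref{gen-gal-free-diff} and of explicitly treating the outer square, which the paper leaves implicit. The paper's approach is shorter because it reduces the correspondence case to the free case plus limit-preservation, rather than reproving the adjunction compatibility from scratch.
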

\begin{proof}
By \ref{gen-gal-free-diff}, the functors $\Phi$ and $\Psi$ establishing the equivalence of categories commute with free $\diff$generation as well as limits and colimits. Hence, using the construction of the directly $\diff$presented object associated to a $Y$-correspondence $(A_0,C_0)=(A_0\leftarrow C_0\rightarrow A_1)$ in $\cA_{\ov Y}$ given in the proof of \ref{dir-gen-adj}, we have
\begin{multline*}
\Phi(\psig{A_0;C_0}_{Y})=\Phi\left(\Eq(\psig{C_0}_{Y}\doublerightarrow{}{}\psig{A_1}_{Y})\right)\simeq\Eq\left(\Phi(\psig{C_0}_{Y})\doublerightarrow{}{}\Phi(\psig{A_1}_{Y})\right)\\
\simeq\Eq\left(\psig{\Phi_0C_0}_{G}\doublerightarrow{}{}\psig{\Phi_0A_1}_{G}\right)\simeq \psig{\Phi_0A_0,\Phi_0C_0}_G,
\end{multline*}
so $\Phi$ transforms a directly presented object of $\Split_Y(f)$ into a directly presented $G$-action. The converse is analogous using $\Psi$.
\end{proof}

\subsection{Magid's Galois theory for commutative rings}\label{ss:magid-rings}

{Our next big goal is to apply Theorem \ref{gen-diff-Galois-thm} with $\cA$ the category of difference rings, i.e., with $\cA_0$ the category of rings. For this purpose we recall the categorical Galois theory of rings (\cite{magid}, \cite[Chapter 4]{borceux-janelidze}) }

\begin{definition}\label{class-gal-setup}
\begin{enumerate}
\item Let $$\cA_0=\Rng^\op$$ be the opposite category of the category of commutative unital rings, which we {may} think of as the category of affine schemes.
\item Let $$\cP_0$$ denote the category of profinite spaces.
\item The \emph{Pierce spectrum} functor {(\cite[Def. 4.2.4]{borceux-janelidze})}  $$S_0:\cA_0\to \cP_0, \ \ S_0(R)=\spec(E(R))$$ sends a commutative ring $R$ to the Boolean spectrum of ultrafilters of the Boolean algebra $E(R)$ of idempotents of $R$. By Stone duality {(\cite[Section~4.1]{borceux-janelidze})}, it is a profinite space, and it can also be regarded as the space $$\pi_0(\spec(R))$$ of connected components of the affine scheme $\spec({R})$.

The complement $P_x$ of an ultrafilter $x\in S_0(R)$ is a prime ideal in the Boolean algebra $E(X)$, so the ideal $P_x R$ of the ring $R$ is a maximal \emph{regular} ideal (maximal in the poset of ideals generated by idempotent elements), so the ring
$$
R_x=R/P_x R
$$
is connected/indecomposable, i.e., has no nontrivial idempotents. 

\item The \emph{ring of continuous functions} functor
$$
C_0:\cP_0\to \cA_0, \ \ C_0(X)=\text{\rm Cont}(X,\Z)
$$
sends a profinite space $X$ to the ring of continuous functions from $X$ to the discrete ring $\Z$.
\end{enumerate}
\end{definition}

\begin{fact}[{\cite[4.7.15]{borceux-janelidze}}]\label{gal-general-rings}
We have an adjunction 
$$
S_0\dashv C_0
$$
which gives rise to a categorical Galois theory for rings as follows.
 If $R\to L$ is a ring homomorphism such that the associated morphism $f:L\to R$ in $\cA_0$ is  of relative Galois descent, we have an equivalence of categories 
$$
\Split_R(f)\simeq [\Gal[f],\cP_0]
$$
where the subcategory $\Split_R(f)$ of $\cA_0$ is the opposite category of $R$-algebras split by $L$, and
$$
\Gal[f]
$$
is the internal groupoid in the category of profinite spaces with the object of objects $S_0(L)$ and the object of morphisms $S_0(L\otimes_RL)$. The equivalence is realised by the functor 
$$
\Split_R(f) \to {\cP_0}_{\ov S_0(L)}, \ \ \ A\mapsto {S_0}_L(L\otimes_R A).
$$
\end{fact}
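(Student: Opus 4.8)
The plan is to obtain this statement as a direct application of Janelidze's categorical Galois theorem recalled in \ref{janelidze} to the adjoint pair $(S_0,C_0)$. There are three things to do: verify the adjunction $S_0\dashv C_0$, observe that $\cA_0$ admits pullbacks, and then translate the abstract conclusion of \ref{janelidze} into the explicit tensor-product description asserted here. Since this Fact is quoted from \cite[Chapter 4]{borceux-janelidze}, the bulk of the argument is imported; what I would make explicit is why the hypotheses of \ref{janelidze} are met and how the abstract Galois groupoid acquires its concrete shape.

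First I would establish $S_0\dashv C_0$. Writing $S_0=\spec\circ E$ with $E\colon\Rng\to\Bool$ the functor of idempotents, and noting that a $\Z$-valued continuous function on a profinite space has finite image and is therefore a $\Z$-combination of indicators of clopen sets, one checks that $C_0(X)=\mathrm{Cont}(X,\Z)$ solves the universal problem of the initial commutative ring equipped with a Boolean-algebra map from the clopen algebra $\mathrm{Clopen}(X)$ into its idempotents. Combining this with Stone duality between $\Bool$ and $\Prof$ produces, naturally in $A\in\cA_0$ and $X\in\cP_0$, bijections
$$
\cA_0(A,C_0(X))=\Rng(\mathrm{Cont}(X,\Z),A)\simeq\Bool(\mathrm{Clopen}(X),E(A))\simeq\Prof(\spec E(A),X)=\cP_0(S_0(A),X),
$$
which is the required adjunction (this is \cite[Section 4.2]{borceux-janelidze}).

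Next, $\cA_0=\Rng^\op$ has all pullbacks, since $\Rng$ has all pushouts, computed by tensor products over the common base; in particular the pullback in $\cA_0$ of the cospan $L\xrightarrow{f}R\xleftarrow{f}L$ is $L\otimes_RL$. Hence \ref{janelidze} applies: for any $f\colon L\to R$ in $\cA_0$ of relative Galois descent it yields an equivalence $\Split_R(f)\simeq[\Gal[f],\cP_0]$, where $\Gal[f]$ is the internal groupoid in $\cP_0$ with object of objects $S_0(L)$ and object of morphisms $S_0(L\times_RL)=S_0(L\otimes_RL)$, whose source, target, identity, composition and inversion maps are the images under $S_0$ of the standard ring maps in the Amitsur cosimplicial object of $L/R$ (the two inclusions $L\rightrightarrows L\otimes_RL$, the multiplication $L\otimes_RL\to L$, the insertion $a\otimes b\mapsto a\otimes 1\otimes b$ together with the identification $S_0(L\otimes_RL\otimes_RL)\simeq S_0(L\otimes_RL)\times_{S_0(L)}S_0(L\otimes_RL)$, and the flip $a\otimes b\mapsto b\otimes a$). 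Finally I would unwind the realization functor $A\mapsto U(A)=S_X\,f^*(A)$ of \ref{janelidze} with $X=L$: base change $f^*$ along $f$ sends an $R$-algebra $A$ to the $L$-algebra $L\otimes_RA$, so $U(A)={S_0}_L(L\otimes_RA)\in{\cP_0}_{\ov S_0(L)}$, exactly as stated.

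I expect the one genuinely substantial ingredient to be the descent-theoretic core of \ref{janelidze}, which I would import rather than reprove: the identification of the monad on ${\cP_0}_{\ov S_0(L)}$ induced by the adjunction $f_!\,C_L\dashv S_L\,f^*$ (restricted to $\Split_R(f)$, where $f^*$ is monadic by Beck's criterion) with the monad whose Eilenberg--Moore category is precisely the category $[\Gal[f],\cP_0]$ of internal $\Gal[f]$-actions; this is where one also sees that $S_0$ takes the relevant pushouts of copies of $L$ to pullbacks, which is what makes the groupoid structure well defined. Everything else — that $C_0$ really lands in $\cA_0$, the clopen-algebra description of the idempotents of $\mathrm{Cont}(X,\Z)$, and the computation of pullbacks in $\cA_0$ as tensor products — is routine bookkeeping.
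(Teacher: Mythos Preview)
The paper does not prove this statement; it is stated as a \emph{Fact} imported verbatim from \cite[4.7.15]{borceux-janelidze}, with no accompanying argument. Your proposal correctly sketches the proof one finds in that reference: verify the Pierce adjunction $S_0\dashv C_0$ via Stone duality, note that $\cA_0=\Rng^\op$ has pullbacks given by tensor products, and then specialise the abstract Janelidze theorem of \ref{janelidze} to this adjunction, so there is nothing to compare and your outline is sound.
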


Magid \cite{magid} identifies the class of morphisms of relative Galois descent and corresponding split extensions as follows.
\begin{definition}\label{def-clss}
Let $R$ be a ring, and let an $R$-algebra $A$ be an \emph{extension} of $R$, i.e., the structure homomorphism $R\to A$ is injective. We say that the extension $A/R$ is 
\begin{enumerate}
\item \emph{separable}, if $A$ is a projective $(A\otimes_RA)$-module;
\item \emph{strongly separable}, if it is separable and $A$ is a finite projective $R$-module;
\item \emph{locally strongly separable}, if it is a colimit of strongly separable extensions of $R$;
\item \emph{componentially locally strongly separable} (abbreviated \emph{clss}), if for each $x\in S_0(R)$, $A_x=R_x\otimes_RA$ is a locally strongly separable extension of $R_x$.  
\end{enumerate}
\end{definition}

\begin{remark}\label{str-sep-fin-etale}
In algebraic geometry terms, $A$ is a strongly separable extension of $R$ if and only if $\spec(A)\to\spec(R)$ is a finite \'etale morphism (\cite[page 128]{magid}). Hence, $A/R$ is locally strongly separable if and only if $\spec(A)\to\spec(R)$ is pro-(finite \'etale). 
\end{remark}

\begin{fact}[{\cite[Theorem~6.1]{magid}}, {\cite[Section~4]{janelidze}}]\label{magid-th}
If $L$ is an auto-split clss extension of $R$, then the associated morphism $f:L\to R$ in $\cA_0$ is of relative Galois descent,
and the category $\Split_R(f)$ consists of those $R$-algebras $A$ such that $L\otimes_RA$ is clss over $L$. 
The associated Galois groupoid
$$
G=\Gal[f]
$$
is a profinite groupoid (a limit of finite groupoids with surjective transition maps).

Let $F_0\xrightarrow{\gamma_0}G_0$ be the total space of a $G$-action $F$. Following \cite[Definition~38]{magid}, $F$ is called
\begin{enumerate}
\item \emph{relatively finite}, if for each $x\in S_0(R)$, there is a finite set $C_x$ such that $\gamma_0^{-1}(S_0(L_x))$ is isomorphic to $S_0(L_x)\times C_x$ as an $S_0(L_x\otimes_{R_x}L_x)$-space. 
\item \emph{constant relatively finite}, if it is relatively finite with all $C_x$ equal.
\item \emph{pro-relatively finite}, if it is a limit of relatively finite $G$-actions.
\end{enumerate} 
The equivalence of categories from \ref{gal-general-rings} restricts to the
equivalence of full subcategories
$$
\Split^\text{\rm clss}_R(f)\simeq [\Gal[f],\cP_0]_\text{\rm prf}
$$
of the opposite category of clss extensions of $R$ split by $L$ and the category of pro-relatively finite profinite $\Gal[f]$-actions. Moreover, strongly separable objects correspond to constant relatively finite $G$-actions.
\end{fact}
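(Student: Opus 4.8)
The plan is to deduce the whole statement from Janelidze's abstract Galois theorem (recalled in \ref{janelidze}, applied to rings in \ref{gal-general-rings}) together with Magid's concrete analysis of clss extensions, so that the real content is threefold: (a) checking that an auto-split clss extension $L/R$ makes $f\colon L\to R$ a morphism of relative Galois descent in $\cA_0=\Rng^\op$; (b) identifying $\Split_R(f)$ and observing that $\Gal[f]$ is profinite; and (c) cutting the resulting equivalence down to the stated full subcategories with the stated correspondence of distinguished objects. Throughout I would use that in $\cA_0=\Rng^\op$ the functor $f^*$ is extension of scalars $-\otimes_RL$, that $\times_R$ is computed by $\otimes_R$, and that $S_0=\spec\circ E$ with $E$ the Boolean algebra of idempotents, so that $S_0$ converts filtered colimits of rings into cofiltered limits of Stone spaces.

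For (a), I would verify the three defining conditions of relative Galois descent. Condition (1), that $f^*$ is monadic, is faithfully flat descent of algebras once we know $L$ is faithfully flat over $R$: flatness because, by \ref{str-sep-fin-etale}, a clss extension is componentwise a filtered colimit of finite \'etale (strongly separable) extensions, hence flat, and flatness is local on $\spec(R)$ while the $R_x$ cover it; faithful flatness because $R\to L$ is injective and $\spec(L)\to\spec(R)$ is surjective on connected components, so no maximal ideal expands to the unit ideal. Condition (2), that $\epsilon^X\colon S_XC_X\to\id$ is an isomorphism, uses nothing about $L$: by Stone duality the idempotents of $\text{\rm Cont}(X,\Z)$ recover the clopen algebra of $X$, so $C_0$ is fully faithful and $S_0C_0\cong\id$, and full faithfulness is inherited by the slice functor $C_X$. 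Condition (3), that $f_!\,C_X(e)$ is split by $f$ for every $e\colon E\to S_0(L)$, is where the auto-split hypothesis is used: describing the image of $C_L$ as the ``componentially constant'' $L$-algebras and using that $L\otimes_RL$ itself lies in that image, one checks that $L\otimes_R\bigl(f_!C_L(e)\bigr)$ is again componentially constant. I expect this last step, which rests on Magid's bookkeeping with the Boolean spectrum and idempotent decompositions, to be the main obstacle; it is the precise point at which ``auto-split clss'' plays the role of ``normal/Galois''.

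For (b), Janelidze's theorem (\ref{janelidze}, \ref{gal-general-rings}) now yields $\Split_R(f)\simeq[\Gal[f],\cP_0]$ with $\Gal[f]$ the internal groupoid in profinite spaces on objects $S_0(L)$ and morphisms $S_0(L\otimes_RL)$, realised by $A\mapsto S_{0,L}(L\otimes_RA)$. To see $\Gal[f]$ is profinite, I would write $L$ componentwise as the filtered colimit of its finite \'etale subextensions and apply the anti-equivalence of Stone duality (filtered colimits of rings to cofiltered limits of finite sets, injections to surjections), exhibiting $S_0(L)$ and $S_0(L\otimes_RL)$ as cofiltered limits of finite sets with surjective transition maps, compatibly with all the groupoid structure maps, so $\Gal[f]$ is a limit of finite groupoids with surjective transitions. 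The identification of the objects of $\Split_R(f)$ as those $A$ with $L\otimes_RA$ clss over $L$ is again Magid: one inclusion is immediate since algebras of the form $\text{\rm Cont}(-,\Z)$ are componentially trivial and hence clss, and the converse uses that the auto-split Galois extension $L$ ``absorbs'' any further componentwise locally strongly separable extension, so $L\otimes_RA$ clss over $L$ already forces it into the image of $C_L$.

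For (c), I would match the finiteness conditions across the equivalence $A\mapsto S_0(L\otimes_RA)$. A strongly separable extension is finite \'etale of locally constant rank, and after base change to $L$ and passage to Pierce spectra this is precisely a $G$-action whose anchor $\gamma_0$ has finite fibres of constant cardinality over each $S_0(L_x)$, i.e.\ a constant relatively finite action; since $S_0$ turns filtered colimits of rings into cofiltered limits of spaces, a locally strongly separable extension (a filtered colimit of strongly separable ones) goes to a cofiltered limit of relatively finite actions, i.e.\ a pro-relatively finite action; and the adverb ``componentially'' on the algebra side is exactly the per-component (``relatively'', over $S_0(R)$) quantifier built into Magid's Definition~38. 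Assembling these observations, the equivalence of \ref{gal-general-rings} restricts to $\Split^{\text{\rm clss}}_R(f)\simeq[\Gal[f],\cP_0]_{\text{\rm prf}}$ with strongly separable objects corresponding to constant relatively finite actions, which is the assertion.
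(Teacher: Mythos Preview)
The paper does not prove this statement: it is recorded as a \emph{Fact} with citations to \cite[Theorem~6.1]{magid} and \cite[Section~4]{janelidze}, and no proof environment follows. So there is no ``paper's own proof'' to compare against; the authors defer entirely to the literature.

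Your sketch is a reasonable reconstruction of how the cited sources assemble the result, and your three-part decomposition (verify relative Galois descent, identify $\Split_R(f)$ and profiniteness, match finiteness conditions) is exactly the structure one finds in Magid and in Borceux--Janelidze's account. A couple of points where your outline is thinner than the actual arguments: for condition~(1), faithful flatness of a clss extension is not quite as immediate as you suggest, since ``componentially'' is a condition over the Pierce spectrum rather than the Zariski spectrum, and Magid's route to monadicity goes through his analysis of Boolean localizations rather than a bare flatness-is-local argument; for condition~(3) you correctly flag that this is ``the main obstacle'' and lean on Magid's bookkeeping, which is honest but means the real content is still in the reference. Your treatment of~(c) is accurate in spirit, though the precise matching of ``strongly separable'' with ``constant relatively finite'' uses Magid's structure theory of separable algebras over connected rings (that a strongly separable algebra over $R_x$ has a well-defined rank) rather than just the phrase ``locally constant rank''. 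None of these are errors in your plan; they are places where the plan points to the cited sources, which is also what the paper does.
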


\subsection{Magid's separable closure of a ring}\label{ss:magid-pi1}

\begin{definition}[{{\cite[Chapter 5]{magid}}}]\label{sep-cl}
\begin{enumerate}
\item A ring $L$ is \emph{separably closed} if, for every clss extension $M/L$, there is an $L$-algebra map $M\to L$.
\item A clss extension $L/R$ is \emph{minimal} if every $R$-algebra map $L\to M$ to a clss extension $M/R$ is a monomorphism.
\item A ring $L$ is a \emph{separable closure} of $R$, provided $L$ is separably closed and $L/R$ is a minimal clss extension.
\end{enumerate}
\end{definition}
{Note that in the case that $R$ is a field, the above corresponds to the usual notion of separable closure.} 

\begin{fact}[{\cite[{Thm. }5.3]{magid}}]\label{sep-cl-exists}
A commutative ring $R$ has a separable closure. Any two separable closures of $R$ are isomorphic as $R$-algebras. 
\end{fact}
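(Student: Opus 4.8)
This is Magid's \cite[Theorem~5.3]{magid}; I sketch the strategy one would follow.

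For \emph{existence}, the plan is a Zorn's lemma argument over clss extensions of $R$. The first point to secure is that the class of clss extensions of $R$ is, up to isomorphism, essentially a set: a strongly separable extension $A/R$ is a finite \'etale cover of $\spec(R)$ (\ref{str-sep-fin-etale}), so $|A|$ is bounded in terms of $|R|$; a locally strongly separable extension is a filtered colimit of such; and a clss extension is built from locally strongly separable extensions of the connected rings $R_x$ continuously over the profinite space $S_0(R)$. All told, one extracts a cardinal $\kappa$, depending only on $|R|$, bounding $|A|$ for all \emph{minimal} clss extensions $A/R$. On the poset of isomorphism classes of minimal clss extensions of $R$ of cardinality $\le\kappa$ one checks that chains have upper bounds --- a filtered colimit of clss extensions is again clss, because the defining condition is tested on the connected rings $R_x$ and base change $R_x\otimes_R(-)$ commutes with filtered colimits --- so Zorn yields a maximal object $L$. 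Finally, $L$ is separably closed: a clss extension $M/L$ without an $L$-algebra retraction would, after passing to a minimal clss sub-quotient over $R$, contradict maximality of $L$, using transitivity of the clss condition (verified component by component, tracking the refinement $S_0(M)\to S_0(L)\to S_0(R)$ of connected components). The concrete route one may prefer is a variant: first handle the case where $S_0(R)$ is finite, so $R$ is a finite product of connected rings and $L$ is the product of their ordinary separable closures, and then realize a general $R$ as an inverse limit of quotients with finite Pierce spectrum and pass the construction to the limit.

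For \emph{uniqueness}, let $L$ and $L'$ both be separable closures of $R$. Base change along $R\to L$ sends the clss extension $L'/R$ to a clss extension $L\otimes_RL'$ of $L$; since $L$ is separably closed this admits an $L$-algebra retraction $L\otimes_RL'\to L$, and precomposing with $L'\to L\otimes_RL'$ gives an $R$-algebra homomorphism $\varphi\colon L'\to L$. Minimality of $L'/R$ forces $\varphi$ to be a monomorphism, and symmetrically one obtains a monomorphism $\psi\colon L\to L'$. Then $\psi\circ\varphi$ is an $R$-algebra endomorphism of $L'$; because $L'$ is a filtered colimit of finite \'etale layers over the connected rings $R_x$, and an injective endomorphism of a finite \'etale algebra over a connected ring is bijective by a rank count, $\psi\circ\varphi$ --- and likewise $\varphi\circ\psi$ --- is an automorphism. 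It then follows formally that $\varphi$ and $\psi$ are inverse isomorphisms, so $L\cong L'$ over $R$.

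The \emph{main obstacle} lies in the existence half, in the interplay between algebra and the profinite topology of $S_0(R)$: one must guarantee that the component-wise separable closures $\overline{R_x}$ of the connected rings $R_x$ glue into a single $R$-algebra whose Boolean algebra of idempotents is a continuous refinement of $E(R)$, rather than an uncontrolled set-indexed product. Managing this --- via the inverse-limit-over-finite-Pierce-spectra device, or by a direct continuity argument --- together with the cardinality bound needed to invoke Zorn's lemma, is the substance of Magid's theorem.
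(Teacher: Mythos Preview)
The paper does not prove this statement: it is recorded as a \emph{Fact} with a bare citation to Magid \cite[Thm.~5.3]{magid}, so there is no proof in the paper to compare against. Your sketch therefore supplies strictly more than the paper does.

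That said, your uniqueness argument aligns with what the paper implicitly relies on elsewhere: the proof of Lemma~\ref{si-lift-sep-cl} begins ``As in the proof of uniqueness of separable closure \cite[5.3]{magid}\ldots'' and uses exactly your device of forming $L_1\otimes_k L_0$, observing it is clss over $L_0$, and extracting a retraction from separable closedness. So the structure you outline for uniqueness is the one the paper takes for granted. One small correction: from $\psi\circ\varphi$ and $\varphi\circ\psi$ each being automorphisms you deduce that $\varphi$ and $\psi$ are isomorphisms, but not that they are \emph{mutual inverses}; this is harmless since only $L\cong L'$ is claimed. Also, your justification that an injective $R$-endomorphism of $L'$ is bijective via ``filtered colimit of finite \'etale layers'' needs care: $L'$ is only \emph{componentially} locally strongly separable, so the rank argument must be run over each $R_x$ separately and then reassembled, which is essentially the ``main obstacle'' you yourself flag for existence.

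For existence your Zorn-based outline is in the right spirit, and your identification of the genuine difficulty --- controlling the gluing of the componentwise closures $\overline{R_x}$ over the profinite base $S_0(R)$ --- is accurate. Since the paper offers no argument here, there is nothing further to compare.
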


\begin{definition}[{{\cite[Def. 39]{magid}}}] \label{def: fundamental groupoid}
Let $\bar{R}$ be a separable closure of a ring $R$. The \emph{fundamental groupoid} is the profinite groupoid
$$
\pi_1(R,\bar{R})=\Gal[\bar{R}\to R]
$$
as in \ref{magid-th}.
\end{definition}

\begin{fact}
Let $\bar{R}$ be a separable closure of a ring $R$. 
\begin{enumerate}
\item Galois correspondence \ref{gal-general-rings} for the extension $\bar{R}/R$ gives an anti-equivalence of categories between the category of $R$-algebras $A$ such that $\bar{R}\otimes_R A$ is clss over $\bar{R}$ and the category of profinite $\pi_1(R,\bar{R})$-actions;
\item Galois correspondence \ref{magid-th} for $\bar{R}/R$ gives an anti-equivalence of categories between the category of clss extensions of $R$ and the category of pro-relatively finite $\pi_1(R,\bar{R})$-actions.
\end{enumerate}
 
\end{fact}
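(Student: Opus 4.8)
The plan is to recognise $f\colon \bar R\to R$ (as a morphism in $\cA_0=\Rng^\op$) as an instance of the situation of Fact~\ref{magid-th} and then read off both assertions directly. The main point — essentially the only nontrivial thing to check — is that $\bar R$ is an \emph{auto-split clss extension} of $R$, so that Fact~\ref{magid-th} applies. That $\bar R/R$ is clss is part of the definition of a separable closure (Definition~\ref{sep-cl}(3)). For auto-splitness I would argue in two steps: first, $\bar R\otimes_R\bar R$ is clss over $\bar R$, because the clss property is preserved under base change along $R\to\bar R$ (componentially this reduces to the stability of strong separability, i.e.\ of finite \'etale morphisms, under base change, together with the commutation of tensoring with filtered colimits); second, since $\bar R$ is \emph{separably closed}, any clss extension of $\bar R$ is trivial, i.e.\ lies in the essential image of $C_0$, by Magid's structural analysis of separably closed rings \cite[Chapter~5]{magid}. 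Combining the two, $\bar R\otimes_R\bar R\cong C_0(T)$ for some profinite space $T$ over $S_0(\bar R)$, which is exactly the statement that $f$ is split by itself. Hence $f$ is of relative Galois descent, and by Definition~\ref{def: fundamental groupoid} its Galois groupoid is $\Gal[f]=\pi_1(R,\bar R)$.

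Granting this, part (1) is immediate. Fact~\ref{gal-general-rings} gives the equivalence $\Split_R(f)\simeq[\pi_1(R,\bar R),\cP_0]$, while Fact~\ref{magid-th} identifies $\Split_R(f)$, as a full subcategory of $\cA_{0\ov R}$, with the objects $A$ for which $\bar R\otimes_R A$ is clss over $\bar R$. Since $\cA_{0\ov R}=\Rng^\op_{\ov R}=(R/\Rng)^\op$ is the opposite of the category of $R$-algebras, the equivalence transcribes into the claimed anti-equivalence between the category of $R$-algebras $A$ with $\bar R\otimes_R A$ clss over $\bar R$ and the category of profinite $\pi_1(R,\bar R)$-actions.

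For part (2) I would pass to the refinement recorded in Fact~\ref{magid-th}, namely the equivalence $\Split^\text{\rm clss}_R(f)\simeq[\pi_1(R,\bar R),\cP_0]_\text{\rm prf}$, where $\Split^\text{\rm clss}_R(f)$ is the opposite of the category of clss extensions of $R$ that are split by $\bar R$. It then suffices to see that \emph{every} clss extension $M/R$ is split by $\bar R$; by the characterisation of split objects used above, this means exactly that $\bar R\otimes_R M$ is clss over $\bar R$, which is again the base-change permanence of the clss property applied to $M/R$. Conversely, every object of $\Split^\text{\rm clss}_R(f)$ is by definition a clss extension of $R$. Thus $\Split^\text{\rm clss}_R(f)$ is precisely the opposite of the whole category of clss extensions of $R$, and the displayed equivalence becomes the desired anti-equivalence with pro-relatively finite $\pi_1(R,\bar R)$-actions.

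The step I expect to be the crux is the auto-splitness of $\bar R/R$, concretely the passage from the mere mapping property defining ``separably closed'' (every clss extension admits a section) to the structural statement that a clss extension of a separably closed ring is a ``profinite product of copies of the base'', i.e.\ of the form $C_0(T)$. Once this is taken from \cite[Chapter~5]{magid}, the remainder is formal bookkeeping with Facts~\ref{gal-general-rings} and~\ref{magid-th} and the routine permanence of the clss property under base change.
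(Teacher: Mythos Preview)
The paper does not give a proof of this Fact; it is stated without argument as an immediate consequence of the earlier cited results (Facts~\ref{gal-general-rings} and~\ref{magid-th}) applied to the extension $\bar R/R$, with Definition~\ref{def: fundamental groupoid} naming the resulting groupoid $\pi_1(R,\bar R)$. Your proposal is correct and is precisely the expansion of this implicit reasoning: you identify that the one point requiring verification before invoking Fact~\ref{magid-th} is the auto-splitness of $\bar R/R$, and your two-step argument (base-change stability of clss, plus Magid's structural result that clss extensions of a separably closed ring lie in the image of $C_0$) is the standard route from \cite[Chapter~5]{magid}. Nothing more is needed.
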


\subsection{Galois theory for fields}

The considerations of \ref{ss:magid-rings} and \ref{ss:magid-pi1} in the special case of field extensions are of particular interest.

\begin{fact}[{\cite[4.7.16, 4.5.5]{borceux-janelidze}}]\label{field-ext-gal-desc}
The morphism $f:L\to k$ in $\cA_0$ corresponding to a Galois extension $L/k$ of fields is of relative Galois descent. The group
$$
\Gal[f]\simeq \Gal(L/k)
$$
is the usual profinite Galois group of the field extension $L/k$. The category
$$
\Split_k(f)
$$
is the opposite of the category of $k$-algebras split by $L$, and the category
$$
[\Gal[f],\cP_0]
$$ 
is the category of profinite $\Gal(L/k)$-spaces. The functor
$$
\Split_k(f)\to [\Gal[f],\cP_0], \ \ \ A\mapsto S_0(L\otimes_kA)
$$
realising the equivalence is isomorphic to the functor
$$
A\mapsto k\Alg(A,L),
$$
where the set $k\Alg(A,L)$ is given the profinite topology by writing $A$ as a colimit of finite-dimensional $k$-algebras $A_\lambda$, and then 
$$
k\Alg(A,L)=k\Alg(\colim_\lambda A_\lambda,L)=\lim_\lambda k\Alg(A_\lambda,L)
$$
is a limit of finite sets. Its quasi-inverse is the functor
$$
X\mapsto (f_! C_{0,L}(X))^{\Gal[f]},
$$
for $X\in[\Gal[f],\cP_0]$.

\end{fact}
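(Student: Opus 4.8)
The plan is to obtain this as the field case of Magid's theory (Fact~\ref{magid-th}): first show that $L/k$ is an auto-split clss extension so that $f$ is of relative Galois descent, then identify the abstract Pierce-spectrum groupoid $\Gal[f]$ with the usual profinite Galois group $\Gal(L/k)$, and finally match the realising functor and its quasi-inverse with the concrete ones. For the first point, since $k$ is a field $S_0(k)$ is a point and ``clss'' collapses to ``locally strongly separable''; writing $L=\colim_i L_i$ over its finite Galois subextensions, each $L_i/k$ is finite separable, so $\spec(L_i)\to\spec(k)$ is finite \'etale and $L_i/k$ is strongly separable (Remark~\ref{str-sep-fin-etale}), whence $L/k$ is locally strongly separable. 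For auto-splitness I would use the classical isomorphism
\[
L\otimes_kL\xrightarrow{\ \sim\ }\mathop{\rm Cont}(\Gal(L/k),L)=C_{0,L}(\Gal(L/k)),\qquad a\otimes b\mapsto\bigl(g\mapsto a\,g(b)\bigr),
\]
obtained by passing to the colimit over the $L_i$ from $L_i\otimes_kL_i\cong\prod_{\Gal(L_i/k)}L_i$. Fact~\ref{magid-th} then yields that $f\colon L\to k$ is of relative Galois descent, and by construction $\Split_k(f)$ is the full subcategory of ${\cA_0}_{\ov k}$ on those $a\colon A\to k$ with $L\otimes_kA$ in the essential image of $C_{0,L}$, i.e.\ the opposite of the category of $k$-algebras split by $L$.

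Next I would identify $\Gal[f]$. Since $L$ is a field, $S_0(L)$ is a point, so $\Gal[f]$ is a one-object groupoid in $\cP_0$, i.e.\ a profinite group, and its object of arrows $S_0(L\otimes_kL)$ is $\Gal(L/k)$ with its Krull topology by the isomorphism above. It remains to check that the unit of $\Gal[f]$ (induced by the multiplication $L\otimes_kL\to L$), its inverse (induced by the flip), and its composition correspond under this identification to the identity element, inversion, and multiplication of $\Gal(L/k)$; this is a direct computation with the explicit form of $S_0(L\otimes_kL)\cong\Gal(L/k)$ which, layer by layer over the $L_i$, reduces to the elementary fact that the Pierce-spectrum groupoid of a finite Galois extension is its (discrete) Galois group. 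Consequently $[\Gal[f],\cP_0]$ is the category of profinite $\Gal(L/k)$-spaces.

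For the realising functor, given $a\colon A\to k$ split by $f$, write $L\otimes_kA\cong\mathop{\rm Cont}(E_A,L)=C_{0,L}(E_A)$, so $S_0(L\otimes_kA)\cong E_A$. Base-change adjunction gives $k\Alg(A,L)\cong\Hom_{L\Alg}(L\otimes_kA,L)\cong\Hom_{L\Alg}(\mathop{\rm Cont}(E_A,L),L)$, and an $L$-algebra map out of $\mathop{\rm Cont}(E_A,L)$ is determined by its restriction to idempotents (a Boolean homomorphism, hence a point of $E_A$ by Stone duality), so it is an evaluation; thus $k\Alg(A,L)\cong E_A\cong S_0(L\otimes_kA)$, naturally in $A$. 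Writing $A$ as the filtered colimit of its finite-dimensional subalgebras $A_\lambda$ (each of which is split by $L$), both sides become the cofiltered limit of the finite sets $k\Alg(A_\lambda,L)\cong S_0(L\otimes_kA_\lambda)$, so the identification is one of profinite spaces; and tracing the action of $\Gal[f]$ through it turns the groupoid action into the $\Gal(L/k)$-action $(g,\varphi)\mapsto g\circ\varphi$. Finally, specialising the general Janelidze quasi-inverse from~\ref{janelidze} to $F=f_!\,C_{0,L}$ and the one-object groupoid $\Gal[f]$, the coequaliser $\text{\rm Coeq}\bigl(FUF(X)\rightrightarrows FX\bigr)$, which in $\cA_0$ is an equaliser in $\Rng$, cuts out the $k$-subalgebra of $\Gal(L/k)$-equivariant functions inside $\mathop{\rm Cont}(X,L)=f_!C_{0,L}(X)$, i.e.\ $(f_!C_{0,L}(X))^{\Gal[f]}$, as claimed.

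The hard part will be the identification of $\Gal[f]$: recognising the abstractly defined Pierce-spectrum groupoid, \emph{with all of its structure maps}, as the concrete profinite group $\Gal(L/k)$, and then verifying that every subsequent identification (of $\Split_k(f)$, of the realising functor, and of the quasi-inverse) is compatible with the profinite topologies and with the $\Gal(L/k)$-actions. Everything else is a matter of unwinding definitions together with classical Galois descent.
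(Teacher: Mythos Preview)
The paper does not give its own proof of this statement: it is recorded as a \emph{Fact} with an explicit citation to \cite[4.7.16, 4.5.5]{borceux-janelidze}, so there is nothing in the paper to compare your argument against. Your proposal is a reasonable reconstruction of why the fact holds, and the overall strategy---deduce the field case from Magid's ring-theoretic framework (Fact~\ref{magid-th}) by checking that a Galois field extension is auto-split locally strongly separable, then unwind the resulting Pierce-spectrum groupoid---is sound.

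A brief remark on the route: Borceux--Janelidze actually treat the field case \emph{before} the general ring case (their~4.5 versus~4.7), establishing relative Galois descent for field extensions directly rather than as a specialisation of Magid's theorem. Your approach via Fact~\ref{magid-th} is perfectly valid and arguably more economical in the context of this paper, since Magid's framework is already in place; it just reverses the logical order of the cited source. The identifications you sketch (of $\Gal[f]$, of the realising functor with $A\mapsto k\Alg(A,L)$, and of the quasi-inverse) are the expected ones and your outline of how to verify them is correct.
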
 

\begin{remark}\label{etale-algebras-class}
In the special case of the extension $\bar{k}/k$, where $\bar{k}$ is a separable closure of $k$, the above result states that the functor
$$
A\mapsto S_0(\bar{k}\otimes_k A)\simeq k\Alg(A,\bar{k})
$$
induces an anti-equivalence of categories between ind-\'etale $k$-algebras and profinite $G=\pi_1(k,\bar{k})=\Gal(\bar{k}/k)$-actions.
\end{remark}

\begin{lemma} \label{lemma: injectivesurjective}
	In the context of Remark \ref{etale-algebras-class}, a morphism of ind-\'etale $k$-algebras is injective/surjectve if and only if the corresponding morphism of profinite spaces is surjective/injective.
\end{lemma}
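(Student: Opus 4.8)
The plan is to exploit that the functor $A\mapsto k\Alg(A,\bar{k})$ of Remark~\ref{etale-algebras-class} is an anti-equivalence onto the category of profinite $G$-sets, so that it interchanges monomorphisms with epimorphisms and reflects isomorphisms; it then remains to identify the relevant monos and epis on both sides. In the category of profinite $G$-sets the monomorphisms are exactly the injective maps and the epimorphisms are exactly the surjective maps: injectivity plainly makes a map a monomorphism (and conversely, using the profinite group $G$ itself as a test object), while a non-surjective continuous $G$-map $u\colon X\to Y$ has closed $G$-invariant image $u(X)$, so there is a clopen $G$-invariant $V$ with $u(X)\subseteq V\subsetneq Y$ (visible already at a finite level), and then the two $G$-maps $Y\to\{0,1\}$ given by the constant $0$ and by the indicator of $Y\setminus V$ agree on $u(X)$ but differ on $Y$, so $u$ is not an epimorphism. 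On the algebra side every injective $k$-algebra homomorphism is a monomorphism and every surjective one an epimorphism. Combining: if $\phi\colon A\to B$ is injective then $\phi^{*}=k\Alg(\phi,\bar{k})$ is an epimorphism of profinite $G$-sets, hence surjective, and if $\phi$ is surjective then $\phi^{*}$ is a monomorphism, hence injective.

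For the two converse implications I would pass through canonical factorizations of $\phi$. Suppose $\phi^{*}$ is surjective and factor $\phi$ as $A\xrightarrow{\,q\,}A/\ker\phi\xrightarrow{\,\psi\,}B$ with $\psi$ injective. Since $A/\ker\phi\cong\phi(A)$ is a subalgebra of the ind-\'etale $k$-algebra $B$ — and every element of a subalgebra of an ind-\'etale $k$-algebra still satisfies a separable polynomial over $k$, so such a subalgebra is again ind-\'etale — this factorization lies in the category of Remark~\ref{etale-algebras-class}. Then $\phi^{*}=q^{*}\circ\psi^{*}$ is surjective, whence $q^{*}$ is surjective; but $q$ is surjective, so by the first paragraph $q^{*}$ is also injective. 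Thus $q^{*}$ is a bijective map of profinite $G$-sets, hence an isomorphism, and since the anti-equivalence reflects isomorphisms, $q$ is an isomorphism, i.e.\ $\ker\phi=0$ and $\phi$ is injective. Symmetrically, if $\phi^{*}$ is injective, factor $\phi$ through its image as $A\twoheadrightarrow\phi(A)\hookrightarrow B$ and let $\iota\colon\phi(A)\hookrightarrow B$ be the inclusion; then $\phi(A)$ is ind-\'etale for the same reason, $\iota^{*}$ is surjective (because $\iota$ is injective) and also injective (as a factor of the injective map $\phi^{*}$), hence bijective, hence an isomorphism, so $\iota$ is an isomorphism and $\phi(A)=B$, i.e.\ $\phi$ is surjective.

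The only non-formal ingredients are the two auxiliary facts that epimorphisms of profinite $G$-sets are surjections and that subalgebras of ind-\'etale $k$-algebras are ind-\'etale; I expect the latter to be the main (minor) obstacle, since it requires checking that the finitely generated $k$-subalgebras involved are finite \'etale (a tensor product and a quotient of finite \'etale $k$-algebras being again finite \'etale). As an alternative route, one can argue concretely: present $A$ and $B$ as filtered colimits of finite \'etale $k$-algebras with $\phi=\colim_{\lambda}\phi_{\lambda}$, so that $\phi^{*}$ is the cofiltered limit of the maps $\phi_{\lambda}^{*}$ of finite $G$-sets; writing each finite \'etale algebra as a finite product of finite separable extensions of $k$, both equivalences are immediate from the explicit description of a $k$-algebra homomorphism and of the dual $G$-map, and they pass to the (co)limit, the surjectivity direction using that a cofiltered limit of surjections of nonempty finite sets is surjective.
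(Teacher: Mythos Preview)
Your proof is correct and takes a genuinely different route from the paper's. The paper argues scheme-theoretically: it identifies $S_0(\bar{k}\otimes_kA)$ with $\spec(\bar{k}\otimes_kA)$ and then runs a direct commutative-algebra argument in each of the four directions (minimal primes lift along inclusions; dominance implies injectivity of the ring map; surjections induce closed immersions; and an injective map of profinite spaces is a homeomorphism onto a closed subset, which is shown to be a closed immersion by computing the stalks as $\bar{k}$). In contrast, you exploit the anti-equivalence formally: you identify monos and epis in profinite $G$-sets, get the forward implications for free, and then factor $\phi$ through its image to bootstrap the converses, using that the equivalence reflects isomorphisms. Your approach is cleaner and more conceptual, and nicely isolates the two genuinely non-formal inputs (epimorphisms of profinite $G$-sets are surjective; subalgebras of ind-\'etale $k$-algebras are ind-\'etale, which you justify correctly). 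The paper's approach avoids those auxiliary lemmas at the cost of invoking more scheme-theoretic machinery. Your alternative finite-level argument at the end is also valid and closer in spirit to the paper's concrete style.
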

\begin{proof}
	We think this lemma is well-known, but for a lack of a suitable reference we include the proof. To begin with, note that $S_0(\bar{k}\otimes_k A)$ can also be identified with $\spec(\bar{k}\otimes_k A)$.
	
	If $A\to B$ is an injective morphism of ind-\'etale $k$-algebras, then also $\bar{k}\otimes_k A\to \bar{k}\otimes_k B$ is injective. Since $A$ is ind-\'etale, all prime ideals of $\bar{k}\otimes_k A$ are minimal. For any inclusion of rings, minimal prime ideals can be lifted to (minimal) prime ideals. Therefore $\spec(\bar{k}\otimes_k B)\to\spec(\bar{k}\otimes_k A)$ is surjective.
	
	Conversely, if  $\spec(\bar{k}\otimes_k B)\to\spec(\bar{k}\otimes_k A)$ is surjective, it is dominant and so $\bar{k}\otimes_k A\to \bar{k}\otimes_k B$ is injective. Therefore also $A\to B$ is injective.
	
	If $A\to B$ is surjective, so is $\bar{k}\otimes_k A\to \bar{k}\otimes_k B$ and therefore  $\spec(\bar{k}\otimes_k B)\to\spec(\bar{k}\otimes_k A)$ is a closed immersion. In particular, it is an injective map.
	
	Conversely, if  $\spec(\bar{k}\otimes_k B)\to\spec(\bar{k}\otimes_k A)$ is injective, it induces a homeomorphism onto a closed subset. (This holds for any continuous injective map of profinite spaces.) A morphism of schemes is a closed immersion if it induces a homeomorphism onto a closed subset and the induced maps on the stalks are surjective. The stalk at a point 
	of $\spec(\bar{k}\otimes_k A)$ is simply $\bar{k}$. So the induced maps on stalks are surjective and so $\spec(\bar{k}\otimes_k B)\to\spec(\bar{k}\otimes_k A)$ is a closed immersion of schemes. On the other hand, a morphism of affine schemes is a closed immersion if and only if the dual map is surjective. Thus $\bar{k}\otimes_k A\to \bar{k}\otimes_k B$ is surjective and therefore also $A\to B$ is surjective.
\end{proof}

\begin{remark}\label{zar-spectrum-quot}
If $X$ is the $G$-set associated to an ind-\'etale $k$-algebra $A$ as in \ref{etale-algebras-class}, using that the Zariski spectrum of a split $\bar{k}$-algebra can be identified with the space of its connected components, we obtain a homeomorphism
$$
\spec(A)\simeq X/G,
$$
where we consider the Zariski topology on the left, and the quotient of the profinite topology on the right. Indeed, since $A\simeq (f_! C_{\bar{k}}(X))^G$, it follows that
$$
\spec(A)\simeq\spec(f_! C_{0,\bar{k}}(X))/G\simeq \spec(C_{0,\bar{k}}(X))/G\simeq S_{0,\bar{k}}(C_{0,\bar{k}}(X))/G\simeq X/G.
$$
\end{remark}

\subsection{Galois theory for difference rings}\label{ss:diff-gal}

{We are now ready to combine the Galois theory of commutative rings with the difference Galois Theorem (Theorem~\ref{gen-diff-Galois-thm}) to obtain the Galois theorem for difference rings.}

Let $\cA_0$, $\cP_0$, $S_0$ and $C_0$ be as in \ref{class-gal-setup}.

\begin{definition}\label{difference-context}
\begin{enumerate} 
\item Let $$\cA=\diff\cA_0=(\diff\Rng)^\op$$ be the opposite category of the category of commutative unital difference rings.
\item Let $$\cP=\diff\cP_0$$ denote the category of difference profinite spaces.
\item The \emph{difference Pierce spectrum} is the functor  
$$S:\cA\to \cP, \ \ S(R,\sigma_R)=(S_0(R),S_0(\sigma_R)).$$ 

\item The \emph{difference ring of continuous functions} functor is 
$$
C:\cP\to \cA, \ \ C(X,\sigma_X)=(C_0(X),C_0(\sigma_X)).
$$
\end{enumerate}
\end{definition}

\begin{theorem}\label{diff-Galois-thm}
Let $f:L\to k$ be the morphism in $\cA$ corresponding to a difference ring extension $L/k$ such that $\forg{L}/\forg{k}$ is auto-split clss (as in \ref{def-clss}). 
The category 
$$
\Split_k(f)
$$
is the opposite category of $k$-algebras $A$ such that $\forg{A}$ is split by $\forg{L}$. The categorical Galois groupoid is the difference profinite groupoid {(an internal groupoid in $\cP$)}
$$
\begin{tikzcd}[cramped, column sep=normal, ampersand replacement=\&]
{\Gal[f]=\left(S(L\otimes_kL) \right.}\ar[yshift=2pt]{r}{} \ar[yshift=-2pt]{r}[swap]{} \&{\left.S(L)\right)}
\end{tikzcd}
$$
We have an equivalence of categories
$$
\Split_k(f)\simeq [\Gal[f],\cP],
$$
realised by the functor
$$
A\mapsto S(L\otimes_k A). 
$$
\end{theorem}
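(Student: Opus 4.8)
The plan is to obtain Theorem~\ref{diff-Galois-thm} as a direct specialisation of the Galois theorem for categories of difference objects, Theorem~\ref{gen-diff-Galois-thm}, to the concrete context set up in Definition~\ref{difference-context}. First I would record that $(\cA_0,\cP_0,S_0,C_0)$ from \ref{class-gal-setup} is a context for Janelidze's categorical Galois theory and that $\cA_0=\Rng^\op$ admits pullbacks, since $\Rng$ admits pushouts (coproducts, i.e.\ tensor products); by \ref{forg-creates-limits} it follows that $\cA=\diff\cA_0=(\diff\Rng)^\op$ also admits pullbacks. Hence Theorem~\ref{gen-diff-Galois-thm} is applicable to any morphism $f$ of $\cA$ whose underlying morphism $\forg{f}$ is of relative Galois descent in $\cA_0$.

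The single hypothesis that needs to be verified is therefore that $\forg{f}\colon\forg{L}\to\forg{k}$ is of relative Galois descent in $\cA_0$. This is exactly what Magid's theorem, recorded here as Fact~\ref{magid-th}, provides: by assumption $\forg{L}/\forg{k}$ is an auto-split clss extension, and Fact~\ref{magid-th} asserts that the associated morphism $\forg{f}$ in $\cA_0$ is then of relative Galois descent. This is the only step where the hypothesis ``auto-split clss'' is used.

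With that in hand, Theorem~\ref{gen-diff-Galois-thm} delivers all three assertions simultaneously. An object of $\cA_{\ov k}$ is a $k$-algebra $A$, and by \ref{gen-diff-split-normal}(1) it lies in $\Split_k(f)$ if and only if $\forg{A}$ is split by $\forg{L}$ as a $\forg{k}$-algebra; so $\Split_k(f)$ is the opposite of the category of such $A$, as claimed. The difference categorical Galois groupoid is the internal groupoid in $\cP=\diff\cP_0$ furnished by Theorem~\ref{gen-diff-Galois-thm}, with object of objects $S(L)$ and object of morphisms $S(X\times_k X)=S(L\otimes_k L)$, using that the fibre product in $(\diff\Rng)^\op$ is the tensor product in $\diff\Rng$. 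Finally, the equivalence $\Split_k(f)\simeq[\Gal[f],\cP]$ realised by $A\mapsto S(L\otimes_k A)$ is verbatim the conclusion of Theorem~\ref{gen-diff-Galois-thm}.

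I do not expect a real obstacle here: the argument is an assembly of facts already proven in the excerpt. The only points requiring genuine care are (i) confirming that the concrete hypothesis ``$\forg{L}/\forg{k}$ auto-split clss'' correctly implies the abstract hypothesis of Theorem~\ref{gen-diff-Galois-thm} via Magid's criterion \ref{magid-th}, and (ii) the routine identification of fibre products of affine difference schemes with tensor products of difference rings. A reader wanting a self-contained route could instead unwind \ref{gen-diff-Galois-thm} back through \ref{gen-diff-split-normal}, \ref{diff-adjunction} and Janelidze's theorem in \ref{janelidze}, but no new idea enters.
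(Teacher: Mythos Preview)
Your proposal is correct and follows essentially the same approach as the paper's proof: invoke Fact~\ref{magid-th} to verify that $\forg{f}$ is of relative Galois descent under the auto-split clss hypothesis, then apply Theorem~\ref{gen-diff-Galois-thm} in the difference context of Definition~\ref{difference-context}. Your write-up is simply more explicit about the routine verifications (existence of pullbacks, identification of $\Split_k(f)$ via \ref{gen-diff-split-normal}(1), fibre products versus tensor products) that the paper leaves implicit.
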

 \begin{proof}
 By \ref{magid-th}, such an $\forg{f}$ is of relative Galois descent and $\Gal[\forg{f}]$ is a profinite groupoid, so the statement follows from the Galois theorem for categories of difference objects \ref{gen-diff-Galois-thm} applied in the difference context \ref{difference-context}.
 \end{proof}

\begin{remark}
The relationship between our Galois correspondence \ref{diff-Galois-thm} for difference rings and Magid's theory \ref{ss:magid-rings} for rings  can be made precise via the Comparison Theorem~\ref{gen-gal-free-diff}, which applies to this context verbatim and yields the following.
\end{remark}

\begin{corollary}\label{eq-with-subshifts}
With notation of \ref{diff-Galois-thm}, the equivalence of categories 
$$
\Split_k(f)\simeq [\Gal[f],\cP],
$$
restricts to equivalences of the following full subcategories:
\begin{enumerate}
\item  finitely $\diff$generated objects of $\Split_k(f)$ and finitely $\diff$generated $G$-profinite sets;
\item finitely $\diff$presented objects of $\Split_k(f)$ and finitely $\diff$presented $G$-profinite sets,
\end{enumerate}
where we say that a $G$-profinite set is \emph{finitely freely $\diff$generated,} if it is obtained by applying $\psig{\,}_G$ to a constant relatively finite $\forg{G}$-action as in \ref{magid-th}, and the notion of finitely $\diff$generated/presented $G$-actions is obtained following \ref{free-dirpres-diffob}.
\end{corollary}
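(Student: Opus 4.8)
The engine of the proof is the Comparison Theorem~\ref{gen-gal-free-diff} together with Corollary~\ref{gal-dif-corr}: the mutually quasi-inverse equivalences $\Phi\colon\Split_k(f)\to[\Gal[f],\cP]$ and $\Psi\colon[\Gal[f],\cP]\to\Split_k(f)$ commute with free $\diff$generation $\psig{\,}$ and with direct $\diff$presentation $\psig{\,}$ on correspondences, and, being equivalences, preserve all limits and colimits, in particular subobjects, quotients, equalisers and coequalisers. The plan is to express ``finitely $\diff$generated'' and ``finitely $\diff$presented'' on each of the two sides purely through these operations applied to objects that are matched by the classical Magid equivalences $\Phi_0\colon\Split_{\forg k}(\forg f)\to[\forg{\Gal[f]},\cP_0]$ and $\Psi_0$, and then to quote the part of \ref{magid-th} which classifies, componentially over the Pierce spectrum, the strongly separable (equivalently, by \ref{str-sep-fin-etale}, finite \'etale) members of $\Split_{\forg k}(\forg f)$ in terms of the corresponding finiteness class of $\forg{\Gal[f]}$-actions.

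For part (2): by Lemma~\ref{fin-pres-dir-pres}, $A\in\Split_k(f)$ is finitely $\diff$presented iff $A\simeq\ssig{A_0;C_0}^k$ for a cocorrespondence of finitely presented $\forg k$-algebras, i.e.\ dually in $\cA$ the associated affine difference scheme is directly $\diff$presented (Definition~\ref{def-corresp}) from a $k$-correspondence of finite-presentation affine $\forg k$-schemes. Since $\forg A$ is split by $\forg f$ (Lemma~\ref{gen-diff-split-normal}(1)), one can arrange $A_0,C_0$ to lie in $\Split_{\forg k}(\forg f)$, where being finitely presented forces them to be finite \'etale over $\forg k$ on each Pierce component. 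Corollary~\ref{gal-dif-corr} then gives $\Phi(\psig{A_0;C_0}_k)\simeq\psig{\Phi_0A_0;\Phi_0C_0}_{\Gal[f]}$, a $\Gal[f]$-action directly $\diff$presented from the $\Gal[f]$-correspondence $(\Phi_0A_0,\Phi_0C_0)$ of finite $\forg{\Gal[f]}$-actions, hence finitely $\diff$presented; the symmetric argument via $\Psi,\Psi_0$ gives the reverse inclusion, so $\Phi$ restricts to the claimed equivalence.

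For part (1): by Lemma~\ref{fin-si-gen}, $A\in\Split_k(f)$ is finitely $\diff$generated iff it is a quotient of $\ssig{A_0}^k$ for some finitely generated $\forg k$-algebra $A_0$. The first task is to show that, $A$ being split, the chosen $\diff$generators may be placed inside a finite \'etale $\forg k$-subalgebra $A_0$ of $\forg A$ (using that $\forg A$ is componentially ind-(finite \'etale) over $\forg k$ once $\forg L\otimes_{\forg k}\forg A$ is clss over $\forg L$), so that $\ssig{A_0}^k$ is split by $L$ and is therefore a finitely freely $\diff$generated object of $\Split_k(f)$. The Comparison Theorem then yields $\Phi(\ssig{A_0}^k)\simeq\psig{\Phi_0A_0}_{\Gal[f]}$, a finitely freely $\diff$generated $\Gal[f]$-profinite set; since $\Phi$ preserves quotients, $\Phi A$ is a quotient of it, hence finitely $\diff$generated in the sense of \ref{free-dirpres-diffob}, and $\Psi,\Psi_0$ handle the converse.

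The main obstacle is precisely this componential bookkeeping of finiteness across Magid's classical correspondence, over the Pierce spectrum $S_0(\forg k)$: one has to check that a finitely presented member of $\Split_{\forg k}(\forg f)$ is finite \'etale over $\forg k$ on each connected component (using that over an indecomposable base every object of $\Split_{\forg k}(\forg f)$ is ind-(finite \'etale) by \ref{magid-th} and \ref{str-sep-fin-etale}, so a finite set of its elements is integral and unramified and hence generates a finite \'etale subalgebra); that a finitely $\diff$generated split difference algebra can be re-presented with its $\diff$generators inside such a finite \'etale subalgebra, which reduces to descending the splitting condition along $\forg f$ component by component; and that under $\Phi_0$ these finite \'etale $\forg k$-algebras land in the exact finiteness class of $\forg{\Gal[f]}$-actions appearing in \ref{magid-th}. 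It is the passage to the Pierce spectrum, rather than a naive field-theoretic argument, that constitutes the real work here; granting this reduction, the remainder is the formal transfer along $\Phi$ and $\Psi$ outlined above.
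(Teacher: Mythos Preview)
Your proposal is correct and follows the same route as the paper: invoke the Comparison Theorem~\ref{gen-gal-free-diff} for part~(1) and Corollary~\ref{gal-dif-corr} for part~(2), passing from quotients of $\ssig{A_0}^k$ (resp.\ direct presentations $\ssig{A_0;C_0}^k$) to subobjects of $\psig{\Phi_0A_0}_G$ (resp.\ $\psig{\Phi_0A_0;\Phi_0C_0}_G$), and then citing \ref{magid-th} for the classical side. The ``componential bookkeeping'' you flag as the main obstacle---arranging $A_0,C_0$ to be split by $\forg f$---is handled by the paper in a single clause without further comment, so your treatment is more cautious but the strategy is identical.
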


\begin{proof}
(1) A finitely $\diff$generated $k$-algebra $A$ split by $f$ is a quotient $\ssig{A_0}^k\to A$ for a finitely generated $\forg{k}$-algebra $A_0$ split by $\forg{f}$. This corresponds to a monomorphism $A\to \psig{A_0}_{k}$ in $\Split_k(f)$, so we conclude by \ref{gen-gal-free-diff} and \ref{magid-th}.

(2)  A finitely $\diff$presented $k$-algebra $A$ split by $f$ has a direct presentation of the form $\ssig{A_0;C_0}^k$ for some finitely presented $\forg{k}$-algebras $A_0$ and $C_0$ split by $\forg{f}$, which corresponds to a direct presentation $\psig{A_0;C_0}_{k}$ in $\Split_k(f)$, so the statement follows from \ref{gal-dif-corr} and \ref{magid-th}.
\end{proof}

\subsection{Difference fundamental groupoid}\label{ss:diff-fund}

{We would like to define the difference fundamental groupoid in analogy to Definition \ref{def: fundamental groupoid}. For this purpose we first need to clarify the notion of separable closure of a difference ring.}

\begin{lemma}\label{si-lift-sep-cl}
Let $k$ be a difference ring, and let $L_0$ be a separable closure of $\forg{k}$ (as in \ref{sep-cl}). Then $L_0$ can be made into a difference ring extension of $k$.
\end{lemma}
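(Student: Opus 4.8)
The plan is to extend the endomorphism $\sigma_k$ of $\forg{k}$ along the extension $\forg{k}\hookrightarrow L_0$, using the universal property of the separable closure. First I would observe that the composite $\forg{k}\xrightarrow{\sigma_k}\forg{k}\hookrightarrow L_0$ exhibits $L_0$ as a ring under $\forg{k}$ via a twisted structure map; more precisely, consider $L_0$ as a $\forg{k}$-algebra through $\sigma_k$, call it $L_0^{\sigma}$. The key point is that $L_0^{\sigma}/\forg{k}$ is again a minimal clss extension: being clss is checked componentwise over $S_0(\forg{k})$ (Definition~\ref{def-clss}(4)), and since $\sigma_k$ induces a continuous self-map of $S_0(\forg{k})$, for each $x\in S_0(\forg{k})$ the fibre $(L_0^\sigma)_x$ is, up to the base-change twist by $\sigma_k$, one of the locally strongly separable extensions $(L_0)_{\sigma_k^*(x)}$ of $\forg{k}_{\sigma_k^*(x)}$, hence locally strongly separable over $\forg{k}_x$; minimality transports likewise. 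Similarly $L_0^\sigma$ is separably closed because $\forg{L_0}$ is and the separably closed property is insensitive to the chosen structure map. Hence $L_0^\sigma$ is a separable closure of $\forg{k}$.

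Now I would invoke the uniqueness part of Magid's theorem (Fact~\ref{sep-cl-exists}): any two separable closures of $\forg{k}$ are isomorphic as $\forg{k}$-algebras. Applying this to $L_0$ and $L_0^\sigma$ yields a $\forg{k}$-algebra isomorphism, but what I actually want is a $\forg{k}$-algebra homomorphism $L_0^\sigma\to L_0$, which amounts to a ring map $\tilde{\sigma}\colon L_0\to L_0$ with $\tilde\sigma|_{\forg{k}}=\iota\circ\sigma_k$ (where $\iota\colon\forg{k}\hookrightarrow L_0$). Such a map exists already by the universal property in Definition~\ref{sep-cl}: $L_0^\sigma$ is a clss extension of $\forg{k}$ and $L_0$ is separably closed, so there is a $\forg{k}$-algebra map $L_0^\sigma\to L_0$; unwinding the twist, this is exactly an endomorphism $\tilde\sigma$ of $L_0$ restricting to $\sigma_k$ on $\forg{k}$. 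Setting $\sigma_{L_0}=\tilde\sigma$ makes $(L_0,\sigma_{L_0})$ a difference ring, and the compatibility $\sigma_{L_0}\circ\iota=\iota\circ\sigma_k$ is precisely the statement that $\iota$ is a morphism of difference rings, so $L_0$ becomes a difference ring extension of $k$.

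The main obstacle I anticipate is the bookkeeping around the twist by $\sigma_k$ when verifying that $L_0^\sigma$ is genuinely clss over $\forg{k}$ and that minimality is preserved — in particular, since $\sigma_k$ need not be injective or surjective, the induced map $S_0(\sigma_k)$ on Pierce spectra need not be a bijection, so one must be careful that for every $x\in S_0(\forg{k})$ the ring $(L_0^\sigma)_x=\forg{k}_x\otimes_{\forg{k},\sigma_k}L_0$ really does compute as a localisation of $L_0$ at the component $S_0(\sigma_k)(x)$, and that it remains locally strongly separable there rather than merely componentially so. Once that component-wise analysis is in place, the rest is a direct application of Magid's existence/uniqueness results and the universal property of the separable closure; no further difference-algebraic input is needed, and the uniqueness up to $k$-isomorphism is deliberately not claimed here, consistent with the remark preceding this lemma.
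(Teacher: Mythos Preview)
There are two genuine gaps.

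First, the ``unwinding'' is backwards. A $\forg{k}$-algebra map $\phi\colon L_0^\sigma\to L_0$ must respect the structure maps, so it satisfies $\phi\circ(\iota\circ\sigma_k)=\iota$, i.e.\ $\phi(\iota(\sigma_k(c)))=\iota(c)$, \emph{not} $\phi\circ\iota=\iota\circ\sigma_k$. What you need is a $\forg{k}$-algebra map in the other direction, $L_0\to L_0^\sigma$.

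Second, and more importantly, your appeal to Definition~\ref{sep-cl}(1) is a misapplication. The separably closed property of $L_0$ produces $L_0$-algebra retractions $M\to L_0$ only for clss extensions $M$ \emph{of $L_0$}; it says nothing about $\forg{k}$-algebra maps from a clss $\forg{k}$-algebra into $L_0$. Your fallback via uniqueness of separable closures would need $L_0^\sigma$ to be a separable closure of $\forg{k}$, but its structure map $\iota\circ\sigma_k$ fails to be injective whenever $\sigma_k$ is not injective, so $L_0^\sigma$ need not even be an extension in the sense of Definition~\ref{def-clss}.

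The paper's proof repairs exactly this point. It forms the genuine base change $L_1=L_0\otimes_{\forg{k},\sigma_k}\forg{k}$ (clss over $\forg{k}$, since clss is stable under base change) and then tensors once more to obtain $L_1\otimes_{\forg{k}}L_0$, which is a clss extension \emph{of $L_0$}. Now Definition~\ref{sep-cl}(1) legitimately applies and yields an $L_0$-algebra map $L_1\otimes_{\forg{k}}L_0\to L_0$, hence a $\forg{k}$-algebra map $L_1\to L_0$; precomposing with the canonical $\sigma_k$-linear map $L_0\to L_1$, $a\mapsto a\otimes 1$, gives the required endomorphism $\bar\sigma$ of $L_0$ extending $\sigma_k$. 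The extra tensor with $L_0$ is not cosmetic: it is precisely what turns a clss extension of $\forg{k}$ into one of $L_0$, so that separable closedness can be invoked at all.
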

\begin{proof}
As in the proof of uniqueness of separable closure \cite[5.3]{magid}, 
since $L_0/\forg{k}$ is clss, the difference twist $L_1=L_{0,\sigma_k}=L_0\otimes_k k$ is again clss over $\forg{k}$. Hence, $L_1\otimes_k L_0$ is clss over $L_0$,
and, since $L_0$ is separably closed, there is an $L_0$-algebra homomorphism $L_1\otimes_k L_0\to L_0$, whence we obtain a $\forg{k}$-algebra homomorphism 
$$
L_1\to L_1\otimes_k L_0\to L_0.
$$
Let $\bar{\sigma}$ be the composite
$$
L_0\to L_1\to L_0,
$$
of the canonical $\sigma_k$-linear map and the above $\forg{k}$-algebra map. By construction,
$$
(L_0,\bar{\sigma})
$$
is a difference ring extension of $k$.
\end{proof}
\begin{definition}
A \emph{separable closure} of a difference ring $k$ is a difference ring extension $\bar{k}$ of $k$ such that $\forg{\bar{k}}$ is a separable closure of $\forg{k}$.
\end{definition}

\begin{remark}\label{diff-sep-cl-exists}
Separable closures of a difference ring exist by \ref{sep-cl-exists} and \ref{si-lift-sep-cl}. However, unlike in the case of rings, two separable closures of a difference ring are not necessarily isomorphic. {For example, for $k=(\mathbb{Q},\id)$ the separable closures $(\bar{\mathbb{Q}},\id)$ and $(\bar{\mathbb{Q}},\sigma)$, with $\sigma$ any non-trivial automorphism of $\bar{\mathbb{Q}}$, are not isomorphic.}
\end{remark}

\begin{remark}\label{si-etale}
Let $\bar{k}$ be a separable closure of a difference ring $k$. Using \ref{magid-th} and \ref{gen-diff-split-normal} and, the following statements are equivalent for a $k$-algebra $A$:
\begin{enumerate}
\item $A\in \Split_k(\bar{k}\to k)$;
\item $\forg{\bar{k}\otimes_k A}$ is clss over $\forg{\bar{k}}$.
\end{enumerate}
\end{remark}

\begin{definition}\label{def-si-etale}
A difference ring extension $A/k$ is \emph{difference locally \'etale} if it satisfies the equivalent conditions from \ref{si-etale}.
\end{definition}

\begin{definition}\label{diff-pi1}
Let $\bar{k}$ be a separable closure of a difference ring  $k$. The associated \emph{fundamental difference groupoid} is the difference profinite groupoid 
$$
\pi_1(k,\bar{k})=\Gal[\bar{k}\to k]
$$
as in \ref{diff-Galois-thm}.
\end{definition}

\begin{corollary}\label{diff-pi1-correspondence}
Let $\bar{k}$ be a separable closure of a difference ring  $k$. The difference Galois correspondence \ref{diff-Galois-thm} for the extension $\bar{k}/k$ establishes an anti-equivalence of categories between the category of difference locally \'etale $k$-algebras  and the category of difference profinite $\pi_1(k,\bar{k})$-actions.
\end{corollary}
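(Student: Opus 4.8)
The plan is to obtain this corollary as a direct application of Theorem~\ref{diff-Galois-thm} to the morphism $f\colon\bar{k}\to k$ of $\cA=(\diff\Rng)^\op$, followed by a purely definitional identification of the two sides. The one hypothesis of Theorem~\ref{diff-Galois-thm} that must be checked is that $\forg{\bar{k}}/\forg{k}$ is an auto-split clss extension in the sense of \ref{def-clss}.

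To verify this I would argue as follows. By \ref{si-lift-sep-cl} and the definition of a separable closure of a difference ring, $\forg{\bar{k}}$ is a separable closure of $\forg{k}$ in Magid's sense, hence in particular a clss extension of $\forg{k}$ by \ref{sep-cl}. That it is moreover auto-split is part of Magid's theory of separable closures --- it is exactly what makes Definition~\ref{def: fundamental groupoid} legitimate, since $\pi_1(\forg{k},\forg{\bar{k}})=\Gal[\forg{\bar{k}}\to\forg{k}]$ is defined through \ref{magid-th}, whose hypothesis is precisely that $\forg{\bar{k}}$ be auto-split clss over $\forg{k}$. So this step is a citation of \cite{magid} rather than new work. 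With the hypothesis in hand, Theorem~\ref{diff-Galois-thm} produces an equivalence $\Split_k(f)\simeq[\Gal[f],\cP]$ realised by $A\mapsto S(\bar{k}\otimes_k A)$. By Definition~\ref{diff-pi1}, $\Gal[f]=\pi_1(k,\bar{k})$, so the right-hand side is the category of difference profinite $\pi_1(k,\bar{k})$-actions. On the left, Theorem~\ref{diff-Galois-thm} identifies $\Split_k(f)$ with the full subcategory of $\cA_{\ov k}$ of objects $A\to k$ with $\forg{A}$ split by $\forg{f}$, i.e.\ with $\forg{\bar{k}\otimes_k A}$ clss over $\forg{\bar{k}}$; by Remark~\ref{si-etale} and Definition~\ref{def-si-etale} these are precisely the difference locally \'etale $k$-algebras. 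Since $\cA_{\ov k}\simeq(k\Alg)^\op$, reading this equivalence through the duality turns it into the asserted anti-equivalence; I would also remark that no independence-of-$\bar{k}$ statement is needed, since the corollary concerns a fixed chosen separable closure (cf.\ \ref{diff-sep-cl-exists}).

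I do not anticipate a serious obstacle: essentially all the content has already been established in Theorem~\ref{diff-Galois-thm} and in the unwinding of ``difference locally \'etale''. The only point requiring care is the verification that $\forg{\bar{k}}$ is auto-split over $\forg{k}$, i.e.\ that $\forg{\bar{k}}\otimes_{\forg{k}}\forg{\bar{k}}$ lies in the essential image of $C_{0,\forg{\bar{k}}}$ --- the ring-theoretic counterpart of the classical isomorphism $\bar{k}\otimes_k\bar{k}\cong\mathop{\rm Cont}(\Gal(\bar{k}/k),\bar{k})$ --- and this is proved in \cite{magid} and already tacitly used in Definition~\ref{def: fundamental groupoid}, so I would simply invoke it.
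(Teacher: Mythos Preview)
Your proposal is correct and matches the paper's (implicit) approach: the paper states the corollary without proof, treating it as immediate from Theorem~\ref{diff-Galois-thm} together with the definitional unwinding in Remark~\ref{si-etale}, Definition~\ref{def-si-etale}, and Definition~\ref{diff-pi1}. Your explicit isolation of the auto-split clss hypothesis and its justification via Magid's theory (already tacitly invoked in Definition~\ref{def: fundamental groupoid}) is exactly the missing verification the paper leaves to the reader.
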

We invite the reader to formulate the versions of the above theorem with additional $\diff$finiteness assumptions as in \ref{eq-with-subshifts}.

\subsection{Galois theory of difference fields}\label{ss:diff-fields}

{When working with difference fields instead of general difference rings we can make some of the above constructions more explicit.}

\begin{lemma} \label{lemma:  adjoint for connected}
If $\forg{k}$ is a connected ring {(i.e., $S(k)$ is a singleton)}, then {(with the notation of \ref{difference-context})} the
 right adjoint to 
 $$
 S_k:k\Alg^\op\to \cP_{\ov S(k)}=\cP, \ \ S_k(A)=S(A)
 $$
 is the functor
 $$
 C_k:\cP\to k\Alg^\op, \ \ X\mapsto(\text{\rm Cont}(\forg{X},\forg{k}), f\mapsto \sigma_k f\sigma_X),
 $$
 where $\forg{k}$ is considered with the discrete topology.  
\end{lemma}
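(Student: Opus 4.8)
The plan is to verify the adjunction $S_k \dashv C_k$ directly by exhibiting a natural bijection
$$
\cP\bigl(S(A),X\bigr) \;\cong\; k\Alg^\op\bigl(A, C_k(X)\bigr) \;=\; k\Alg\bigl(C_k(X), A\bigr)
$$
for $A\in k\Alg$ and $X\in\cP$, and then checking this bijection is the difference-enriched refinement of the classical adjunction $S_0\dashv C_0$ from \ref{class-gal-setup} in the connected case. Since $\forg{k}$ is connected, $S(k)$ is a singleton, so $\cP_{\ov S(k)}\simeq\cP$ and $S_k$ is literally $S$ followed by this identification; the only real content is to identify the correct right adjoint on the slice, and to see that in the connected case $\mathrm{Cont}(\forg{X},\forg{k})$ (rather than $\mathrm{Cont}(\forg{X},\Z)$) is what appears, together with the twisted difference operator $f\mapsto \sigma_k f\sigma_X$.

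First I would recall that for $\forg{k}$ connected the underlying ring adjunction takes the form $S_{0,\forg k}\dashv C_{0,\forg k}$ with $C_{0,\forg k}(X)=\mathrm{Cont}(\forg X,\forg k)$: indeed, a $\forg k$-algebra homomorphism $\mathrm{Cont}(\forg X,\forg k)\to \forg A$ is determined by where the idempotents go, and since $\forg k$ is connected the relevant Boolean algebra is $E(\mathrm{Cont}(\forg X,\Z))=E(\mathrm{Cont}(\forg X,\forg k))$, so $\forg k\Alg(\mathrm{Cont}(\forg X,\forg k),\forg A)\cong \mathbf{Bool}(E(\mathrm{Cont}(\forg X,\Z)), E(\forg A))\cong \cP_0(S_0(\forg A), X)$ by Stone duality. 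This is the non-difference statement; it is presumably folklore and I would either cite \cite[Chapter 4]{borceux-janelidze} or spell out the Stone-duality computation in one line. Then I would invoke \ref{diff-adjunction}: the difference adjunction $S\dashv C$ and its relative version $S_k\dashv C_k$ are obtained from the underlying adjunction by the general principle that an adjoint pair $F_0\dashv G_0$ induces $F\dashv G$ on difference categories, provided one checks that the induced functors carry the stated difference operators.

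The one point requiring care — and what I expect to be the main obstacle — is pinning down the difference operator on $C_k(X)$. Because we are working in $\cA=(\diff\Rng)^\op$, the slice is over $S(k)$ with its \emph{induced} $\sigma$-action, and the relative right adjoint is built as in \ref{psig-prop}/\ref{adj-rel} from a pullback; unwinding this in the opposite category turns the expected "precompose with $\sigma_X$" into the twisted formula $f\mapsto \sigma_k\circ f\circ\sigma_X$, the $\sigma_k$ appearing precisely because the structure map $k\to C_k(X)$ must be a morphism of difference rings and the target's difference operator must be $\sigma_k$-semilinear in the appropriate sense. Concretely I would check: (i) $f\mapsto \sigma_k f\sigma_X$ is a ring endomorphism of $\mathrm{Cont}(\forg X,\forg k)$ (clear, as $\sigma_k$ and $\sigma_X$ are continuous and $\sigma_k$ is a ring map); (ii) the canonical map $k\to C_k(X)$ sending $a\mapsto (x\mapsto a)$ intertwines $\sigma_k$ on $k$ with this operator, since $\sigma_k\circ(x\mapsto a)\circ\sigma_X = (x\mapsto \sigma_k(a))$; and (iii) under the bijection from the underlying adjunction, a map $C_k(X)\to A$ in $k\Alg$ is a \emph{difference} ring homomorphism if and only if the corresponding continuous map $S(A)\to X$ is a morphism of difference profinite spaces, i.e.\ commutes with the shifts. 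Point (iii) is the crux: one chases the square expressing compatibility with $\sigma$ through Stone duality, using that $S_0(\sigma_A)$ is by definition the map induced on Pierce spectra, and observing that the twist by $\sigma_k$ in the operator on $C_k(X)$ is exactly what is needed to make the two compatibility conditions equivalent. Naturality in $A$ and $X$ is then routine. I would close by remarking that the formula reduces to the expected "precomposition with $\sigma_X$" when $\sigma_k=\id$, consistent with the difference-set discussion around \ref{psig-G-sets}.
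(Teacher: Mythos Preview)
Your proposal is correct, but it takes a more hands-on route than the paper. The paper's proof is very short: it invokes \ref{diff-adjunction} to lift the global adjunction $S_0\dashv C_0$ to $S\dashv C$, then applies the general relative-adjoint construction of \ref{adj-rel} (pullback along $\eta_k$), observes that for $\forg{k}$ connected one has $C(S(k))=(\Z,\id)$ so the pullback in $\cA$ becomes the tensor product $C_k(X)=C(X)\otimes_{(\Z,\id)}k$, and finally cites \cite[4.3.5]{borceux-janelidze} for the isomorphism $\forg{k}\otimes_\Z\mathrm{Cont}(\forg{X},\Z)\simeq\mathrm{Cont}(\forg{X},\forg{k})$. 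The claimed difference operator $f\mapsto\sigma_k f\sigma_X$ then drops out of the tensor-product difference structure without a separate verification.

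By contrast, you relativize first and lift second: you establish the underlying relative adjunction $S_{0,\forg k}\dashv C_{0,\forg k}$ directly via Stone duality (bypassing the tensor-product identification), and then check the difference compatibility (your points (i)--(iii)) explicitly. This buys you a self-contained argument that does not rely on the cited isomorphism from Borceux--Janelidze, and makes the origin of the twist $\sigma_k f\sigma_X$ more transparent; the paper's approach is shorter and shows more clearly how the lemma is an instance of the general machinery already set up. Your Stone-duality step (that $\forg{k}$-algebra maps out of $\mathrm{Cont}(\forg X,\forg k)$ are determined by their effect on idempotents, and conversely any Boolean map extends) is the one place where you should be careful to supply or cite the relevant fact rather than treat it as a one-liner.
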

\begin{proof}
The adjunction follows directly from \ref{diff-adjunction}. Since $\cA$ admits pullbacks, the 
general construction of relative right adjoints described in \ref{janelidze} {and the fact that $C(S(k))=(\Z,\id)$ because $\forg{k}$ is connected,} gives that 
$$
C_k(X)=C(X)\otimes_{(\Z,\id)}k\simeq (C_0(\forg{X}),C_0(\forg{\sigma_X)})\otimes_{(\Z,\id)}(k,\sigma_k),
$$
so the result for $\forg{k}$ connected follows from \cite[4.3.5]{borceux-janelidze} stating that
$$
\forg{k}\otimes_\Z \text{\rm Cont}(\forg{X},\Z)\simeq \text{\rm Cont}(\forg{X},\forg{k}).
$$
\end{proof}

{For the following lemma, recall the definition of the topology on $\forg{k}\Alg(\forg{A},\forg{L})$ in \ref{field-ext-gal-desc} and the definition of the enriched homs $\lbr A,L\rbr_k$ from Appendix \ref{sec: Appendix C}.}

\begin{lemma} \label{lemma: prepare for fields}
Let $L/k$ be a difference field extension 
and let $A\in k\Alg$ be split by $L/k$. 
\begin{enumerate}
\item There exists a {unique} continuous map $()^\sigma: \forg{k}\Alg(\forg{A},\forg{L}) \to \forg{k}\Alg(\forg{A},\forg{L})$ satisfying 
$$
\sigma_L\circ f^\sigma=f\circ \sigma_A.
$$
\item If $\forg{L}/\forg{k}$ is Galois, we have isomorphisms of difference profinite sets 
$$
S_L(L\otimes_k A)\simeq \lbr A,L\rbr_k\simeq (\forg{k}\Alg(\forg{A},\forg{L}),()^\sigma).
$$
\end{enumerate}
\end{lemma}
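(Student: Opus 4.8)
The plan is to reduce everything to the concrete picture supplied by the split hypothesis. Since $\forg{L}$ is a field it is connected, so $S(L)$ is a point, and ``$A$ split by $L$'' means exactly that $L\otimes_k A\simeq C_L(E)$ in $k\Alg^\op$ for some difference profinite set $E$; Lemma~\ref{lemma:  adjoint for connected} then identifies $\forg{L\otimes_k A}$ with $\mathop{\rm Cont}(\forg{E},\forg{L})$ ($\forg{L}$ discrete), the difference operator going to $h\mapsto\sigma_L\circ h\circ\sigma_E$. I keep this identification in play throughout.

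For part (1), uniqueness is free: $\sigma_L$ is injective (an endomorphism of a field), so at most one map $f^\sigma$ can satisfy $\sigma_L\circ f^\sigma=f\circ\sigma_A$, and a short diagram chase using this injectivity shows that any such $f^\sigma$ is automatically a $\forg{k}$-algebra homomorphism. So the content is existence, which amounts to the inclusion $\im(f\circ\sigma_A)\subseteq\im(\sigma_L)$ --- false for general difference $k$-algebras, and the place where splitness is used. Via the base-change adjunction I would view $f$ as the $\forg{L}$-algebra map $\tilde f\colon\forg{L\otimes_k A}\to\forg{L}$, $\ell\otimes a\mapsto\ell f(a)$, and note $1\otimes\sigma_A(a)=\sigma_{L\otimes_k A}(1\otimes a)$. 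Under $\forg{L\otimes_k A}\cong\mathop{\rm Cont}(\forg{E},\forg{L})$, every such $\tilde f$ is evaluation $\mathrm{ev}_e$ at a unique $e\in\forg{E}$ (a ring homomorphism out of a ring of continuous functions is determined by the ultrafilter of clopen sets it sends to $1$), so $\tilde f(\sigma_{L\otimes_k A}(h))=(\sigma_L\circ h\circ\sigma_E)(e)=\sigma_L\!\bigl(h(\sigma_E(e))\bigr)\in\im(\sigma_L)$; taking $h=1\otimes a$ gives the inclusion, and $f^\sigma:=\sigma_L^{-1}\circ f\circ\sigma_A$ then corresponds to $\mathrm{ev}_{\sigma_E(e)}$. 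Continuity is then routine: on each finite-dimensional $\forg{k}$-subalgebra $B\subseteq\forg{A}$ the restriction $f^\sigma|_B$ depends only on $f|_{B'}$ for any finite-dimensional $B'\supseteq\sigma_A(B)$, so $()^\sigma$ is a limit of maps of finite sets.

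For part (2), the computation above already yields a continuous bijection $\forg{k}\Alg(\forg{A},\forg{L})\to\forg{E}$, $f\mapsto e$, with $e_{f^\sigma}=\sigma_E(e)$. I would upgrade it to a homeomorphism by noting both sides carry the profinite topology --- on the left via \ref{field-ext-gal-desc} (this is where the Galois hypothesis on $\forg{L}/\forg{k}$ is used): each finite-dimensional $\forg{k}$-subalgebra of the split $\forg{A}$ is again split by $\forg{L}$, and \ref{field-ext-gal-desc} identifies it levelwise with the corresponding Pierce spectrum. This gives $(\forg{k}\Alg(\forg{A},\forg{L}),()^\sigma)\simeq(\forg{E},\sigma_E)=E$. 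On the other hand, the idempotents of $\mathop{\rm Cont}(\forg{E},\forg{L})$ are the indicators $\mathbf{1}_U$ of clopen $U\subseteq\forg{E}$, so $S_0(\forg{L\otimes_k A})=\forg{E}$, and $S_0(\sigma_{L\otimes_k A})$, induced by $\mathbf{1}_U\mapsto\mathbf{1}_{\sigma_E^{-1}(U)}$, equals $\sigma_E$; hence $S_L(L\otimes_k A)=S(L\otimes_k A)\simeq E$. Finally I would unwind the construction of the enriched hom $\lbr A,L\rbr_k$ from Appendix~\ref{sec: Appendix C}, whose underlying profinite space is $\forg{k}\Alg(\forg{A},\forg{L})$ and whose difference operator is the $()^\sigma$ of part~(1), and then compose the displayed isomorphisms.

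I expect the main obstacle to be the bookkeeping in part (2): matching all three models --- the difference Pierce spectrum $S_L(L\otimes_k A)$, the enriched hom $\lbr A,L\rbr_k$ from Appendix~\ref{sec: Appendix C}, and $(\forg{k}\Alg(\forg{A},\forg{L}),()^\sigma)$ --- not just on underlying profinite spaces but on the nose together with their difference operators, and placing the Galois hypothesis precisely where $\forg{k}\Alg(\forg{A},\forg{L})$ could otherwise fail to be all of $\forg{E}$. The delicate point in part (1) is recognising that it is the full split hypothesis, rather than any weaker separability condition, that forces $\im(f\circ\sigma_A)\subseteq\im(\sigma_L)$ and so makes $f^\sigma$ exist at all.
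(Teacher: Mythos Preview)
Your proposal is correct and follows essentially the same route as the paper: base-change to $L$, identify $L\otimes_k A$ with $C_L(E)$ for a difference profinite $E$, recognise $\forg{L}$-algebra maps to $\forg{L}$ as evaluations at points of $E$ (the paper phrases this via the prime ideal $I(f)=\ker f\cap E(A)$ in the Boolean algebra of idempotents, which is the same thing), and read off that $()^\sigma$ corresponds to $\sigma_E$. The one place where the paper is more explicit than you is the topology on $\lbr A,L\rbr_k$: rather than invoking a compactness argument, it reduces to the finitely $\diff$generated case and checks directly that the subbasic sets $\langle a,b;n\rangle$ of \ref{subbasis-funspacealg} match the subbasic sets $\langle\sigma^n(a),\lambda\rangle$ for the profinite topology on $\forg{k}\Alg(\forg{A},\forg{L})$, using that the latter are empty unless $\lambda\in\im(\sigma_L^n)$; your ``unwind the construction'' should include this step (in particular, the map $(f_0,f_1,\ldots)\mapsto f_0$ being a bijection with inverse $f\mapsto(f,f^\sigma,f^{\sigma^2},\ldots)$ is exactly what the paper records, and the topology check is the remaining content).
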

\begin{proof}
In view of the fact that $\lbr A,L\rbr_k\simeq \lbr L\otimes_k A,L\rbr_L$, and 
$\forg{k}\Alg(\forg{A},\forg{L})\simeq  \forg{L}\Alg(\forg{L\otimes _kA},\forg{L})$, we can reduce to the case where $L=k$ and $A=C_L(X)$ for some difference profinite set $X$. Note that, by compactness of $\forg{X}$, $\forg{A}=C_{0,\forg{L}}(\forg{X})=\text{\rm Cont}(\forg{X},\forg{L})$ is spanned as an $\forg{L}$-vector space by the idempotents $E(A)$, which are in fact characteristic functions of clopen subsets of $X$. Hence, each $f\in \forg{L}\Alg(\forg{A},\forg{L})$ is uniquely determined by $I(f)=\ker{f}\cap E(A)$, a prime ideal in $E(A)$, hence an element of $\spec( E(A))=S_L(C_L(X))\simeq X$.

Given an $f\in \forg{L}\Alg(\forg{A},\forg{L})$, the morphism $f^\sigma$ is determined by stipulating 
$$
I(f^\sigma)=\sigma_{E(A)}^{-1} (I(f)),
$$
i.e., by the action of $\sigma_X$. 

The assignment $I$ establishes $\cP$-isomorphisms 
$$
(\forg{L}\Alg(\forg{A},\forg{L}),()^\sigma)\simeq X \simeq S_L(A).
$$
The map 
$$
\lbr A,L\rbr_L\to \forg{L}\Alg(\forg{A},\forg{L}), \ \ \ (f_0,f_1,\ldots)\mapsto f_0
$$
is an isomorphism of difference sets with inverse
$$
f\mapsto (f,f^\sigma,f^{\sigma^2},\ldots),
$$
and it remains to show that it is a homeomorphism. It is enough to show that topologies match for $A$ {finitely $\diff$generated} over $k$, because an arbitrary $k$-algebra is a colimit of such. 

Let $a$ be a finite tuple of $\sigma$-generators of $A$ over $k$, so that $\forg{A}={\forg{k}}[a,\sigma(a),\sigma^2(a),\ldots]$.
As explained in \ref{field-ext-gal-desc}, one can describe the topology on $\forg{k}{\Alg}(\forg{A},\forg{L})$ by writing
$$\forg{A}=\colim_n A_n, \ \ \ \text{where}\ \ \ A_n={\forg{k}}[a,\sigma(a),\ldots,\sigma^n(a)],$$
and then considering the profinite topology of
$$
\forg{k}\Alg(\forg{A},\forg{L})\simeq \lim_n \forg{k}\Alg(A_n,\forg{L}).
$$
In terms of the topology of pointwise convergence on the function space $\forg{k}\Alg(\forg{A},\forg{L})$, we can choose a sub-basis for this topology consisting of the sets of form 
$$
\langle \sigma^n{(a)},\lambda\rangle=\{ f\in \forg{k}{\Alg}(\forg{A},\forg{L}): f({\sigma^n(a)})=\lambda\},
$$
for $\lambda\in L$ and $n\in\N$. The existence of $()^\sigma$ shows that  $\langle \sigma^n{(a)},\lambda\rangle=\emptyset$ unless $\lambda=\sigma^n{(b)}$ for some ${b}\in L$, and then the set $\langle \sigma^n{(a)},\sigma^n{(b)}\rangle$ corresponds to the sub-basic set $\langle a,{b};n\rangle$ for the topology on $\lbr A,L\rbr_k$ as in \ref{subbasis-funspacealg}.
\end{proof}

{The following theorem follows by combining Theorem \ref{diff-Galois-thm} and Lemma \ref{lemma: prepare for fields}.}

\begin{theorem}\label{diff-Galois-thm-fields}
Let $f:L\to k$ be the morphism in $\cA$ corresponding to a difference field extension $L/k$ such that $\forg{L}/\forg{k}$ is Galois. 
The category 
$$
\Split_k(f)
$$
is the opposite category of $k$-algebras $A$ such that $\forg{A}$ is split by $\forg{L}$. 
The categorical Galois group is the difference profinite group 
$$
\Gal[f]=S(L\otimes_k L)\simeq \Aut\lbr L\rbr_k\simeq (\Gal(\forg{L}/\forg{k}),()^\sigma).
$$
We have an equivalence of categories
$$
\Split_k(f)\simeq [\Gal[f],\cP],
$$
realised by the functor
$$
A\mapsto S(L\otimes_k A)\simeq \lbr A,L\rbr_k\simeq (\forg{k}\Alg(\forg{A},\forg{L}),()^\sigma)
$$
whose quasi-inverse is the functor
$$
X\mapsto (f_! C_L(X))^{\Gal[f]},
$$
for $X\in[\Gal[f],\cP]$. {Here the action of $\Aut\lbr L\rbr_k$ on $\lbr A,L\rbr_k$ and of $(\Gal(\forg{L}/\forg{k}),()^\sigma)$ on $(\forg{k}\Alg(\forg{A},\forg{L}),()^\sigma)$ respectively is given by composition.}
\end{theorem}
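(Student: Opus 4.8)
The theorem is essentially a repackaging of Theorem~\ref{diff-Galois-thm} for difference rings, specialised to a Galois field extension $L/k$, using Lemma~\ref{lemma: prepare for fields} to replace the abstract difference Pierce spectrum $S(L\otimes_k-)$ by the more concrete enriched hom-space $\lbr-,L\rbr_k\simeq(\forg{k}\Alg(\forg{A},\forg{L}),()^\sigma)$. So the proof is a matter of checking that the hypotheses of the more general theorem are met and then transporting its conclusion along the isomorphisms of Lemma~\ref{lemma: prepare for fields}.

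\begin{proof}
Since $L/k$ is a difference field extension with $\forg{L}/\forg{k}$ Galois, by \ref{field-ext-gal-desc} the morphism $\forg{f}\colon\forg{L}\to\forg{k}$ in $\cA_0$ is of relative Galois descent; in particular $\forg{L}/\forg{k}$, being Galois (hence finite or a union of finite Galois subextensions), is an auto-split clss extension of $\forg{k}$ in the sense of \ref{def-clss}. Thus Theorem~\ref{diff-Galois-thm} applies to $f\colon L\to k$: the category $\Split_k(f)$ is the opposite category of $k$-algebras $A$ with $\forg{A}$ split by $\forg{L}$, the difference categorical Galois groupoid
$$
\begin{tikzcd}[cramped, column sep=normal, ampersand replacement=\&]
{\Gal[f]=\left(S(L\otimes_kL) \right.}\ar[yshift=2pt]{r}{} \ar[yshift=-2pt]{r}[swap]{} \&{\left.S(L)\right)}
\end{tikzcd}
$$
is a difference profinite groupoid, and $A\mapsto S(L\otimes_k A)$ realises an equivalence $\Split_k(f)\simeq[\Gal[f],\cP]$. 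Because $\forg{L}$ is a field, it is connected, so $S(L)$ is a singleton and $\Gal[f]$ has a single object: it is a difference profinite \emph{group}. By \ref{field-ext-gal-desc}, $\forg{\Gal[f]}=\Gal[\forg{f}]\simeq\Gal(\forg{L}/\forg{k})$ as profinite groups, and the difference structure on $S(L\otimes_kL)$ is precisely the endomorphism $()^\sigma$; applying Lemma~\ref{lemma: prepare for fields}(2) with $A=L$ (noting $L\otimes_kL$ is split by $L$) gives the identifications
$$
\Gal[f]=S(L\otimes_kL)\simeq \lbr L\rbr_k=\Aut\lbr L\rbr_k\simeq(\Gal(\forg{L}/\forg{k}),()^\sigma)
$$
of difference profinite groups, where the first equality uses that for $A=L$ the enriched hom $\lbr L,L\rbr_k$ is the internal automorphism group $\Aut\lbr L\rbr_k$ (see Appendix~\ref{sec: Appendix C}).

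For a general $A\in\Split_k(f)$, Lemma~\ref{lemma: prepare for fields}(2) supplies isomorphisms of difference profinite sets
$$
S(L\otimes_k A)=S_L(L\otimes_k A)\simeq \lbr A,L\rbr_k\simeq(\forg{k}\Alg(\forg{A},\forg{L}),()^\sigma),
$$
natural in $A$; transporting the $\Gal[f]$-action along these (which, unwinding the construction in \ref{field-ext-gal-desc} and Appendix~\ref{sec: Appendix C}, is just composition of homomorphisms, compatible with $()^\sigma$ by the defining relation $\sigma_L\circ f^\sigma=f\circ\sigma_A$) turns the equivalence $A\mapsto S(L\otimes_k A)$ into the asserted equivalence realised by $A\mapsto\lbr A,L\rbr_k\simeq(\forg{k}\Alg(\forg{A},\forg{L}),()^\sigma)$. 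Finally, the quasi-inverse is identified as in \ref{field-ext-gal-desc}: the general quasi-inverse to the monadic comparison functor in \ref{janelidze} is $X\mapsto(f_!\,C_L(X))^{\Gal[f]}$, and by Lemma~\ref{lemma:  adjoint for connected} the relative right adjoint $C_L$ is $X\mapsto(\text{\rm Cont}(\forg{X},\forg{L}),f\mapsto\sigma_L f\sigma_X)$, so this description holds verbatim in the difference setting.
\end{proof}

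\emph{Main obstacle.} The only genuinely nontrivial point is the bookkeeping identifying the transported $\Gal[f]$-action with composition and verifying its compatibility with $()^\sigma$; this is exactly what Lemma~\ref{lemma: prepare for fields} is engineered to handle, so in the presence of that lemma and Appendix~\ref{sec: Appendix C} the argument is essentially formal.
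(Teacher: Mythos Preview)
Your proof is correct and follows exactly the approach the paper takes: the paper's own proof is the single sentence ``The following theorem follows by combining Theorem~\ref{diff-Galois-thm} and Lemma~\ref{lemma: prepare for fields},'' and you have simply unpacked that combination in detail. One tiny quibble: $\lbr L,L\rbr_k$ is not literally equal to $\Aut\lbr L\rbr_k$ in general, but since $\forg{L}/\forg{k}$ is Galois every $\forg{k}$-algebra endomorphism of $\forg{L}$ is an automorphism, so the identification holds here.
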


\begin{remark} 
An algebra $A$ over a difference field $k$ is difference locally \'etale (as in \ref{def-si-etale}) if and only if $\forg{A}$ is locally strongly separable (i.e.,  ind-\'etale, in view of \ref{str-sep-fin-etale}) over $\forg{k}$.  In other words, $A$ is difference locally \'etale if and only if every $a\in A$ satisfies a separable polynomial over $\forg{k}$.
\end{remark}

\begin{definition}\label{diff-etale} 
Let $k$ be a difference field and let $A$ be a $k$-algebra. We say that
\begin{enumerate}
\item $A$ is \emph{finitely $\diff$generated \'etale}, if it is difference locally \'etale and a quotient of $\ssig{A_0}^k$ for some finitely generated \'etale $\forg{k}$-algebra $A_0$;
\item $A$ is \emph{finitely $\diff$presented \'etale}, if $A\simeq\ssig{A_0;C_0}^k$ for some cocorrespondence $A_0\to C_0\leftarrow A_0$ of \'etale $\forg{k}$-algebras. 
\end{enumerate}
\end{definition}

{As the special case $L=\bar{k}$ of Theorem \ref{diff-Galois-thm-fields} we obtain the following.}
\begin{corollary}\label{diff-abs-Gal}
Let $\bar{k}$ be a separable closure of the difference field $k$ (with a non-canonical lift $\bar{\sigma}$ of $\sigma_k$), and consider the difference profinite absolute Galois group
$$
G=S(\bar{k}\otimes_k\bar{k})\simeq \Aut\lbr \bar{k}\rbr_k\simeq (\Gal(\forg{\bar{k}}/\forg{k}),()^{\bar{\sigma}}).
$$
The functor 
$$
A\mapsto S(\bar{k}\otimes_k A)\simeq \lbr A,\bar{k}\rbr_k\simeq (\forg{k}\Alg(\forg{A},\forg{\bar{k}}),()^{\bar{\sigma}})
$$
gives rise to an anti-equivalence of categories between the category of difference locally \'etale $k$-algebras 
and the category of difference profinite sets with an action of $G$. 
\end{corollary}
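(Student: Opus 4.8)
The plan is to derive this corollary as the special case $L=\bar k$ of Theorem~\ref{diff-Galois-thm-fields}. The only hypothesis of that theorem is that $\forg{L}/\forg{k}$ be Galois, and here this is immediate: a separable closure of the field $\forg{k}$ is its separable algebraic closure, hence a (possibly infinite) Galois extension of $\forg{k}$. Applying Theorem~\ref{diff-Galois-thm-fields} to $f\colon\bar k\to k$ therefore yields an equivalence $\Split_k(f)\simeq[\Gal[f],\cP]$ realised by $A\mapsto S(\bar k\otimes_k A)\simeq\lbr A,\bar k\rbr_k\simeq(\forg{k}\Alg(\forg{A},\forg{\bar k}),()^{\bar\sigma})$, with quasi-inverse $X\mapsto(f_!C_{\bar k}(X))^{\Gal[f]}$, and identifies the categorical Galois group $\Gal[f]=S(\bar k\otimes_k\bar k)$ with $\Aut\lbr\bar k\rbr_k$ and with $(\Gal(\forg{\bar k}/\forg{k}),()^{\bar\sigma})$, the action being given by composition.

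It then remains to recognise the two sides of this equivalence. On the algebra side, Remark~\ref{si-etale} together with Definition~\ref{def-si-etale} says that the objects of $\Split_k(\bar k\to k)$ are exactly the difference locally \'etale $k$-algebras; since $\cA=(\diff\Rng)^\op$, the category $\Split_k(f)$ is the \emph{opposite} of the category of such algebras, so the equivalence of Theorem~\ref{diff-Galois-thm-fields} becomes the stated anti-equivalence. On the groupoid side, the object of objects of $\Gal[f]$ is $S(\bar k)$, which is a one-point difference profinite space because the field $\forg{\bar k}$ is connected; hence $\Gal[f]$ is an internal group $G=S(\bar k\otimes_k\bar k)$ in $\cP=\diff\Prof$, and, following \ref{diff-gp-actions}, the category $[\Gal[f],\cP]$ of internal diagrams on $\Gal[f]$ is precisely the category of difference profinite sets equipped with a $G$-action. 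Combining these identifications with the realising functor and the description of $G$ above gives the corollary verbatim.

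Since this is a direct specialization, I do not expect a genuine obstacle; the whole content of the proof of Theorem~\ref{diff-Galois-thm-fields} (and in particular the verification, via Magid's Fact~\ref{magid-th} and Fact~\ref{field-ext-gal-desc}, that $f$ is of relative Galois descent, so that $\forg{\bar k}/\forg{k}$ is auto-split clss) is already carried out there. The one point worth flagging is that, by Remark~\ref{diff-sep-cl-exists}, the lift $\bar\sigma$ of $\sigma_k$ to $\forg{\bar k}$ is non-canonical, so it must be recorded explicitly in the statement: the resulting anti-equivalence does not otherwise depend on this choice, although different choices of $\bar\sigma$ can produce non-isomorphic difference profinite groups $G$.
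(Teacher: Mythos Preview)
Your proposal is correct and follows exactly the paper's approach: the paper states this corollary simply as ``the special case $L=\bar{k}$ of Theorem~\ref{diff-Galois-thm-fields}'' without further proof, and you have correctly unpacked what this specialization entails, including the identification of $\Split_k(\bar{k}\to k)$ with difference locally \'etale $k$-algebras via Remark~\ref{si-etale} and Definition~\ref{def-si-etale}, and the observation that $S(\bar{k})$ is a point so that the Galois groupoid is a group.
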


In view of \ref{eq-with-subshifts}, we obtain the following.

\begin{corollary}\label{fp-corr-sft}
The anti-equivalence of categories from \ref{diff-abs-Gal} restricts to anti-equivalences of the following full subcategories:
\begin{enumerate}
\item finitely $\diff$generated \'etale $k$-algebras and finitely $\diff$generated $G$-profinite sets. 
\item finitely $\diff$presented \'etale $k$-algebras and finitely $\diff$presented $G$-profinite sets. 
\end{enumerate}
The appropriate  notion of finite $\diff$generation/presentation is in the sense of terminology \ref{free-dirpres-diffob}, where we stipulate that  a \emph{finitely freely $\diff$generated $G$-profinite set} is of the form $\psig{X_0}_G$ for a finite set $X_0$ with a continuous $\forg{G}$-action (cf. \ref{psig-G-sets}).
%
%
\end{corollary}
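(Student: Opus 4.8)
The plan is to deduce \ref{fp-corr-sft} by specializing Corollary~\ref{eq-with-subshifts} to the morphism $f\colon\bar{k}\to k$ and then reconciling the two descriptions of the relevant finiteness conditions. First I would record that \ref{eq-with-subshifts} does apply to $f$: a separable closure of a field is a (possibly infinite) Galois extension, so $\forg{\bar{k}}/\forg{k}$ is auto-split clss, and by \ref{diff-abs-Gal} together with Definition~\ref{def-si-etale} the category $\Split_k(f)$ is the opposite of the category of difference locally \'etale $k$-algebras. Since $\forg{\bar{k}}$ is a field, $S(\bar{k})$ is a one-point space, so $G=\Gal[f]$ is a difference profinite \emph{group} acting over a point; consequently the relatively finite $\forg{G}$-actions of \ref{magid-th} are exactly the finite continuous $\forg{G}$-sets, and the ``finitely freely $\diff$generated $G$-profinite sets'' of \ref{eq-with-subshifts} are precisely the objects $\psig{X_0}_G$ with $X_0$ a finite $\forg{G}$-set (cf.~\ref{psig-G-sets}), which is the stipulation made in the statement.

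It then remains to match, on the algebra side, the finitely $\diff$generated (resp. finitely $\diff$presented) objects of $\Split_k(f)$ with the finitely $\diff$generated \'etale (resp. finitely $\diff$presented \'etale) $k$-algebras of Definition~\ref{diff-etale}; the anti-equivalences then follow from \ref{eq-with-subshifts} (together with \ref{gal-dif-corr} for the direct-presentation part). One inclusion is immediate in each case: a finitely $\diff$generated (resp. $\diff$presented) \'etale $k$-algebra is difference locally \'etale, hence lies in $\Split_k(f)$ by \ref{def-si-etale}, and is finitely $\diff$generated by \ref{fin-si-gen} (resp. finitely $\diff$presented by \ref{fin-pres-dir-pres}), being a quotient of $\ssig{A_0}^k$ with $A_0$ finitely generated (resp. of the form $\ssig{A_0;C_0}^k$ with $A_0,C_0$ finitely presented); here one also uses that $\ssig{A_0;C_0}^k$ built from a cocorrespondence of \'etale $\forg{k}$-algebras has ind-\'etale underlying ring, so is itself difference locally \'etale.

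For the reverse inclusion the key algebraic input is that, over a field, every finitely generated subalgebra of an ind-\'etale algebra is \'etale: it sits inside some finite \'etale subalgebra, hence is a reduced finite-dimensional $\forg{k}$-algebra, i.e.\ a finite product of finite field extensions of $\forg{k}$, each of which is a subextension of a separable extension and therefore separable. Granting this, if $A\in\Split_k(f)$ is finitely $\diff$generated, then $\forg{A}$ is ind-\'etale over $\forg{k}$ by the Remark following Theorem~\ref{diff-Galois-thm-fields}, and $A$ is a quotient of $\ssig{A_0}^k$ by \ref{fin-si-gen}; replacing $A_0$ by its (automatically \'etale) image $\bar{A}_0$ in $\forg{A}$, the induced map $\ssig{\bar{A}_0}^k\to A$ is still surjective because $A$ is generated over $k$ by $\bigcup_i\sigma_A^i(\bar{A}_0)$, and $A$ is difference locally \'etale, so $A$ is finitely $\diff$generated \'etale. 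For a finitely $\diff$presented $A\in\Split_k(f)$ one proceeds in the same spirit: by \ref{fin-pres-dir-pres} one may write $A\simeq\ssig{A_0;C_0}^k$ for a cocorrespondence of finitely presented $\forg{k}$-algebras, and one wants to replace $(A_0,C_0)$ by a cocorrespondence of \'etale $\forg{k}$-algebras presenting the same $A$, the natural candidates being suitable finitely generated --- hence \'etale --- subalgebras of $\forg{A}$ adapted to the gluing data.

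The step I expect to be the main obstacle is precisely this last replacement. Unlike in the finitely $\diff$generated case, the two structure maps $A_0\rightrightarrows C_0$ of the cocorrespondence land in $\forg{A}$ as a pair of \emph{different} finitely generated subalgebras intertwined by $\sigma_A$, so one must choose the new $\forg{k}$-algebras $A_0',C_0'$ so that both maps factor appropriately and $\ssig{A_0';C_0'}^k$ still recovers $\forg{A}$ as a ring; concretely this amounts to first rewriting $A$ with generators and defining relations of order at most one (as one does in difference algebra to pass to a $1$-step presentation) and then reading off an \'etale cocorrespondence from the explicit colimit description of $\ssig{\,}^k$ in \ref{dir-pres-adj}. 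This is routine but is the most delicate bookkeeping in the argument.
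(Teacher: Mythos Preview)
Your overall plan—specialize Corollary~\ref{eq-with-subshifts} to the morphism $f:\bar{k}\to k$ and note that, since $S(\bar{k})$ is a point, the constant relatively finite $\forg{G}$-actions of \ref{magid-th} are exactly the finite continuous $\forg{G}$-sets—is precisely the paper's proof, which is the single sentence ``In view of \ref{eq-with-subshifts}''.

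Where you diverge is in trying to prove more than the statement asks. Definition~\ref{diff-etale} already \emph{stipulates} that the presenting datum $A_0$ (resp.\ the cocorrespondence $A_0\to C_0\leftarrow A_0$) be \'etale over $\forg{k}$. Correspondingly, the ``finitely $\diff$generated objects of $\Split_k(f)$'' in \ref{eq-with-subshifts} are, following the terminology of \ref{free-dirpres-diffob} and the stipulation there, quotients of $\ssig{A_0}^k$ with $A_0$ strongly separable; over a field this means $A_0$ finite \'etale. So the algebra-side notions in \ref{eq-with-subshifts} and \ref{diff-etale} coincide by inspection, and neither your ``reverse inclusion'' nor your ``main obstacle'' (replacing a generic finitely presented cocorrespondence by an \'etale one) is actually part of \ref{fp-corr-sft}.

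The genuinely harder question you flag—whether a $k$-algebra that is finitely $\diff$presented in the ambient sense of \ref{fin-pres-dir-pres} and happens to be difference locally \'etale can always be re-presented by an \'etale cocorrespondence—is real, but the paper defers it to Theorem~\ref{theo:translation-mech}, where it is handled on the dynamics side by showing that any subshift carrying a $G$-action embeds $G$-equivariantly into some $\psig{X_0'}_G$ and then reading off an \'etale presentation via block maps. So your instinct that this step is the delicate one is right; it is simply located elsewhere in the paper.
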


\begin{definition}
Let $R$ be a difference ring. We write
$$
\spec(R)=(\spec(\forg{R}),\lexp{a}{\sigma}_R),
$$
where $\lexp{a}{\sigma}_R=\spec(\sigma_R)$ is the map on $\spec(\forg{R})$ associated with the action of $\sigma_R$ on prime ideals via
$$
\lexp{a}{\sigma}_R(\p)=\sigma_R^{-1}(\p).
$$
\end{definition}

\begin{corollary}\label{diffet-spec}
With notation of \ref{diff-abs-Gal}, let $A$ be a difference \'etale $k$-algebra
and let $X$ be the associated $G$-profinite space. Then we have an isomorphism of difference topological spaces
$$
\spec(A)\simeq X/G.
$$
\end{corollary}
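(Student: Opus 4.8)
The plan is to carry out the chain of homeomorphisms from the proof of Remark~\ref{zar-spectrum-quot} inside the category $\diff\Topl$ of difference topological spaces, using that the functors $S$, $C$, $f_!$ and $\spec$ are all defined objectwise with induced difference operators, and that the forgetful functor $\forg{\,}:\diff\Topl\to\Topl$ creates limits and colimits (\ref{forg-creates-limits}).

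First I would record that passing to underlying objects in \ref{diff-abs-Gal} recovers the classical anti-equivalence of \ref{etale-algebras-class}: since $\forg{\,}$ intertwines $S$, $C$, $f_!$ with their non-difference counterparts (the content of the Comparison Theorem~\ref{gen-gal-free-diff} in this setting), the underlying $\forg G$-profinite set of $X=S(\bar k\otimes_k A)$ is precisely the $\forg G$-set associated by \ref{etale-algebras-class} to the ind-\'etale $\forg k$-algebra $\forg A$. Using the quasi-inverse of Theorem~\ref{diff-Galois-thm-fields}, I would write $A\simeq (f_!\, C_{\bar k}(X))^{G}$ in $\diff\Rng$, with the $G$-invariants formed there (hence, by \ref{forg-creates-limits}, computed on underlying rings). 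Applying the difference spectrum functor and running the chain of \ref{zar-spectrum-quot} then gives
$$
\spec(A)\simeq \spec(f_!\, C_{\bar k}(X))/G\simeq\spec(C_{\bar k}(X))/G\simeq S(C_{\bar k}(X))/G\simeq X/G,
$$
all objects and quotients by the difference profinite group $G$ being taken in $\diff\Topl$. On underlying spaces these are exactly the homeomorphisms of \ref{zar-spectrum-quot} (using that $f_!$ leaves the underlying ring unchanged, that $\spec$ and $S$ agree on the split $\forg{\bar k}$-algebra $C_{\bar k}(X)=\mathop{\rm Cont}(\forg X,\forg{\bar k})$, and the classical identity $\spec((-)^{\forg G})\simeq\spec(-)/\forg G$ for the pro-finite group $\forg G$). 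Since $\forg{\,}$ is faithful and creates these colimits, it remains to check that each underlying homeomorphism respects the difference operators; each of them is $\spec$ of a difference ring homomorphism, or a $G$-quotient thereof, so intertwines the shifts automatically. For the last identification in particular, the quotient map $\forg X=\spec(\forg{\bar k}\otimes_{\forg k}\forg A)\to\forg X/\forg G\simeq\spec(\forg A)$ is $\spec$ of $a\mapsto 1\otimes a$, which intertwines $\sigma_{\bar k\otimes_k A}$ with $\sigma_A$; being epimorphic, this quotient map forces the induced shift on $\forg X/\forg G$ to be $\lexp{a}{\sigma}_A$, i.e.\ the difference operator of $\spec(A)$.

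The step I expect to be the main obstacle is making the identity $\spec(M^G)\simeq\spec(M)/G$ rigorous in the difference world: one needs that $G$-invariants of a difference ring and the $G$-quotient of a difference space are both computed on underlying objects (via \ref{forg-creates-limits}), and that the shift genuinely descends to the quotient space. The latter holds because $\sigma_X$ is $\sigma_G$-equivariant, $\sigma_X(g.x)=\sigma_G(g).\sigma_X(x)$ by \ref{defi: diff group action}, so $\sigma_X$ carries $\forg G$-orbits to $\forg G$-orbits and the induced self-map of $\forg X/\forg G$ is the unique one compatible with the quotient map. Granting this, the displayed chain assembles into the desired isomorphism $\spec(A)\simeq X/G$ of difference topological spaces.
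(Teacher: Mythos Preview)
Your argument is correct, and it reaches the same conclusion by a somewhat different route than the paper. The paper's proof is more concrete and direct: it identifies $X$ with $(\forg{k}\Alg(\forg{A},\forg{\bar{k}}),()^{\bar\sigma})$, takes the explicit map $f\mapsto\ker(f)$ from $\forg X$ to $\spec(\forg A)$ (which induces the homeomorphism $\forg X/\forg G\simeq\spec(\forg A)$ of \ref{zar-spectrum-quot}), and then verifies by hand, using the defining relation $\bar\sigma\circ f^{\bar\sigma}=f\circ\sigma_A$, that $\ker(f^{\bar\sigma})=\sigma_A^{-1}\ker(f)=\lexp{a}{\sigma}_A(\ker(f))$, so this map intertwines the shifts. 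You instead run the entire chain of \ref{zar-spectrum-quot} in $\diff\Topl$, arguing functorially that each step is $\spec$ of a difference ring homomorphism (or a $G$-quotient thereof) and hence automatically compatible with shifts; your final check that the quotient map $\forg X\to\spec(\forg A)$ is $\spec$ of the difference morphism $a\mapsto 1\otimes a$ is really the same observation as the paper's, expressed in the description $X\simeq S(\bar k\otimes_k A)$ rather than $X\simeq(\forg{k}\Alg(\forg{A},\forg{\bar{k}}),()^{\bar\sigma})$. The paper's approach is shorter and more transparent at the single crucial step; yours is more in keeping with the categorical machinery of the paper and makes explicit why the quotient $X/G$ is well-formed in $\diff\Topl$.
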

\begin{proof}
Identifying $X\simeq(\forg{k}\Alg(\forg{A},\forg{\bar{k}}, ()^{\bar{\sigma}})$ as in  \ref{diff-abs-Gal}, the $\forg{G}$-equivariant map $\forg{X}\to \spec(\forg{A})$ 
inducing the homeomorphism 
$$
\spec(\forg{A})\simeq \forg{X}/\forg{G}
$$
from \ref{zar-spectrum-quot} is given by 
$f\mapsto \ker(f)$, for $f\in \forg{k}\Alg(\forg{A},\forg{\bar{k}})$. 

Using the property $\bar{\sigma} \circ f^{\bar{\sigma}}=f\circ\sigma_A$, we readily verify that
$$
\ker(f^{\bar{\sigma}})=\sigma_A^{-1}\ker(f)=\lexp{a}{\sigma}_A(\ker(f)),
$$
so this assignment is compatible with the difference structure, as required.
\end{proof}

\section{Symbolic dynamics}\label{s:symb}

{Subshifts and subshifts of finite type are the central objects of study in symbolic dynamics. The connection between difference algebra and symbolic dynamics manifests itself through the fact that, under the Galois theorem (\ref{diff-Galois-thm-fields}), finitely $\diff$generated difference algebras correspond to subshifts, while finitely $\diff$presented difference algebras correspond to subshifts of finite type. Moreover, classical definitions and constructions on both sides correspond to each other, e.g., the limit degree on the difference algebra side corresponds to the entropy on the symbolic dynamics side.}

{In this section} we give a whirlwind tour of symbolic dynamics following mostly classical texts such as \cite{kitchens} and \cite{lind-marcus}.

\subsection{Subshifts of finite type}

\begin{definition}
The \emph{one-sided full shift} on an alphabet consisting of a finite set $E_0$ is the profinite space 
$$X=E_0^{\N},$$ together with the shift 
$\sigma:X\to X$,
$$
\sigma(x_0,x_1,x_2,\ldots)=(x_1,x_2,\ldots). 
$$
It is naturally a profinite space with a continous self-map. Moreover, 
it is metrisable by the metric
$$
d(x,y)=2^{-\min\{i: x_i\neq y_i\}}.
$$

A \emph{subshift} is a closed subset of some full shift which is {stable} under the shift. 

Let $\Gamma$ be the directed graph with vertices $X_0$ and edges $E_0\xrightarrow{(s,t)} X_0\times X_0$.
The \emph{subshift of finite type} {(SFT)} associated with $\Gamma$ is the set of infinite paths in $\Gamma$,
$$
X_\Gamma=\{(x_0,x_1,x_2,\ldots)\in E_0^\N : s(x_{i+1})=t(x_i)\},
$$
considered both as a difference subset and a subspace of the full shift.
\end{definition}

{In what follows we take the liberty to also refer to objects of $\diff\Prof$ isomorphic to subshifts/SFTs as subshifts/SFTs.}

\begin{remark}\label{dir-si-gen-set-sft}
A full shift on {a finite} alphabet $E_0$ is  a {finitely freely $\diff$generated object of $\diff\Prof$, i.e.,
$$
E_0^\N=\psig{E_0}\in\diff\Prof,
$$
where, as in Proposition \ref{psig-adj}, $\psig{\,}$ is the adjoint of the forgetful functor $\forg{\,}\colon \diff\Prof\to\Prof$ and $E_0$ is considered as a discrete  topological space.}
In categorical terms, a subshift is a subobject of a full shift in $\diff\Prof$.

A finite (discrete) directed graph $\Gamma=(X_0,E_0)$ defines a self-1-correspondence in the category $\Prof$, where $1\in \diff\Prof$ is a terminal object consisting of a singleton with the identity self-map. The subshift of finite type associated with $\Gamma$ is in fact a finitely directly {$\diff$presented object of $\diff\Prof$ (in the sense of \ref{free-dirpres-diffob})},
$$
X_\Gamma=\psig{X_0,E_0}_1\in \diff\Prof.
$$
\end{remark}

\begin{remark}
By adding and relabeling vertices and edges, we may assume that in the directed graph $\Gamma$ we have
$E_0\subseteq X_0\times X_0$ and that $X_0=\{1,\ldots,n\}$, so that $\Gamma$ is determined by its $n\times n$ transition matrix $A$ with entries from $\{0,1\}$. In that case, we can denote the associated subshift  by
$$
X_A=\{ (x_0,x_1,\ldots)\in \{1,\ldots,n\}^\N : A_{x_i,x_{i+1}}=1\}.
$$
\end{remark}

\begin{definition}
Let $X$ be a SFT. A point $x\in X$ is called
\begin{enumerate}
\item \emph{periodic}, if there exists an $i>0$ such that $\sigma^i(x)=x$;
\item \emph{preperiodic}, if some $\sigma^i{(x)}$ is periodic, or, equivalently, if its orbit is finite;
\item \emph{transitive}, if its orbit is dense;
\item \emph{wandering}, if there exists a neighbourhood $U\ni x$ such that $\sigma^i{(U)}\cap U=\emptyset$ for $i>0$.
\end{enumerate}
\end{definition}

\begin{fact}[{\cite[4.4]{lind-marcus}}]\label{comm-classes}
Let $\Gamma=(X_0;E_0)$ be a directed graph. For vertices $x,y\in X_0$, we write 
\begin{enumerate}
\item $x\rightsquigarrow y$ if there is a (possibly empty) path in $\Gamma$ from $x$ to $y$;
\item $x\leftrightsquigarrow y$ if $x\rightsquigarrow y$ and $y\rightsquigarrow x$.
\end{enumerate}
The relation $\leftrightsquigarrow$ is an equivalence relation and its classes are called \emph{communicating classes}.
We form the \emph{graph of communicating classes}
$$
\bar{\Gamma}=(X_0/\leftrightsquigarrow, \bar{E}_0)
$$
whose vertices are the communicating classes, and for two communicating classes $C$, $D$, we insert {an} edge $(C,D)\in \bar{E}_0$ if there exists an $x\in C$ and $y\in D$ such that $(x,y)\in E_0$.

By construction, $\bar{\Gamma}$ is acyclic, and we can arrange the communicating classes $C_1,\ldots,C_k$ in an order so that there is a path in $\bar{\Gamma}$ from $C_j$ to $C_i$ if and only if $j>i$, hence the transition matrix $A$ assumes the block triangular form
$$
\begin{bmatrix}
A_1 & 0 &  \ldots & 0 \\
* & A_2 &  \ldots & 0\\

\vdots & \vdots &  \ddots & \vdots \\
* & * &  \ldots & A_k
\end{bmatrix}
$$
where the matrices $A_i$ are transition matrices of the subgraphs $\Gamma_i$ of $\Gamma$ obtained by restricting the graph structure to the class $C_i$. These matrices 
correspond to \emph{strongly connected} graphs $\Gamma_i$.
\end{fact}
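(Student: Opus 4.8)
The plan is to run the standard condensation argument (passage to the graph of strongly connected components). First I would verify that $\leftrightsquigarrow$ is an equivalence relation on $X_0$: reflexivity holds because the empty path witnesses $x\rightsquigarrow x$, symmetry is immediate from the definition, and transitivity follows by concatenating paths, so if $x\rightsquigarrow y$ and $y\rightsquigarrow z$ then splicing a path from $x$ to $y$ with a path from $y$ to $z$ gives $x\rightsquigarrow z$, and symmetrically in the reverse direction. Hence $\leftrightsquigarrow$ partitions $X_0$ into communicating classes and $\bar\Gamma$ is well defined. I would also record that reachability descends to classes: if $C,D$ are communicating classes and $x\rightsquigarrow y$ for some $x\in C$, $y\in D$, then $x'\rightsquigarrow y'$ for all $x'\in C$, $y'\in D$, by prepending a path $x'\rightsquigarrow x$ and appending a path $y\rightsquigarrow y'$, which exist since $x'\leftrightsquigarrow x$ and $y\leftrightsquigarrow y'$.

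Next I would show $\bar\Gamma$ is acyclic. If $\bar\Gamma$ had a directed cycle through two distinct classes $C\neq D$, reading its edges back in $\Gamma$ and using the previous observation yields $x\rightsquigarrow y$ and $y\rightsquigarrow x$ for $x\in C$, $y\in D$, hence $x\leftrightsquigarrow y$ and $C=D$, a contradiction; and $\bar\Gamma$ has no loop at a single class by construction. So $\bar\Gamma$ is a finite acyclic directed graph.

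Now I would topologically sort $\bar\Gamma$: repeatedly removing a sink (a class with no outgoing edge in the remaining graph, which exists by acyclicity) and labelling the $i$-th removed class $C_i$ yields an enumeration $C_1,\dots,C_k$ in which every edge points from a larger index to a smaller one, since when $C_a$ is removed it has no outgoing edge, so an edge $C_a\to C_b$ forces $C_b$ to have been removed earlier, i.e.\ $b<a$; in particular a nonempty path in $\bar\Gamma$ from $C_j$ to $C_i$ forces $j>i$. Ordering $X_0$ by listing the vertices of $C_1$, then those of $C_2$, and so on, and writing $A$ for the resulting $\{0,1\}$-valued transition matrix: an edge $(x,y)\in E_0$ with $x\in C_j$, $y\in C_i$, $i\neq j$ induces the edge $C_j\to C_i$ in $\bar\Gamma$, hence $j>i$, so $A$ is block lower triangular with diagonal blocks $A_1,\dots,A_k$, and $A_i$ is precisely the transition matrix of the subgraph $\Gamma_i$ of $\Gamma$ induced on the vertex set $C_i$. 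Finally $\Gamma_i$ is strongly connected: for $x,y\in C_i$ choose a path $x=v_0\to\cdots\to v_m=y$ in $\Gamma$ (it exists because $x\leftrightsquigarrow y$); each $v_\ell$ satisfies $x\rightsquigarrow v_\ell$ and $v_\ell\rightsquigarrow y\rightsquigarrow x$, so $v_\ell\leftrightsquigarrow x$ and $v_\ell\in C_i$, hence the path lies inside $\Gamma_i$, and symmetrically a path $y\rightsquigarrow x$ lies inside $\Gamma_i$.

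I do not expect a genuine obstacle: the statement is classical and the argument is purely combinatorial. The points that need care are only bookkeeping conventions — the empty path must be allowed, or reflexivity of $\leftrightsquigarrow$ fails; a path in $\Gamma$ joining two vertices of a communicating class must be checked to remain in that class (done above); and one should adopt the convention that a single-vertex graph without a loop is strongly connected, so that a communicating class consisting of one wandering vertex still furnishes a legitimate diagonal block. Note also that in the displayed statement the implication actually used is that a nonempty path from $C_j$ to $C_i$ exists only if $j>i$, which is exactly what the block-triangular form requires; the converse need not hold for an arbitrary topological order and is not needed.
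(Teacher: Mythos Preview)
The paper does not supply a proof of this statement: it is recorded as a \emph{Fact} with a citation to \cite[4.4]{lind-marcus} and is used as background from symbolic dynamics. Your argument is the standard condensation/topological-sort proof and is correct; it is precisely what one finds in the cited reference. Your closing remark is also well taken: the ``if and only if'' in the displayed statement is stronger than what a topological sort guarantees in general (and stronger than what the block-triangular form needs), so only the ``only if'' direction is actually being used.
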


\begin{definition}
A SFT $X$ is called
\begin{enumerate}
\item \emph{irreducible}, if it is of {the} form $X=X_\Gamma$ for a strongly connected directed graph $\Gamma$;
\item \emph{transitive}, if it has a transitive point;
\item \emph{non-wandering}, if every point is non-wandering.
\end{enumerate}
\end{definition}

\begin{fact}\label{prop-irred-sft}
\begin{enumerate}
\item A SFT is transitive if and only if it is irreducible {(\cite[Thm.~1.4.1]{kitchens})}.
\item An irreducible SFT $X$ is either finite consisting of a single $\sigma_X$-orbit, or it is infinite and has countably many periodic points and they are dense.
\end{enumerate}
\end{fact}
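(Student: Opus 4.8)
The plan is to fix a presentation $X\cong X_\Gamma$ by a finite directed graph $\Gamma=(X_0;E_0)$ and reduce both assertions to elementary statements about $\Gamma$; for the precise formulations I would ultimately defer to \cite[Thm.~1.4.1]{kitchens} and \cite[4.4]{lind-marcus}.

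For part (1), the implication ``irreducible $\Rightarrow$ transitive'' I would prove constructively. Choosing $\Gamma$ strongly connected (with at least one edge, equivalently $X\neq\emptyset$), I list the countably many finite paths $w_1,w_2,\dots$ of $\Gamma$ and build an infinite path $x\in X_\Gamma$ by concatenating $w_1$, then a connecting path from the terminal vertex of $w_1$ to the initial vertex of $w_2$ (which exists by strong connectedness), then $w_2$, then a connecting path to $w_3$, and so on. Every admissible word is then a factor of $x$, so the orbit $O(x)=\{\sigma^n x:n\ge 0\}$ meets every nonempty cylinder of $X$, i.e.\ $x$ is transitive. For the converse I take a transitive point $x$ and pass to an essential presentation $\Gamma$ (discarding every vertex from which no infinite forward path emanates, so that each remaining vertex has an outgoing edge). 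Using $\overline{O(x)}=X$ I would show that $x$ visits every vertex $v$ of $\Gamma$ at arbitrarily late times: $v$ lies on some infinite forward path $\pi\in X$, and density of $O(x)$ forces $s(x_n)=v$ for $n$ in an unbounded set. Granting this, for any vertices $u,v$ I pick times $m<m'$ with $s(x_m)=u$ and $s(x_{m'})=v$; then $x_m x_{m+1}\cdots x_{m'-1}$ is a path $u\to v$, and symmetrically there is a path $v\to u$, so $\Gamma$ is strongly connected and $X$ is irreducible.

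For part (2), I would fix a strongly connected presentation $\Gamma$ of $X\neq\emptyset$ (so that every vertex has out-degree $\ge 1$) and split on out-degrees. If every vertex has out-degree exactly $1$, strong connectedness forces $\Gamma$ to be a single directed cycle, whence $X_\Gamma$ consists of the finitely many shifts of the corresponding periodic point and $X$ is a single $\sigma_X$-orbit. Otherwise some vertex $v$ has two distinct outgoing edges; adjoining to each a return path to $v$ produces two distinct cycles $C_1\neq C_2$ based at $v$, and $\eta\mapsto C_{\eta_0}C_{\eta_1}C_{\eta_2}\cdots$ is an injection $\{1,2\}^{\N}\hookrightarrow X$, so $X$ is infinite (in fact uncountable). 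In this case the periodic points of $X$ are precisely the infinite repetitions of closed paths of $\Gamma$, and since there are at most $|E_0|^{\ell}$ closed paths of length $\ell$ there are only countably many of them. Finally they are dense: given $y\in X$ and $N\ge 1$, the prefix $y_{0}\cdots y_{N-1}$ is a path from $u:=s(y_0)$ to $w:=t(y_{N-1})$, so for any path $q$ from $w$ to $u$ the periodic point $(y_0\cdots y_{N-1}\,q)^{\infty}$ agrees with $y$ on the first $N$ coordinates.

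The step I expect to be the real obstacle is the ``$x$ visits every vertex at arbitrarily late times'' claim in the converse half of (1): density of $O(x)$ only yields finitely-precise approximations of points of $X$, so one must argue carefully — distinguishing whether the approximating point lies in $O(x)$, and using essentiality of $\Gamma$ — that the orbit really does return to each vertex infinitely often. Everything else is routine bookkeeping with finite directed graphs.
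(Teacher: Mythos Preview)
The paper offers no proof of this Fact beyond the citation to Kitchens, so there is nothing to compare your sketch against; you have already gone further than the paper by outlining an argument.

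Your treatments of part~(2) and of the direction ``irreducible $\Rightarrow$ transitive'' in part~(1) are correct and standard.

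The obstacle you isolate in the converse of~(1) is not merely delicate---under the paper's one-sided conventions and its definition of \emph{transitive} (existence of a point with dense forward orbit) it is fatal, because the implication is actually false. Take the graph $\Gamma$ on vertices $\{0,1,2\}$ with edges $a\colon 0\to 1$, $b\colon 1\to 2$, $c\colon 2\to 1$. Then $X_\Gamma=\{x,\sigma(x),\sigma^2(x)\}$ with $x=abcbcbc\cdots$, and the forward orbit of $x$ is all of $X_\Gamma$, so $x$ is a transitive point. Yet $X_\Gamma$ is not conjugate to $X_{\Gamma'}$ for any strongly connected $\Gamma'$: such a $\Gamma'$ with finite path space must be a single directed cycle, all of whose points are periodic, whereas $x$ is not. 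In this example your ``visits every vertex at arbitrarily late times'' assertion fails at vertex~$0$, and no refinement of the argument will repair it. The cited theorem in Kitchens presumably uses either two-sided shifts (where $\sigma$ is invertible and the difficulty disappears) or the alternative notion of topological transitivity (for all nonempty open $U,V$ there exists $n\ge 0$ with $\sigma^{-n}(U)\cap V\ne\emptyset$); under either reading the equivalence holds and your plan goes through essentially as written.
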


\begin{fact}\label{irred-comp-sft} {(\cite[Observation 5.1.1]{kitchens})}
Let $X=X_\Gamma$ be a SFT. The subshifts $X_{\Gamma_i}$ corresponding to the strongly connected components $\Gamma_i$ of $\Gamma$ in \ref{comm-classes} are called \emph{irreducible components} of $X$.

The non-wandering set $\Omega_\Gamma$ of $X$ decomposes as a finite union of \emph{irreducible components} of $X$,
$$
\Omega_\Gamma=X_{\Gamma_1}\coprod \cdots\coprod X_{\Gamma_r}.
$$
\end{fact}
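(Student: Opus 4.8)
The plan is to describe the non-wandering set $\Omega_\Gamma$ purely combinatorially, in terms of the communicating classes of $\Gamma$, and then read off the decomposition. Keep the normalisation of \ref{comm-classes}: $E_0\subseteq X_0\times X_0$ and $X_0=\{1,\dots,n\}$, so that a point $x=(x_0,x_1,\dots)\in X_\Gamma$ is literally an infinite path $x_0\to x_1\to\cdots$ in $\Gamma$, and the cylinders $[x_0\cdots x_m]=\{y\in X_\Gamma:\ y_0\cdots y_m=x_0\cdots x_m\}$ form a neighbourhood basis at $x$. Hence $x\in\Omega_\Gamma$ if and only if for every $m\geq 0$ there is an $i\geq 1$ and a finite admissible path $y_0\to\cdots\to y_{i+m}$ in $\Gamma$ with $y_0\cdots y_m=x_0\cdots x_m$ and $y_i\cdots y_{i+m}=x_0\cdots x_m$ (one direction extends such a finite path by a tail of $x$, the other restricts a witnessing infinite path). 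After relabelling, let $\Gamma_1,\dots,\Gamma_r$ be exactly those communicating-class subgraphs $\Gamma|_{C_j}$ that contain at least one edge; equivalently, these are the strongly connected components of \ref{comm-classes} with $X_{\Gamma_i}\neq\emptyset$, the remaining ones being isolated loopless vertices. The goal is the identity $\Omega_\Gamma=\{x\in X_\Gamma:\ \text{all }x_i\text{ lie in one communicating class}\}=\bigcup_{j=1}^{r}X_{\Gamma_j}$.

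For the inclusion $\bigcup_j X_{\Gamma_j}\subseteq\Omega_\Gamma$: if $x\in X_{\Gamma_j}$, then for each $m$ the vertices $x_m,x_0$ lie in the strongly connected $\Gamma_j$, which contains an edge and hence a cycle, so there is a path of length $\ell\geq 1$ from $x_m$ to $x_0$ inside $\Gamma_j$; concatenating $x_0\cdots x_m$, this path, and then $x_0 x_1 x_2\cdots$ produces an admissible point witnessing the return condition with $i=m+\ell$, and as $m$ is arbitrary, $x\in\Omega_\Gamma$. Each $X_{\Gamma_j}$ equals $\bigcap_i\{y\in X_\Gamma:\ y_i\in C_j\}$, the path space of $\Gamma|_{C_j}$, so it is a non-empty (it contains a periodic orbit of a cycle) closed $\sigma$-invariant subshift of $X_\Gamma$, which is irreducible by the definition of irreducibility since $\Gamma_j$ is strongly connected (cf.\ \ref{prop-irred-sft}); and $X_{\Gamma_j}\cap X_{\Gamma_{j'}}=\emptyset$ for $j\neq j'$ because the $C_j$ are pairwise disjoint. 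Thus the right-hand side is a finite disjoint union of irreducible components, and it remains to prove $\Omega_\Gamma\subseteq\bigcup_j X_{\Gamma_j}$.

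For the reverse inclusion I would first invoke the standard fact that the non-wandering set of a continuous selfmap is forward invariant, $\sigma(\Omega_\Gamma)\subseteq\Omega_\Gamma$ (given an open $V$ around $\sigma(x)$, pull back to $U=\sigma^{-1}(V)$ and chase the return condition). It then suffices to show that $x\in\Omega_\Gamma$ forces $x_{j+1}\rightsquigarrow x_j$ for every $j$: together with the edges $x_j\to x_{j+1}$ this gives $x_j\leftrightsquigarrow x_{j+1}$ for all $j$, hence all $x_i$ lie in one communicating class $C$, which is nontrivial because it carries the edge $x_0\to x_1$, so $C=C_j$ and $x\in X_{\Gamma_j}$ for some $j\leq r$. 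To obtain $x_{j+1}\rightsquigarrow x_j$, apply the reformulated non-wandering condition to the point $\sigma^j(x)\in\Omega_\Gamma$ and its length-$1$ cylinder: there are $i\geq 1$ and a finite admissible path $w_0\to\cdots\to w_{i+1}$ with $w_0 w_1=x_j x_{j+1}=w_i w_{i+1}$; if $i\geq 2$ the segment $w_1\to\cdots\to w_i$ is a path from $x_{j+1}$ to $x_j$, while if $i=1$ then $w_1=x_{j+1}$ and $w_1=x_j$ force $x_j=x_{j+1}$, so $x_{j+1}\rightsquigarrow x_j$ trivially.

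The step I expect to be the main obstacle is precisely this last one. In a one-sided SFT the non-wandering condition only returns \emph{some} return time $i\geq 1$, with no control on its size, so one cannot directly extract a path from $x_m$ back to $x_0$ for a general length-$m$ cylinder — the ``short return'' case $i<m$ blocks the naive argument. Passing to $\sigma^j(x)$ and working with cylinders of length $1$, where the only alternative to a genuine backward path is the degenerate equality $x_j=x_{j+1}$, is what makes the argument go through. The only other point requiring care is the bookkeeping noted above: strongly connected components that are isolated loopless vertices have empty path space and must be discarded, which is why the union is indexed by $1,\dots,r$ rather than over all communicating classes of $\Gamma$.
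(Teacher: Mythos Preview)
The paper does not supply a proof of this statement: it is recorded as a \emph{Fact} with a citation to \cite[Observation~5.1.1]{kitchens}, so there is no in-paper argument to compare against. Your proof is correct and self-contained. The combinatorial reformulation of non-wandering via cylinders is right (the extension of the finite witness by the tail of $x$ works because $y_{i+m}=x_m$), and the key reverse inclusion is handled well: reducing to the length-$1$ cylinder at $\sigma^j(x)$ neatly sidesteps the ``short return'' obstruction you correctly flag, and the case split $i\geq 2$ versus $i=1$ is clean. Two minor points worth making explicit for a fully rigorous write-up: first, the claim that a path in $\Gamma$ witnessing $x_m\rightsquigarrow x_0$ stays inside the communicating class $C_j$ (true, since any intermediate vertex $u$ on such a path satisfies $x_0\rightsquigarrow u\rightsquigarrow x_0$); second, the existence of a cycle through every vertex of a communicating class containing an edge (immediate if $|C_j|=1$ via the self-loop, and if $|C_j|\geq 2$ by concatenating paths to and from another vertex). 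Your bookkeeping remark about discarding loopless singleton classes is also apt, though harmless for the stated identity since those $X_{\Gamma_i}$ are empty.
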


\subsection{Entropy}
{See \cite[Chapter 4]{lind-marcus}.}

\begin{definition}\label{top-entropy}
Let $X$ be a subshift, and let  
$$
W(X,l)
$$
be the set of words of length $l$ appearing in points of $X$. The \emph{topological entropy} of $X$ is
$$
h(X)=\lim_{l\to\infty} \frac{1}{l}\log |W(\Gamma,l)|.
$$
\end{definition}

\begin{fact}
Let $X=X_A$ be the SFT associated to a transition matrix $A$. Then $W(X,l)$ is the set of paths of length $l$ in the graph associated to $A$, whence
$$
|W(X,l)|=\sum_{i,j}(A^{l-1})_{ij}.
$$
Perron-Frobenius theory for the matrix $A$ yields that
$$
h(X)=\log\lambda,
$$
where $\lambda$ is the spectral radius of $A$. If $X$ is irreducible, then $\lambda$ is the Perron value of $A$.
\end{fact}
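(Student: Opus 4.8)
The plan is to reduce the statement to elementary Perron--Frobenius theory applied to the nonnegative integer matrix $A$, in three steps: first the combinatorial identity for $|W(X,l)|$, then the computation of the exponential growth rate, and finally the identification of that rate with the spectral radius and, in the irreducible case, with the Perron value.

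First I would unwind the definitions. A point of $X=X_A$ is by definition an infinite path in the directed graph $\Gamma_A$ on vertex set $\{1,\ldots,n\}$ with an edge $i\to j$ exactly when $A_{ij}=1$, so a word of length $l$ occurring in $X$ is precisely a finite vertex-path $v_0,\ldots,v_{l-1}$ (with $A_{v_i,v_{i+1}}=1$) that extends to an infinite path, equivalently one whose terminal vertex lies on an infinite forward walk, equivalently can reach a cycle. To obtain the clean formula I would first replace $\Gamma_A$ by the induced subgraph on the vertices that can reach a cycle; this changes neither $X$ (no point can visit a vertex from which every forward walk is finite) nor $\rho(A)$ (every cycle, hence every diagonal block of positive spectral radius, is retained), and after this reduction every finite path occurs in a point of $X$. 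An immediate induction on $l$, using that $(A^m)_{ij}$ counts paths of edge-length $m$ from $i$ to $j$ because matrix multiplication concatenates paths, then yields
$$
|W(X,l)|=\sum_{i,j}(A^{l-1})_{ij}=\mathbf 1^{\!\top}A^{l-1}\mathbf 1,
$$
where $\mathbf 1$ denotes the all-ones vector.

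Second, I would pass to the limit defining $h(X)$. Assuming $X\neq\emptyset$, so that $\Gamma_A$ contains a cycle and $\lambda:=\rho(A)\ge 1$, equivalence of norms on the space of $n\times n$ matrices furnishes constants $0<c\le C$ with $c\,\|A^{l-1}\|\le\mathbf 1^{\!\top}A^{l-1}\mathbf 1\le C\,\|A^{l-1}\|$ for all $l$, so $\tfrac1l\log|W(X,l)|$ and $\tfrac1l\log\|A^{l-1}\|$ have the same limit; by Gelfand's formula $\|A^m\|^{1/m}\to\rho(A)$, so this common limit equals $\log\lambda$, i.e. $h(X)=\log\lambda$.

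Finally, when $X$ is irreducible the graph $\Gamma_A$ is strongly connected, so $A$ is an irreducible nonnegative matrix, and the Perron--Frobenius theorem for irreducible matrices identifies $\rho(A)$ as an eigenvalue of $A$ with a strictly positive eigenvector --- that is, the Perron value of $A$ --- which finishes the argument. I do not anticipate a genuine obstacle; the only point requiring a little care is the first step, where $W(X,l)$ must be understood as the set of length-$l$ paths that really extend to points of $X$ rather than all finite paths, and this is handled by the essential-subgraph reduction above (all of this is classical, cf.\ \cite{lind-marcus,kitchens}).
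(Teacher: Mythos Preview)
The paper states this as a \emph{Fact} without proof, as part of its ``whirlwind tour of symbolic dynamics'' citing \cite{lind-marcus} and \cite{kitchens}; there is no argument in the paper to compare against. Your reconstruction is correct and is essentially the standard proof found in those references.

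One point worth noting: your essential-subgraph reduction addresses a real subtlety that the paper's statement glosses over. The identity $|W(X,l)|=\sum_{i,j}(A^{l-1})_{ij}$ as written tacitly assumes that every finite path in $\Gamma_A$ extends to an infinite one, i.e.\ that the graph is \emph{essential}; this is the standing convention in \cite{lind-marcus} but is not made explicit here. Your argument that passing to the induced subgraph on vertices reaching a cycle preserves $\rho(A)$ is right, and can be made a touch cleaner: a vertex that cannot reach a cycle has no edge to a vertex that can (else it would reach a cycle too), so after reordering $A$ is block upper-triangular with the ``bad'' block the adjacency matrix of an acyclic graph, hence nilpotent, and $\rho(A)$ equals the spectral radius of the ``good'' block. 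The remaining steps---Gelfand's formula for the growth rate and Perron--Frobenius in the irreducible case---are exactly as in the textbooks.
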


\subsection{Zeta function of a subshift of finite type}\label{zeta-sft}

Let $X=X_A$ be the subshift of finite type associated to a transition matrix $A$. Let
$$
N(X,n)=|\Fix(\sigma_X^n)|=|\begin{tikzcd}[cramped, column sep=normal, ampersand replacement=\&]
{\Eq\left(X\right.}\ar[yshift=2pt]{r}{\sigma^n} \ar[yshift=-2pt]{r}[swap]{\id} \&{\left.X\right)}
\end{tikzcd}|
$$
be the number of periodic points of period $n$.  The \emph{dynamical zeta function} of $X$ is the formal power series {(\cite[p.~24]{kitchens} and \cite[Section~6.4]{lind-marcus})}
$$
Z(X,t)=\exp\left(\sum_{n=1}^\infty \frac{N(X,n)}{n}t^n\right).
$$ 
Noting that $N(X,n)=\mathop{\rm Tr}(A^n)$, familiar arguments {(\cite[Thm. 6.4.6]{lind-marcus})} give
$$
Z(X,t)=\exp\left(\sum_{n=1}^\infty \frac{\mathop{\rm Tr}(A^n)}{n}t^n\right)=\frac{1}{\det(I-tA)},
$$
so $Z(X,t)$ converges to a rational function in $t$ with radius of convergence $e^{-h(X)}$, where $h(X)$ is the topological entropy of $X$.

\subsection{Block maps}


\begin{definition}
Let $X$ and $X'$ be subshifts, and let 
$$
\varphi_0: W(X,l)\to W(X',1)
$$
be a map consistent with $X$ and $X'$ in the sense that 
$$
[i_0,\ldots,i_l]\in W(X,l+1)\ \text{ implies }\ [\varphi_0([i_0,\ldots,i_{l-1}]),\varphi_0([i_1,\ldots,i_l])]\in W(X',2).
$$
The \emph{$l$-block map determined by $\varphi_0$}  is
$$
\varphi: X\to X',   (\varphi(x))_n=\varphi_0([x_n,\ldots,x_{n+l-1}]).
$$
It is continuous and commutes with the shifts.
\end{definition}

\begin{fact}[{\cite[6.2.9, 1.5.12]{lind-marcus}}]\label{sft-map-block}
A continuous shift-commuting map $\varphi:X\to X'$ between two subshifts is a block map. If $X$ and $X'$ are subshifts of finite type, by relabelling the presentation  of $X$, we may assume that $\varphi$ is a 1-block map, i.e., that it is induced by a  directed graph map $\varphi_0:\Gamma\to\Gamma'$, 
$$
\varphi=\psig{\varphi_0}:\psig{\Gamma}\to\psig{\Gamma'}.
$$
\end{fact}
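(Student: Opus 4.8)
The first sentence is the Curtis--Hedlund--Lyndon theorem, which I would prove by a compactness argument. Since $\varphi$ commutes with the shifts, $(\varphi(x))_n=(\sigma_{X'}^n\varphi(x))_0=(\varphi(\sigma_X^n x))_0$, so everything is controlled by the single map $x\mapsto(\varphi(x))_0$ from $X$ to the finite discrete set $W(X',1)$. This map is continuous on the profinite (hence compact) space $X$, so it is locally constant; its finitely many fibres are clopen, so each is a finite union of cylinders fixing an initial segment of the coordinates, and by compactness a single length $l$ suffices for all of them. Hence $x\mapsto(\varphi(x))_0$ factors through $x\mapsto(x_0,\dots,x_{l-1})$, giving $\varphi_0\colon W(X,l)\to W(X',1)$ with $(\varphi(x))_n=\varphi_0(x_n,\dots,x_{n+l-1})$. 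Consistency of $\varphi_0$ with $X$ and $X'$ is immediate from $\varphi(X)\subseteq X'$, so $\varphi$ is an $l$-block map.

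For the second sentence, the plan is to relabel $X$ by passing to a sufficiently high block presentation. Every SFT is conjugate, via a sliding block code, to a $1$-step SFT, and a $1$-step SFT is up to isomorphism the infinite-path space $\psig{\Gamma}$ of a finite directed graph; so I would write $X\simeq\psig{\Gamma_0}$, keep $X'=\psig{\Gamma'}$ fixed, and transport $\varphi$, which remains a sliding block code, say an $l$-block code $\psig{\Gamma_0}\to\psig{\Gamma'}$. Replacing $\Gamma_0$ by a high block graph $\Gamma$ of itself (the paths of $\Gamma_0$ of some length $N$ well above $l$ serving as the new edges) gives $\psig{\Gamma}\simeq\psig{\Gamma_0}\simeq X$, and since one coordinate of $\psig{\Gamma}$ now records $N$ consecutive $\psig{\Gamma_0}$-coordinates, the transported $\varphi\colon\psig{\Gamma}\to\psig{\Gamma'}$ is a $1$-block map. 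Discarding edges and vertices of $\Gamma$ that appear in no point of $\psig{\Gamma}$, we may assume $W(\psig{\Gamma},1)=E(\Gamma)$, so that $\varphi$ is given by a map $\varphi_0\colon E(\Gamma)\to E(\Gamma')$ obeying $t_\Gamma(e)=s_\Gamma(e')\Rightarrow t_{\Gamma'}(\varphi_0 e)=s_{\Gamma'}(\varphi_0 e')$.

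The remaining, and only delicate, point is that such an edge map need not be induced by a morphism of directed graphs when $\Gamma'$ has several edges sharing a pair of endpoints; here one exploits the extra room in the block length. A vertex of the block graph $\Gamma$ records a word of $\psig{\Gamma_0}$ long enough to begin with an edge $e$ of $\Gamma_0$ and to determine the values of $\varphi_0$ on the relevant initial windows; declaring the vertex component of the induced map to send that word to the source in $\Gamma'$ of $\varphi_0$ applied to its first window is then compatible with both the source and target maps precisely because of the consistency of $\varphi_0$ recorded above. This exhibits $\varphi_0$ as a morphism $\Gamma\to\Gamma'$ of directed graphs, i.e.\ of the self-$1$-correspondences in $\Prof$ underlying these presentations, and by functoriality of the infinite-path-space construction $\psig{\,}$ (\ref{dir-gen-adj}, \ref{dir-si-gen-set-sft}) its induced map on infinite paths is $\varphi$; hence $\varphi=\psig{\varphi_0}$. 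I expect this last manipulation of block presentations to be the main obstacle; the rest is the standard compactness argument of Curtis--Hedlund--Lyndon together with routine bookkeeping of conjugacies.
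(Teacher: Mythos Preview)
The paper does not prove this statement; it is recorded as a \emph{Fact} with a citation to \cite[6.2.9, 1.5.12]{lind-marcus} and no proof is given. Your argument is a correct reconstruction of the standard proof: the compactness argument for Curtis--Hedlund--Lyndon is exactly right, and your treatment of the second sentence via higher block presentations is the expected one. In particular, your identification of the only nontrivial point---that a $1$-block edge map need not come from a graph morphism unless one also produces a compatible vertex map---and your resolution by taking the block length large enough that a vertex records a word on which the code is already determined, with the vertex map defined as the $\Gamma'$-source of the code applied to the initial window, is correct; target compatibility then reduces precisely to the consistency condition you already have. This matches the content of the cited results in Lind--Marcus.
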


\begin{remark}
Fact~\ref{sft-map-block} shows that the category of subshifts of finite type is a full subcategory of the category $\diff\Topl$ of difference topological spaces.
\end{remark}

\section{Interactions between symbolic dynamics and difference algebra}\label{s:interactions}

In this section we {explain} the connection between difference algebra and symbolic dynamics, which we exploit to prove a number of results of shared interest. The translation mechanism between the two areas is provided by the following theorem.

%


\begin{theorem}[Translation mechanism] \label{theo:translation-mech}
	Let $k$ be a difference field and $\bar{k}$ a separable closure of $k$, i.e., $\bar{k}$ is the separable algebraic closure of $\forg{k}$ equipped with an extension of $\sigma_k$. Let $G=\pi_1(k,\bar{k})=\Gal[\bar{k}/k]$ be the absolute Galois group of $\forg{k}$ equipped with the continuous endomorphism $()^\sigma$ as in Lemma \ref{lemma: prepare for fields}.
	Then the functor $$A\mapsto X(A)=(\forg{k}\Alg(\forg{A},\forg{\bar{k}}),()^{\sigma})$$ defines an anti-equivalence of categories between the category of difference locally \'etale $k$-algebras and the category $[G,\diff\Prof]$ of difference profinite spaces equipped with a $G$-action. (The action is understood to be continuous and compatible with $\sigma$ in the sense of Definition \ref{defi: diff group action}.)  
	Moreover, under this correspondence:
	\begin{enumerate}
		\item\label{surj} a surjective map of $k$-algebras corresponds to an injective map of difference profinite spaces;
	\item\label{inj} an injective map of $k$-algebras corresponds to a surjective map of difference profinite spaces;
		\item\label{subsh} the difference algebra $A$ is finitely $\diff$generated if and only if $X(A)$ is a subshift;
		\item\label{sftsh} the difference algebra $A$ is finitely $\diff$presented if and only if $X(A)$ is a subshift of finite type.
	\end{enumerate}	
\end{theorem}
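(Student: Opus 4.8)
The strategy is to recognise that the substantial content is already in place, and the theorem is largely an exercise in unwinding definitions. First I would observe that the anti-equivalence is exactly Corollary~\ref{diff-abs-Gal} (the case $L=\bar k$ of Theorem~\ref{diff-Galois-thm-fields}): by Lemma~\ref{lemma: prepare for fields}(2), using that $\forg{\bar k}/\forg k$ is Galois, the functor $A\mapsto X(A)=(\forg k\Alg(\forg A,\forg{\bar k}),()^\sigma)$ is naturally isomorphic to $A\mapsto S(\bar k\otimes_k A)$, and by \ref{si-etale} and \ref{def-si-etale} its source is the category of difference locally \'etale $k$-algebras. So the core statement needs no new argument.

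For (1) and (2): the forgetful functors $[G,\diff\Prof]\to\diff\Prof\to\Prof$ and $k\Alg\to\forg k\Alg\to\Set$ are faithful, and on either side a morphism is injective (resp.\ surjective) precisely when the underlying map of sets is. Under $X(-)$ the underlying map of profinite spaces attached to a morphism $A\to B$ is $S_0(\bar k\otimes_k B)\to S_0(\bar k\otimes_k A)$; since $\forg A$ and $\forg B$ are ind-\'etale over $\forg k$, Lemma~\ref{lemma: injectivesurjective} says this is surjective iff $A\to B$ is injective and injective iff $A\to B$ is surjective. This gives (1) and (2) directly.

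For (3) and (4) I would invoke Corollary~\ref{fp-corr-sft} (together with \ref{diff-etale} to match ``finitely $\diff$generated \'etale'' with ``difference locally \'etale and finitely $\diff$generated''), so that it remains to prove the purely symbolic-dynamical assertion: a difference profinite $G$-set $Y$ is finitely $\diff$generated (resp.\ finitely $\diff$presented, equivalently finitely directly $\diff$presented, as in \ref{free-dirpres-diffob} and by the geometric dual of \ref{fin-pres-dir-pres}) if and only if the underlying difference profinite space $\forg Y$ is a subshift (resp.\ a subshift of finite type). The ``only if'' direction is immediate: forgetting the $G$-action sends $\psig{X_0}_G$ to the full shift $X_0^\N=\psig{X_0}$ (Example~\ref{psig-G-sets}) and $\psig{X_0;C_0}_G$ to the subshift of finite type of the graph $X_0\leftarrow C_0\rightarrow X_0$ (Remark~\ref{dir-si-gen-set-sft}); since this forgetful functor is faithful, preserves $\psig{\,}$, and hence sends a subobject of $\psig{X_0}_G$ to a subobject of the full shift, a finitely $\diff$generated (resp.\ directly $\diff$presented) $G$-set has underlying object a subshift (resp.\ an SFT).

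The converse is the main obstacle, and is where symbolic dynamics proper intervenes. Suppose $\forg Y$ is a subshift, witnessed by a continuous $g\colon\forg Y\to E_0$ into a finite set with $y\mapsto(g(y),g(\sigma_Y y),\dots)$ a closed embedding $\forg Y\hookrightarrow E_0^\N$. Since $Y$ is a profinite $G$-set, $\forg Y=\lim_\lambda Y_\lambda$ with $Y_\lambda$ finite $G$-sets, so $g$ factors through some surjection $p\colon\forg Y\to X_0:=Y_\lambda$; the compatibility $\sigma_Y(\gamma\cdot y)=\sigma_G(\gamma)\cdot\sigma_Y(y)$ of Definition~\ref{defi: diff group action} then makes $\rho\colon y\mapsto(p(y),p(\sigma_Y y),\dots)$ a $G$-equivariant map for the twisted-shift $G$-action of Example~\ref{psig-G-sets}, and it is injective because post-composing it componentwise with the map $X_0\to E_0$ through which $g$ factors recovers the original embedding; so one obtains a closed embedding $Y\hookrightarrow\psig{X_0}_G$, whence $Y$ is finitely $\diff$generated. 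For the finite-type case, $\forg Y$ is intrinsically an SFT, so $\rho(\forg Y)\subseteq X_0^\N$ is an SFT of some finite step $k$; passing to the $m$-block presentation over $X_0$ with $m=k+1$ makes it a vertex shift and exact, the new vertex set $X_0^{[m]}$ of allowed $m$-blocks being a finite $G$-set for the twisted action by the same equivariance computation, so setting $C_0\subseteq X_0^{[m]}\times X_0^{[m]}$ to be the allowed overlaps one gets $Y\simeq\psig{X_0^{[m]};C_0}_G$, i.e.\ $Y$ is finitely directly $\diff$presented (cf.\ the recoding arguments of \cite{lind-marcus}). Combined with Corollary~\ref{fp-corr-sft} this yields (3) and (4). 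I expect the genuinely delicate point to be precisely this last step --- simultaneously keeping the symbolic-dynamics recoding $G$-equivariant and arranging that the resulting graph presentation is exact (an isomorphism onto the space of paths, not merely an embedding) --- with everything else being formal.
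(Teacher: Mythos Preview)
Your proof is correct and follows essentially the same approach as the paper: the anti-equivalence and points (1)--(2) are reduced to Corollary~\ref{diff-abs-Gal} and Lemma~\ref{lemma: injectivesurjective} exactly as in the paper, and for (3)--(4) both you and the paper refine the given subshift embedding to a $G$-equivariant one by replacing the alphabet with a finite $\forg G$-quotient of $Y$ (you via the profinite decomposition $\forg Y=\varprojlim Y_\lambda$ into finite $\forg G$-sets, the paper via the averaging $R'=\bigcap_g gR$ of \cite[5.6.4]{ribes-zal}, which is the same thing), and then recode to a $1$-step presentation in the SFT case. Your higher-block recoding for (4) is in fact slightly more transparent than the paper's terse block-map argument, since it makes the exactness of the graph presentation automatic.
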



\begin{proof}
The equivalence of categories is just a restatement of Corollary \ref{diff-abs-Gal}.
Points (\ref{surj}) and (\ref{inj}) follow from Lemma \ref{lemma: injectivesurjective}.

For (\ref{subsh}),  let $X$ be a subshift of $\psig{X_0}$ with a $G$-action. We will show that there exists a finite discrete set $X_0'$ with a continuous $\forg{G}$-action and an injective morphism  $X\to\psig{X_0'}_G$ in $[G,\diff\Prof]$. Then, \ref{fp-corr-sft} will give that the difference algebra corresponding to $X$ is a quotient of the freely finitely $\diff$generated algebra associated to $\psig{X_0'}_G$, hence it is finitely $\diff$generated.  

Indeed, let $\pi_0: \psig{X_0}\to X_0$ be the projection to the first component, and define an open equivalence relation $R\subseteq X\times X$ by $(x,x')\in R$ if $\pi_0(x)=\pi_0(x')$. As in \cite[Lemma~5.6.4]{ribes-zal}, let $R'=\bigcap_{g\in\forg{G}}gR$ be the associated $\forg{G}$-invariant open equivalence relation, and consider the finite set $X_0'=X/R'$ with the $\forg{G}$-action $g.(x/R')=(gx)/R'$. The morphism
$$
\iota:X\to \psig{X_0'}_G, \ \ \ x\mapsto (x/R',\sigma(x)/R',\ldots)
$$
is injective because $R'\subseteq R$, and it is $G$-equivariant since
$$
gx\mapsto ((gx)/R,\sigma(gx)/R,\ldots)
=((gx)/R,(\sigma_G(g)\sigma(x))/R,\ldots)=(g.(x/R), \sigma_G(g).(\sigma(x)/R),\ldots),
$$
which agrees with the $G$-action on $\psig{X_0'}_G$ described in \ref{psig-G-sets}. 


If $X$ is a subshift of finite type, then \ref{sft-map-block} tells us that $\iota$ is an $l$-block map for some $l$, determined by a map $\iota_0:W(X,l)\to X_0'$. Let $E_0'=\{[\iota_0([x_0,\ldots,x_{l-1}]),\iota_0([x_1,\ldots,x_l])]: [x_0,\ldots,x_l]\in W(X,l+1)\}\subseteq X_0'\times X_0'$. By construction,
$$
X\simeq\psig{X_0';E_0'}_G,
$$
so \ref{fp-corr-sft} gives that the difference algebra corresponding to it is finitely $\diff$presented, proving  (\ref{sftsh}).

\end{proof}


\begin{remark}
{Theorem \ref{theo:translation-mech}} remains interesting even in the `geometric' case when $k$ is separably closed and $\pi_1(k,k)=1$, whence the category of finitely $\diff$presented \'etale $k$-algebras is equivalent to the category of subshifts of finite type. 
\end{remark}

\begin{remark}[Translation machanism, $\diff$finite stage]\label{translation-mech2}
Let $L/k$ be an extension of difference fields with $\forg{L}/\forg{k}$ Galois {and such that $L$ is finitely $\diff$generated as a $k$-algebra}. By \ref{diff-fld-fp-fg}, $L/k$ is finitely $\diff$presented, so the difference Galois group
$$
G=\Gal[L/k]
$$
is a \emph{group subshift of finite type} (a group object in the category of subshifts of finite type), and there is an anti-equivalence of categories between:
\begin{enumerate}
\item finitely $\diff$generated $k$-algebras split by $L$ and subshifts with an action of $G$;
\item finitely $\diff$presented $k$-algebras split by $L$ and subshifts of finite type with an action of $G$.
\end{enumerate}

Note that, in {\ref{theo:translation-mech}}, $\bar{k}$ is a colimit of finitely $\diff$generated Galois extensions of $k$, so $\pi_1(k,\bar{k})$ is a limit of group subshifts of finite type. 
\end{remark}

To illustrate the above theorem, let us show how to explicitly construct a finitely $\diff$presented $k$-algebra corresponding to a  given subshift of finite type.
\begin{example}
	Let $X\subseteq \{1,\ldots,n\}^\N$ be a subshift of finite type defined by a directed graph with edges $E\subseteq \{1,\ldots,n\}^2$. For simplicity, let us assume that $k$ has $n$ distinct elements $a_1,\ldots,a_n\in k$. Let $I\subseteq k[x,y]$ denote the ideal of all polynomials $f\in k[x,y]$ such that $f(a_i,a_j)=0$ for $(i,j)\in E$. Then the set of solutions of $I$ in $\bar{k}^2$ is exactly $\{(a_i,a_j)|\ (i,j)\in E\}$. Let $J\subseteq k\{x\}$ be the difference ideal $\diff$generated by all $f(x,\sigma(x))\in k\{x\}$, where $f\in I$. Then $A=k\{x\}/J$ is finitely $\diff$presented \'etale and $X(A)$ is isomorphic to $X$. Note that the action of $G$ on $X(A)$ is trivial because $A$ is split over $k$. 
\end{example}	
	
\begin{example}
	Let $k$ be a difference field. A standard example of a difference ideal that is not finitely $\diff$generated is the difference ideal $I$ of $k\{x\}$ $\diff$generated by $x\sigma(x),x\sigma^2(x), x\sigma^3(x),\ldots$ (\cite[Ex. 3, p. 73]{cohn}). So $k\{x\}/I$ is not finitely $\diff$presented, only finitely $\diff$generated. Note that $k\{x\}/I$ is not difference locally \'etale. To make it difference locally \'etale we can add the equation $x(x-1)$. Let $J$ be the difference ideal of $k\{x\}$ \mbox{$\diff$generated} by $x(x-1),x\sigma(x),x\sigma^2(x), x\sigma^3(x),\ldots$. Then $A=k\{x\}/J$ is locally difference \'etale. The subshift $X(A)$ corresponding to $A$ can be identified with the set of all sequences $X(A)\subseteq\{0,1\}^\N$ of $0$'s and $1$'s having at most one $1$. As predicted by Theorem \ref{theo:translation-mech} (5), $X(A)$ is not of finite type. The action of $G=\Gal[\bar{k}|k]$ on $X(A)$ is trivial because $A$ is split over $k$.
	
	To get a non-trivial action of $G$ we can replace the polynomial $x-1$ above by any separable polynomial $g\in k[x]$ that does not have $0$ as a root. Indeed, let $I$ be the difference ideal of $k\{x\}$ $\diff$generated by $xg,x\sigma(x),x\sigma^2(x), x\sigma^3(x),\ldots$ and let $0=a_0,a_1,\ldots,a_n$ denote the roots of $xg$ in $\overline{k}$. To specify a $\forg{k}$-algebra morphism $\forg{k\{x\}}\to \forg{k}$ that vanishes on $xg,\sigma(xg),\sigma^2(xg)\ldots$ is equivalent to choosing an element of $\{0,\ldots,n\}^\N$: A sequence $(x_0,x_1,\ldots)\in \{0,\ldots,n\}^\N$ corresponds to the morphism $\psi\colon \forg{k\{x\}}\to \forg{k} $ determined by $\psi(\sigma^i(x))=\sigma^i(a_{x_i})$. Such a morphism vanishes on $x\sigma^j(x),\sigma(x\sigma^j(x)), \sigma^2(x\sigma^j(x)),\ldots$ if and only if $\sigma^i(a_{x_i})\sigma^{i+j}(a_{x_{i+j}})=0$ for all $i$, i.e., if $x_i\neq 0$, then $x_{i+j}=0$. Thus, for $A=k\{x\}/I$, $\forg{k}\Alg(\forg{A},\forg{k})$ is in bijection with the subshift $X(A)$ of $\{0,\ldots,n\}^\N$ consisting of all sequences with at most one non-zero entry. The action of $G$ on $X(A)$ is determined by the action on the non-zero entry, i.e., if $g\in G$ and $\widetilde{x}\in X(A)$ has a non-zero entry $x_i\in \{0,\ldots,n\}$ at position $i$, then $g(\widetilde{x})\in X(A)$ has a non-zero entry $y\in\{0,
	\ldots,n\}$ at position $i$, where $y$ corresponds to $g(\sigma^i(a_{x_i}))=g^{\sigma^i}(a_{x_i})$ under the bijection
	$$\{0,\ldots,n\}\to \{\sigma^i(a_0),\ldots,\sigma^i(a_n)\},\ j\mapsto \sigma^i(a_j).$$
	\end{example}

\subsection{Difference connected components}

\begin{definition}
Let $X$ be an object of $\diff\Topl$,  i.e., a topological space with a continuous self-map $\sigma_X$. The \emph{$\diff$topology} on $X$ is the topology whose closed subsets are the $\sigma_X$-stable closed subsets of $\forg{X}$. The \emph{$\diff$connected components} of $X$ are the connected components of $X$ in the $\diff$topology.
\end{definition}

\begin{lemma}[{\cite[3.19]{michael-gsft}}]\label{michael-si-comp}
Let $X$ be a subshift of finite type. Then $X$ has finitely many $\diff$connected components.
\end{lemma}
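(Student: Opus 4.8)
The plan is to reduce to the classical decomposition of a subshift of finite type into irreducible components and the combinatorics of the graph of communicating classes from Fact~\ref{comm-classes}. Fix a directed graph $\Gamma=(X_0,E_0)$ with $X=X_\Gamma$. First I would observe that a $\sigma_X$-stable closed subset of $X$ is the same thing as a closed subobject of $X$ in $\diff\Prof$, so the $\diff$connected components are obtained by declaring two points $x,y\in X$ to lie in the same component precisely when they are not separated by a clopen $\sigma_X$-stable partition. The key point is that any $\sigma_X$-stable clopen subset of $X_\Gamma$ is, after passing to a higher block presentation, a union of ``cylinder sets along communicating classes'': since $\sigma_X$ maps the cylinder over a vertex into the union of cylinders over its successors, a $\sigma_X$-stable clopen set is a union of cylinders over a set of vertices $V\subseteq X_0$ that is closed under taking successors \emph{within each infinite path}, i.e.\ a subgraph containing all of $\Gamma_i$ whenever it meets $\Gamma_i$, and closed downward along $\bar\Gamma$ (in the ordering of Fact~\ref{comm-classes}).

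Next I would use the finiteness of the graph of communicating classes $\bar\Gamma$. There are only finitely many communicating classes $C_1,\dots,C_k$, hence only finitely many ``down-sets'' in the finite acyclic graph $\bar\Gamma$, hence only finitely many $\sigma_X$-stable clopen subsets of $X_\Gamma$ arising this way; in particular the Boolean algebra of $\sigma_X$-stable clopen subsets of $X_\Gamma$ is finite. A profinite space whose algebra of clopen sets relevant to a given topology is finite has finitely many components in that topology, so $X_\Gamma$ has finitely many $\diff$connected components. Concretely, two infinite paths are $\diff$-equivalent if and only if the set of communicating classes they visit infinitely often generate the same down-closed-and-component-saturated subset of $\bar\Gamma$, and there are at most $2^k$ such subsets; one can even pin the components down to be indexed by the irreducible components $\Gamma_1,\dots,\Gamma_r$ appearing in the non-wandering decomposition of Fact~\ref{irred-comp-sft}, together with the ``transient'' behaviour, but for the statement the crude bound suffices.

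The main obstacle I anticipate is the careful verification that every $\sigma_X$-stable clopen subset of $X_\Gamma$ really does have the claimed cylinder-over-a-saturated-subgraph form. A clopen subset of $X_\Gamma$ is, for some $\ell$, a union of cylinders determined by length-$\ell$ words; the content is to check that $\sigma_X$-stability forces this set of admissible words to be closed under the natural ``follow an edge'' operation in the higher block graph, and then to translate that back into saturation with respect to communicating classes of the original (or higher-block) graph. This is essentially the argument behind Fact~\ref{irred-comp-sft}, so I would lean on \cite[Observation~5.1.1]{kitchens} and \cite[4.4]{lind-marcus} and spell out only the passage from the closed-invariant-subset decomposition to the clopen case. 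Since the statement itself is quoted from \cite[3.19]{michael-gsft}, the cleanest write-up simply cites that reference; if a self-contained proof is wanted, the sketch above — reduce to $X=X_\Gamma$, describe $\sigma_X$-stable clopens via down-sets in the finite graph $\bar\Gamma$, conclude finiteness — is the route I would take.
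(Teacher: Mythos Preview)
There is a genuine gap. Your central claim --- that the $\sigma_X$-stable clopen subsets of $X_\Gamma$ are finite in number, bounded by the down-sets of $\bar\Gamma$ --- is false. Take $\Gamma$ with vertices $a,b$ and edges $a\to a$, $a\to b$, $b\to b$; then for every $N\ge 0$ the finite set $\{a^n b^\infty : 0\le n\le N\}$ is a $\sigma_X$-stable clopen, so there are infinitely many. The problem is your ``after passing to a higher block presentation'' step: each higher-block graph $\Gamma^{[\ell]}$ has its own (larger) $\bar\Gamma^{[\ell]}$ with more down-sets, so counting down-sets in the fixed finite graph $\bar\Gamma$ does not bound the totality of $\sigma_X$-stable clopens across all $\ell$. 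Relatedly, the $\sigma_X$-stable clopens do not form a Boolean algebra (they are not closed under complement), so the sentence ``the Boolean algebra of $\sigma_X$-stable clopen subsets of $X_\Gamma$ is finite'' fails on two counts.

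The paper's proof sidesteps this by working directly with the \emph{two-sided} condition. It observes that $\bar\Gamma$ decomposes into finitely many weakly connected pieces $T_j$, takes $\Theta_j$ to be the subgraph of $\Gamma$ on the union of the communicating classes in $T_j$, and declares the $X_{\Theta_j}$ to be the $\diff$-connected components --- that is the whole proof. Your route is salvageable if you replace ``$\sigma_X$-stable clopen'' by ``$\diff$-clopen'' throughout: a clopen $U$ with both $U$ and $X\setminus U$ $\sigma_X$-stable satisfies $\sigma_X^{-1}(U)=U$, and for a vertex cylinder this forces $V$ and $X_0\setminus V$ each to be successor-closed, i.e.\ $V$ is a union of weakly connected components of (the essential part of) $\Gamma$, not merely a down-set. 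That count is stable under higher-block recoding, and you land exactly on the paper's description.
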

\begin{proof} 
Let $X=X_\Gamma$ for some directed graph $\Gamma$, The graph of communicating classes $\bar{\Gamma}$ constructed in \ref{comm-classes} is acyclic and hence a finite union of maximal subtrees $T_j$. Let $\Theta_j$ be a subgraph of $\Gamma$ induced on the union of all communicating classes appearing as vertices of $T_j$. Then $X_{\Theta_j}$ are the $\diff$connected components of $X$.
\end{proof}

\begin{proposition}\label{finitely-si-comp}
Let $A$ be a finitely $\diff$presented \'etale difference algebra over a difference field $k$. Then $\spec(A)$ has finitely many $\diff$connected components. 
\end{proposition}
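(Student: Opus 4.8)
The plan is to transport the statement, via the translation mechanism, to the corresponding fact about subshifts of finite type, and then to push it down along the Galois quotient $X\to X/G$.

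First I would fix a separable closure $\bar k$ of $k$ and set $G=\pi_1(k,\bar k)$. A finitely $\diff$presented \'etale $k$-algebra is in particular difference locally \'etale (cf.\ the remark following Theorem~\ref{diff-Galois-thm-fields}, and \ref{fp-corr-sft}), so Theorem~\ref{theo:translation-mech} applies to $A$, and by part~(\ref{sftsh}) of that theorem the associated difference profinite $G$-space $X=X(A)=(\forg{k}\Alg(\forg{A},\forg{\bar k}),()^\sigma)$ is a subshift of finite type. Lemma~\ref{michael-si-comp} then yields that $X$ has finitely many $\diff$connected components, say $X_1,\dots,X_m$.

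Next I would invoke Corollary~\ref{diffet-spec}, which provides an isomorphism of difference topological spaces $\spec(A)\simeq X/G$, realised by the surjective, continuous, $\sigma$-equivariant quotient map $q\colon X\to X/G$. The key observation is that $q$ remains continuous for the $\diff$topologies: if $C\subseteq X/G$ is a $\sigma$-stable closed subset, then $q^{-1}(C)$ is closed by continuity of $q$, and $\sigma$-stable by equivariance of $q$, since $\sigma_X(q^{-1}(C))\subseteq q^{-1}(C)$. Hence each image $q(X_i)$ is a $\diff$connected subset of $X/G$, and since $q$ is onto, the finitely many sets $q(X_1),\dots,q(X_m)$ cover $X/G\simeq\spec(A)$. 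To finish, one uses the elementary fact that a space covered by finitely many connected subsets has only finitely many connected components: each connected subset lies entirely within one component, so every $\diff$connected component of $\spec(A)$ contains some $q(X_i)$, whence there are at most $m$ of them.

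There is no deep difficulty here, since the substantive work is carried out by \ref{theo:translation-mech}, \ref{michael-si-comp} and \ref{diffet-spec}. The only points that genuinely need attention — the closest thing to an obstacle — are confirming that finitely $\diff$presented \'etale algebras satisfy the hypotheses of the translation mechanism and of Corollary~\ref{diffet-spec}, and verifying that passing to the coarser $\diff$topology does not destroy the continuity of the Galois quotient map; both are routine and were handled above.
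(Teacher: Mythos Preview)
Your proposal is correct and follows essentially the same route as the paper's own proof: identify $\spec(A)\simeq X/G$ via \ref{diffet-spec}, with $X$ a subshift of finite type by the translation mechanism (the paper cites \ref{fp-corr-sft} directly), invoke \ref{michael-si-comp} for $X$, and pass the finiteness down to the quotient. The paper compresses your last two paragraphs into the single phrase ``and so does $\spec(A)$ as its quotient'', but your explicit verification that the quotient map is $\diff$continuous and that finitely many connected images force finitely many components is exactly what underlies that phrase.
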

\begin{proof}
By \ref{fp-corr-sft} and \ref{diffet-spec},
$$
\spec(A)\simeq X/G,
$$
where $X$ is a subshift of finite type with an action of the absolute difference Galois group $G$. By \ref{michael-si-comp}, $X$ has finitely many $\diff$connected components, and so does $\spec(A)$ as its quotient. 
\end{proof}

\subsection{Difference core conjecture}

\begin{definition}[{\cite[1.7]{ive-mich-babb}}]
Let $k$ be a difference field. 
A $k$-algebra $A$ is called
\begin{enumerate}
\item \emph{$\diff$separable,} if the natural $\sigma_k$-twist $k$-homomorphism (dual to \ref{sigma-twist})
$$
\bar{\sigma}_A: A_{(\sigma_k)}=A\otimes_k k\to A ,\ a\otimes \lambda\mapsto \sigma(a)\lambda 
$$
is {injective}.
\item \emph{strongly $\diff$\'etale}, provided it is $\diff$separable, and $\forg{A}$ is an \'etale $\forg{k}$-algebra. {In this case the map in (1) is automatically an isomorphism.}
\end{enumerate}
\end{definition}

\begin{definition}
Let $A$ be a $k$-algebra. The \emph{strong core}
$$
\pi_0^\sigma(A)
$$
is the union of all strongly $\diff$\'etale $k$-subalgebras of $A$.
\end{definition}

\begin{remark}\label{finite-periodic}
In view of \cite[1.14]{ive-mich-babb}, in the Galois correspondence \ref{diff-abs-Gal}, strongly $\diff$\'etale algebras $A$ correspond precisely to finite inversive difference sets $X=S(\bar{k}\otimes_kA)$ (with a difference Galois action). Such a set $X$ consists of 
$\sigma_X$-periodic elements.  
\end{remark}

\begin{proposition}\label{strong-core-strong}
If $A$ is a finitely $\diff$presented \'etale difference algebra over a difference field $k$, then $\pi_0^\sigma(A)$ is a strongly $\diff$\'etale $k$-algebra.
\end{proposition}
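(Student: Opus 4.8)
The plan is to transport the statement through the translation mechanism to symbolic dynamics, reduce it to a uniform boundedness statement for finite inversive quotients of a subshift of finite type, and prove that boundedness using the irreducible decomposition and the structure of periodic points.

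\emph{Step 1: reduction to a boundedness statement.} By Corollary~\ref{fp-corr-sft} (equivalently Theorem~\ref{theo:translation-mech}), the functor $A\mapsto X(A)=(\forg{k}\Alg(\forg{A},\forg{\bar k}),()^\sigma)$ carries the finitely $\diff$presented \'etale $k$-algebra $A$ to a subshift of finite type $X=X(A)$ with a $G$-action, $G=\pi_1(k,\bar k)$. By Remark~\ref{finite-periodic} together with the dictionary between injective/surjective maps of $\diff$\'etale algebras and surjective/injective maps of difference profinite $G$-sets (parts (\ref{surj})--(\ref{inj}) of Theorem~\ref{theo:translation-mech}), a $k$-subalgebra $B\subseteq A$ is strongly $\diff$\'etale precisely when $X(B)$ is a finite inversive difference set with $G$-action (a finite disjoint union of $\sigma$-periodic orbits), and then $B\hookrightarrow A$ corresponds to a $G$-equivariant surjection $X\twoheadrightarrow X(B)$. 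For two such $B_1,B_2$, the image of $X$ in $X(B_1)\times X(B_2)$ is again finite, inversive and $G$-equivariant, and it corresponds to $B_1B_2$; hence $B_1B_2$ is strongly $\diff$\'etale, the family of strongly $\diff$\'etale $k$-subalgebras of $A$ is directed, $\pi_0^\sigma(A)=\colim_B B$ is difference locally \'etale, and $X(\pi_0^\sigma(A))=\lim_B X(B)$. It therefore suffices to produce a constant $M$, depending only on $X$, with $|F|\le M$ for every finite inversive quotient $\pi\colon X\twoheadrightarrow F$ in $\diff\Prof$ (the $G$-action is irrelevant for this bound). Granting such an $M$, the filtered system $(X(B))_B$ has bounded cardinalities and surjective transition maps, so it stabilizes; its largest term is $X(\pi_0^\sigma(A))$, a finite inversive $G$-set, and by Remark~\ref{finite-periodic} (applied through the anti-equivalence of Corollary~\ref{diff-abs-Gal}, whose domain contains the difference locally \'etale algebra $\pi_0^\sigma(A)$) this means exactly that $\pi_0^\sigma(A)$ is strongly $\diff$\'etale.

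\emph{Step 2: the bound.} Fix a finite inversive quotient $\pi\colon X\twoheadrightarrow F$ and let $\Omega\subseteq X$ be the non-wandering set of the SFT $X$, which is $\sigma$-stable with $\sigma(\Omega)=\Omega$. First, $\pi(\Omega)=F$: the set $\pi(\Omega)$ is $\sigma$-invariant in $F$, and for $f\in F$ pick $x$ with $\pi(x)=f$; the $\omega$-limit set $\omega(x)$ is non-empty and contained in $\Omega$, while $\pi(\omega(x))\subseteq\omega(f)$, and $\omega(f)$ is the $\sigma$-orbit of $f$ because $F$ is finite and inversive, so the orbit of $f$ meets $\pi(\Omega)$ and hence, by $\sigma$-invariance, lies in $\pi(\Omega)$. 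By Fact~\ref{irred-comp-sft} write $\Omega=X_{\Gamma_1}\sqcup\cdots\sqcup X_{\Gamma_r}$ with each $X_{\Gamma_i}$ an irreducible SFT, so $|F|\le\sum_{i=1}^{r}|\pi(X_{\Gamma_i})|$. For each $i$, $X_{\Gamma_i}$ is transitive by Fact~\ref{prop-irred-sft}(1), so $\pi(X_{\Gamma_i})$ is a finite transitive inversive system, i.e.\ a single $\sigma$-orbit of some length $n_i$; by Fact~\ref{prop-irred-sft}(2) the component $X_{\Gamma_i}$ contains a periodic point, of minimal period $d_i<\infty$, and if $y\in X_{\Gamma_i}$ has period $d$ then $\sigma^{d}$ fixes $\pi(y)$, forcing $n_i\mid d$; taking $d=d_i$ gives $|\pi(X_{\Gamma_i})|=n_i\le d_i$. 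Setting $M=\sum_{i=1}^{r}d_i$ finishes the bound, hence the proof.

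The main obstacle is Step~2, specifically the two points that the non-wandering set already carries all of a finite inversive quotient and that each irreducible component contributes only boundedly many orbits; this is exactly where the irreducible decomposition of a subshift of finite type (Fact~\ref{irred-comp-sft}) and the description of periodic points in an irreducible subshift of finite type (Fact~\ref{prop-irred-sft}) are used, and it is the step that genuinely relies on the \emph{finite type} hypothesis on $A$ (Lemma~\ref{michael-si-comp} and the example of $x\sigma(x),x\sigma^2(x),\dots$ show this hypothesis cannot simply be dropped).
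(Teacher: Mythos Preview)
Your proof is correct and follows the same overall strategy as the paper: translate to symbolic dynamics via the Galois correspondence, then bound the size of finite inversive quotients of the resulting SFT using the irreducible decomposition and periodic points. The differences are only in the bookkeeping. The paper first bounds the number of cycles in a finite inversive quotient by the number of $\diff$connected components of $X$ (invoking Lemma~\ref{michael-si-comp}), and then bounds each cycle length by the period of a periodic point in an irreducible component sitting inside a given $\diff$connected component. You proceed more directly: you show $\pi(\Omega)=F$ via $\omega$-limit sets, decompose $\Omega$ into irreducible pieces (Fact~\ref{irred-comp-sft}), and bound each $|\pi(X_{\Gamma_i})|$ by the minimal period $d_i$. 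This avoids the $\diff$connected-component step altogether and yields the explicit bound $M=\sum_i d_i$; your Step~1 also spells out more carefully than the paper why a uniform bound suffices (directedness of the family of strongly $\diff$\'etale subalgebras, and stabilization of the resulting inverse system of finite inversive $G$-sets).
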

\begin{proof}
Let $X$ be the subshift of finite type (with a difference Galois action) corresponding to $A$ via \ref{diff-abs-Gal}. A strongly $\diff$\'etale subalgebra of $A$ corresponds to a (difference Galois equivariant) shift cover map $X\to X_0$, where $X_0$ is finite consisting of periodic points by \ref{finite-periodic}.

It suffices to show that $X$ admits a maximal such quotient. Indeed, $X_0$ is a finite union of $\sigma_{X_0}$-periods, and the number of those periods is bounded by the number of $\diff$connected components of $X$, which is finite by \ref{michael-si-comp}. In order to show that the length of each $\sigma_{X_0}$-period is bounded, we may assume that we have a cover $X\to X_0$ where $X$ is $\diff$connected and $X_0$ is a $\sigma_{X_0}$-cycle.  By the proof of \ref{michael-si-comp}, there exists an irreducible component $Y\subseteq X$. Moreover, the restriction $Y\to X_0$ of the original cover is still a cover, hence the size of $X_0$ divides the length of the period of any periodic point of $Y$ (they exist by \ref{prop-irred-sft} (2)), so it is bounded.
\end{proof}

\begin{remark}
We conjectured \cite[1.19]{ive-mich-babb} that the above result holds for $A$ finitely $\diff$generated. If there exists a subshift {of a subshift of finite type} with infinitely many $\diff$connected components, the conjecture does not hold in this generality. 
\end{remark}

\subsection{Entropy for difference algebras}

\begin{definition}\label{alg-entropy}
Let $A$ be a finitely $\diff$generated \'etale difference algebra over a difference field $k$.  The \emph{entropy} of $A$ is the number
$$
h(A)=h(X),
$$
where $X$ is the underlying subshift of the $\pi_1(k,\bar{k})$-action associated to $A$ by the Galois correspondence of \ref{theo:translation-mech} (which originates in \ref{diff-abs-Gal}).
\end{definition}

\begin{remark}
The number $h(A)$ from \ref{alg-entropy} does not depend on the choice of the separable closure $\bar{k}$ because we ignore the $G=\pi_1(k,\bar{k})$-action. 

Indeed, if $\bar{k}_1$ is equipped with another lift $\bar{\sigma}_1$  
 of $\sigma_k$ to the separable closure of $\forg{k}$, there exists a $g\in\forg{G}$ such that $\bar{\sigma}=\bar{\sigma}_1 g$. 
 Since $A$ is finitely $\diff$generated, it is a quotient of $\ssig{A_0}^k$ for some finite \'etale $\forg{k}$-algebra. Galois correspondence  \ref{theo:translation-mech} with respect to the extension $\bar{k}/k$   sends $A$ to a subshift $X$ of the full shift $F=\psig{X_0}_G$ corresponding to $\ssig{A_0}^k$. The description of the $G$-action on $F$ from \ref{psig-G-sets} shows that $g$ acts as an isometry. Therefore, the subshift $(X,g\sigma_X)$ corresponding to $A$ with respect to the extension $\bar{k}_1/k$ has the same entropy as $X$.
 
%
\end{remark}

\begin{proposition}\label{entropy-ld}
Let $L/k$ be an extension of difference fields {such that $\forg{L}/\forg{k}$ is separably algebraic and $L$ is finitely $\diff$generated as a $k$-algebra.} Then
$$
h(L)=\log(\ld(L/k))
$$
\end{proposition}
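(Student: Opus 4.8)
The plan is to compute the topological entropy of the subshift attached to $L$ by the translation mechanism and to count its words of each length in terms of the degrees appearing in Lemma~\ref{ld-lemma}. Fix a finite tuple $a=(a_1,\dots,a_n)$ of $\diff$generators of the $k$-algebra $L$, so $\forg{L}=\forg{k}[a,\sigma(a),\sigma^2(a),\dots]$, and set $L_i=\forg{k}[a,\sigma(a),\dots,\sigma^i(a)]$, $d_i=[L_i:L_{i-1}]$ for $i\ge 1$, and $d_0=[L_0:\forg{k}]$. Since $\forg{L}/\forg{k}$ is separably algebraic, each $L_i$ is a finite separable field extension of $\forg{k}$, $L_0$ is a finite \'etale $\forg{k}$-algebra, and $L$ is a quotient of $\ssig{L_0}^k$. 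Writing $X=X(L)=(\forg{k}\Alg(\forg{L},\forg{\bar{k}}),()^\sigma)$ and $X_0=\forg{k}\Alg(L_0,\forg{\bar{k}})$ (a finite set of size $d_0$), the assignment $f\mapsto(f|_{L_0},\,f^\sigma|_{L_0},\,f^{\sigma^2}|_{L_0},\dots)$, built using the relation $\sigma_{\bar{k}}^{\,i}\circ f^{\sigma^i}=f\circ\sigma_L^{\,i}$ from Lemma~\ref{lemma: prepare for fields}(1), is a continuous, shift-equivariant injection of $X$ into the full shift $X_0^{\N}$ (injectivity because the $f^{\sigma^i}|_{L_0}$ jointly determine $f$ on $\bigcup_i L_i=\forg{L}$); thus $X$ is realised as a subshift of $X_0^{\N}$, in agreement with Theorem~\ref{theo:translation-mech}\,(\ref{subsh}) and Corollary~\ref{fp-corr-sft}.

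The key step is the identity $|W(X,l)|=[L_{l-1}:\forg{k}]=d_0 d_1\cdots d_{l-1}$ for all $l\ge 1$. As $X$ is shift-stable, $W(X,l)$ consists exactly of the initial blocks $(f|_{L_0},f^\sigma|_{L_0},\dots,f^{\sigma^{l-1}}|_{L_0})$ of points $f\in X$. I would show that $f\mapsto f|_{L_{l-1}}$ descends to a bijection between $W(X,l)$ and $\forg{k}\Alg(L_{l-1},\forg{\bar{k}})$: because $\sigma_{\bar{k}}$ is injective, the tuple $(f^{\sigma^i}|_{L_0})_{0\le i<l}$ carries the same information as $(f^{\sigma^i}(a))_{0\le i<l}$, hence as $(f(\sigma_L^{\,i}(a)))_{0\le i<l}$, which is precisely the datum of $f|_{L_{l-1}}$ on the generators $a,\sigma(a),\dots,\sigma^{l-1}(a)$ of $L_{l-1}$ --- giving well-definedness and injectivity --- while surjectivity follows since any $\forg{k}$-embedding $L_{l-1}\hookrightarrow\forg{\bar{k}}$ extends to some $f\in\forg{k}\Alg(\forg{L},\forg{\bar{k}})=X$ ($\forg{\bar{k}}$ separably closed, $\forg{L}/L_{l-1}$ separable algebraic) whose $l$-block depends only on $f|_{L_{l-1}}$ (note $\sigma_L^{\,i}(L_0)\subseteq L_{l-1}$ for $i<l$). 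Since $L_{l-1}/\forg{k}$ is a finite separable field extension, $|\forg{k}\Alg(L_{l-1},\forg{\bar{k}})|=[L_{l-1}:\forg{k}]=d_0 d_1\cdots d_{l-1}$.

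Combining this with Definition~\ref{top-entropy} and the definition of $h(L)$ in \ref{alg-entropy} gives
\[
h(L)=h(X)=\lim_{l\to\infty}\frac{1}{l}\log|W(X,l)|=\lim_{l\to\infty}\frac{1}{l}\sum_{i=0}^{l-1}\log d_i .
\]
By Lemma~\ref{ld-lemma} the sequence $(d_i)_{i\ge 1}$ is non-increasing and stabilises, and by Definition~\ref{ld-defn} its eventual value is $\ld(L/k)$; hence the Ces\`aro means $\frac{1}{l}\sum_{i=0}^{l-1}\log d_i$ converge to $\log\ld(L/k)$, which is the assertion.

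The step I expect to be the main obstacle is the counting identity in the second paragraph: one must be careful that the $i$-th letter of the $l$-block is exactly $f^{\sigma^i}|_{L_0}$ rather than some other $\sigma_{\bar{k}}$-twist, that the passage between $f^{\sigma^i}(a)$ and $f(\sigma_L^{\,i}(a))$ through $\sigma_{\bar{k}}^{\,i}$ is a genuine bijection so that no blocks are lost or double-counted, and that the extension used for surjectivity relies only on separable-closedness of $\forg{\bar{k}}$. The realisation of $X$ as a subshift and the final averaging limit should be routine given the earlier results, and the independence of $h(L)$ from the choice of $\bar{k}$ is already recorded in the remark preceding this proposition.
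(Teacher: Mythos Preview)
Your argument is correct and arrives at the same core identity as the paper: $|W(X,l)|=[L_{l-1}:\forg{k}]$, after which the Ces\`aro computation is identical. The difference is in how this identity is established. The paper first invokes \ref{diff-fld-fp-fg} to upgrade $L$ to a finitely $\diff$presented algebra, writes it as a directly presented object $\ssig{A_0;C_0}^k$, and then applies the Comparison Theorem machinery (via \ref{gal-dif-corr}) so that the functor $\Phi_0$ carries the cocorrespondence presentation to an explicit SFT presentation of $X$; the word set $W(X,n)$ is then read off functorially as $\Phi_0(L_n)$. You instead work directly with the subshift embedding $X\hookrightarrow X_0^{\N}$ coming from the quotient $\ssig{L_0}^k\twoheadrightarrow L$ and establish the bijection $W(X,l)\cong\forg{k}\Alg(L_{l-1},\forg{\bar{k}})$ by hand, tracking the $()^\sigma$ relation and using separable-closedness of $\forg{\bar{k}}$ for surjectivity. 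Your route is more elementary and self-contained (it does not need finite $\diff$presentation of $L$ or the categorical comparison results), at the cost of the explicit bookkeeping you flag in your final paragraph; the paper's route hides that bookkeeping inside the functoriality of $\Phi_0$ but depends on more of the earlier machinery.
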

\begin{proof}
By \ref{diff-fld-fp-fg}, $L$ is {a finitely}  $\diff$presented \'etale $k$-algebra, so, as in the proof of \ref{fin-pres-dir-pres}, we can assume 
$$L=\ssig{\forg{k}[\bar{x}]\xrightarrow{\bar{x}\mapsto{\bar{x}_0}} \forg{k}[\bar{x}_0,\bar{x}_1]/I \xleftarrow{\bar{x}_1\mapsfrom\bar{x}} \forg{k}[\bar{x}]}^k,$$
where $I=(f_1,\ldots,f_m)$, and the corresponding subshift of finite type is presented as
$$
X=\Phi(L)=\psig{\Phi_0(\forg{k}[\bar{x}])\leftarrow \Phi_0(\forg{k}[\bar{x}_0,\bar{x}_1]/I)\to \Phi_0(\forg{k}[\bar{x}])}.
$$
The set of words of length $n$ is therefore
$$
W(X,n)=\Phi_0(\forg{k}[\bar{x}_0,\bar{x}_1,\ldots,\bar{x}_n]/(I+\sigma(I)+\cdots+\sigma^{n-1}(I))=\Phi_0(L_n),
$$
where $L_n$ are as in \ref{ld-lemma}. Hence, 
$$
|W(X,n)|=|\Phi_0(L_n)|=|\forg{k}\Alg(L_n, \forg{\bar{k}})\mw{|}=[L_n:k].
$$
If $i$ is the minimal index for which $d_i=\ld(L,k)$, we obtain that
\begin{multline*}
h(X)=\lim_n \frac{1}{n}\log|W(X,n)|=\lim_n \frac{1}{n}\log([L_n:k])=\lim_n\frac{1}{n}\log([L_n:L_{n-1}]\cdots [L_0:k])\\
=\lim_{n\geq i} \frac{1}{n} \log(d_0 d_1\cdots d_{i-1} \ld(L/k)^{n-i})=\log(\ld(L/k)).
\end{multline*}

\end{proof}

\subsection{A difference zeta function}

Let $k=(\F_q,\id)$, {where $\F_q$ is the finite field with $q$ elements} and let us write 
$$\bar{k}_n=(\bar{\F}_q,\varphi_n),
$$
where $\varphi_n(\alpha)=\alpha^{q^n}$ is the $n$-th power of the $q$-Frobenius automorphism.

 Given a $k$-algebra $A\in k\Alg$ (or an affine difference scheme when considered as an object of $\cA_k$), let us consider the number 
 $$
 N_n=|k\Alg(A,\bar{k}_n)|=|\cA_k(\bar{k}_n,A)|
 $$
of $\bar{k}_n$-rational points of $A$,  and form a generating function
 $$
 Z_A(t)=\exp\left( \sum_{n=1}^\infty \frac{N_n}{n}t^n\right).
 $$

Hrushovski's twisted Lang-Weil estimate from \cite{udi-frob} tells us that, if $A$ is finitely $\diff$presented and `absolutely transformally integral of finite total dimension' (as defined in \cite{udi-frob}), the numbers $N_n$ are finite for large enough $n$, and it makes sense to conjecture that $Z_A(t)$ is \emph{near-rational}, i.e., that its logarithmic derivative is rational.

In the very special case when $\sigma_A=\id$, the scheme $X_0=\spec(\forg{A})$ is  of finite type over $\F_q$, and we verify that $N_n=|X_0(\F_{q^n})|$ for all $n$, whence
$$
Z_A(t)=Z_{X_0}(t),
$$ 
i.e., the difference zeta function of $A$ is the classical Weil zeta function of $X_0$ over the finite field $\F_q$, which is known to be rational by the work of Dwork and Grothendieck on classical Weil conjectures. 

Our result below serves as a proof of concept for near-rationality in the `0-dimensional' difference case. Note that even that simplest case becomes rather nontrivial if we remove the assumption that $A$ is transformally integral, because of the emerging link to symbolic dynamics.

\begin{theorem}\label{zeta-near-rat}
Let $A$ be an \'etale finitely $\diff$presented $k$-algebra. Then $Z_A(t)$ is near-rational.
\end{theorem}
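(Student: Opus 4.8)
The plan is to combine the difference Galois correspondence with an explicit description of the subshift of finite type attached to $A$, adapting the classical computation of the dynamical zeta function of an SFT.

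First I would reinterpret the counting function. Since $\F_q^{\,q}=\F_q$, the $q$-power Frobenius $F_A\colon A\to A$, $a\mapsto a^q$, is a morphism in $k\Alg$, so by the translation mechanism (Theorem~\ref{theo:translation-mech}, via Corollary~\ref{diff-abs-Gal}) it corresponds to a shift-commuting self-map $\phi$ of $X=X(A)=(\forg{k}\Alg(\forg{A},\forg{\bar{k}}),()^{\sigma})$, namely $\phi(f)=\mathop{\rm Frob}_q\circ f=f\circ F_A$. A $k$-algebra map $f\colon A\to\bar{k}_n=(\bar{\F}_q,\mathop{\rm Frob}_q^{\,n})$ is precisely a point $f\in X$ with $f\circ\sigma_A=\mathop{\rm Frob}_q^{\,n}\circ f$, i.e.\ with $\sigma_X(f)=\phi^n(f)$; hence
$$
N_n=\Bigl|\Eq\!\bigl(X\doublerightarrow{\sigma}{\phi^{\,n}}X\bigr)\Bigr|=\bigl|\{x\in X:\ \sigma_X(x)=\phi^n(x)\}\bigr|.
$$

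Next I would make $X$ and $\phi$ explicit. Because $A$ is étale and finitely $\diff$presented (\ref{diff-etale}, \ref{fin-pres-dir-pres}) we may write $A\simeq\forg{k}\{x_1,\dots,x_s\}/J$ with $J$ the difference ideal $\diff$generated by finitely many $g_1,\dots,g_r\in\forg{k}[\sigma^ix_l:\ 0\le i\le d]$, and with each $x_l$ killed by some fixed separable $h_l\in\F_q[T]\setminus\{0\}$. Evaluating a homomorphism on $x_1,\dots,x_s$ identifies $X$ with the set of sequences $(\beta^{(0)},\beta^{(1)},\dots)$ in a fixed finite $\mathop{\rm Frob}_q$-stable set $V\subseteq\bar{\F}_q^{\,s}$ (a product of root sets of the $h_l$) satisfying $g_j(\beta^{(i)},\dots,\beta^{(i+d)})=0$ for all $i,j$; in this presentation $\sigma_X$ is the left shift and $\phi$ acts as the coordinatewise Frobenius. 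The equation $\sigma_X(x)=\phi^n(x)$ forces $\beta^{(i)}=\mathop{\rm Frob}_q^{\,ni}\beta^{(0)}$, and since the $g_j$ have $\F_q$-coefficients all the resulting constraints reduce to the single one at $i=0$, giving
$$
N_n=\bigl|\{\beta\in V:\ g_j(\beta,\mathop{\rm Frob}_q^{\,n}\beta,\dots,\mathop{\rm Frob}_q^{\,nd}\beta)=0\ \text{ for }j=1,\dots,r\}\bigr|.
$$

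From here the conclusion is immediate: for each fixed $\beta\in V$ the Frobenius acts on $\F_q(\beta)$ with order $e(\beta):=[\F_q(\beta):\F_q]$, so membership in the set above depends only on $n$ modulo $e(\beta)$; hence $N_n$ is periodic in $n$ with period dividing $P:=\operatorname{lcm}_{\beta\in V}e(\beta)$ (in particular each $N_n$ is finite and $Z_A(t)$ is well defined). Therefore
$$
\frac{Z_A'(t)}{Z_A(t)}=\sum_{n\ge1}N_nt^{\,n-1}=\frac{N_1+N_2t+\dots+N_Pt^{\,P-1}}{1-t^{\,P}},
$$
which is rational, i.e.\ $Z_A(t)$ is near-rational. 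The step that will require genuine care — the main obstacle — is the explicit dictionary of the third paragraph: verifying that $\phi$ becomes the coordinatewise Frobenius on the SFT presentation furnished by \ref{fin-pres-dir-pres}, and that the infinitely many ``window'' equations defining $\Eq(\sigma,\phi^n)$ collapse to the equations at $i=0$. The étale hypothesis is essential precisely here (and not just for finiteness of $N_n$): it confines the letters $\beta^{(i)}$ to the fixed finite set $V$, which is what makes the period $P$ independent of $n$ — this is the twisted analogue of the Perron--Frobenius bookkeeping behind $Z(X,t)=1/\det(I-tA)$, with $\mathop{\rm Frob}_q^{\,n}$-orbits in $V$ playing the rôle of closed paths.
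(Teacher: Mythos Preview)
Your proof is correct and follows the same overall strategy as the paper: reinterpret $N_n$ as $|\Eq(\sigma,\phi^n)|$ on the associated subshift of finite type $X$, where $\phi$ is the self-map induced by the $q$-power Frobenius, and then exploit the finite order of Frobenius on the alphabet to deduce that $N_n$ is periodic in $n$, whence the logarithmic derivative is a rational function.

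The paper implements the middle step a little differently. Rather than working with your higher-block presentation, it invokes \ref{sft-map-block} to recode $X$ as a $1$-step SFT $X_T$ on a finite alphabet $X_0$ with $\phi$ a $1$-block map given by a bijection $f_0:X_0\to X_0$; it then shows $N_n=\mathop{\rm Tr}(TF^n)$ where $F$ is the permutation matrix of $f_0$, and concludes from the finite order of $F$. Your argument bypasses both the recoding and the matrix formalism by directly collapsing all window constraints $g_j(\beta^{(i)},\dots,\beta^{(i+d)})=0$ to the single constraint at $i=0$ (using that the $g_j$ have $\F_q$-coefficients). This is arguably more transparent and makes the dependence on the \'etale hypothesis explicit; the paper's version has the virtue of displaying $Z_A(t)=\exp\!\bigl(\sum_n \mathop{\rm Tr}(TF^n)t^n/n\bigr)$ in a form that visibly parallels the classical computation $Z(X,t)=1/\det(I-tA)$ of \ref{zeta-sft}, even though the closed determinantal form no longer holds.
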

\begin{proof}
We shall use the Galois correspondence \ref{diff-abs-Gal} for $\bar{k}=\bar{k}_0=(\bar{\F}_q,\id)$, which assigns to $A$ the subshift of finite type $X$ with an action of the difference group $G=(\Gal(\forg{\bar{k}}/\forg{k}, \id)$, which is topologically generated by the $q$-Frobenius $\varphi_1$. As a set, $X$ can be identified with
$$
\forg{k}\Alg(\forg{A},\forg{\bar{k}}),
$$
and the action $\sigma_X(f)=f\circ\sigma_A$ {(\ref{lemma: prepare for fields})}. Given that $\sigma_G=\id$, we obtain that the action of $\varphi_1$ on $X\simeq\forg{k}\Alg(\forg{A},\forg{\bar{k}})$, $f\mapsto\varphi_{1}\circ f$ commutes with $\sigma_X$ and thus yields a shift map 
$$\phi:X\to X.$$

The same shift map is obtained by applying the Galois correspondence functor to the absolute Frobenius $k$-algebra homomorphism $A\to A$ given by $a\mapsto a^q$.

Note that 
$$
N_n=|\begin{tikzcd}[cramped, column sep=normal, ampersand replacement=\&]
{\Eq\left(X\right.}\ar[yshift=2pt]{r}{\sigma} \ar[yshift=-2pt]{r}[swap]{\phi^n} \&{\left.X\right)}
\end{tikzcd}|.
$$
Indeed, if $f\in \forg{k}\Alg(\forg{A},\forg{\bar{k}})$ is such that $\sigma(f)=\phi^n(f)$, then in fact 
$$
f\circ\sigma_A=\varphi_1^n\circ f=\varphi_n\circ f, \text{ i.e. } f\in k\Alg(A,\bar{k}_n).
$$
By functoriality of Frobenius, we may assume that the subshift $X$ is given as $X=X_T$ for a transition matrix $T$ on a finite alphabet $X_0$ and that $\phi$ is a $1$-block map  given by a bijection $f_0:X_0\to X_0$. Therefore, 
\begin{align*}
\begin{tikzcd}[cramped, column sep=normal, ampersand replacement=\&]
{\Eq\left(X\right.}\ar[yshift=2pt]{r}{\sigma} \ar[yshift=-2pt]{r}[swap]{\phi^n} \&{\left.X\right)}
\end{tikzcd}  & = \{  (x_0,x_1,\ldots)\in X_0^\N : T_{x_i,x_{i+1}}=1, \sigma (x_0,x_1,\ldots)=\phi^n(x_0,x_1,\ldots) \} \\
& = \{  (x_0,x_1,\ldots)\in X_0^\N : T_{x_i,x_{i+1}}=1, (x_1,x_2,\ldots)=(f_0^n{(x_0),f_0^n(x_1)},\ldots)\} \\
& \simeq \{  x_0\in X_0 : T_{x_0,f_0^n{(x_0)}}=1 \}. \\
\end{align*}
Let $F$ be the permutation matrix associated with $f_0$. Then we have
$$
(TF^n)_{x,y}=T_{x,f_0^n{(y)}},
$$
whence we deduce that 
$$
N_n=|\Eq(\sigma_X,\phi^n)|=\mathop{\rm Tr}(TF^n),
$$
and
$$
Z_A(t)=\exp\left(\sum_{n=1}^\infty\frac{\mathop{\rm Tr}(TF^n)}{n}t^n\right).
$$
While our approach is inspired by the calculation of the zeta function of a SFT as in \ref{zeta-sft}, the above series does not sum up to a closed form in terms of determinants. On the other hand, the logarithmic derivative of $Z_A(t)$
is 
$$
\sum_{n=1}^\infty \mathop{\rm Tr}(TF^n)t^{n-1},
$$
so, by using the fact that the permutation matrix $F$ is of finite order and by summing up the resulting geometric series,  we obtain a rational function in $t$.
\end{proof}

\appendix


\section{Groupoids}\label{s:groupoids}

This appendix provides an overview of internal categories/groupoids and diagrams/actions. Further details are available in \cite{johnstone} and \cite[4.6]{borceux-janelidze}.
\subsection{Internal categories and groupoids}\label{internal-cats}

Let $\cS$ be a category with pullbacks. An \emph{internal category} in $\cS$ is a collection $\C$ of objects and morphisms of $\cS$
\begin{center}
 \begin{tikzpicture} 
\matrix(m)[matrix of math nodes, row sep=0em, column sep=3em, text height=1.5ex, text depth=0.25ex]
 {
|(0)|{C_2}  &  |(1)|{C_1}		& |(2)|{C_0} \\
 }; 
\path[->,font=\scriptsize,>=to, thin]
(0) edge node[above]{$m$} (1)
([yshift=1em]1.east) edge node[above=-2pt]{$d_0$} ([yshift=1em]2.west) 
(2)  edge node[above=-2pt]{$e$} (1) 
([yshift=-1em]1.east) edge node[above=-2pt]{$d_1$} ([yshift=-1em]2.west) 
;
\end{tikzpicture}
\end{center}
where $C_2=C_1\times_{C_0}C_1$ is the pullback
 \begin{center}
 \begin{tikzpicture} 
\matrix(m)[matrix of math nodes, row sep=2em, column sep=2em, text height=1.5ex, text depth=0.25ex]
 {
 |(1)|{C_2}		& |(2)|{C_1} 	\\
 |(l1)|{C_1}		& |(l2)|{C_0} 	\\
 }; 
\path[->,font=\scriptsize,>=to, thin]
(1) edge node[above]{$\pi_2$} 
(2) edge node[left]{$\pi_1$}  
(l1)
(2) edge node[right]{$d_1$} (l2) 
(l1) edge  node[above]{$d_0$} (l2);
\end{tikzpicture}
\end{center}
and we adhere to the convention that $C_1$ appearing on the left of the symbol $\times_{C_0}$ is considered with the structure map $d_0$, and $C_1$ appearing on the right is considered with the structure map $d_1$.

These morphisms should fit into suitable commutative diagrams in $\cS$ expressing the axioms for a category (\cite[4.6]{borceux-janelidze}), with the following interpretation:
\begin{enumerate}
\item $C_0$ is the \emph{object of objects};
\item $C_1$ is the \emph{object of morphisms};
\item $d_0$ is the \emph{domain/source morphism};
\item $d_1$ is the \emph{codomain/target morphism};
\item $e$ is the \emph{identity morphism};
\item $C_2$ is the \emph{object of composable pairs};
\item $m$ is the \emph{composition morphism}. 
\end{enumerate}
More precisely, we require that 
\begin{gather*}
d_0 e=d_1 e=\id_{C_0}, \ \ \ d_0 m=d_0 \pi_2, \ \ \ d_1 m= d_1\pi_1, \ \ \  m(\id\times e)=m(e\times\id)=\id_{C_1}, \\
m(\id\times m)=m(m\times\id):C_1\times_{C_0}C_1\times_{C_0}C_1\to C_1.
\end{gather*}

An \emph{internal functor}
$$
f:\C\to \bbD
$$
between internal categories $\C$ and $\bbD$ in $\cS$
consists of morphisms $f_0:C_0\to D_0$ and $f_1:C_1\to D_1$ commuting with $d_0$, $d_1$, $e$ and $m$.

An \emph{internal groupoid} in $\cS$ is an internal category $\C$ endowed with an additional morphism $i:C_1\to C_1$, satisfying the additional commutative diagrams in $\cS$ expressing that $i$ inverts all morphisms. 

\begin{example}\label{int-cat-set}
An internal category in the category $\Set$ is a small category in the usual sense, given that we have sets of objects and morphisms.
\end{example}

\subsection{Internal diagrams and groupoid actions}\label{internal-diags}

An \emph{internal diagram} (or a \emph{copresheaf}) on an internal category $\C$ in $\cS$ is a pair of $\cS$-morphisms
$$
F=\left(F_0\xrightarrow{\gamma_0}C_0, F_1=C_1\times_{C_0}F_0\xrightarrow{\mu} F_0\right),
$$
such that
\begin{gather*}
\gamma_0 \mu=d_1\pi_1, \ \ \ \mu(e\times \id)=\id_{F_0}, \\
\mu(m\times \id)=\mu(\id\times \mu): C_1\times_{C_0}C_1\times_{C_0}F_0\to F_0.
\end{gather*}

A \emph{morphism of internal diagrams} on $\C$
$$
f:F\to G
$$
is a $\cS_{\ov C_0}$ morphism $F_0\to G_0$ commuting with the action morphisms $\mu_F$ and $\mu_G$.

The category of internal diagrams on $\C$ is denoted
$$
[\C,\cS].
$$
When $\C$ is a groupoid, we may refer to its objects as \emph{$\C$-actions in $\cS$}.

The diagram
\begin{center}
\begin{tikzpicture} 
\matrix(m)[matrix of math nodes, row sep=2em, column sep=2em, text height=1.5ex, text depth=0.25ex]
 {
|(2)|{F_0}  &  |(1)|{F_1}		& |(0)|{F_0} 	\\
|(l2)|{C_0} & |(l1)|{C_1}		& |(l0)|{C_0} 	\\
 }; 
\path[->,font=\scriptsize,>=to, thin]
(1) edge node[above]{$\pi_2$} (0)
(l1) edge node[above]{$d_0$} (l0)
(0) edge  node[right]{$\gamma_0$} (l0)
(1) edge node[above]{$\mu$} (2) 
(1) edge node[right]{$\pi_1$}   (l1)
(2) edge node[left]{$\gamma_0$} (l2) 
(l1) edge node[above]{$d_1$} (l2) 
;
\end{tikzpicture}
\end{center}
expressing the first property shows that an internal diagram $F$ on $\cC$ can be viewed as an internal category over $\C$ with the property that the right square is a pullback. An internal functor with this property is called a \emph{discrete opfibration}, see \cite[2.15]{johnstone}.

\begin{example}
Let $\C$ be an internal category in $\Set$, and let $F\in [\C,\Set]$. By \ref{int-cat-set}, $\C$ is a small category, and $F$ yields a copresheaf $\cF:\C\to \Set$ by the rule
$$
\cF(c)=\gamma_0^{-1}(\{c\}), \ \ \  \cF(c\xrightarrow{f} d)=\mu(f,\mathord{-}): \cF(c)\to \cF(d),
$$
for $c,d\in C_0$, $f\in C_1$.

Hence $F$ plays the role of the `total space' of the copresheaf $\cF$.
\end{example}

\section{Proofs for Section~\ref{s:diffalg}}\label{ap:diffalg}

\begin{proof}[Proof \ref{psig-adj}]
Given $X_0\in \cC$, we let
$$
\psig{X_0}=\prod_{i\in \N}X_i
$$
be the product of copies $X_i= X_0$, together with the left shift (omitting the first component)
$$
\sigma:\prod_{i\in \N}X_i \to \prod_{i\in \N}X_i.
$$
A straightforward verification shows that we obtain a right adjoint of $\forg{\,}$. 
\end{proof}

\begin{lemma}[{\cite[Lemma 4.3.4]{borceux-janelidze}}]\label{adj-rel}
Suppose we have an adjunction 
\begin{center}
 \begin{tikzpicture} 
 [cross line/.style={preaction={draw=white, -,
line width=3pt}}]
\matrix(m)[matrix of math nodes, minimum size=1.7em,
inner sep=0pt, 
row sep=3.3em, column sep=1em, text height=1.5ex, text depth=0.25ex]
 { 
  |(dc)|{\cA}	\\
 |(c)|{\cB} 	      \\ };
\path[->,font=\scriptsize,>=to, thin]
%
(dc) edge [bend right=30] node (ss) [left]{$L$} (c)
(c) edge [bend right=30] node (ps) [right]{$R$} (dc)
(ss) edge[draw=none] node{$\dashv$} (ps)
;
\end{tikzpicture}
\end{center}
with unit $\eta:\id\to RL$ and counit $\epsilon:LR\to \id$.
 If $\cA$ admits pullbacks, for any $X\in\cA$ we obtain an adjunction
\begin{center}
 \begin{tikzpicture} 
 [cross line/.style={preaction={draw=white, -,
line width=3pt}}]
\matrix(m)[matrix of math nodes, minimum size=1.7em,
inner sep=0pt, 
row sep=3.3em, column sep=1em, text height=1.5ex, text depth=0.25ex]
 { 
  |(dc)|{\cA_{\ov X}}	\\
 |(c)|{\cB_{\ov L(X)}} 	      \\ };
\path[->,font=\scriptsize,>=to, thin]
%
(dc) edge [bend right=30] node (ss) [left]{$L_X$} (c)
(c) edge [bend right=30] node (ps) [right]{$R_X$} (dc)
(ss) edge[draw=none] node{$\dashv$} (ps)
;
\end{tikzpicture}
\end{center}
where  
$$
L_X(A\xrightarrow{a}X)=L(A)\xrightarrow{L(a)}L(X),
$$
and  $R_X(E\xrightarrow{e}L(X))$ is obtained by forming the pullback
\begin{center}
 \begin{tikzpicture} 
\matrix(m)[matrix of math nodes, row sep=2em, column sep=2em, text height=1.9ex, text depth=0.25ex]
 {
 |(1)|{{Y}}		& |(2)|{R(E)} 	\\
 |(l1)|{X}		& |(l2)|{RL(X)} 	\\
 }; 
\path[->,font=\scriptsize,>=to, thin]
(1) edge   (2) edge node[left]{{$R_X(e)$}}  (l1)
(2) edge node[right]{${R}(e)$} (l2) 
(l1) edge node[above]{$\eta_X$}  (l2);
\end{tikzpicture}
\end{center}
in $\cA$.
\end{lemma}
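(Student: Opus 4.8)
The plan is to produce, for each $X\in\cA$, a bijection
$$
\cB_{\ov L(X)}\big(L_X(a),e\big)\;\cong\;\cA_{\ov X}\big(a,R_X(e)\big),
$$
natural in $a\colon A\to X$ in $\cA_{\ov X}$ and in $e\colon E\to L(X)$ in $\cB_{\ov L(X)}$. First I would record that both $L_X$ and $R_X$ are genuine functors: $L_X$ is post-composition with $L$ (legitimate since $L$ sends the common codomain $X$ to $L(X)$), and $R_X$ is well defined because $\cA$ has pullbacks, a morphism $e\to e'$ over $L(X)$ inducing the canonical morphism between the two pullback objects over $X$ by the universal property.

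For the bijection, unwind the left-hand side: a morphism $L_X(a)\to e$ in $\cB_{\ov L(X)}$ is a $\cB$-morphism $g\colon L(A)\to E$ with $e\circ g=L(a)$. Let $\bar g=R(g)\circ\eta_A\colon A\to R(E)$ be its transpose under $L\dashv R$. The transpose of $e\circ g$ is $R(e)\circ\bar g$, and the transpose of $L(a)$ is $RL(a)\circ\eta_A=\eta_X\circ a$ by naturality of $\eta$; since transposition is a bijection on hom-sets, the equation $e\circ g=L(a)$ is equivalent to $R(e)\circ\bar g=\eta_X\circ a$. This last equation says precisely that the pair $(\bar g,a)$ is a cone over the cospan $R(E)\xrightarrow{R(e)}RL(X)\xleftarrow{\eta_X}X$, so by the universal property of the pullback $Y$ it corresponds to a unique morphism $h\colon A\to Y$ with $p_X\circ h=a$ and $p_{R(E)}\circ h=\bar g$, where $p_X\colon Y\to X$ and $p_{R(E)}\colon Y\to R(E)$ are the two projections. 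Since $p_X=R_X(e)$, the first equation says exactly that $h\in\cA_{\ov X}(a,R_X(e))$. The assignment $g\mapsto h$ is the claimed bijection; its inverse sends $h$ to the transpose of $p_{R(E)}\circ h$ under $L\dashv R$, and one checks the two round trips are identities using the pullback square $R(e)\circ p_{R(E)}=\eta_X\circ p_X$ together with uniqueness in the pullback.

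Naturality in $a$ and in $e$ is then a routine diagram chase, reducing to naturality of the transposition for $L\dashv R$ and to the functoriality of the pullback (the comparison maps between pullbacks being the unique ones compatible with both projections). A cleaner, essentially equivalent way to organize everything is to factor
$$
L_X\colon \cA_{\ov X}\xrightarrow{\ \Sigma_{\eta_X}\ }\cA_{\ov RL(X)}\xrightarrow{\ L'\ }\cB_{\ov L(X)},
$$
where $\Sigma_{\eta_X}$ is post-composition with $\eta_X$ and $L'$ sends $b\colon B\to RL(X)$ to $\epsilon_{L(X)}\circ L(b)$; the triangle identities give $L_X=L'\,\Sigma_{\eta_X}$ and $R_X=\eta_X^{*}\circ R$ (pullback along $\eta_X$ after applying $R$). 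Since $\Sigma_{\eta_X}\dashv\eta_X^{*}$ (post-composition is left adjoint to pullback, using that $\cA$ has pullbacks) and $L'$ is left adjoint to the restricted functor $R\colon\cB_{\ov L(X)}\to\cA_{\ov RL(X)}$ (a one-line transposition using $R\epsilon\circ\eta R=\id$), composing the two adjunctions yields $L_X\dashv R_X$ with exactly the stated unit and counit.

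I do not expect a genuine obstacle here: this is the standard sliced-adjunction lemma (\cite[Lemma 4.3.4]{borceux-janelidze}). The only thing requiring care is bookkeeping — keeping the transpositions and the two naturality squares straight, and not confusing $\eta$ with $\epsilon$ or the two projections of the pullback.
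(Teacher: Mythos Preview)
Your argument is correct. Note, however, that the paper does not supply its own proof of this lemma: it is stated with a citation to \cite[Lemma~4.3.4]{borceux-janelidze} and used as a black box. Your direct hom-set bijection via transposition and the pullback universal property is the standard proof (and is essentially what appears in the cited reference); the alternative factorisation $L_X=L'\,\Sigma_{\eta_X}$ you sketch is also valid and is a slick repackaging of the same content.
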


\begin{notation}\label{sigma-twist}
Suppose that $\cC$ has pullbacks.

For an object $Y\in\cC_{\ov\forg{S}}$, its \emph{$\sigma_S$-twist} is
the pullback
$$
 \begin{tikzpicture} 
 [cross line/.style={preaction={draw=white, -,
line width=3pt}}]
\matrix(m)[matrix of math nodes, minimum size=1.7em,
inner sep=0pt,
row sep=1.5em, column sep=1em, text height=1.5ex, text depth=0.25ex]
 { 
                        |(P)|{Y_{\sigma}} 	& |(2)| {Y}          \\
                       |(1)|{S}             & |(h)|{S} \\};
\path[->,font=\scriptsize,>=to, thin]
(P) edge  (1) edge (2)
(1) edge node[above]{$\sigma$}  (h)
(2) edge  (h)
;
\end{tikzpicture}
$$
We extend the definition to morphisms in a natural way
to obtain the \emph{base change functor via $\sigma_S$}
$$
(\mathord{-})_{\sigma_S}:\cC_{\ov\forg{S}}\to \cC_{\ov\forg{S}}.
$$  

Moreover, the above functor gives rise to the base change functor via $\sigma_S$,
$$
(\mathord{-})_{\sigma_S}:\diff\cC_{\ov S}\to \diff\cC_{\ov S}.
$$

Given $X\in\diff\cC_{\ov S}$, by the universal property of pullbacks, the diagram
$$
 \begin{tikzpicture} 
 [cross line/.style={preaction={draw=white, -,
line width=3pt}}]
\matrix(m)[matrix of math nodes, minimum size=1.7em,
inner sep=0pt, 
row sep=1.5em, column sep=1em, text height=1.5ex, text depth=0.25ex]
 { 
 |(3)|{X}	   &			&[2em]	\\
& |(P)| {X_{\sigma_S}}   &  |(2)|{X} 	&       \\[1em]
& |(1)|{S}  & |(h)|{S}              &\\};
\path[->,font=\scriptsize,>=to, thin]
(P) edge node[left=-3pt]{}  (1) edge (2)
(1) edge node[below]{$\sigma$}  (h)
(2) edge (h)
(3) edge [bend right=10] (1) edge [bend left=10] node[above]{$\sigma_X$} (2) edge[dashed] node[pos=0.6,above right=-2pt]{$\bar{\sigma}_X$} (P)
;
\end{tikzpicture}
$$
yields a $\diff\cC_{\ov S}$-morphism $\bar{\sigma}_X:X\to X_{\sigma_S}$. This construction is functorial in $X$, so we obtain a natural transformation
$$
\bar{\sigma}:\id_{\diff\cC_{\ov S}}\to (\mathord{-})_{\sigma_S}.
$$
\end{notation}

\begin{proof}[Proof of \ref{psig-prop}]
The statement is immediate from \ref{psig-adj} and \ref{adj-rel}.  Nevertheless, we give a direct proof with a more informative construction of the right adjoint.

Let $f_0:X\to\forg{S}$ be an object of $\cC_{\ov\forg{S}}$. Write $f_i:X_i\to\forg{S}$ for the base change 
of $f_0$ along $\forg{\sigma^i}:\forg{S}\to\forg{S}$, and let $\sigma_i:X_{i+1}\to X_i$ be the induced morphism satisfying
$$
f_i\circ\sigma_i={\sigma_S}\circ f_{i+1}.
$$ 
Consider the pullback/fibre product/the product in ${\cC_{\ov\forg{S}}}$
$$
\psig{X}_{S}=\prod_{i\in\N}f_i=X_0\times_{\forg{S}}X_1\times_{\forg{S}}\cdots,
$$
together with a morphism
$\sigma:\psig{X}_{S}\to\psig{X}_{S}$ defined as the composite 
$$
\prod_{i\geq0}X_i/S\xrightarrow{\text{proj}}\prod_{i\geq1}X_i/S\xrightarrow{\prod\sigma_i}\prod_{i\geq0}X_i/S.
$$
This construction extends to a functor
$$
\psig{\,}_{S}:\cC_{\ov \forg{S}}\to\diff\cC_{\ov S}
$$
which is right adjoint to $\forg{\,}_S$. Indeed, the $\cC_{\ov \forg{S}}$-projection $\pi_X:\forg{\psig{X}}\to X$ induces a natural bijection
$$
{\diff\cC_{\ov S}}(Z,\psig{X}_{\ov S})\to {\cC_{\ov\forg{S}}}(\forg{Z},X).
$$
For $f\in{\diff\cC_{\ov S}}(Z,\psig{X}_{\ov S})$, the morphism $\pi_X\circ\forg{f}$ is in ${\cC_{\ov\forg{S}}}(\forg{Z},X)$.

Conversely, let $f_0\in{\cC_{\ov\forg{S}}}(\forg{Z},X)$ and let 
$f_i=\forg{\bar{\sigma}_Z^i}\circ f_{0,\forg{{\sigma_S}^i}} \in {\cC_{\ov\forg{S}}}(\forg{Z},X_i)$. We have that
$$
\sigma_{i}\circ f_{i+1}=f_i\circ\sigma_Z,
$$
whence $f=\prod_i f_i\in{\diff\cC_{\ov S}}(Z,\psig{X}_{\ov S})$. The above constructions are mutually inverse. 
\end{proof}

\begin{proof}[Proof of \ref{sigmaization-algebras}]
Suppose $A_0$ is a $\forg{k}$-algebra and denote $\varsigma=\sigma_k$. 
Let $A_i=(A_0)_{\varsigma^i}=A_0\otimes_kk$ be the base change of $A_0$ along $\varsigma^i:k\to k$, and let $\sigma_i:A_i\to A_{i+1}$ be the induced ring morphism. Consider the tensor product over $k$ (the coproduct in $k\Alg$) 
$$
\ssig{A_0}^k=\otimes_{i\in\N}A_i,
$$
together with the ring morphism $\sigma:\ssig{A_0}^k\to \ssig{A_0}^k$,
$$
\sigma(a_0\otimes a_1\otimes\cdots)=1\otimes\sigma_0(a_0)\otimes\sigma_1(a_1)\otimes\cdots.
$$
This construction extends to a functor
$$
\ssig{\,}^k:\forg{k}\Alg\to k\Alg
$$
which is left adjoint to $\forg{\,}^k$, i.e., for any $A_0\in\forg{k}\Alg$ and $B\in k\Alg$,
$$
k\Alg(\ssig{A_0}^k,B)\simeq\forg{k}\Alg(A_0,\forg{B}).
$$
\end{proof}

\begin{proof}[Proof of \ref{psig-diags}]
Since $\psig{\,}$ preserves limits (as a right adjoint), applying it to an internal category in $\cS$ gives an internal category in $\diff\cS$. Let
$$
\eta_\C:\C\to \psig{\forg{\C}}
$$
be the internal functor given by units $\eta_{C_0}$ and $\eta_{C_1}$ of the adjunction $\forg{\,}\dashv\psig{\,}$. 

We let $\psig{\,}_\C$ be the composite
$$
[\forg{\C},\cS]\xrightarrow{\psig{\,}}[\psig{\forg{\C}},\diff\cS]\xrightarrow{\eta_\C^*}[\C,\diff\cS].
$$

An $F\in [\forg{\C},\cS]$ yields a diagram
\begin{center}
\begin{tikzpicture} 
\matrix(m)[matrix of math nodes, row sep=2em, column sep=2em, text height=1.5ex, text depth=0.25ex]
 {
|(2)|{F_0}  &  |(1)|{F_1}		& |(0)|{F_0} 	\\
|(l2)|{\forg{C_0}} & |(l1)|{\forg{C_1}}		& |(l0)|{\forg{C_0}} 	\\
 }; 
\path[->,font=\scriptsize,>=to, thin]
(1) edge node[above]{$\pi_2$} (0)
(l1) edge node[above]{$d_0$} (l0)
(0) edge  node[right]{$\gamma_0$} (l0)
(1) edge node[above]{$\mu$} (2) 
(1) edge node[right]{$\pi_1$}   (l1)
(2) edge node[left]{$\gamma_0$} (l2) 
(l1) edge node[above]{$d_1$} (l2) 
;
\end{tikzpicture}
\end{center}
where the right square is a pullback. Applying $\psig{\,}$ gives the rear face of the parallelogram 
$$
 \begin{tikzpicture}
[cross line/.style={preaction={draw=white, -,
line width=4pt}}]
\matrix(m)[matrix of math nodes, row sep=.1em, column sep=1em,  
text height=1.2ex, text depth=0.25ex]
{
|(h)|{\psig{F_0}} 		&		&[0em] |(a)|{\psig{F_1}} &[0em]		& |(b)|{\psig{F_0}}	&			\\[1.2em]
			& |(H)|{\psig{F_0}_{C_0}}  	& 		&|(A)|{\psig{F_1}_{C_1}} 	&		& |(B)|{\psig{F_0}_{C_0}}\\[.6em]          
|(u)|{\psig{\forg{C_0}}}	&		&|(c)|{\psig{\forg{C_1}}}	&		& |(d)|{\psig{\forg{C_0}}} &			\\[1.2em]
			&|(U)|{C_0}	&		&|(C)|{C_1} &			& |(D)|{C_0}   \\};
\path[->,font=\scriptsize,>=to, thin,inner sep=1pt]
(a) edge node[pos=0.5,above]{$\psig{\mu}$}(h)
(c) edge node[pos=0.3,above]{$\psig{d_1}$}(u)
(h) edge node[pos=0.5,left]{$\psig{\gamma_0}$}(u) 
(U) edge  node[pos=0.5,below left]{$\eta_{C_0}$} (u) 
(H) edge [cross line] (U) edge (h)
(C) edge (U) 
(a)edge node[pos=0.5,above]{$\psig{\pi_2}$}(b)
(b)edge node[pos=0.2, left
]{$\psig{\gamma_0}$}(d)
(a) edge node[pos=0.3,left]{$\psig{\pi_1}$}(c)
(c)edge node[pos=0.7,above
]{$\psig{d_0}$}(d)
(A)edge[cross line, dashed]  (B)
(B)edge[cross line]  (D)
(A)edge[cross line]  (C)
(C)edge[cross line]  (D)
(A)edge 
(a)
(C)edge node[pos=0.6,below left]{$\eta_{C_1}$}(c)
(B)edge (b)
(A) edge [cross line] (H)
(D)edge node[pos=0.6,below left]{$\eta_{C_0}$} (d);
\end{tikzpicture}
$$ 
where the right rectangle is again a pullback, so that $\psig{F}$ is an internal diagram on $\psig{\forg{\C}}$. Pulling back via $\eta_{C_0}$ and $\eta_{C_1}$ gives us the front face, where we have used the proof of \ref{psig-prop} via \ref{adj-rel} to write 
$$
\psig{F_0}_{C_0}=\eta_{C_0}^* \psig{\gamma_0}, \text{ and } 
\psig{F_1}_{C_1}=\eta_{C_1}^* \psig{\pi_1}, 
$$
and the right rectangle is still a pullback. Hence we can define the dashed arrow to be the action on $\psig{F}_\C=\eta^*_\C\psig{F}$.

The verification that $\forg{\,}_\C\dashv\psig{\,}_\C$  now follows from the adjunction $\forg{\,}_{C_0}\dashv\psig{\,}_{C_0}$ and the definition of the action.
\end{proof}

\begin{remark}\label{corr-linearised}
If $\cC$ has pullbacks, the use of difference twists from \ref{sigma-twist} 
%
%
%
yields an alternative description of objects and morphisms of $\Corr_S$ purely in terms of morphisms of $\cC_{\ov\forg{S}}$ as follows.

\begin{itemize}
\item[(1')]\label{dvpdiag} An object $(X_0;C_0)$ is a diagram of $\cC_{\ov \forg{S}}$-morphisms
$$
 \begin{tikzpicture} 
\matrix(m)[matrix of math nodes, row sep=2em, column sep=2em, text height=1.5ex, text depth=0.25ex]
 {
 |(2)|{X_0}		& |(3)|{ C_0}  	& |(4)|{X_{0{\sigma}}}.\\
 }; 
\path[->,font=\scriptsize,>=to, thin]
(3) edge node[above]{${p_0}$} (2) (3) edge node[above]{$s_0$}   (4);
\end{tikzpicture}
$$
\item[(2')] A morphism $f:(X_0;C_0)\to(Y_0;D_0)$ consists of  $\cC_{\ov\forg{S}}$-morphisms $f_0:X_0\to Y_0$, $\hat{f}_0:C_0\to D_0$, making the diagram
$$
 \begin{tikzpicture}
[cross line/.style={preaction={draw=white, -,
line width=4pt}}]
\matrix(m)[matrix of math nodes, row sep=2.2em, column sep=2.2em, text height=1.5ex, text depth=0.25ex]
{
|(x0)| {X_0}		&			 |(x1)| {C_0} 			& |(x0s)| {X_{0\sigma}}	\\  
|(y0)|{Y_0} 		&			 |(y1)|{D_0} 			& |(y0s)|{Y_{0\sigma}} 				\\};
\path[->,font=\scriptsize,>=to, thin]
(x1) edge node[above]{$p_0$} (x0) edge node[above]{$s_0$} (x0s) 
	edge node[left,pos=0.5]{$\hat{f}_0$} (y1) 
(x0) edge node[left,pos=0.5]{$f_0$} (y0)
(x0s) edge node[right,pos=0.5]{$f_{0\sigma}$} (y0s)
(y1) edge node[above]{$p_0$} (y0) edge node[above]{$s_0$} (y0s) 
;
\end{tikzpicture}
$$ 
commutative.
\end{itemize}
This description explains why we may informally refer to such objects as \emph{skew-correspondences}.
\end{remark}

\begin{proof}[Proof of \ref{dir-gen-adj}]
Let $(X_0;C_0)$ be an $S$-correspondence. Using its description in terms of \ref{corr-linearised}, and writing 
$X_i$ for the base change/difference twist of $X_0$ via $\sigma_S^i$, let 
$$X_i\xleftarrow{p_i} C_i\xrightarrow{s_i}X_{i+1}$$ be the base change of $X_0\xleftarrow{p_0} C_0\xrightarrow{s_0}X_{1}$ via $\sigma_S^i$.

We define the difference object associated to this $S$-correspondence as the equaliser
$$
\psig{X_0\xleftarrow{p_0} C_0\xrightarrow{s_0}X_{1}}_S=\psig{X_0;C_0}_S=
\begin{tikzcd}[cramped, column sep=normal, ampersand replacement=\&]
{\Eq\left(\psig{C_0}_S\right.}\ar[yshift=2pt]{r}{\text{proj}_{X_0}\circ\psig{p_0}} \ar[yshift=-2pt]{r}[swap]{\psig{s_0}} \&{\left.\psig{X_1}_S\right)}
\end{tikzcd}
$$
where the top arrow is $\text{proj}_{X_0}\circ\psig{p_0}_S=\psig{p_1}_S\circ\text{proj}_{C_0}$, and we wrote
$\text{proj}_{X_0}$ and $\text{proj}_{C_0}$ for the projections/shifts 
$\psig{X_0}_S=\prod_{i\geq0}X_i/S\to\prod_{i\geq1}X_i/S=\psig{X_1}_S$ and $\psig{C_0}_S\to \psig{C_1}_S$, and the difference operator is the shift inherited from $\psig{C_0}_S$.

Given an object $Y\in\diff\cC_{\ov S}$, we claim that there is a natural isomorphism
$$
\Corr_S(Y\xleftarrow{\id} Y\xrightarrow{\bar{\sigma}_Y}Y_{\sigma_S}, X_0\xleftarrow{p_0}C_0\xrightarrow{s_0}X_1)\simeq
\diff\cC_{\ov S}(Y,\psig{X_0;C_0}_S).
$$

Indeed, by \ref{psig-prop},
\begin{multline*}
\diff\cC_{\ov S}(Y,\psig{X_0;C_0}_S)=\diff\cC_{\ov S}(Y,\Eq(\psig{C_0}_S\doublerightarrow{}{}\psig{X_1}_S)
\simeq\Eq(\diff\cC_{\ov S}(Y,\psig{C_0}_S)\doublerightarrow{}{}\diff\cC_{\ov S}(Y,\psig{X_1}_S))\\
\simeq \Eq(\cC_{\ov\forg{S}}(\forg{Y},C_0)\doublerightarrow{}{} \cC_{\ov\forg{S}}(\forg{Y}, X_1)),
\end{multline*}
where the top morphism is the composite 
$$
\cC_{\ov\forg{S}}(\forg{Y},C_0)\xrightarrow{p_0\circ\mathord{-}}
\cC_{\ov\forg{S}}(\forg{Y},X_0)\xrightarrow{(\mathord{-})_{\sigma_S}\circ\bar{\sigma}_Y}
\cC_{\ov\forg{S}}(\forg{Y},X_1),
$$
and the bottom morphism is
$$
\cC_{\ov\forg{S}}(\forg{Y},C_0)\xrightarrow{s_0\circ\mathord{-}}
\cC_{\ov\forg{S}}(\forg{Y},X_1),
$$
whence the above equaliser is the set
$$
\{c\in \cC_{\ov\forg{S}}(\forg{Y},C_0) : (p_0\circ c)_{\sigma_S}\circ\bar{\sigma}_Y= s_0\circ c\}\simeq
\Corr_S(Y\xleftarrow{\id} Y\xrightarrow{\bar{\sigma}_Y}Y_{\sigma_S}, X_0\xleftarrow{p_0}C_0\xrightarrow{s_0}X_1),
$$
where we used the description of morphisms between $S$-correspondences from \ref{corr-linearised}.
\end{proof}

\begin{proof}[Proof of \ref{fin-si-gen}] Assuming (1) {and $A=k[a_1,\ldots,a_n,\sigma(a_1),\ldots,\sigma(a_n),\ldots]$,} using the adjunction $\ssig{\,}^k\dashv \forg{\,}^k$, there is a unique $k$-algebra homomorphism $P_{n,k}\to A$ taking $x_i$ to $a_i$, and it is clearly surjective, so we obtain (2). The converse implication is clear.

If we have an epimorphism $\ssig{A_0}\to A$, where $A_0$ is finitely generated over $\forg{k}$, then we have an epimorphism $\forg{k}[x_1,\ldots,x_n]\to A_0$ in $\forg{k}\Alg$, so applying $\ssig{\,}^k$, we have 
$$
P_{n,k}\to \ssig{A_0}\to A,
$$
where both arrows are epimorphisms, so the composite is an epimorphism $P_{n,k}\to A$. This establishes the equivalence of (2) and (3).
 
\end{proof}

\begin{proof}[Proof of \ref{fin-pres-dir-pres}]
The equivalence of (1) and (2) follows from the fact that, if the parallel morphisms in the coequaliser diagram in (2) are denoted $f$ and $g$, then the coequaliser $A$ is the quotient of $P_{n,k}$ by the difference ideal generated by $f(x_1)-g(x_1), \ldots, f(x_m)-g(x_m)$, where $x_1,\ldots,x_m$ are variables of $P_{m,k}$. 

Suppose $A$ has a presentation as in (3). Since $C_0$ is finitely presented as a $\forg{k}$-algebra, it can be written as a coequaliser
\begin{center}
 \begin{tikzpicture} 
\matrix(m)[matrix of math nodes, row sep=0em, column sep=1.7em, text height=1.5ex, text depth=0.25ex]
 {
 |(1)|{F_{m}}		& |(2)|{F_{n}}  & |(3)|{C_0}	\\
 }; 
\path[->,font=\scriptsize,>=to, thin,yshift=12pt]
(2) edge node[above]{} (3)
([yshift=2pt]1.east) edge node[above]{} ([yshift=2pt]2.west) 
([yshift=-2pt]1.east)edge node[below]{}   ([yshift=-2pt]2.west) 
;
\end{tikzpicture}
\end{center}
of polynomial rings $F_m$ and $F_n$ over $\forg{k}$.

Since $\ssig{\,}^k$ commutes with colimits, applying it to the above yields a coequaliser
\begin{center}
 \begin{tikzpicture} 
\matrix(m)[matrix of math nodes, row sep=0em, column sep=1.7em, text height=1.5ex, text depth=0.25ex]
 {
 |(1)|{P_{m,k}}		& |(2)|{P_{n,k}}  & |(3)|{\ssig{C_0}^k}.	\\
 }; 
\path[->,font=\scriptsize,>=to, thin,yshift=12pt]
(2) edge node[above]{} (3)
([yshift=2pt]1.east) edge node[above]{} ([yshift=2pt]2.west) 
([yshift=-2pt]1.east)edge node[below]{}   ([yshift=-2pt]2.west) 
;
\end{tikzpicture}
\end{center}
whence $\ssig{C_0}^k$ satisfies (2) and (1). Similarly we obtain that $\ssig{A_0}^k$ satisfies (1). Thus, the coequaliser expression from (3) makes $A$ a quotient of a $k$-algebra $\ssig{C_0}^k$ satifsying (1) by a finitely $\diff$generated difference ideal, which is again a $k$-algebra satisfying (1).

Assuming $A$ is as in (1), by the standard procedure of adding variables and recoding the system of difference equations captured by the finitely generated difference ideal, we can ensure that the relevant system of difference equations is of order 1, so that
$$
A=P_{n,k}/ \langle f_1(\bar{x}_0,\bar{x}_1),\ldots,f_m(\bar{x}_0,\bar{x}_1)\rangle_\sigma,
$$
where $\bar{x}_0$ is the $n$-tuple of variables of $P_{n,k}$, i.e., $P_{n,k}=k\{\bar{x}_0\}=k[\bar{x}_0,\bar{x}_1,\bar{x}_2,\ldots]$ with $\sigma:\bar{x}_i\mapsto \bar{x}_{i+1}$. Applying $\ssig{\,}^k$ to the {cocorrespondence}
$$
 \begin{tikzpicture}
[cross line/.style={preaction={draw=white, -,
line width=4pt}}]
\matrix(m)[matrix of math nodes, row sep=2.2em, column sep=2.2em, text height=1.5ex, text depth=0.25ex]
{
|(x0)| {\forg{k}[\bar{x}_0]}		&			 |(x1)| {\forg{k}[\bar{x}_0,\bar{x}_1]_{\ov(f_1,\ldots,f_m)}} 			& |(x0s)| {\forg{k}[\bar{x}_0]}	\\  
|(y0)|{\forg{k}} 		&			 |(y1)|{\forg{k}} 			& |(y0s)|{\forg{k}} 				\\};
\path[->,font=\scriptsize,>=to, thin]
(x0) edge node[above]{$\bar{x}_0\mapsto \bar{x}_0$} (x1) 
(x0s) edge node[above]{$\bar{x}_1\mapsfrom \bar{x}_0$} (x1) 
(y1)	edge  (x1) 
(y0) edge  (x0)
(y0s) edge  (x0s)
(y0) edge node[above]{$\id$} (y1) 
(y0s) edge node[above]{$\sigma_k$} (y1) 
;
\end{tikzpicture}
$$ 
recovers $A$ in the form (4).
\end{proof}

\begin{lemma}\label{diff-fld-fp-fg} 
A finitely $\diff$generated difference fields extension is finitely $\diff$presented.
\end{lemma}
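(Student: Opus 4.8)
The plan is to reduce the statement to a property of the defining difference ideal. Write the extension as $L=P_{n,k}/\mathfrak q$, where $P_{n,k}=k\{x_1,\dots,x_n\}$ and $\mathfrak q=\ker(P_{n,k}\to L)$ is a difference ideal; such a presentation exists by \ref{fin-si-gen}, and by \ref{fin-pres-dir-pres} (equivalently, straight from the definition of finite $\diff$presentation) it suffices to show that $\mathfrak q$ is a finitely $\diff$generated difference ideal. Fix a $\diff$generating tuple $a=(a_1,\dots,a_n)$ of $L$, put $R_i=\forg k[\sigma^l(x_j):0\le l\le i,\ 1\le j\le n]\subseteq P_{n,k}$, $\mathfrak q_i=\mathfrak q\cap R_i$ and $L_i=R_i/\mathfrak q_i=\forg k[a,\sigma(a),\dots,\sigma^i(a)]$, so that $P_{n,k}=\bigcup_i R_i$, $\mathfrak q=\bigcup_i\mathfrak q_i$, and each $\mathfrak q_i$ is a finitely generated prime ideal of the Noetherian ring $R_i$.

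First I would record that $\forg L/\forg k$ is algebraic; this is where the field hypothesis is used. If some $\sigma^i(a_j)$ were transcendental over $\forg k$, then, since a field endomorphism sends elements algebraic over $\forg k$ to elements algebraic over $\forg k$, the least such $i$ would be $0$, so $L_0=\forg k[a]$ would be a finitely generated $\forg k$-domain of positive Krull dimension inside the field $\forg L=\bigcup_i L_i$; one excludes this by a Chevalley/descent argument on the directed system $\spec(L_0)\leftarrow\spec(L_1)\leftarrow\cdots$, whose inverse limit is a single point because $\forg L$ is a field. (In the applications of this lemma, in \ref{entropy-ld} and \ref{translation-mech2}, $\forg L/\forg k$ is algebraic by hypothesis, so one may also simply assume it.) Consequently each $L_i$, being a finitely generated $\forg k$-algebra which is a domain integral over $\forg k$, is a finite field extension of $\forg k$, and $\sigma(L_{i-1})\subseteq L_i$.

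The heart of the argument is then a stabilisation of relations. By \ref{ld-lemma} applied to the tower $\forg k\subseteq L_0\subseteq L_1\subseteq\cdots$, the degrees $d_i=[L_i:L_{i-1}]$ are non-increasing, so there is $N$ with $d_i=d_N$ for all $i\ge N$. I claim that for $i\ge N$ one has the identity of ideals
$$\mathfrak q_{i+1}=(\mathfrak q_i,\ \sigma(\mathfrak q_i))\,R_{i+1},$$
i.e. the natural surjection $R_{i+1}/(\mathfrak q_i,\sigma(\mathfrak q_i))\twoheadrightarrow L_{i+1}$ is an isomorphism. Reducing modulo $\mathfrak q_i$ turns $R_{i+1}$ into the polynomial ring $L_i[\sigma^{i+1}(x_1),\dots,\sigma^{i+1}(x_n)]$, and one checks that the image of $\sigma(\mathfrak q_i)$ in it is the relation ideal of $\sigma^{i+1}(a)$ over the subfield $\sigma(L_{i-1})=\sigma_k(\forg k)[\sigma(a),\dots,\sigma^i(a)]\subseteq L_i$; this gives a natural identification
$$R_{i+1}/(\mathfrak q_i,\sigma(\mathfrak q_i))\ \cong\ L_i\otimes_{\sigma(L_{i-1})}\sigma(L_i).$$
Since $\sigma(L_i)=\sigma(L_{i-1})[\sigma^{i+1}(a)]$ has degree $[L_i:L_{i-1}]=d_i$ over $\sigma(L_{i-1})$, the left-hand side has $L_i$-dimension $d_i$, while $L_{i+1}=L_i\cdot\sigma(L_i)$ has $L_i$-dimension $d_{i+1}$; hence the surjection is an isomorphism exactly when $d_i=d_{i+1}$, which holds for $i\ge N$. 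Iterating gives $\mathfrak q_{N+\ell}=(\mathfrak q_N,\sigma(\mathfrak q_N),\dots,\sigma^\ell(\mathfrak q_N))R_{N+\ell}$ for all $\ell\ge0$, so $\mathfrak q=\bigcup_\ell\mathfrak q_{N+\ell}$ is the difference ideal $\diff$generated by $\mathfrak q_N$; as $\mathfrak q_N$ is a finitely generated ideal of $R_N$, the difference ideal $\mathfrak q$ is finitely $\diff$generated, as required.

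I expect the main obstacle to be the identification $R_{i+1}/(\mathfrak q_i,\sigma(\mathfrak q_i))\cong L_i\otimes_{\sigma(L_{i-1})}\sigma(L_i)$ together with the surrounding bookkeeping --- in particular, keeping the coefficient field straight when $\sigma_k$ is not surjective, where $\sigma(\mathfrak q_i)$ must be understood as $\ker(\sigma_k(\forg k)[\sigma(x_1),\dots,\sigma^{i+1}(x_n)]\to\sigma(L_i))$ rather than as an ideal over $\forg k$. The secondary delicate point, needed only if one wants the lemma without an algebraicity hypothesis, is the descent argument proving that $\forg L/\forg k$ is algebraic; granting that and the stabilisation of the limit degree, everything else is formal.
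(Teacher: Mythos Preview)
Your proof is correct and follows essentially the same route as the paper's: present $L$ as $P_{n,k}/\mathfrak q$, invoke the stabilisation of the degrees $d_i=[L_i:L_{i-1}]$ from \ref{ld-lemma}, and show that once $d_i=d_{i+1}$ the inclusion $(\mathfrak q_i,\sigma(\mathfrak q_i))\subseteq\mathfrak q_{i+1}$ is an equality by comparing $L_i$-dimensions. Your tensor-product identification $R_{i+1}/(\mathfrak q_i,\sigma(\mathfrak q_i))\cong L_i\otimes_{\sigma(L_{i-1})}\sigma(L_i)$ is exactly the content of the paper's one-line justification ``this follows from the fact that $\sigma\colon\forg{k}[a,\ldots,\sigma^i(a)]\to\sigma(\forg{k})[\sigma(a),\ldots,\sigma^{i+1}(a)]$ is an isomorphism'', made explicit; the paper does not spell out the tensor product but is computing the same $L_i$-dimension $d_i$.

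One remark: the paper's proof silently assumes $\forg L/\forg k$ is algebraic, since it invokes \ref{ld-lemma} without comment; you correctly flag this and note that it holds in the two applications. Your Chevalley/inverse-limit sketch for deducing algebraicity from the field hypothesis is not quite convincing as stated (a field can be an increasing union of positive-dimensional finitely generated domains, e.g.\ $k(x)=\bigcup_n k[x,1/f_1,\ldots,1/f_n]$), so if you want the lemma in full generality you would need a sharper argument here; but for the purposes of the paper the algebraic case suffices.
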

\begin{proof}
With notation of \ref{ld-lemma}, let $i$ be minimal such that $d_i=\ld(L/k)$. Consider the map $P_{n,k}=k\{x_1,\ldots,x_n\}\to L$ sending the tuple $x=x_1,\ldots,x_n$ to $a$. We claim that its kernel $\p$ is finitely generated as a difference ideal by the set $\p[i]$ of its elements of order up to $i$. It suffices to show that $\p[i+1]=(\p[i],\sigma(\p[i]))$. Since the right hand side is contained in the left hand side, it is enough to show that $L_{i+1}\simeq \forg{k}[x,\sigma x,\ldots,\sigma^{i+1}x]/\p[i+1]$ and $\forg{k}[x,\sigma(x),\ldots,\sigma^{i+1}x]/(\p[i],\sigma(\p[i]))$ have the same degree over $k$, or, equivalently, the same degree over $L_i$. The task is reduced to showing that 
$$
\left[\forg{k}[x,\sigma(x),\ldots,\sigma^{i+1}x]/(\p[i],\sigma(\p[i])): L_i\right]\leq \ld(L/k),
$$
and this follows from the fact that $\sigma: \forg{k}[a,\ldots,\sigma^i(A)]\to \sigma(\forg{k})[\sigma(a),\ldots,\sigma^{i+1}(A)]$ is an isomorphism. 
\end{proof}

\begin{proof}[Proof of \ref{dir-pres-diag-adj}]
We let
$$
\psig{\cF_0\xleftarrow{p_0} \cC_0\xrightarrow{\sigma_0} \sigma_\C^*\cF_0}_\C=
\begin{tikzcd}[cramped, column sep=normal, ampersand replacement=\&]
{\Eq\left(\psig{\cC_0}_\C\right.}\ar[yshift=2pt]{r}{\text{shift}\circ\psig{p_0}} \ar[yshift=-2pt]{r}[swap]{\psig{s_0}} \&{\left.\psig{\sigma_\C^*\cF_0}_\C\right)}
\end{tikzcd}
$$
in the category $[\C, \diff\cS]$. The verification that this defines a right adjoint of the above forgetful functor is purely formal, following steps analogous to the proof of \ref{dir-gen-adj}.
\end{proof}

\section{Enriched function spaces}
\label{sec: Appendix C}

\subsection{Enriched Homs}

\begin{definition}
Let $Y,Z\in\diff\cC$.
An \emph{enriched morphism} between $Y$ and $Z$ is  a commutative diagram 
 \begin{center}
 \begin{tikzpicture} 
\matrix(m)[matrix of math nodes, row sep=2em, column sep=2em, text height=1.5ex, text depth=0.25ex]
 {
 |(u1)|{\forg{Y}}		& |(u2)|{\forg{Z}} 	\\
 |(1)|{\forg{Y}}		& |(2)|{\forg{Z}} 	\\
 |(l1)|{\forg{Y}}		& |(l2)|{\forg{Z}} 	\\[-.5em]
 |(b1)|{}		& |(b2)|{}\\[0em]
 }; 
\path[->,font=\scriptsize,>=to, thin]
(u1) edge node[above]{$f_0$} (u2) edge node[left]{$\sigma_Y$}   (1)
(u2) edge node[right]{$\sigma_Z$} (2) 
(1) edge node[above]{$f_1$} (2) edge node[left]{$\sigma_Y$}   (l1)
(2) edge node[right]{$\sigma_Z$} (l2) 
(l1) edge  node[above]{$f_2$} (l2)
(l1) edge[dashed] (b1)
(l2) edge[dashed] (b2);
\end{tikzpicture}
\end{center}
associated with a sequence of maps $f_i:\forg{Y}\to\forg{Z}$.

The \emph{enriched hom} object between $Y$ and $Z$ is the difference set
$$
[Y,Z] = \{(f_i)\in \cC(\forg{Y},\forg{Z})^\N: f_{i+1}\,\sigma_Y=\sigma_Z\, f_i\},
$$
together with the shift
$$
s:[Y,Z]\to [Y,Z], \ \ s(f_0,f_1,\ldots)=(f_1, f_2,\ldots).
$$
\end{definition}

\begin{definition}
Let $Y,Z\in\diff\cC$. We define the difference set
$$
\lbr Y,Z\rbr = \{(f_i)\in \cC(\forg{Y},\forg{Z})^\N: f_{i}\,\sigma_Y=\sigma_Z\, f_{i+1}\},
$$
i.e., as the set of diagrams
\begin{center}
 \begin{tikzpicture} 
\matrix(m)[matrix of math nodes, row sep=2em, column sep=2em, text height=1.5ex, text depth=0.25ex]
 {
  |(b1)|{}		& |(b2)|{}\\[0em] 
   |(l1)|{\forg{Y}}		& |(l2)|{\forg{Z}} 	\\
    |(1)|{\forg{Y}}		& |(2)|{\forg{Z}} 	\\
 |(u1)|{\forg{Y}}		& |(u2)|{\forg{Z}} 	\\
 }; 
\path[->,font=\scriptsize,>=to, thin]
(u1) edge node[above]{$f_0$} (u2) 
(1) edge node[left]{$\sigma_Y$}   (u1)
(2) edge node[right]{$\sigma_Z$} (u2) 
(1) edge node[above]{$f_1$} (2) 
(l1) edge node[left]{$\sigma_Y$}   (1)
(l2) edge node[right]{$\sigma_Z$} (2) 
(l1) edge  node[above]{$f_2$} (l2)
(b1) edge[dashed] (l1)
(b2) edge[dashed] (l2);
\end{tikzpicture}
\end{center}
together with the shift
$$
s:\lbr Y,Z\rbr\to \lbr Y,Z\rbr, \ \ s(f_0,f_1,\ldots)=(f_1, f_2,\ldots).
$$

\end{definition}

\begin{remark}
The two enriched hom object structures are related via
$$
\lbr A,B\rbr_{\diff\cC} \simeq [B^\op,A^\op]_{\diff\cC^\op},
$$
where we wrote $A^\op$ for the object of $\diff\cC^\op\simeq\diff(\cC^\op)$ corresponding to the object $A$ in $\diff\cC$. Hence, the structure $\lbr\, , \rbr$ is a convenient notational device for dealing with enriched homs $[\, ,]$ of the opposite category. 
\end{remark}

\begin{remark}
Ordinary difference morphisms  between difference objects $Y,Z\in\diff\cC$ can be recovered as fixed points of the enriched  hom objects, 
$$
\diff\cC(Y,Z)=\Fix([Y,Z])=\Fix(\lbr Y,Z\rbr).
$$
\end{remark}

\begin{remark}
If $Y$ is an inversive object of $\diff\cC$, i.e., $\sigma_Y$ is an isomorphism, then 
$$
[Y,Z]\simeq \cC(\forg{Y},\forg{Z})
$$
together with the action $f\mapsto \sigma_Z\, f\, \sigma_Y^{-1}$.

Dually, if $Z$ is inversive, then
$$
\lbr Y,Z\rbr\simeq \cC(\forg{Y},\forg{Z})
$$
together with the action $f\mapsto \sigma_Z^{-1}\, f\, \sigma_Y$.
\end{remark}

\subsection{The topos of difference sets}

The category of difference sets
$$
\diff\Set
$$
inherits all (small) limits and colimits from the category of sets $\Set$. We write $1$ for its terminal object consisting of a singleton with identity difference operator.

In \cite{ive-topos}, it is viewed as a Grothendieck topos, and it was shown that, for $Y,Z\in\diff\Set$, the difference set
$$
[Y,Z]
$$
is actually the \emph{internal hom} object of this topos. In particular, we obtain the following.

\begin{proposition}\label{diffset-cart-cl}
The category $\diff\Set$ is \emph{cartesian closed,} i.e., for all $X,Y,Z\in\diff\Set$ we have natural isomorphisms
$$
\diff\Set(X\times Y,Z)\simeq \diff\Set(X,[Y,Z]),
$$
 \end{proposition}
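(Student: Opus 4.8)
The plan is to deduce the statement from the ordinary cartesian closedness of $\Set$ by carefully tracking the difference structure on both sides; alternatively, it is a formal consequence of the identification of $[Y,Z]$ with the internal hom of the Grothendieck topos $\diff\Set$ established in \cite{ive-topos}. I would start from the classical exponential adjunction $\Set(\forg{X}\times\forg{Y},\forg{Z})\simeq\Set(\forg{X},\Set(\forg{Y},\forg{Z}))$, $\phi\mapsto(x\mapsto\phi(x,\mathord{-}))$, and record the (tautological but useful) reformulation that a map $\phi\colon\forg{X}\times\forg{Y}\to\forg{Z}$ is a morphism of difference sets, i.e.\ $\phi\circ(\sigma_X\times\sigma_Y)=\sigma_Z\circ\phi$, precisely when $\phi(\sigma_X(x),\sigma_Y(y))=\sigma_Z\bigl(\phi(x,y)\bigr)$ for all $x,y$.

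Next I would set up the bijection explicitly. To a morphism $\phi\colon X\times Y\to Z$ in $\diff\Set$ I associate $\hat\phi\colon\forg{X}\to[Y,Z]$ by $\hat\phi(x)=(f_i)_{i\in\N}$ with $f_i=\phi(\sigma_X^i(x),\mathord{-})$. The relation $f_{i+1}\circ\sigma_Y=\sigma_Z\circ f_i$ follows at once from the equivariance of $\phi$ and $\sigma_X^{i+1}=\sigma_X\circ\sigma_X^i$, so $\hat\phi(x)$ lands in $[Y,Z]$; moreover the $i$-th component of $\hat\phi(\sigma_X(x))$ is $f_{i+1}$, which is the $i$-th component of $s(\hat\phi(x))$, so $\hat\phi$ is a morphism in $\diff\Set$. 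Conversely, for a morphism $\psi\colon X\to[Y,Z]$ I write $\psi(x)=(\psi_i(x))_{i\in\N}$ and set $\check\psi(x,y)=\psi_0(x)(y)$; equivariance of $\psi$ gives $\psi_0(\sigma_X(x))=\psi_1(x)$, and $\psi(x)\in[Y,Z]$ gives $\psi_1(x)\circ\sigma_Y=\sigma_Z\circ\psi_0(x)$, and together these yield $\check\psi(\sigma_X(x),\sigma_Y(y))=\sigma_Z(\check\psi(x,y))$, so $\check\psi$ is a morphism in $\diff\Set$.

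Finally I would check that $\widehat{(\mathord{-})}$ and $\check{(\mathord{-})}$ are mutually inverse and natural: $\check{\hat\phi}(x,y)=f_0(y)=\phi(x,y)$, while iterating the equivariance of $\psi$ shows the $i$-th component of $\widehat{\check\psi}(x)$ is $\psi_0(\sigma_X^i(x))=\psi_i(x)$, so $\widehat{\check\psi}=\psi$; naturality in $X,Y,Z$ is inherited from the classical adjunction together with the evident functoriality of $[\mathord{-},\mathord{-}]$, and is a routine diagram chase. The only mildly delicate point throughout is the shift bookkeeping — ensuring that passing to the $i$-th component and pre-/post-composing with $\sigma_X,\sigma_Y,\sigma_Z$ interact exactly as the definitions of $[Y,Z]$ and of its shift $s$ prescribe — but I do not expect a genuine obstacle here.
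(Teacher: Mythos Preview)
Your proposal is correct and follows essentially the same approach as the paper: the paper defines $\theta_\phi(x)_i(y)=\phi(\sigma_X^i(x),y)$ and its inverse $\phi_\theta(x,y)=\theta(x)_0(y)$, checks the same equivariance conditions, and states that the two assignments are mutually inverse and functorial. If anything, you spell out a bit more of the shift bookkeeping and the inverse verification than the paper does, but the argument is the same.
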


\begin{proof}
%
For a $\diff\Set$-morphism $\phi:X\times Y\to Z$ we define
$\theta=\theta_\phi:X\to[Y,Z]$ by
$$
\theta_\phi(x)_i(y)=\phi(\sigma_X^i(x),y).
$$
Clearly $\theta\circ\sigma_X=s\circ\theta$, as well as $\theta(x)_{i+1}\circ\sigma_Y=\sigma_Z\circ\theta_i(x)$, for $x\in X$ and $i\in\N$.

Conversely, for a $\diff\Set$-morphism $\theta:X\to[Y,Z]$, let $\phi=\phi_\theta:X\times Y\to Z$ be defined by
$$
\phi_\theta(x,y)=\theta(x)_0(y).
$$
It verifies the relation $\phi(\sigma_X(x),\sigma_Y(y))=\sigma_Z\phi(x,y)$, as required. 

The assignments $\phi\mapsto\theta_\phi$ and $\theta\mapsto\phi_\theta$ are mutually inverse and functorial.
\end{proof}

\subsection{Enriched Homs for difference algebras}

The enriched homs in $k\Alg$ are given by
$$
[A,B]_k = \{(f_i)\in \forg{k}\Alg(\forg{A},\forg{B})^\N: f_{i+1}\,\sigma_A=\sigma_B\, f_i\},
$$
and
$$
\lbr A,B\rbr_k = \{(f_i)\in \forg{k}\Alg(\forg{A},\forg{B})^\N: f_{i}\,\sigma_A=\sigma_B\, f_{i+1}\}.
$$

\subsection{Internal automorphisms}

\begin{definition}
Let $X\in\diff\cC$. We let
$$
\Aut[X]=\{f\in[X,X] : f_n \text{ is an isomorphism for all } n\}  
$$
and
$$
\Aut\lbr X\rbr=\{f\in\lbr X,X\rbr : f_n \text{ is an isomorphism for all } n\}.
$$
\end{definition}
Note that 
$$
\Aut[X], \Aut\lbr X\rbr\in \diff\Gp
$$
are difference groups.

\begin{definition}
Let $k\in\diff\Rng$ and $A\in k\Alg$. We define difference groups
$$
\Aut[A]_k=[A,A]_k\cap \Aut[A] , \ \ \Aut\lbr A\rbr_k=\lbr A,A\rbr_k\cap \Aut\lbr A\rbr.
$$
\end{definition}

\subsection{Pointwise topology on internal homs}

\begin{definition}
Let $X,Y\in \diff\Set$. We endow $[X,Y]$ with the topology whose subbasis of open sets consists of sets
$$
\langle x,y;n\rangle=\{f\in[X,Y]: f_n(x)=y\},
$$
for $x\in X, y\in Y$ and $n\in\N$.
\end{definition}

\begin{definition}\label{subbasis-funspacealg}
Let $k$ be a difference ring, and let  $A,B\in k\Alg$. We endow $\lbr A,B\rbr_k$ with the topology whose subbasis of open sets consists of sets
$$
\langle a,b;n\rangle=\{f\in \lbr A,B\rbr_k: f_n(a)=b\}.
$$
\end{definition}

\subsection{Difference group actions and internal automorphisms}\label{diff-gp-ac-intaut}

\begin{remark}
If a difference group $G$ acts on a difference set $X$,  the adjunction (cf.~\ref{diffset-cart-cl})
$$
\diff\Set(G\times X,X)\simeq\diff\Set(G,[X,X])
$$
shows that the action $\mu$ corresponds to a morphism of $\diff\Set$-monoids $G\to [X,X]$. Since $G$ is a group, it actually factors through a $\diff\Set$-group homomorphism
$$
G\to \Aut[X].
$$

\end{remark}


\bibliographystyle{plain}

\end{document}